\title[]{Real submanifolds of maximum complex tangent space at a CR singular point, II}
\author[]{Xianghong Gong}
\address{Department of Mathematics,
 University of Wisconsin-Madison, Madison, WI 53706, U.S.A.}
 \email{gong@math.wisc.edu}
\author{Laurent Stolovitch}
\address{CNRS and Laboratoire J.-A. Dieudonn\'e
U.M.R. 6621, Universit\'e de Nice - Sophia Antipolis, Parc Valrose
06108 Nice Cedex 02, France.}
\email{stolo@unice.fr}
\thanks{Research of L. Stolovitch was supported by ANR grant ``ANR-10-BLAN 0102'' for the project DynPDE and "ANR-14-CE34-0002-01" for the project "Dynamics and CR geometry"}
 \keywords{ Local analytic geometry, CR singularity, normal form, integrability, reversible mapping, linearization, small divisors, hull of holomorphy}
 \subjclass[2010]{32V40, 37F50, 32S05, 37G05}
\newtheorem{thm}{Theorem}[section]
\newtheorem{cor}[thm]{Corollary}
\newtheorem{prop}[thm]{Proposition}
\newtheorem{lemma}[thm]{Lemma}
\newcommand{\sbt}{\,\begin{picture}(-1,1)(-1,-3)\circle*{3}\end{picture}\ }
\newcommand{\diag}{\operatorname{diag}}
\theoremstyle{definition}
\newtheorem{defn}[thm]{Definition}
\newtheorem{exmp}[thm]{Example}
\newtheorem{rem}[thm]{Remark}
\renewcommand{\th}[1]{\begin{thm}\label{#1}}
\newcommand{\eth}{\end{thm}}
\newcommand{\co}[1]{\begin{cor}\label{#1}}
\newcommand{\eco}{\end{cor}}
\renewcommand{\le}[1]{\begin{lemma}\label{#1}}
\newcommand{\ele}{\end{lemma}}
\newcommand{\pr}[1]{\begin{prop}\label{#1}}
\newcommand{\epr}{\end{prop}}
\newcommand{\ga}{\begin{gather}}
\newcommand{\ega}{\end{gather}}
\newcommand{\gan}{\begin{gather*}}
\newcommand{\egan}{\end{gather*}}
\newcommand{\al}{\begin{align}}
\newcommand{\eal}{\end{align}}
\newcommand{\aln}{\begin{align*}}
\newcommand{\ealn}{\end{align*}}
\newcommand{\eq}[1]{\begin{equation}\label{#1}}
\newcommand{\eeq}{\end{equation}}
\newcommand{\ci}{~\cite}
\newcommand{\f}[2]{\frac{#1}{#2}}
\newcommand{\fix}{\operatorname{Fix}}
\newcommand{\cc}{{\bf C}}
\newcommand{\nn}{{\bf N}}
\newcommand{\zz}{{\bf Z}}
\newcommand{\rr}{{\bf R}}
\newcommand{\qq}{{\bf Q}}
\newcommand{\ov}{\overline}
\newcommand{\ord}{\operatorname{ord}}
\newcommand{\id}{\operatorname{I}}
\newcommand{\RE}{\operatorname{Re}}
\newcommand{\IM}{\operatorname{Im}}
\newcommand{\cL}{\mathcal}
\newcommand{\I}{\operatorname{I}}
\newcommand{\gaa}{\gamma}
\newcommand{\del}{\delta}
\newcommand{\Del}{\Delta}
\newcommand{\var}{\varphi}
\newcommand{\e}{\epsilon}
\newcommand{\om}{\omega}
\newcommand{\Om}{\Omega}
\newcommand{\la}{\lambda}
\newcommand{\pd}{\partial}
\newcommand{\yt}{\frac{1}{2}}
\newcommand{\re}[1]{(\ref{#1})}
\newcommand{\rea}[1]{$(\ref{#1})$}
\newcommand{\rl}[1]{Lemma~\ref{#1}}
\newcommand{\nrc}[1]{Corollary~\ref{#1}}
\newcommand{\rp}[1]{Proposition~\ref{#1}}
\newcommand{\rt}[1]{Theorem~\ref{#1}}
\newcommand{\rla}[1]{Lemma~$\ref{#1}$}
\newcommand{\rpa}[1]{Proposition~$\ref{#1}$}
\newcommand{\rta}[1]{Theorem~$\ref{#1}$}
\newcounter{pp}
\newcommand{\bpp}{\begin{list}{$\hspace{-1em}\alph{pp})$}{\usecounter{pp}}}
\newcommand{\epp}{\end{list}}
\newcounter{ppp}
\newcommand{\bppp}{\begin{list}{$\hspace{-1em}(\roman{ppp})$}{\usecounter{ppp}}}
\newcommand{\eppp}{\end{list}}
\def\beq{\begin{equation}}
\def\eeq{\end{equation}}
\begin{document}


\begin{abstract} We study a germ of  real analytic $n$-dimensional submanifold  of ${\mathbf C}^n$ that has a complex tangent space of maximal dimension at a CR singularity.
Under the condition that its complexification admits the maximum number of deck transformations, we first  classify holomorphically its quadratic CR singularity.
We then study its transformation to a normal form under the action of local (possibly formal) biholomorphisms at the singularity. 
We first conjugate formally its associated reversible map $\sigma$ to suitable normal forms and show that all these normal forms can be divergent. We then construct a unique formal normal form under a non degeneracy condition.
\end{abstract}

\date{\today}
 \maketitle

\tableofcontents

\addtocontents{toc}{\protect\setcounter{tocdepth}{1}}

\setcounter{section}{0}
\setcounter{thm}{0}\setcounter{equation}{0}
\section{Introduction and main results}

\subsection{Introduction}
We say that a point $x_0$ in
 a  real submanifold $M$ in $\cc^n$ is  a CR singularity,
 if the complex tangent spaces $T_xM\cap J_xT_xM$  do not
have a constant dimension in any neighborhood of $x_0$.  The study of real submanifolds with CR singularities was initiated by E.~Bishop in his pioneering work~\cite{Bi65},
when the complex tangent space of $M$ at a CR singularity is minimal, that is exactly one-dimensional.
 The very elementary models of this kind of manifolds are   classified as    the Bishop quadrics in $\cc^2$,  given by
\eq{bishopq}
Q \colon z_2=|z_1|^2+\gaa(z_1^2+\ov z_1^2), \quad 0\leq\gaa<\infty; \quad
Q\colon z_2=z_1^2+\ov z_1^2, \quad \gaa=\infty
\eeq
with Bishop invariant $\gaa$. The origin is a complex tangent which is said to be {\it elliptic} if $0\leq\gaa<1/2$,  {\it parabolic} if $\gaa=1/2$, or {\it hyperbolic} if $\gaa>1/2$.

In ~\cite{MW83}, Moser and Webster studied the normal form problem of  a real analytic surface
$M$ in $\cc^2$
which is the higher order perturbation of $Q$. They showed that
 when $0<\gaa<1/2$, $M$ is holomorphically equivalent to a normal form which
is  an algebraic surface that depends only on $\gaa$ and two discrete invariants. They also constructed a
formal normal form of $M$ when the origin is a    non-exceptional hyperbolic complex tangent point;
 although the normal form is still convergent, they showed that the normalization is divergent in general for the hyperbolic case.
In fact, Moser-Webster  dealt  with an $n$-dimensional
real submanifold $M$ in $\cc^n$,  of which
the complex tangent space has   (minimum) dimension $1$
at a CR singularity.  When $n>2$, they also found normal forms under suitable non-degeneracy condition.

In this paper we continue our previous investigation on an $n$-dimensional real analytic  submanifold $M$ in $\cc^n$ of which the complex tangent space has the
{\it largest}
possible
dimension at a  given CR singularity~\ci{GS15}. The dimension must be $p=n/2$. Therefore,  $n=2p$ is even. As shown in~\ci{St07} and~\ci{GS15}, there is yet another basic quadratic model
 \eq{Qgam}
 Q_{\gamma_s}\subset\cc^4\colon z_{3}= (z_1+2\gamma_s\ov z_{2})^2,
\quad z_{4}=( z_{2}+2
(1-\ov\gamma_{s})  \ov z_{1})^{2}
\eeq
with $\gaa_s$ an    invariant satisfying   $\RE\gaa_s\leq1/2$,  $\IM\gaa_s\geq0$, and $\gaa_s\neq0$.  The complex tangent at the origin is said of complex type.
In~\ci{GS15}, we obtained convergence of normalization for {\rm abelian}   CR singularity.
In this paper, we study systematically the normal forms of the manifolds $M$ under the condition that $M$ admit the maximum number of deck transformations,  condition~D, introduced in~\ci{GS15}.

In  suitable holomorphic coordinates, a $2p$-dimensional real analytic submanifold 
in $\cc^{2p}$
 that has a complex tangent space of maximum dimension at the origin  is given by
\ga\label{mzpjintr}\nonumber
M\colon z_{p+j}=E_j(z',\ov z'),
\quad 1\leq j\leq p,
\\
\label{ejzp}\nonumber
E_j(z',\ov z')=h_j(z',\ov z')+q_j(\ov z') + O(|(z',\ov z')|^3), 
\end{gather}
where $z'=(z_1,\ldots, z_p)$,   each  $h_j(z',\ov z')$ is a homogeneous quadratic polynomial in $z',\ov z'$ without holomorphic or anti-holomorphic terms,
  and each $q_j(\ov z')$ is a homogeneous quadratic polynomial in $\ov z'$.
  We call $M$ a  {\em quadratic manifold} in $\cc^{2p}$ if $E_j$ are homogeneous quadratic polynomials. If $M$  is a product of Bishop quadrics \re{bishopq} and quadrics of the form \re{Qgam}, it  is called a {\em product quadric}.

 \subsection{Basic invariants}  We first describe some basic invariants of real analytic submanifolds, which are essential to the normal forms.
To study $M$, we consider its complexification in $\cc^{2p}\times\cc^{2p}$ defined by
\begin{equation}
{\mathcal M}\colon
\begin{cases}
z_{p+i} = E_{i}(z',w'), & i=1,\ldots, p,
\\
w_{p+i} = \bar E_i(w',z'),& i=1,\ldots, p.\\
\end{cases}
\nonumber
\end{equation}
 It is a complex submanifold of complex
dimension $2p$ with coordinates $(z',w')\in\cc^{2p}$.  Let $\pi_1,\pi_2$ be the restrictions of the projections $(z,w)\to z$
and   $(z,w)\to w$ to $\cL M$, respectively.
Note that $\pi_2=C\pi_1\rho_0$, where  $\rho_0$ is the restriction to $\cL M$ of the anti-holomorphic involution $(z,w)\to(\ov w,\ov z)$ and $C$ is the complex conjugate. 
 It is proved in ~\cite{GS15} that when $M$ satisfies {\it condition B},  i.e. $q^{-1}(0)=0$,
 the deck transformations of $\pi_1$ are involutions that commute pairwise, while the number of deck transformations can be $2^\ell$ for $1\leq\ell\leq p$.  As in~\ci{GS15}, our basic hypothesis on $M$ is {\it condition $D$} that $\pi_1$ admits the maximum number, $2^p$, deck transformations. Then it is proved in~\ci{GS15} that the group of deck transformations of $\pi_1$ is generated uniquely by $p$ involutions $\tau_{11},\dots, \tau_{1p}$ such that each $\tau_{1j}$ fixes a
hypersurface in $\cL M$. Furthermore,
$$
\tau_1:=\tau_{11}\dots\tau_{1p}
$$
is the unique deck transformation of which the set of the fixed-points has the smallest dimension $p$.   We call $\{\tau_{11},\ldots, \tau_{1p},\rho_0\}$ the set of {\it Moser-Webster involutions}.   Let  $\tau_2=\rho_0\tau_1\rho_0$ and
  $$
 \sigma=\tau_1\tau_2.
 $$
 Then $\sigma$ is {\it reversible} by $\tau_j$ and $\rho_0$, i.e.
 $\sigma^{-1}=\tau_j\sigma\tau_j^{-1}$ and $\sigma^{-1}=\rho_0\sigma\rho_0$.

In this paper  for classification purposes, we will impose the following condition:

 \medskip
 \noindent
{\bf Condition E.} {\it   $M$ has distinct eigenvalues, i.e.  $\sigma$ has
  $2p$ distinct eigenvalues.}

\medskip

 We now introduce our main results.

Our first step is to normalize   $\{\tau_1,\tau_2, 
\rho_0\}$.  When $p=1$, this normalization is   the main step in order to obtain the Moser-Webster normal form; in fact a simple further normalization allows Moser and Webster to achieve a convergent normal form under a suitable non-resonance condition even for the  non-exceptional  hyperbolic complex tangent.

When $p>1$, we need to carry out a further normalization for $\{\tau_{11},\ldots, \tau_{1p},\rho_0\}$; this is our second step. Here the normalization has a large degree of freedom  as shown by  our  formal and convergence results.


\subsection{A normal form of quadrics}
In section~\ref{secquad}, we study all quadrics which admit the maximum number of deck transformations. For such quadrics,
 all deck transformations are linear.
Under condition E,  we will first normalize 
$\sigma, \tau_1,\tau_2$ and $\rho_0$
into $\hat S, \hat T_1,\hat T_2$ and  $\rho$ where
\begin{equation}
\begin{array}{rrclrcl}
  \hat T_1\colon  &  \xi_j'   &\!\! = \!\!&  \la_j^{-1}\eta_j,  &\quad \eta_j'     &\!\!\! = \!\!\!& \la_j\xi_j, \\ 
  \hat T_2\colon  & \xi_j'    &\!\! = \!\!&  \la_j\eta_j,          &\quad  \eta_j'        &\!\!\! = \!\!\!& \la_j^{-1}\xi_j, \\ 
\hat S\colon       &\xi_j'     &\!\! = \!\!&  \mu_j\xi_j,          &\quad  \eta_j'       &\!\!\! = \!\!\!& \mu_j^{-1}\eta_j 
\end{array}
\nonumber
\end{equation}
with
\eq{muss}\nonumber
 \la_e>1,   \quad |\la_h|=1, \quad |\la_s|>1, \quad\la_{s+s_*}=\ov\la_s^{-1}, \quad \mu_j=\la_j^2.
\eeq
Here $1\leq j\leq p$.  Throughout the paper,   the indices $e,h,s$ have the ranges:  $1\leq e\leq e_*$,
$ e_*<h\leq e_*+h_*$,  $ e_*+h_*<s\leq p-s_*$. Thus $e_*+h_*+2s_*=p$.
We will call $e_*,h_*, s_*$
the numbers
of {\it elliptic, hyperbolic} and {\it complex} components of a product quadric, respectively.
As in the Moser-Webster theory, at the complex tangent (the origin)  an {\it elliptic} component of a product quadric corresponds
a  {\it hyperbolic} component of $\hat S$, while a {\it hyperbolic} component of the quadric corresponds an {\it elliptic}
component of $\hat S$.
 On the other hand, a {\it complex} component of the quadric    behaves like an elliptic component when the CR singularity is abelian, and it also behaves like a hyperbolic components for the existence of attached complex manifolds; see~\ci{GS15} for details.

For the above normal form of $\hat T_1,\hat T_2$ and $\hat S$,  we always normalize the anti-holomorphic involution
$\rho_0$ as
\begin{gather}\label{57rhoz}
\rho\colon\left\{ \begin{array}{lcllcl}
\xi_e' &=&\ov \eta_e, \qquad&\eta_e'&=&\ov\xi_e,\\
\xi_h' &=&\ov \xi_h, & \eta_h'&=&\ov\eta_h,\\
\xi_s' &=&\ov\xi_{s+s_*}, & \eta_s'&=&\ov\eta_{s+s_*},\\
\xi_{s+s_*}'   &=&\ov \xi_s, & \eta_{s+s_*}'&=&\ov\eta_s.
\end{array} \right.
\end{gather}
 With the above normal forms $\hat T_1,\hat T_2,\hat S, \rho$ with $\hat S=\hat T_1\hat T_2$, we will
then normalize the  $\tau_{11},\ldots, \tau_{1p}$ under linear transformations that
commute with $\hat T_1, \hat T_2$, and $\rho$, i.e. the
linear transformations belonging to the {\it centralizer}
of  $\hat T_1,\hat T_2$ and $\rho$. This is a subtle  step.
Instead of normalizing the involutions directly,
we will use the  pairwise commutativity of  $\tau_{11}, \ldots, \tau_{1p}$ to associate to
these $p$ involutions 
a non-singular $p\times p$ matrix $\mathbf B$. The normalization
of $\{\tau_{11}, \ldots,\tau_{1p},
\rho\}$
 is then identified with the normalization of the matrices  $\mathbf B$ under a suitable
equivalence relation.   The latter  is easy to solve. Our normal form of $\{\tau_{11}, \ldots,\tau_{1p},
\rho\}$ is then constructed from the normal forms of $T_1,T_2,\rho$, and the matrix $\mathbf B$.
Following Moser-Webster~\cite{MW83}, we will construct the normal form of the quadrics 
 from the normal form of involutions.   Let us first state a Bishop type holomorphic classification for quadratic real manifolds.

\begin{thm}\label{thm1intr}Let $M$ be a quadratic submanifold defined by
$$
z_{p+j}=h_j(z',\ov z')+q_j(\ov z'),
\quad 1\leq j\leq p.
$$
Suppose that $M$ satisfies condition  $E$,
i.e.
  the branched covering
of  $\pi_1$ of complexification $\cL M$ has   $2^p$  deck transformations
and  
 $2p$ distinct eigenvalues. 
Then $M$ is holomorphically equivalent to
  \begin{gather}\label{qbgaintro}
  \nonumber
Q_{\mathbf B,\boldsymbol{\gamma}}\colon
z_{p+j}=L_j^2(z',\ov z'), \quad 1\leq j\leq p
\end{gather}
where $(L_1(z',\ov z'), \ldots, L_p(z',\ov z'))^t=\mathbf B(z'-2\boldsymbol{\gamma}\ov z')$,
 $\mathbf B\in GL_p(\cc)$ and
\begin{gather}\nonumber
\boldsymbol{\gamma}:=\begin{pmatrix}
  \boldsymbol{\gaa}_{e_*}    &\mathbf{0} &\mathbf{0} &\mathbf{0}\\
    \mathbf{0}&\boldsymbol\gaa_{h_*}&\mathbf{0}& \mathbf{0}  \\
   \mathbf{0}&\mathbf{0}&\mathbf{0}&\boldsymbol{\gaa}_{s_*} \\
   \mathbf{0}&\mathbf{0}&{\mathbf I_{s_*}}-\ov{\boldsymbol{\gaa}}_{s_*}&\mathbf{0}
\end{pmatrix}.
\end{gather}
Here $p=e_*+h_*+2s_*$, $\mathbf I_{s_*}$ denotes the $s_*\times s_*$ identity matrix, and
\gan \boldsymbol{\gaa}_{e_*}=\diag(\gamma_1,\ldots, \gamma_{e_*}), \quad
\boldsymbol{\gaa}_{h_*}=\diag(\gamma_{e_*+1},\ldots, \gamma_{e_*+h_*}),\\
\boldsymbol{\gaa}_{s_*}=\diag(\gamma_{e_*+h_*+1},\ldots, \gamma_{p-s_*})
\end{gather*}
with  $\gaa_e,\gaa_h,$ and $\gaa_s$ satisfying
\eq{0ge1}\nonumber
0<\gamma_e<1/2, \quad
1/2<\gamma_h<\infty, \quad \RE
\gaa_s<1/2, \quad  \IM\gaa_s>0.
\eeq
 Moreover, $\mathbf B$  is uniquely determined by   an equivalence
  relation
 $
 \mathbf B\sim \mathbf C\mathbf B\mathbf R
 $ for suitable non-singular matrices $\mathbf C,\mathbf R$ which have exactly $p$ non-zero entries.
 %
 %
\end{thm}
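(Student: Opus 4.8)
The plan is to transfer the classification of $M$ to a classification of its Moser--Webster involutions, following the scheme of \cite{MW83} and \cite{GS15}. As $M$ is quadratic, its complexification $\cL M$ is a quadratic cone, $\pi_1\colon\cL M\to\cc^{2p}$ is a linear $2^p$-fold branched covering, and all the maps $\tau_{11},\dots,\tau_{1p},\tau_1,\tau_2,\sigma=\tau_1\tau_2$ of \cite{GS15} are linear (and $\rho_0$ anti-linear). The first step is the normalization announced just before the theorem: condition $E$ makes $\sigma$ diagonalizable with $2p$ simple eigenvalues, which by the reversibility relations $\tau_j\sigma\tau_j=\sigma^{-1}$ and $\rho_0\sigma\rho_0=\sigma^{-1}$ form a set invariant under $\mu\mapsto\mu^{-1}$ and (anti-holomorphically) under $\mu\mapsto\ov\mu$; separating the elliptic, hyperbolic and complex families produces the parameters $\la_e>1$, $|\la_h|=1$, $|\la_s|>1$ with $\la_{s+s_*}=\ov\la_s^{-1}$, equivalently the invariants $\gamma_e,\gamma_h,\gamma_s$ in the stated ranges. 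These are already holomorphic invariants of $M$, being spectral data of $\sigma$. A simultaneous diagonalization, using the simplicity of the spectrum to fix the eigenbases, brings $\sigma,\tau_1,\tau_2,\rho_0$ to $\hat S,\hat T_1,\hat T_2,\rho$; in particular it pins down once and for all the complementary subspaces $V:=$ the $(-1)$-eigenspace of $\hat T_1$ and $W:=\fix(\hat T_1)$, with $\cc^{2p}=V\oplus W$ and $\dim V=\dim W=p$, together with a reference basis $f_1,\dots,f_p$ of $V$.

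The heart of the proof is the second step, the normalization of the remaining involutions under the residual coordinate freedom. Since each $\tau_{1j}$ fixes a hypersurface while $\fix(\tau_1)$ has dimension exactly $p$ (both from \cite{GS15}), and the $\tau_{1j}$ commute pairwise with product $\tau_1$, a dimension count on their joint eigenspace decomposition shows that each $\tau_{1j}$ is a linear reflection through $0$ in a hyperplane $H_j\supset W$, with reflection direction $v_j$, where $v_1,\dots,v_p$ is a basis of $V$ and $\bigcap_j H_j=W$. Thus the (unordered) set $\{\tau_{11},\dots,\tau_{1p}\}$ amounts exactly to $p$ lines $\cc v_j\subset V$ in general position, i.e.\ to the matrix $\mathbf B\in GL_p(\cc)$ expressing $(v_j)$ in the reference basis $(f_k)$; rescaling a $v_j$ leaves $\tau_{1j}$ unchanged, and relabeling the involutions permutes the $v_j$, so $\mathbf B$ is defined only up to $\mathbf B\mapsto\mathbf B\mathbf R$ with $\mathbf R$ having exactly $p$ nonzero entries. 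The coordinate changes preserving $\hat S,\hat T_1,\hat T_2,\rho$ form the centralizer of these four maps; since the spectrum of $\hat S$ is simple this centralizer is diagonal in the $(\xi',\eta')$-coordinates, commuting with $\hat T_1$ ties together the $\xi_j$- and $\eta_j$-entries, and commuting with $\rho$ imposes the reality conditions dictated by \re{57rhoz}. Restricting this group to $V$ gives the action $\mathbf B\mapsto\mathbf C\mathbf B$, again with $\mathbf C$ having exactly $p$ nonzero entries. One is thereby reduced to the two-sided equivalence $\mathbf B\sim\mathbf C\mathbf B\mathbf R$, which is elementary to normalize and supplies a canonical representative; this is exactly the uniqueness assertion.

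The third step runs the Moser--Webster reconstruction in reverse. The linear data $\pi_1,\tau_{1j},\rho$ determine $\cL M$ and hence $M$; if $L_j(z',w')$ denotes the linear form that $\tau_{1j}$ reflects, then the fibers of $\pi_1$ are cut out by $L_j(z',w')^2=z_{p+j}$, so in the normalized coordinates $M$ takes the form $z_{p+j}=L_j^2(z',\ov z')$, and unwinding the changes of coordinates identifies $(L_1,\dots,L_p)^t=\mathbf B(z'-2\boldsymbol\gamma\ov z')$, with $\boldsymbol\gamma$ diagonal in the elliptic and hyperbolic slots and with the off-diagonal coupling of the $s$- and $(s+s_*)$-slots displayed in the statement, that coupling being forced precisely by the form \re{57rhoz} of $\rho$ on the complex components. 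Conversely, one checks directly that every $Q_{\mathbf B,\boldsymbol\gamma}$ of this shape satisfies condition $E$, with spectral parameters $\la_j$ and associated matrix $\mathbf B$. Since all the coordinate changes involved are linear, any holomorphic equivalence between two such quadrics conjugates the entire Moser--Webster package: it preserves the spectrum of $\sigma$ (hence $\boldsymbol\gamma$ up to the permitted reordering) and conjugates the normalized involutions by an element of the residual gauge group (hence $\mathbf B$ up to the equivalence $\mathbf C\mathbf B\mathbf R$). Therefore $M\cong Q_{\mathbf B,\boldsymbol\gamma}$ and $\mathbf B$ is determined up to $\sim$, as claimed.

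The main obstacle is the second step: computing the centralizer of $\{\hat S,\hat T_1,\hat T_2,\rho\}$ and, above all, its induced action on $V$, so as to verify that the equivalence it imposes on $\mathbf B$ is neither finer nor coarser than the stated two-sided one --- together with the bookkeeping in the third step needed to check that the same equivalence is the one produced by arbitrary holomorphic equivalences of the quadrics, which is what turns the normal form into a genuine uniqueness statement.
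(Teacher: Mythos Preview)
Your proposal is correct and follows essentially the same route as the paper: reduce to the Moser--Webster involutions via \cite{GS15}, normalize $\{\tau_1,\tau_2,\sigma,\rho_0\}$ to $\{\hat T_1,\hat T_2,\hat S,\rho\}$ using the simple spectrum, parametrize the remaining freedom in $\{\tau_{1j}\}$ by a matrix $\mathbf B$ recording the reflection directions inside the $(-1)$-eigenspace of $\hat T_1$, compute the residual centralizer to obtain the two-sided equivalence on $\mathbf B$, and then run the realization procedure to read off $Q_{\mathbf B,\boldsymbol\gamma}$. The paper phrases the parametrization a bit more algebraically (writing $\tau_{1j}=\varphi_1\phi_1 Z_j\phi_1^{-1}\varphi_1^{-1}$ with $\phi_1(z^+,z^-)=(z^+,\mathbf Bz^-)$ in invariant/skew-invariant coordinates of $\hat T_1$) and carries out the ``unwinding'' in Step~3 by an explicit block-matrix computation of $\hat Z^-\mathbf m^{-1}$, but the content is the same as your sketch.
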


  When $\mathbf B$ is the identity matrix, we get a product quadric or its equivalent form.
See~\rt{quadclass} for detail of the equivalence relation. The scheme of finding quadratic
normal forms turns out to be useful.
It 
will be applied to the study of  normal forms of  the general real submanifolds.


\subsection{Formal submanifolds, formal involutions, and formal centralizers}
%
The normal forms of $\sigma$ turn out to be in the centralizer of $\hat S$, the  normal form of the
linear part of $\sigma$.
 The family is subject to a second step of normalization  
under mappings which again turn out to be in the centralizer  of $\hat S$. Thus, before we introduce normalization, we will first study various centralizers.
We will discuss the centralizer of $\hat S$ as well as the centralizer of $\{\hat T_1,\hat T_2\}$ in section~\ref{fsubm}.

\subsection{Normalization of $\sigma$}

As mentioned earlier, we will divide the normalization for the families of non-linear involutions  into two steps.
This division will serve  two purposes: first, it helps us to find the formal normal forms of the family of involutions $\{\tau_{11},\ldots, \tau_{1p},\rho\}$; second, it  helps us  understand
the convergence of normalization of the original normal form problem for the real submanifolds.  For purpose of normalization, we will assume that $M$
is {\it non-resonant}, i.e. $\sigma$ is {\it non-resonant}, if its eigenvalues $\mu_1, \ldots,\mu_p,  \mu_1^{-1}, \ldots, \mu_p^{-1}$ satisfy 
\eq{muqn1}
\mu^Q\neq1, \qquad \forall Q\in\zz^p, \quad |Q|\neq0.
\eeq

In section~\ref{secnfs}, we obtain the normalization of $\sigma$ by proving the following.
\begin{thm}\label{ideal0intr} Let $\sigma$ be a holomorphic map with linear part $\hat S$.
Assume that  $\hat S$ has eigenvalues $\mu_1, \dots, \mu_p, \mu_1^{-1}, \dots, \mu_p^{-1}$  satisfying the non-resonant condition \rea{muqn1}. Suppose that $\sigma=\tau_1\tau_2$ where $\tau_1$ is a holomorphic involution, $\rho$ is an anti-holomorphic involution, and $\tau_2=\rho\tau_1\rho$.
Then there exists a formal map $\Psi$
such that $\rho:=\Psi^{-1}\rho\Psi$ is given by \rea{57rhoz},  $\sigma^*=\Psi^{-1}\sigma\Psi$   and $\tau_{i}^*=\Psi^{-1}\tau_i\Psi$ have the form
 \begin{gather}\label{sigma0}
\sigma^*\colon\xi_j'=M_j(\xi\eta)\xi_j, \quad \eta_j'=M_j^{-1}(\xi\eta)\eta_j, \quad M_j(0)=\mu_j, \quad 1\leq j\leq p,\\
\tau_i^*=\Lambda_{ij}(\xi\eta)\eta_j, \quad \eta_j'=\Lambda_{ij}^{-1}(\xi\eta)\xi_j.
\nonumber
\end{gather}
 Here, $\xi\eta=(\xi_1\eta_1,\ldots,\xi_p\eta_p)$. Assume further that  $\log M$ $($see
 \rea{logm} for definition$)$  is tangent to the identity. 
  Under a further change of coordinates that preserves $\rho$,    $\sigma^*$  and
$\tau_i^*$ are  transformed into
\begin{gather} \label{hmjw}
\hat\sigma\colon \xi_j'=\hat M_j(\xi\eta)\xi_j, \quad \eta_j'=\hat M_j^{-1}(\xi\eta)\eta_j, \quad\hat M_j(0)=\mu_j,\quad 1\leq j\leq p,\\
\hat\tau_i=\hat\Lambda_{ij}(\xi\eta)\eta_j, \quad \eta_j'=\hat\Lambda_{ij}^{-1}(\xi\eta)\xi_j,
\quad\hat\Lambda_{2j}=\hat\Lambda_{1j}^{-1}.
\nonumber
\end{gather}
 Here the $j$th component of  $\log \hat M(\zeta)- \zeta= O(|\zeta|^2)$ 
  is independent of $\zeta_ j$.
Moreover, $\hat M$ is unique.
\end{thm}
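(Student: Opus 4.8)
The plan is to prove Theorem~\ref{ideal0intr} in the two stages its statement already suggests. In the first stage one conjugates the holomorphic involution $\tau_1$ (hence $\tau_2=\rho\tau_1\rho$ and $\sigma=\tau_1\tau_2$) to the diagonal-type normal forms \rea{sigma0}. The mechanism is the standard Moser--Webster device adapted to $p$ dimensions: since $\sigma$ has linear part $\hat S$ with the non-resonant eigenvalues $\mu_1,\dots,\mu_p,\mu_1^{-1},\dots,\mu_p^{-1}$, Poincar\'e--Dulac normalization (using only the centralizer of $\hat S$, whose structure was described in section~\ref{fsubm}) formally linearizes $\sigma$ \emph{modulo} the monomials $\xi^{Q+e_j}\eta^{Q}$ and $\xi^{Q}\eta^{Q+e_j}$; non-resonance \rea{muqn1} forces exactly the resonant monomials depending on the products $\zeta_k=\xi_k\eta_k$ to survive, so $\sigma$ takes the form $\xi_j'=M_j(\xi\eta)\xi_j$, $\eta_j'=M_j^{-1}(\xi\eta)\eta_j$. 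One then checks that the conjugating map $\Psi$ can be chosen to preserve the reversors: because $\sigma$ is reversible by $\tau_1$ and by $\rho_0$, averaging $\Psi$ against these involutions (replacing $\Psi$ by a suitable ``square root'' of $\Psi\cdot(\text{conjugate of }\Psi)$, as in the $p=1$ Moser--Webster argument) yields a $\Psi$ with $\Psi^{-1}\rho_0\Psi=\rho$ of the form \rea{57rhoz}, and then $\tau_1^*=\Psi^{-1}\tau_1\Psi$ automatically has the displayed form $\xi_j'=\Lambda_{1j}(\xi\eta)\eta_j$, $\eta_j'=\Lambda_{1j}^{-1}(\xi\eta)\xi_j$ from the identities $\sigma^*=\tau_1^*\tau_2^*$, $\tau_1^{*2}=\mathrm{id}$, and $\tau_1^*\sigma^*\tau_1^*=\sigma^{*-1}$. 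The relation $M_j=\Lambda_{1j}/\Lambda_{2j}$ (and $\Lambda_{2j}=\Lambda_{1j}\circ\tau_1^*$ up to the product-preserving substitution) records that $M$ is determined by the $\Lambda$'s.

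\emph{Second}, for the uniqueness of $\hat M$ I would set up the remaining gauge freedom explicitly. After stage one, a further change of coordinates preserving $\rho$ and the shape \rea{sigma0} must itself lie in the centralizer of $\hat S$ and commute appropriately with $\rho$; such a map is governed by functions of the invariants $\zeta=\xi\eta$ and acts on $\log M$ by addition of a coboundary-type term. Writing $\log\hat M(\zeta)-\zeta=O(|\zeta|^2)$ and tracking how an admissible substitution $\xi_j\mapsto a_j(\zeta)\xi_j$, $\eta_j\mapsto a_j(\zeta)^{-1}\eta_j$ transforms it, one finds that the $j$-th component of $\log M$ changes by a quantity that one can use to kill exactly the $\zeta_j$-dependence of that component. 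This is the normalization ``the $j$th component of $\log\hat M(\zeta)-\zeta$ is independent of $\zeta_j$,'' and once this gauge is fixed no further admissible substitution is available (any remaining one acting trivially on the normal form), giving uniqueness of $\hat M$. The hypothesis that $\log M$ is tangent to the identity is what makes the relevant linearized operator (the one sending $a(\zeta)$ to the change it induces) have the right triangular/invertible structure on the complement of the chosen normal-form space.

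\emph{Concretely}, the steps I would carry out in order are: (1) recall from section~\ref{fsubm} the description of the centralizer $\mathcal{C}(\hat S)$ and of $\mathcal{C}(\hat T_1,\hat T_2)$, and note that the reversibility relations $\sigma^{-1}=\tau_1\sigma\tau_1=\rho_0\sigma\rho_0$ pass to any formal conjugate; (2) run the formal Poincar\'e--Dulac normalization of $\sigma$ order by order, at each order solving the homological equation in $\mathcal{C}(\hat S)$ and leaving only $\zeta$-resonant terms, obtaining $\sigma^*$ of the form \rea{sigma0}; (3) correct $\Psi$ by the involution-averaging trick so that $\rho$ is linear and $\tau_i^*$ acquire the stated form, deducing the $\tau_i^*$ shape from the algebra of reversible maps; (4) in the tangent-to-identity case, analyze the residual gauge group (substitutions $\xi_j\mapsto a_j(\zeta)\xi_j$, $\eta_j\mapsto a_j(\zeta)^{-1}\eta_j$ together with $\rho$-compatibility), show it acts on $\log M-\mathrm{id}$ by an explicit affine-linear rule, and solve uniquely for $a=(a_j)$ so that component $j$ of $\log\hat M(\zeta)-\zeta$ loses its $\zeta_j$-dependence; (5) verify that the stabilizer of this normal form is trivial, hence $\hat M$ is unique.

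\textbf{Main obstacle.} The delicate point is step (3)–(4): ensuring that the normalizing map can be chosen \emph{simultaneously} compatible with the reversors $\tau_1$ and $\rho_0$ \emph{and} that, in the tangent-to-identity regime, the residual gauge action on $\log M$ is exactly solvable with a unique normalizer. Getting the involution-averaging to converge formally at each order (so that the ``square root'' of the conjugacy exists as a formal power series respecting all three of $\sigma,\tau_1,\rho_0$) and then pinning down that the only freedom left is precisely the $a_j(\zeta)$-substitutions — no more, no less — is where the real work lies; the Poincar\'e--Dulac bookkeeping in step (2) is routine by comparison, and the rest is linear algebra over the ring of formal functions of $\zeta$.
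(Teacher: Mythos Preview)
Your outline for stage one is essentially right and matches the paper: Poincar\'e--Dulac normalization of $\sigma$ inside $\cL C^{\mathsf c}(\hat S)$ gives $\sigma^*$, and the shape of $\tau_i^*$ then follows from the reversibility algebra (the paper does this via $\tau_1^*T_0\in\cL C(\hat S)$ with $T_0(\xi,\eta)=(\eta,\xi)$). One point of difference: the paper does not use an averaging or square-root trick to get $\rho\Psi=\Psi\rho$. Instead it takes $\Psi$ to be the \emph{unique} element of $\cL C^{\mathsf c}(\hat S)$ normalizing $\sigma$; since $\rho\sigma\rho=\sigma^{-1}$ and $\rho\Psi\rho$ is again in $\cL C^{\mathsf c}(\hat S)$ and normalizes $\sigma$, uniqueness forces $\rho\Psi\rho=\Psi$. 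Your averaging idea would need an extra argument that the averaged map still normalizes $\sigma$, which is not automatic.

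The real gap is in your second stage. You identify the residual gauge group as substitutions $\xi_j\mapsto a_j(\zeta)\xi_j$, $\eta_j\mapsto a_j(\zeta)^{-1}\eta_j$, acting on $\log M$ additively. This is incorrect on both counts. Such a substitution fixes $\zeta=\xi\eta$ and therefore fixes $\sigma^*$ identically (compute: the $a_j$ and $a_j^{-1}$ cancel), so it cannot be used to normalize $M$ at all. The transformations that preserve the form of $\{\tilde\tau_1,\tilde\tau_2\}$ (after one has first achieved $\tilde\Lambda_{2j}=\tilde\Lambda_{1j}^{-1}$ via a separate step the paper carries out with a unique $\psi_0\in\cL C^{\mathsf c}(\hat T_1,\hat T_2)$) are those in $\cL C(\hat T_1,\hat T_2)$, namely $\xi_j\mapsto c_j(\zeta)\xi_j$, $\eta_j\mapsto c_j(\zeta)\eta_j$ with the \emph{same} $c_j$. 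These send $\zeta_j\mapsto c_j^2(\zeta)\zeta_j$ and act on $M$ by \emph{right composition}: $M_j\mapsto M_j\circ\varphi$ with $\varphi(\zeta)=(c_1^2\zeta_1,\dots,c_p^2\zeta_p)$. Consequently the problem in $\zeta$-space is: given $F=\log M$ tangent to the identity, find a unique $\varphi$ preserving all coordinate hyperplanes such that $\hat F=F\circ\varphi$ has $\partial_{\zeta_j}\hat F_j=0$. The paper solves this as a self-contained lemma (an implicit-function-type argument, convergent when $F$ is), and uniqueness of $\hat M$ then follows because the centralizer of $\{\hat\tau_1,\hat\tau_2\}$ reduces to the $2^p$ sign dilations $(\xi_j,\eta_j)\mapsto(\pm\xi_j,\pm\eta_j)$, which act trivially on $\zeta$. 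Your additive/coboundary picture does not capture this mechanism and, as written, would not produce any normalization of $M$.
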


\begin{rem}
The condition that $\log M$ is tangent to identity at the origin has to be understood as a non-degeneracy condition of   the simplest form. When there is no ambiguity, ``tangent to identity'' stands for ``tangent to identity at the origin''.
\end{rem}

We will conclude section~\ref{secnfs} with an example showing that although $\sigma,\tau_1,\tau_2$ are  linear, $\{\tau_{11},\ldots,\tau_{1p},\rho\}$ are not necessarily linearizable, provided $p>1$. 


Section \ref{div-sect} is devoted to the proof of the following divergence result. 
\begin{thm}\label{divsig} There exists a non-resonant real analytic submanifold $M$ with
pure elliptic complex tangent in $\cc^6$ 
such that if its associated $\sigma$  is transformed into a map $\sigma^*$
that commutes with the linear part of $\sigma$ at the origin,
then $\sigma^*$ must diverge.
\end{thm}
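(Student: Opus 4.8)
## Proof proposal for Theorem \ref{divsig}

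The plan is to exhibit the divergence as an obstruction living already at the level of the reversible map $\sigma=\tau_1\tau_2$, by choosing $M$ (equivalently, its Moser–Webster data) so that $\tau_1$ is \emph{linear} while the normalization forced by commuting with $\hat S$ must solve a cohomological equation with small divisors whose solution is generically divergent. Concretely, I would work in $\cc^6$, so $p=3$ with $e_*=3$ (pure elliptic), and pick the eigenvalues $\la_1,\la_2,\la_3>1$ of $\tau_1$ — hence $\mu_j=\la_j^2$ — to be non-resonant in the sense of \rea{muqn1} but with $\mu^Q-1$ accumulating to $0$ along a sequence $Q=Q_k\in\zz^3$ fast enough (a Liouville-type choice). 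Since $p>1$, there genuinely is a multi-index direction transverse to each coordinate axis, which is exactly what creates room for a nontrivial small-divisor series; this is why the statement requires $\cc^6$ and fails for $p=1$. Take $\sigma$ itself to be linear equal to $\hat S$ — so $M$ is near a product quadric — but choose the \emph{individual} involutions $\tau_{11},\tau_{12},\tau_{13}$ to be a nonlinear perturbation (compatible with $\rho$ and with $\tau_1=\tau_{11}\tau_{12}\tau_{13}$, $\tau_2=\rho\tau_1\rho$), so that the pair $(\tau_1,\tau_2)$ determining $\sigma$ is itself a nonlinear perturbation of the linear model even though its product $\sigma$ looks linear after a formal change; the point is that \emph{any} $\Psi$ bringing $\sigma$ into the centralizer of $\hat S$ is forced, through the reversibility relations $\sigma^{-1}=\tau_i\sigma\tau_i^{-1}$, to also normalize $\tau_1$, and I would arrange the $\tau_{1j}$ so that this is impossible convergently.

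The key steps, in order. First, set up the formal conjugacy equation: writing $\Psi=\id+\psi$ with $\psi$ of order $\ge 2$, the condition $\Psi^{-1}\sigma\Psi$ commutes with $\hat S$ becomes, degree by degree $m\ge2$, a linear equation $\psi_m\circ\hat S-\hat S\circ\psi_m = R_m$ where $R_m$ is a universal polynomial in the lower-order data of $\sigma$ and of $\psi_2,\dots,\psi_{m-1}$. In the $(\xi,\eta)$ coordinates the operator $\psi\mapsto \psi\circ\hat S-\hat S\circ\psi$ is diagonal on monomials $\xi^A\eta^B e_j$ with eigenvalue $\mu^{A-B}\mu_j^{\mp1}-1$, so the resonant monomials are precisely those producing the centralizer form \rea{sigma0}, and the normalization of the non-resonant part divides by $\mu^{A-B}\mu_j^{\pm1}-1$. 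Second, compute the relevant Taylor coefficients of $\sigma$ coming from the nonlinear $\tau_{1j}$'s: I would pick a one-parameter family of such perturbations (for instance adding a single well-chosen resonant-looking but actually non-resonant monomial to $\tau_{11}$) so that for infinitely many degrees $m_k$ the right-hand side $R_{m_k}$ has a component along a monomial $\xi^{A_k}\eta^{B_k}$ with $A_k-B_k=\pm Q_k$, with coefficient bounded below independently of $k$ (one has to check that the contributions from the inductively-generated lower-order $\psi$'s cannot cancel this — the cleanest way is to isolate a monomial of lowest possible degree carrying $Q_k$, so that no cancellation from products of strictly-lower-degree terms is available). Third, conclude: the corresponding coefficient of $\psi_{m_k}$ is then $\gtrsim |\mu^{Q_k}-1|^{-1}$, which by the Liouville choice grows faster than $C^{m_k}$ for every $C$, so $\Psi$ diverges; and since the normalizing map in the theorem's conclusion is unique up to the (harmless) residual freedom described after Theorem \ref{ideal0intr}, \emph{every} admissible $\sigma^*$ requires a divergent $\Psi$. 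Finally, translate this back to $M$: by Theorem \ref{thm1intr} and the discussion of basic invariants, the chosen $(\tau_{1j},\rho)$ are realized by an actual real analytic $M$ with pure elliptic CR singularity in $\cc^6$ (one realizes the prescribed Moser–Webster involutions as in \ci{GS15}), and non-resonance of $\sigma$ is exactly \rea{muqn1}, which we imposed.

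The main obstacle I expect is Step three's no-cancellation argument: one must guarantee that the $Q_k$-component of $R_{m_k}$ is not accidentally killed by the recursively-defined $\psi_2,\dots,\psi_{m_k-1}$ (whose coefficients themselves already carry small divisors and could conspire). The standard device — used by Moser–Webster in the hyperbolic case and reusable here — is to choose the perturbation of the $\tau_{1j}$ supported on a single monomial and to track only the \emph{leading} (lowest-degree) appearance of each $Q_k$, where $R_{m_k}$ is a linear (not higher-order) functional of the perturbation datum, so a generic (indeed explicit) choice of the perturbation coefficients avoids the at-most-countably-many cancellation hyperplanes. A secondary technical point is arranging simultaneously (i) non-resonance \rea{muqn1}, (ii) the Liouville accumulation of $\mu^{Q_k}-1$ to $0$, and (iii) the reversibility and commutation constraints among $\{\tau_{11},\tau_{12},\tau_{13},\rho\}$ with $\la_j>1$; this is a matter of first fixing a Liouville triple $(\la_1,\la_2,\la_3)$ and then perturbing the involutions within the centralizer constraints, which leaves enough freedom precisely because $p=3>1$.
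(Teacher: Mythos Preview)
Your proposal has a fundamental gap: you are proving that the normalizing transformation $\Psi$ diverges, but the theorem asserts that the \emph{normal form} $\sigma^*$ itself diverges. These are entirely different phenomena. Indeed, the paper explicitly remarks (after the statement of this theorem) that for $p=1$ in the hyperbolic case, Moser--Webster showed the normalizing transformation diverges while the normal form remains convergent. Your own construction is in fact self-defeating: you propose to take $\sigma$ formally linearizable (``its product $\sigma$ looks linear after a formal change''), but then $\sigma^*=\hat S$ is a perfectly valid normal form in $\cL C(\hat S)$, and it is linear, hence convergent. The divergence of the unique $\Psi\in\cL C^{\mathsf c}(\hat S)$ achieving this is irrelevant to the conclusion. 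Your final sentence, ``every admissible $\sigma^*$ requires a divergent $\Psi$,'' is a statement about $\Psi$, not about $\sigma^*$.

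The paper's argument is of a different nature. One must force small divisors to appear in the \emph{coefficients of the normal form} $\hat M_{j,P}$, not merely in $\Psi$. The mechanism (\rl{mj0mj}) is that after normalizing $\sigma$ to order $d$, the coefficient $\hat M_{j,P}$ with $2|P|+1=2d+1$ picks up a term $\mu_j\{2(\hat U_j\hat V_j)_{PP}+(\hat U_j^2)_{(P+e_j)(P-e_j)}\}$, where $\hat U,\hat V$ are the degree-$d$ pieces of the normalizing map. This term contains a \emph{product} of two small divisors $(\mu^{P_k}-\mu_3)^{-1}(\mu^{-P_k}-\mu_3^{-1})^{-1}$, whereas all competing terms (including the recursively generated ones you worry about) contain at most one such factor. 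One then constructs $\mu$ (\rl{smallvec+}) and a sequence of perturbations so that the exceptional small divisor at degree $d_k$ is vastly smaller than any polynomial in the small divisors of lower degree, making the product term dominate. Crucially, the argument requires a \emph{unique} normal form $\hat\sigma$ (obtained via the further normalization of \rl{fcfp}, which preserves convergence) so that divergence of $\hat\sigma$ forces divergence of \emph{every} normal form via \rt{ideal5}~(iii). Your scheme, which stays at the level of the cohomological equation for $\Psi$, never touches the coefficients of $\sigma^*$ and cannot reach the conclusion.
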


Note that the  theorem says that all normal forms of $\sigma$ (by definition, they belong to the centralizer of its linear part,
i.e. they are in the Poincar\/e-Dulac normal forms) are divergent.   It implies  that any transformation for $M$ that  transforms  $\sigma$ into a Poincar\'e-Dulac normal
form  must diverge. This is in contrast with the Moser-Webster theory:
For  $p=1$, a convergent  normal form can always be achieved even if the  associated transformation is divergent (in the case of hyperbolic complex tangent), and
furthermore in case of $p=1$ and elliptic complex tangent with a non-varnishing Bishop invariant, the normal form can   be achieved by a convergent
transformation.
A divergent Birkhoff normal form for the classical Hamiltonian systems was
obtained in \cite{Go12}. See Yin~\ci{Yi15} for the existence of divergent Birkhoff normal forms for real analytic area-preserving mappings.

 We do not know if there exists a non-resonant real analytic submanifold with pure elliptic eigenvalues in $\cc^4$ of which all Poincar\'e-Dulac normal forms are divergent.


\subsection{A unique normalization for the family $\{\tau_{ij},\rho\}$}
In section~\ref{nfin}, we will follow the normalization scheme developed for the quadric
normal forms in order to normalize $\{\tau_{11},\ldots,\tau_{1p},\rho\}$. 
Let $\hat\sigma$ be given by \re{hmjw}. We define
$$
\hat\tau_{1j}\colon\xi_j'=\hat\Lambda_{1j}(\xi\eta)\eta_j, \quad \eta_j'=\hat\Lambda_{1j}^{-1}(\xi\eta)\xi_j, \quad\xi_k'=\xi_k, \quad \eta_k'=\eta_k,\quad k\neq j,
$$
where 
 $\hat\Lambda_{1j}(0)=\la_j$  and
 $\hat M_j=\hat\Lambda_{1j}^2$.  We have the following formal normal form.
\begin{thm}\label{nfofM}Let $M$ be a real analytic submanifold that is a
 higher order perturbation of a non-resonant product
quadric. Suppose that
its associated  $\sigma$ is formally equivalent to $\hat\sigma$ given by \rea{hmjw}. 
 Suppose that  the formal mapping  $\log\hat M$ is as   in \rta{ideal0intr}. 
Then the formal normal form of $M$ is completely determined by
\eq{Phi1}\nonumber
\hat M(\zeta), \quad  \Phi(\xi,\eta).
\eeq
Here 
the  formal
mapping $\Phi$ is  in $\cL C^c(\hat\tau_{11},\ldots, \hat\tau_{1p})\cap\cL C(\hat\tau_1)$ and tangent to the identity.
 Moreover, $\Phi$ is uniquely determined up to the equivalence relation $\Phi\sim R_\e\Phi R_\e^{-1}$ with  $R_\e\colon\xi_j=\e_j\xi,\eta_j'=\e_j\eta_j$ $(1\leq j\leq p)$, $\e_j^2=1$   and $\e_{s+s_*}=\e_s$. Furthermore, if the normal form \rea{sigma0} of $\sigma$ can be achieved by a convergent transformation, so does the normal form of $M$.
\end{thm}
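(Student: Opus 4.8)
The plan is to transport the quadric normalization scheme of Theorem \ref{thm1intr} to the formal category, using the normal form of $\sigma$ from Theorem \ref{ideal0intr} as the backbone. First I would apply Theorem \ref{ideal0intr}: since $M$ is a higher-order perturbation of a non-resonant product quadric and its associated $\sigma$ is formally equivalent to $\hat\sigma$ of \rea{hmjw}, there is a formal map $\Psi_0$ carrying $\rho_0$ to $\rho$ of \rea{57rhoz}, carrying $\sigma$ to $\hat\sigma$ with the $j$th component of $\log\hat M(\zeta)-\zeta$ independent of $\zeta_j$ (here the hypothesis that $\log\hat M$ is tangent to the identity enters), and carrying $\tau_1,\tau_2$ to the maps $\hat\tau_1,\hat\tau_2$ of \rea{hmjw}. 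At this point $\hat M$ is uniquely determined, which accounts for the first invariant in \rea{Phi1}. What is \emph{not} yet normalized is the individual factorization $\tau_1=\tau_{11}\cdots\tau_{1p}$ into commuting involutions; the remaining freedom is precisely the group of formal changes of coordinates that preserve $\rho$, $\hat\sigma$, $\hat\tau_1,\hat\tau_2$, i.e.\ (by the centralizer computations of section~\ref{fsubm}) formal maps $\Phi$ in $\cL C^c(\hat\tau_{11},\ldots,\hat\tau_{1p})\cap\cL C(\hat\tau_1)$.

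Next I would carry out the second-step normalization for $\{\hat\tau_{11},\ldots,\hat\tau_{1p},\rho\}$ exactly as in the quadratic case: use pairwise commutativity of the $\tau_{1j}$ to attach to them a formal analogue of the matrix $\mathbf B$ (the leading linear data being literally the matrix $\mathbf B$ of Theorem \ref{thm1intr}, the higher-order data organized so that the $\hat\tau_{1j}$ act diagonally in the $j$th slot as in the displayed formula for $\hat\tau_{1j}$), and then normalize $\mathbf B$ under the same equivalence $\mathbf B\sim\mathbf C\mathbf B\mathbf R$. The output of this step is the formal map $\Phi$, which together with $\hat M$ reconstructs the normal form of $M$: one reverses the Moser--Webster correspondence, recovering $M$ from its attached involutions $\{\tau_{11},\ldots,\tau_{1p},\rho_0\}$ (equivalently from $\sigma$ together with the factorization of $\tau_1$), exactly as the quadric $Q_{\mathbf B,\boldsymbol\gamma}$ was reconstructed from involutions in section~\ref{secquad}. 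This shows the normal form is completely determined by the pair $(\hat M,\Phi)$.

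For uniqueness of $\Phi$, I would argue that any two formal normalizations differ by a formal map $\Theta$ that simultaneously preserves $\rho$, $\hat\sigma$ and the normalized family $\{\hat\tau_{11},\ldots,\hat\tau_{1p}\}$; a centralizer argument (again from section~\ref{fsubm}, combined with the non-resonance \rea{muqn1}, which forces the diagonal part of $\Theta$ to be essentially trivial) pins $\Theta$ down to the finite group $R_\e$ with $\e_j^2=1$, $\e_{s+s_*}=\e_s$ (the sign ambiguity comes from the square roots $\hat\Lambda_{1j}=\hat M_j^{1/2}$ and the constraint $\e_{s+s_*}=\e_s$ from compatibility with $\rho$ on the complex blocks). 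Hence $\Phi$ is unique up to $\Phi\sim R_\e\Phi R_\e^{-1}$. Finally, the convergence statement: if \rea{sigma0} is achieved by a convergent $\Psi$, then both $\hat M$ and the attached commuting involutions are convergent, the second-step normalization of $\mathbf B$ is a finite-dimensional (hence convergent) linear-algebra operation, and reconstructing $M$ from convergent involutions is convergent; so the whole normal form of $M$ is reached by a convergent transformation.

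The main obstacle I expect is the second step — attaching and normalizing the formal matrix $\mathbf B$ for the \emph{commuting family} of involutions $\{\hat\tau_{1j}\}$ rather than for $\tau_1$ alone, and proving that the residual centralizer is exactly $\{R_\e\}$. One must check that commutativity of the $\tau_{1j}$ together with the reversibility relations $\sigma^{-1}=\tau_{1j}\sigma\tau_{1j}$ and $\sigma^{-1}=\rho\sigma\rho$ are rigid enough, under non-resonance, to kill all non-trivial higher-order freedom while correctly tracking the $\rho$-induced pairing on the complex (index $s$) blocks; this is the ``subtle step'' flagged in the introduction, and it is where the quadratic model's equivalence relation $\mathbf B\sim\mathbf C\mathbf B\mathbf R$ must be shown to persist verbatim at all formal orders.
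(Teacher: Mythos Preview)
Your two-step architecture matches the paper, but there are two concrete gaps.

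First, you have the role of $\cL C^{\mathsf c}(\hat\tau_{11},\ldots,\hat\tau_{1p})\cap\cL C(\hat\tau_1)$ backwards. This set is not the ``remaining freedom'' of coordinate changes preserving $\rho,\hat\sigma,\hat\tau_1,\hat\tau_2$; it is the \emph{target space} in which the invariant $\Phi$ is forced to live. The genuine remaining freedom is $\cL C(\hat\tau_1,\hat\tau_2)$, and the reason this collapses to the finite set $\{R_\e\}$ is \rt{ideal5}~(ii): because $\log\hat M$ is tangent to the identity, $\zeta\mapsto\hat M(\zeta)$ has invertible derivative, so any $\psi(\xi,\eta)=(c(\xi\eta)\xi,c(\xi\eta)\eta)$ commuting with both $\hat\tau_1,\hat\tau_2$ must satisfy $c_j^2\equiv1$. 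Non-resonance and the square-root ambiguity in $\hat\Lambda_{1j}=\hat M_j^{1/2}$ are not what produce this; without the tangent-to-identity hypothesis the centralizer is infinite-dimensional and the argument fails. Once one knows the centralizer is $\{R_\e\}$, the mechanism that actually produces $\Phi$ is not a ``formal matrix $\mathbf B$'' but: choose any $\Phi_1\in\cL C(\hat\tau_1)$ tangent to the identity with $\Phi_1^{-1}\tau_{1j}\Phi_1=\hat\tau_{1j}$ for all $j$ (this exists by simultaneous linearization of commuting involutions, \cite[Lemma~2.4]{GS15}), then take the unique decomposition $\Phi_1=\Phi\cdot\Phi_0^{-1}$ with $\Phi\in\cL C^{\mathsf c}(\hat\tau_{11},\ldots,\hat\tau_{1p})$ and $\Phi_0\in\cL C(\hat\tau_{11},\ldots,\hat\tau_{1p})$ from \rl{cnnl}. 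For a product quadric the linear data is $\mathbf B=\mathbf I$, so \rl{rbnu} forces any permutation $\nu$ in the equivalence to be the identity and shows that conjugation by $R_\e$ preserves $\cL C^{\mathsf c}(\hat\tau_{11},\ldots,\hat\tau_{1p})$; uniqueness of the decomposition then gives exactly $\tilde\Phi=R_\e^{-1}\Phi R_\e$. Your outline never supplies this decomposition step, and your equivalence $\mathbf B\sim\mathbf C\mathbf B\mathbf R$ is vacuous here since $\mathbf B=\mathbf I$.

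Second, your convergence argument is wrong as written. The second step is not a ``finite-dimensional linear-algebra operation''. Three separate infinite-dimensional convergence facts are needed: passing from the Poincar\'e--Dulac form $\sigma^*$ to the refined form $\hat\sigma$ is convergent by \rl{fcfp}~(ii) (an implicit-function majorant argument); the simultaneous linearization producing $\Phi_1$ is convergent by \cite[Lemma~2.4]{GS15}; and the decomposition $\Phi_1=\Phi\Phi_0^{-1}$ preserves convergence by the majorant estimate underlying \rl{fhg-} and \rl{cnnl}. Only after those three facts does the realization of \rp{mmtp} yield a convergent submanifold.
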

The   set $\cL C(\hat\tau_{1})\cap\cL C^c(\hat\tau_{11},\ldots, \hat\tau_{1p})$ is given in   \rl{cnnl} with $\mathbf B_1$ being the identity matrix. 

 We now mention related normal form problems.
The normal form problem, that is the equivalence to a model manifold, of analytic real hypersurfaces in $\cc^n$ with a non-degenerate Levi-form has a complete theory achieved through the works of E.~Cartan~\cite{Ca32}, \cite{Ca33},  Tanaka~\cite{Ta62}, and Chern-Moser~\cite{chern-moser}. 
In another direction, the  relations between formal and holomorphic equivalences of real analytic hypersurfaces (thus there is no CR singularity) have been investigated by  Baouendi-Ebenfelt-Rothschild ~\cite{BER97}, ~\cite{BER00}, Baouendi-Mir-Rothschild~\cite{BMR02},  and Juhlin-Lamel~\cite{JL13}, where  positive (i.e. convergent) results were obtained. In a recent paper, Kossovskiy and  Shafikov~\cite{KS13} showed that there are real analytic real hypersurfaces which are formally but not holomorphically equivalent. In the presence of CR singularity, the problems and techniques required are however different from those used in the CR case.   See~\cite{GS15} for further references and therein.

\subsection{Notation}
We briefly introduce  notation used in the paper. The identity map is denoted by $I$.  The matrix of a linear map $y=Ax$ is denoted by a bold-faced $\mathbf A$. We
denote by $LF$ the linear part at the origin of a mapping $F\colon\cc^m\to\cc^n$
with $F(0)=0$.
 Let $F'(0)$ or $DF(0)$ denote the Jacobian matrix of the $F$ 
 at the origin. Then
$LF(z)=F'(0)z$.
We also denote by 
$DF(z)$ or simply $DF$, the Jacobian matrix of $F$
at 
$z$, when there is no ambiguity. 
If $\mathcal F$ is a family of mappings fixing the origin, let $L\mathcal F$ denote the family of linear parts of  mappings in $\cL F$.
 By an analytic (or holomorphic) function, we shall mean a {\it germ} of analytic function at a point (which will be defined by the context) otherwise stated.
 We shall denote by ${\cL O}_n$ (resp. $\widehat {\cL O}_n$, $\mathfrak M_n$, $\widehat{\mathfrak M}_n$) the space of germs of holomorphic functions of $\cc^n$ at the origin (resp. of formal power series in $\cc^n$,  holomorphic
germs, and formal germs vanishing at the origin).

\setcounter{thm}{0}\setcounter{equation}{0}

\section{
Moser-Webster involutions and product quadrics}
\label{secinv}
  In this section we will first recall a formal and convergent result from~\ci{GS15}  that will be used to classify real submanifolds admitting the maximum number of
 deck transformations. We will then derive the family of deck transformations for the product quadrics.

 We  consider a   formal real submanifold of dimension $2p$ in $\cc^{2p}$
defined by
\eq{fmzp}
M\colon
z_{p+j} = E_{j}(z',\bar z'), \quad 1\leq j\leq p.
\eeq
Here $E_j$ are   formal power series in $z',\ov z'$.  We  assume that
\eq{fmzp+}
E_j(z',\bar z')=h_j(z',\ov z')+q_j(\ov z')+O(|(z',\ov z')|^3)
\eeq
and $h_j, q_j$ are homogeneous quadratic
polynomials. The formal complexification of $M$ is  defined by
\begin{equation}\nonumber
\label{variete-complex}
\cL M\colon
\begin{cases}
z_{p+i} = E_{i}(z',w'),\quad i=1,\ldots, p,\\
w_{p+i} = \bar E_i(w',z'),\quad i=1,\ldots, p.\\
\end{cases}
\end{equation}
 We define a {\it formal deck transformation}
of $\pi_1$ to be a formal biholomorphic map
$$
\tau\colon (z',w')\to (z',f(z',w')), \quad \tau(0)=0
$$
such that $\pi_1\tau=\pi_1$, i.e. $E\circ\tau=E$. Assume that $q^{-1}(0)=0$ and that the formal manifold defined
by \rea{fmzp}-\rea{fmzp+} satisfies condition D that its  formal branched  covering $\pi_1$ admits
$2^p$ formal deck transformations.  Then $\pi$ admits a unique set of $p$ deck transformations $\{\tau_{11}, \dots, \tau_{1p}\}$ such that each $\tau_{1j}$ fixes a hypersurface in $\cL M$.

As in the Moser-Webster theory,  the significance of the two sets of
involutions is the following proposition that transforms the normalization of the real manifolds
into that 
of two families $\{\tau_{i1},\dots, \tau_{ip}\}$ ($i=1,2$) of commuting involutions satisfying $\tau_{2j}=\rho\tau_{1j}\rho$
for an antiholomorphic involution $\rho$.
Let us recall the anti-holomorphic involution
 \eq{rho0}
 \rho_0\colon(z',w')\to(\ov{w'},\ov{z'}).
 \eeq

\begin{prop} \label{mmtp}
Let $M,\tilde M$ be formal $(${\rm resp. real analytic}$)$ real submanifolds of dimension  $2p$
in $\cc^n$ of the form \rea{fmzp}-\rea{fmzp+}.
Suppose that   $M,\tilde M$ satisfy condition ~{\rm D}. Then the following hold~$:$
\bppp
\item $M$ and $\tilde M$ are
formally $(${\rm resp.~holomorphically}$)$ equivalent
if and only if  their associated families of involutions
$\{\tau_{11}, \ldots, \tau_{1p},
 \rho_0\}$ and $\{\tilde\tau_{11}, \ldots, \tilde\tau_{1p},\rho_0\}$
are   formally $(${\rm resp.~holomorphically}$)$ equivalent.
\item
Let  $\cL T_1=\{\tau_{11}, \ldots,\tau_{1p}\}$ be a family of
 formal holomorphic $(${\rm resp.~holomorphic}$)$
commuting involutions such that
 the tangent spaces of  $\fix(\tau_{11}),\ldots,
 \fix(\tau_{1p})$  are hyperplanes
   intersecting  transversally at the origin. Let $\rho$
 be an anti-holomorphic formal $(${\rm resp.~holomorphic}$)$  involution
 and let $\cL T_2=\{\tau_{21},\ldots,\tau_{2p}\}$   with $\tau_{2j}=\rho\tau_{1j}\rho$. Let  $[\mathfrak M_n]_1^{L\cL T_i}$ be the set of linear functions without constant terms that are invariant by $L\cL T_i$. 
Suppose that
\eq{conreal}
 [\mathfrak M_n]_1^{L\cL T_1}\cap[\mathfrak M_n]_1^{L\cL T_2}
 =\{0\}.
\eeq
There exists a formal $(${\rm resp. real analytic}$)$   submanifold  defined by
\begin{equation}\label{masym}
 z''=(B_1^2,\ldots, B_p^2)(z',\ov z')
\end{equation}
for some formal $(${\rm resp. convergent}$)$  power series $B_1,\ldots, B_p$
such that $M$ satisfies condition  {\rm D}.  The set
of involutions  $\{ \tilde\tau_{11},\ldots, \tilde\tau_{1p},   \rho_0\}$  of $M$ is formally
$(${\rm resp. holomorphically}$)$  equivalent to
 $\{\tau_{11}, \ldots, \tau_{1p},\rho\}$.
 \eppp
\end{prop}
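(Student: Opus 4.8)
The plan is to exploit the correspondence, developed in \cite{GS15}, between a submanifold $M$ of the form \rea{fmzp}-\rea{fmzp+} satisfying condition D and its complexification $\cL M$ together with the restricted projections $\pi_1,\pi_2$ and the anti-holomorphic involution $\rho_0$ of \rea{rho0}. Part (i) should follow almost formally from this dictionary: a (formal or holomorphic) equivalence $\Phi$ between $M$ and $\tilde M$ lifts canonically to an equivalence $\Phi\times\ov\Phi$ of the complexifications intertwining $(\pi_1,\pi_2,\rho_0)$, hence conjugating the deck transformations $\tau_{1j}$ of $\pi_1$ to those of $\tilde M$ and commuting with $\rho_0$; since the set $\{\tau_{11},\dots,\tau_{1p}\}$ is \emph{uniquely} characterized by condition D (each $\tau_{1j}$ fixing a hypersurface), the conjugation matches them up. Conversely, given an equivalence $\psi$ of the two families of involutions with $\psi\rho_0=\rho_0\psi$, one recovers an equivalence of the $M$'s: the map $\pi_1$ realizes $\cL M$ as a branched cover of $z$-space, and $E=E\circ\tau$-invariance identifies $E$ (up to equivalence) with the data encoded by $(\tau_{1j},\rho_0)$; pushing $\psi$ down through $\pi_1$ (using that $\pi_1$ is, in suitable coordinates, the quotient by the group generated by the $\tau_{1j}$) gives the desired $\Phi$. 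These arguments are essentially the same as in \cite{MW83} for $p=1$ and were carried out in \cite{GS15}; I would cite them and only indicate the modifications needed for general $p$.

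For part (ii), the goal is the reverse construction: starting from abstract commuting involutions $\cL T_1$ with transverse hyperplane fixed-point sets and an anti-holomorphic $\rho$, produce an actual submanifold. First I would choose coordinates $(z',w')$ on $\cc^{2p}$ in which the common invariants of $L\cL T_1$ are coordinate functions; condition \rea{conreal} on the linear parts guarantees that the invariant functions of $L\cL T_1$ together with those of $L\cL T_2=\{\rho L\cL T_{1j}\rho\}$ span, so that one can arrange $\pi_1:=$ (the map to the $L\cL T_1$-invariants) and $\pi_2:=$ (the map to the $L\cL T_2$-invariants) to be submersions with transverse fibers, the projection picture being modeled on a product of the $p=1$ situations. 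Each $\tau_{1j}$ fixes a hypersurface, so after a formal (resp.\ convergent, by the analytic dependence in \cite{GS15}) change of coordinates one may take the quotient of $\cc^{2p}$ by $\tau_{1j}$ and express it via a single square: this produces the functions $B_j$ with $z_{p+j}=B_j^2(z',\ov z')$, exactly the shape \rea{masym}. The anti-holomorphic $\rho$, conjugated to $\rho_0$ by the same normalization (which is possible because $\rho$ is an anti-holomorphic involution reversing the $\tau_{1j}$ appropriately), then makes the resulting $M$ a genuine real submanifold, and a count of deck transformations shows $M$ inherits condition D with its associated involutions equivalent to $\{\tau_{11},\dots,\tau_{1p},\rho\}$.

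The main obstacle is the construction step in (ii): arranging the \emph{simultaneous} normalization of the $p$ commuting involutions so that a common set of ``$z$-coordinates'' and the branched-covering structure of $\pi_1$ appear, and verifying that the hypersurfaces $\fix(\tau_{1j})$ descend to the zero sets of the single-valued functions $B_j$. For $p=1$ this is Moser-Webster's observation that an involution with a fixed hypersurface is, after a coordinate change, $(x,y)\mapsto(x,-y)$, whence the invariant is $y^2$; for $p>1$ the commutativity of $\{\tau_{1j}\}$ must be used to make these normalizations compatible, and one must check that condition \rea{conreal} is exactly what is needed for $\pi_1,\pi_2$ to have the transversality that the submanifold picture requires. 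I expect the formal case to be routine once the coordinate choices are set up, and the convergent case to follow by invoking the convergence statement already quoted from \cite{GS15} rather than by a fresh estimate.
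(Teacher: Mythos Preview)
Your proposal is correct and follows essentially the same approach as the paper. The paper does not give a self-contained proof but cites \cite[Propositions 2.8 and 3.2]{GS15}, and then recaps exactly the construction you outline for part (ii): simultaneously linearize the commuting involutions $\tau_{1j}$ (using that their fixed-point sets are transversally intersecting hyperplanes) to the standard $Z_j\colon z_{p+j}\mapsto -z_{p+j}$, so that the invariants are generated by $A_1,\dots,A_p$ and $\tilde B_1^2,\dots,\tilde B_p^2$; then condition \rea{conreal} is precisely what makes $\varphi=(A,\ov{A\circ\rho})$ a biholomorphism, and $M$ is defined by $z_{p+j}=\tilde B_j^2\circ\varphi^{-1}(z',\ov z')$. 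One small sharpening: your phrasing ``take the quotient of $\cc^{2p}$ by $\tau_{1j}$'' for a single $j$ is not quite the mechanism---the point (which you flag in your ``main obstacle'' paragraph) is that the \emph{simultaneous} linearization produces a single coordinate $z_{p+j}$ that is skew-invariant under $\tau_{1j}$ and invariant under all other $\tau_{1k}$, so that $\tilde B_j^2$ is invariant under the whole family; this is what \cite[Lemma 2.4]{GS15} provides.
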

  The above proposition is proved in~\cite[Propositions 2.8 and  3.2]{GS15}. Since we need to apply the   realization several times, let us recall  how \re{masym} is constructed. Using the fact that $\tau_{11},\dots, \tau_{1p}$
are commuting involutions of which the sets of fixed points are hypersurfaces intersecting transversally, we ignore $\rho$ and linearize them simultaneously as
$$
Z_j\colon z_{p+j}\to-z_{p+i}, \quad z_i\to z_i, \quad i\neq j
$$
for $1\leq j\leq p$. Thus in $z$ coordinates, invariant functions
of $\tau_{11}, \dots, \tau_{1p}$ are generated by $z_1,\dots, z_p$ and $z_{p+1}^2,\dots, z_{2p}^2$. In the original coordinates, $z_j=A_j(\xi,\eta), 1\leq j\leq p,$ are invariant by the involutions, while $z_{p+j}=\tilde B_j(\xi,\eta)$ is skew-invariant by  $\tau_{1j}$. Then $\ov{A_j(\xi,\eta)}$ are invariant by the second family $\{\tau_{2i}\}$. Condition \re{conreal} ensures that $\var\colon (z',w') = (A(\xi,\eta),  \ov{A\circ\rho(\xi,\eta)})$
is a germ of formal  (biholomorphic) mapping at the origin. Then
$$
M\colon z_{p+j}=\tilde B^2_j\circ\var^{-1}(z',\ov z'), \quad 1\leq j\leq p
$$
is a realization for $\{\tau_{11}, \dots, \tau_{1p}, \rho\}$ in the sense stated in the above proposition.

Next we recall the deck transformations for a product quadric from \ci{GS15}.

Let us first recall involutions in \cite{MW83} where the complex tangents
are elliptic (with non-vanishing Bishop invariant) or hyperbolic.
When $\gaa_1\neq0$, the non-trivial
 deck transformations of
\eq{z2z12}
\nonumber
Q_{\gaa_1}\colon
z_2=|z_1|^2+\gaa_1(z_1^2+\ov z_1^2)
\eeq
 for $\pi_1,\pi_2$ are $\tau_1$ and $\tau_2$,  respectively. They are
\eq{tau10e}\nonumber
\tau_1\colon  z_1' = z_1, \quad w_1'=-w_1 -\gamma_1^{-1}z_1; \quad \tau_2=\rho\tau_1\rho
\eeq
with $\rho$ being defined by  (\ref{rho0}). Here the formula is valid for $\gaa_1=\infty$
(i.e. $\gaa_1^{-1}=0$).  Note that $\tau_1$ and $\tau_2$
do not commute and $\sigma=\tau_1\tau_2$ satisfies
 \eq{} \nonumber
 \sigma^{-1}=\tau_i\sigma\tau_i=\rho\sigma\rho, \quad \tau_i^2= I,\quad\rho^2= I.
 \eeq
When the complex tangent is not parabolic, the eigenvalues of $\sigma$ are   $\mu,\mu^{-1}$ with
$\mu=\la^2$ and
$
\gaa\la^2 -\la +\gaa=0.
$
For the elliptic complex tangent, we can choose a solution  $\la>1$, and in suitable coordinates we obtain
\begin{gather}
\label{tau1e}\nonumber
\tau_1\colon\xi'=\la\eta+O(|(\xi,\eta)|^2), \quad \eta'=\la^{-1}\xi+O(|(\xi,\eta)|^2),\\
\tau_2=\rho\tau_1\rho, \nonumber
\quad 
\nonumber
\rho(\xi,\eta)=(\ov\eta,\ov\xi),\\
\sigma\colon\xi'=\mu\xi+O(|(\xi,\eta)|^2), \quad \eta'=\mu^{-1}\eta+O(|(\xi,\eta)|^2),\quad \mu=\la^2.\nonumber
\end{gather}
When the complex tangent is hyperbolic, i.e.
$1/2<\gaa\leq\infty$,    $\tau_i$ and  $\sigma$ still have the above form, while $|\mu|=1=|\la|$ and
\eq{rhohy}
\nonumber
\rho(\xi,\eta)
=(\ov\xi,\ov\eta).
\eeq
 We recall from ~\cite{MW83} that
\eq{gaala1}\nonumber
\gaa_1=\f{1}{\la+\la^{-1}}.
\eeq
Note that for a parabolic Bishop surface,  the linear part of $\sigma$ is not diagonalizable.

 Consider a quadric of the complex type of CR singularity
 \eq{z3z42} Q_{\gaa_s}\colon  z_3=z_1\ov z_2+\gaa_s\ov z_2^2+(1- \gaa_s)z_1^2, \quad z_4=\ov z_3.
\eeq
Here $\gaa_s$ is a complex number.

By condition B, we know that $\gaa_s\neq0,1$.     Recall from ~\ci{GS15} that  the  deck transformations for $\pi_1$ are generated by two involutions
\ga\tau_{11}
\label{tau1112}
\nonumber
\colon \begin{cases}
z_1'=z_1,\\
z_2'=z_2,\\
w_1'=-w_1-(1-\ov\gaa_s)^{-1} z_2,\\
w_2'=w_2;
\end{cases}
\quad
\tau_{12}\colon\begin{cases}
z_1'=z_1,\\
z_2'=z_2,\\
w_1'=w_1,\\
w_2'=-w_2- \gaa_s^{-1}z_1.
\end{cases}
 \end{gather}
We still have $\rho$ defined by (\ref{rho0}).  Then $\tau_{2j}=\rho\tau_{1j}\rho$, $j=1,2$, are given by
\ga
\nonumber
\tau_{21}\colon\begin{cases}
z_1'=-z_1- (1-\gaa_s)^{-1} w_2,\\
z_2'=z_2,\\
w_1'=w_1,\\
w_2'=w_2;\end{cases}
\quad
\tau_{22}\colon  \begin{cases}
z_1'=z_1,\\
z_2'=-z_2-\ov\gaa_s^{-1} w_1,\\
w_1'=w_1,\\
w_2'=w_2.
\end{cases}
\end{gather}
Thus $\tau_i=\tau_{i1}\tau_{i2}$ is the unique deck transformation of $\pi_i$ that has the smallest dimension of the fixed-point set among all deck transformations. They are
\gan\tau_{1}\colon  \begin{cases}
z_1'=-z_1- (1-\gaa_s)^{-1} w_2,\\
z_2'=-z_2-\ov\gaa_s^{-1} w_1,\\
w_1'=w_1,\\
w_2'=w_2;
\end{cases}
\tau_{2}\colon  \label{tau12s}
\begin{cases}
z_1'=z_1,\\
z_2'=z_2,\\
w_1'=-w_1-(1-\ov\gaa_s)^{-1} z_2,\\
w_2'=-w_2-\gaa_s^{-1} z_1.
\end{cases}\end{gather*}
Also $\sigma_{s1}:=\tau_{11}\tau_{22}$ and $\sigma_{s2}:=\tau_{12}\tau_{21}$ are given by
\ga  \nonumber
\sigma_{s1}\colon \begin{cases}
z_1'=z_1,\\
z_2'=-z_2- \ov\gaa_s^{-1} w_1,\\
w_1'=(1-\ov\gaa_s)^{-1} z_2+((\ov\gaa_s-\ov\gaa_s^2)^{-1}-1)w_1,\\
w_2'=w_2;
\end{cases}\\
\sigma_{s2}\colon \begin{cases}
z_1'=-z_1-(1-\gaa_s)^{-1}w_2,\\
z_2'=z_2,\\
w_1'=w_1,\\
w_2'=\gaa_s^{-1} z_1+((\gaa_s-\gaa_s^2)^{-1}-1)w_2.
\end{cases}
\nonumber
\end{gather}
  And $\tau_1\tau_2=\sigma_{s1}\sigma_{s2}$ is given by
\gan\sigma_s\colon \begin{cases}
z_1'=-z_1- (1-\gaa_s)^{-1}w_2,\\
z_2'=-z_2- \ov\gaa_s^{-1}w_1,\\
w_1'= (1-\ov\gaa_s)^{-1} z_2+((\ov\gaa_s-\ov\gaa_s^2)^{-1}-1)w_1,\\
w_2'=\gaa_s^{-1}z_1+((\gaa_s-\gaa_s^2)^{-1}-1)w_2.
\end{cases}
\label{sigsn}
\end{gather*}
  Suppose that $\gaa_s\neq1/2$. The eigenvalues of $\sigma_s$ are
\begin{gather}\label{msms-}
\mu_s, 
\quad  \mu_s^{-1}, 
\quad
  \ov \mu_s^{-1}, \quad 
  \ov\mu_s, \\  
\mu_s= \ov\gaa_s^{-1}-1.
\label{msms-+}
\end{gather}
  Here if $\mu_s=\ov\mu_s$ and $\mu_s^{-1}=\ov\mu_s^{-1}$ then each eigenspace  has dimension $2$.
 Under suitable linear coordinates,  the involution $\rho$, defined by (\ref{rho0}),
takes the form
 \eq{rhos}
\rho(\xi_1,\xi_2,\eta_1,\eta_2)=(\ov\xi_2,\ov\xi_1,\ov\eta_2,\ov\eta_1).\eeq
  Moreover, for $j=1,2$, we have $\tau_{2j}=\rho\tau_{1j}\rho$ and
\gan
\tau_{1j}\colon\xi_j'=\la_j\eta_j, \quad \eta_j'=\la_j^{-1}\xi_j; \quad\xi_i'=\xi_i, \quad\eta_i'=\eta_i, \quad i\neq j;\label{taus}\\
\la_1=\la_s, \quad \la_2=\ov\la_s^{-1}, \quad\mu_s=\la_s^2.\label{taus+}
\end{gather*}
By a permutation of coordinates that preserves $\rho$, we obtain a unique holomorphic invariant $\mu_s$ satisfying
\eq{mu1im}
|\mu_s|\geq1, \quad \IM\mu_s\geq0, \quad 0\leq\arg\la_s\leq \pi/2, \quad \mu_s\neq -1.
\eeq
By condition E, we have $|\mu_s|\neq1$.

 Although the case
 $\gaa_s=1/2$ is not studied in this paper,  we remark that when $\gaa_s=1/2$   the only eigenvalue of $\sigma_{s1}$ is $1$.     We can choose  suitable linear coordinates such that   $\rho$
 is given by \re{rhos},  while
\begin{equation}\label{jsig}
\begin{array}{rrclrclrclrcl}
\sigma_{s1}\colon &\xi_1'&\!\!=\!\!&\xi_1, \quad &\eta_1'&\!\!=\!\!&\eta_1+\xi_1, \quad &\xi_2'&\!\!=\!\!&\xi_2,\quad &\eta_2'&\!\!=\!\!&\eta_2\\
\sigma_{s2}\colon &\xi_1'&\!\!=\!\!&\xi_1,\quad &\eta_1'&\!\!=\!\!&\eta_1,\quad  &\xi_2'&\!\!=\!\!&\xi_2,\quad&\eta_2'&\!\!=\!\!&-\xi_2+\eta_2,\\
\sigma_s\colon      &\xi_1'&\!\!=\!\!&\xi_1, \quad &\eta_1'&\!\!=\!\!&\xi_1+\eta_1, \quad &\xi_2'&\!\!=\!\!& \xi_2,\quad&\eta_2'&\!\!=\!\!&-\xi_2+\eta_2.
\end{array}
\end{equation}
Note that  eigenvalue formulae \re{msms-} and  the Jordan normal form \re{jsig} tell us that $\tau_1$ and $\tau_2$ do not commute,   while
$\sigma_{s1}$ and $ \sigma_{s2}$
commute and they are diagonalizable if and only if $\gaa_s\neq 1/2$.  We further remark that when $\mu_s$ satisfies \re{mu1im}, we have
\ga\label{regs}
\RE\gaa_s \leq1/2, \ \IM\gaa_s \geq0, \quad \text{if $|\mu_s|\geq1, \ \IM\mu_s\geq 0$};\\
\RE\gaa_s=1/2, \ \IM\gaa_s\geq0, \quad \gaa_s\neq1/2, \quad\text{if $|\mu_s|=1,\  \IM\mu_s\geq0,\  \mu_s\neq1$};  \label{regsb}\\
\gaa_s<1/2, \ \gaa_s\neq0, \quad \text{if $\mu_s^2>1$}; \qquad\gaa_s=1/2, \quad \text{if $\mu_s=1$}.\label{regsc}
\end{gather}
We have therefore proved the following.
\begin{prop}Quadratic surfaces in $\cc^4$ of complex type CR singularity at the origin is classified by \rea{z3z42} with $\gaa_s$
uniquely determined  by \rea{regs}-\rea{regsc}.
\end{prop}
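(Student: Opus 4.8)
The plan is to establish the classification by tracking a quadratic surface of complex type through the chain of reductions already set up in the excerpt and then verifying that $\gamma_s$ normalized by \rea{regs}--\rea{regsc} is a complete invariant. First I would recall that any quadric of complex type is, after a holomorphic change of coordinates, of the form \rea{z3z42} for some $\gamma_s\in\cc$; this is the starting normal form coming from \ci{St07} and \ci{GS15}, and condition B forces $\gamma_s\neq0,1$. The surface then carries the deck transformations $\tau_{1j},\tau_{2j}$ and the reversible map $\sigma_s=\tau_1\tau_2$ written out explicitly above, with eigenvalues $\mu_s,\mu_s^{-1},\ov\mu_s^{-1},\ov\mu_s$ where $\mu_s=\ov\gamma_s^{-1}-1$ by \rea{msms-+}.

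Next I would argue that $\sigma_s$, together with the antiholomorphic involution $\rho_0$, is a complete holomorphic invariant of the quadric. This is exactly the content of \rp{mmtp}: two such quadrics are holomorphically equivalent if and only if the associated data $\{\tau_{11},\tau_{12},\rho_0\}$ — equivalently $\{\sigma_s,\tau_{1j},\rho_0\}$ — are holomorphically equivalent. So the classification reduces to: when are two sets of linear involutions of this type linearly equivalent? Since all the objects here are linear, the equivalence is governed by the eigenvalue data of $\sigma_s$ modulo the symmetries forced by the reversing structure ($\sigma_s^{-1}=\tau_i\sigma_s\tau_i$, $\sigma_s^{-1}=\rho_0\sigma_s\rho_0$) and by permutations of coordinates that preserve $\rho$ in the form \rea{rhos}. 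The reversibility pairs $\mu$ with $\mu^{-1}$; the conjugation by $\rho_0$ pairs $\mu$ with $\ov\mu$; hence the unordered set $\{\mu_s,\mu_s^{-1},\ov\mu_s,\ov\mu_s^{-1}\}$ is the invariant, and choosing a representative amounts to imposing $|\mu_s|\geq1$, $\IM\mu_s\geq0$, $\mu_s\neq-1$, which is \rea{mu1im}. Translating back through $\mu_s=\ov\gamma_s^{-1}-1$ (equivalently $\gamma_s=(\ov\mu_s+1)^{-1}$, using that $\mu_s\neq-1$ makes this well-defined and $\gamma_s\neq0$ automatic) gives precisely the three cases \rea{regs}, \rea{regsb}, \rea{regsc}; I would verify this last translation by a short direct computation, checking that the region $|\mu|\geq1,\IM\mu\geq0,\mu\neq-1$ maps bijectively onto $\{\RE\gamma_s\leq1/2,\IM\gamma_s\geq0\}\setminus\{0\}$ with the boundary subcases matching as stated.

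Finally I would assemble the pieces: given any quadric of complex type, the above produces a unique $\gamma_s$ in the normalized region, and conversely distinct such $\gamma_s$ yield distinct $\mu_s$-orbits hence inequivalent quadrics by \rp{mmtp}. The one subtlety — and the step I expect to be the main obstacle — is handling the degenerate eigenvalue configuration noted after \rea{msms-}, namely when $\mu_s=\ov\mu_s$ and $\mu_s^{-1}=\ov\mu_s^{-1}$ so that each eigenspace has dimension two, and the borderline case $|\mu_s|=1$: here one must check that the coordinate normalization of $\rho_0$ to the form \rea{rhos} together with the explicit diagonal form \rea{taus}--\rea{taus+} of the $\tau_{1j}$ still goes through, so that no extra continuous modulus hides in the two-dimensional eigenspaces. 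Since the excerpt already records that such coordinates exist (the passage "Under suitable linear coordinates, the involution $\rho$ $\dots$ takes the form \rea{rhos}" and the subsequent display), this reduces to invoking that normalization and then noting that the permutation of coordinates preserving $\rho$ in the form \rea{rhos} is exactly the ambiguity $\mu_s\leftrightarrow\mu_s^{-1}\leftrightarrow\ov\mu_s$ already quotiented out by \rea{mu1im}. With the parabolic case $\gamma_s=1/2$ excluded from the present study (as the excerpt states), the proof is complete.
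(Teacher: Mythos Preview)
Your approach is essentially the paper's: the proposition is stated immediately after the explicit computation of $\tau_{1j},\tau_{2j},\sigma_s$ for the quadric \rea{z3z42}, the eigenvalue formula $\mu_s=\ov\gamma_s^{-1}-1$, the normalization of $\rho_0$ to \rea{rhos}, and the observation that the residual coordinate freedom is the permutation yielding \rea{mu1im}, which translates to \rea{regs}--\rea{regsc}. Your invocation of \rp{mmtp} to justify that the family $\{\tau_{1j},\rho_0\}$ is a complete invariant is exactly the mechanism the paper relies on.

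There is one genuine gap. You close by saying the case $\gamma_s=1/2$ is ``excluded from the present study,'' but the proposition's classification \emph{includes} it: \rea{regsc} explicitly records $\gamma_s=1/2$ as the case $\mu_s=1$. What the excerpt excludes is the later normal-form theory for perturbations, not the quadric classification itself. Your eigenvalue-orbit argument does not cover $\gamma_s=1/2$ as written, since $\sigma_s$ is not diagonalizable there and the degenerate spectrum $\{1,1,1,1\}$ alone does not determine the linear conjugacy class of $\{\tau_{1j},\rho_0\}$. The paper handles this by computing the Jordan form \rea{jsig} in coordinates where $\rho$ is already \rea{rhos}; the non-diagonalizability of $\sigma_s$ is then an invariant that isolates $\gamma_s=1/2$ from every other value in the normalized region, completing the bijection. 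You need to supply that step (or at least note that non-diagonalizability is a holomorphic invariant singling out this one value) rather than dismiss the case.
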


  The region of eigenvalue $\mu$,  restricted to $E:=\{|\mu|\geq1, \IM\mu\geq1\}$,  can be described as follows: For a Bishop quadric, $\mu$
is precisely located  in $\om:=\{\mu\in\cc\colon |\mu|=1\}\cup [1,\infty)$.  The value of $\mu$ of a quadric of complex type, is precisely located in $\Om:=E\setminus\{-1\}$, while $\om=\pd E\setminus(-\infty,-1)$.

 In summary,  under the condition that no component is a Bishop parabolic quadric or a complex quadric with $\gaa_s=1/2$,  we have found linear coordinates for the product quadrics such that the normal forms of
$S$, $T_{ij}$, $\rho$ of the corresponding $\sigma,\sigma_j,\tau_{ij},\rho_0$ are given by
\begin{align*}
&S\colon\xi_j'=\mu_j\xi_j,\quad \eta_j'=\mu_j^{-1}\eta_j;
\\
&T_{ij}\colon\xi_j'=\la_{i j}\eta_j,\quad\eta_j'=\la_{i j}^{-1}\xi_j,\quad\xi_k'=\xi_k,\quad
\eta_k'=\eta_k, \quad k\neq j; 
\\ 
& \rho\colon \left\{\begin{array}{ll}
(\xi_e',\eta_e',\xi_h',\eta_h')=
(\ov\eta_e,\ov\xi_e,\ov\xi_h,\ov\eta_h),\vspace{.75ex}
\\
(\xi_{s}', \xi_{s+s_*}',\eta_{s}',\eta_{s+s_*}')=(\ov\xi_{s+s_*},
\ov\xi_{s},\ov\eta_{s+s_*}, \ov\eta_{s}).
\end{array}\right.
\end{align*}
Notice that we can always normalize $\rho_0$ into the above normal form $\rho$.

%
%

For various reversible mappings and their relations with general mappings, the reader is referred  to \cite{OZ11} for recent results and references therein.

To derive our normal forms, we shall transform  $\{\tau_1,\tau_2,\rho\}$ into
a
normal form first.
We will  further normalize $\{\tau_{1j},\rho\}$ by using the group of biholomorphic maps
that preserve the normal form of $\{\tau_1,\tau_2,\rho\}$, i.e. the centralizer of the normal form  of $\{\tau_1,\tau_2,\rho\}$.

\setcounter{thm}{0}\setcounter{equation}{0}

\section{
Quadrics with the maximum number of deck transformations}
\label{secquad}
 In~ \rp{mmtp},
we describe   the basic relation between the classification of real manifolds and that of  two families of involutions intertwined
by an antiholomorphic involution, which is established in~\ci{GS15}.
As an application, we obtain  in this section a normal form for  two families of linear involutions
and use it to construct the normal form for their associated quadrics. This section
also serves an introduction to our approach to find the normal forms of the real submanifolds   at least at
the formal level.   At the end of the section, we will  also introduce examples of quadrics of which $S$ is given by Jordan matrices. The perturbation of such quadrics will not be studied in this paper.

\subsection{Normal form of two families of linear involutions}

To formulate our results, we first discuss the normal forms  which we are seeking for the involutions.
We are given two families of commuting linear involutions
$ \cL T_1
=\{T_{11}, \ldots, T_{1p}\}$ and $\cL T_2=\{T_{21},\ldots, T_{2p}\}$ with $T_{2j}=\rho T_{1j}\rho$. Here $\rho$ is a linear anti-holomorphic involution. We
 set
$$  T_1=T_{11}\cdots T_{1p}, \quad T_2=\rho T_1\rho. $$
 We also assume that each $\fix T_{1j}$ is a hyperplane and
$\cap \fix T_{1j}$  has dimension $p$.  By \cite[Lemma 2.4]{GS15},  in suitable linear coordinates,  each $T_{1j}$ has the form
\eq{zjxp}\nonumber
Z_j\colon \xi'=\xi, \quad \eta_i'=\eta_i \ (i\neq j), \quad \eta_j'=-\eta_j.
\eeq
Thus by \re{conreal},
\ga\label{ilii++}
  \dim [\mathfrak M_n]_1^{\cL T_i}=p,\quad  [\mathfrak M_n]_1^{\cL T_i}= [\mathfrak M_n]_1^{T_i},
 \\
  \dim [\mathfrak M_n]_1^{T_i}=p,\quad
 [\mathfrak M_n]_1^{ T_1}\cap[\mathfrak M_n]_1^{T_2}
 =\{0\}.\label{bilii++}
 \end{gather}
  Recall that  $[\mathfrak M_n]_1$ denotes the linear holomorphic functions   without constant terms.   We would like to find
a change of coordinates $\var$ such that $\var^{-1} T_{1j}\var$ 
and
$\var^{-1}\rho\var$ have a simpler form. We would like
to show that two such families of involutions $\{\cL T_1,\rho\}$ and
$\{\widetilde{\cL T_1},\tilde\rho\}$ are holomorphically equivalent, if there are
normal forms are equivalent under a much smaller set of changes of coordinates,
or if they are identical in the ideal situation.

Next, we describe our scheme to derive the normal forms for linear involutions.
The scheme  to derive the linear normal forms
turns out to be essential to derive normal forms for non-linear involutions and the perturbed quadrics. We define
$$
 S=T_1T_2.
$$
Besides   conditions   \re{ilii++}-\re{bilii++}, we will soon impose condition E   that $S$ has $2p$ distinct eigenvalues.

We  first use a linear map $\psi$ to
 diagonalize $S$ to its normal form
$$
\hat S\colon \xi_j'=\mu_j\xi, \quad \eta_j'=\mu_j^{-1}\eta_j,\quad 1\leq j\leq p.
$$
The choice of $\psi$ is not unique.
We   further normalize $T_1,T_2, \rho$ under linear transformations commuting with $\hat S$, i.e.
the   invertible mappings in the
{\it linear centralizer} of $\hat S$. We   use a linear map that commutes with $\hat S$ to
 transform $\rho$ into a
normal form too, which is still
 denoted by $\rho$.
 We then use a transformation $\psi_0$ in the linear centralizer of $\hat S$ and $\rho$
  to normalize the   $T_1,T_2$ into the normal form
$$
\hat T_i\colon \xi_j'=\lambda_{ij}\eta_j, \quad \eta_j'=\lambda_{ij}^{-1}\xi_j, \quad 1\leq j\leq p.
$$
Here we require $\lambda_{2j}=\lambda_{1j}^{-1}$. Thus $\mu_j=\lambda_{1j}^2$ for $1\leq j\leq p$, and
 $\lambda_{11}, \ldots, \lambda_{1p}$ form a complete set of invariants
of $T_1,T_2,\rho$, provided the normalization satisfies
$$
\la_{1e}>1, \quad \IM\la_{1h}>0, \quad \arg\la_{1s}\in(0,\pi/2), \quad |\la_s|>1.
$$
  This normalization will be verified  under condition E.

 Next we   normalize the family  $\cL T_1$ of linear involutions
 under mappings in the linear centralizer of $\hat T_1, \rho$. Let us assume that   $T_1,\rho$ 
 are in the normal forms $\hat T_1,\rho$.
To further normalize the family $\{\cL T_1,\rho\}$,   we use the crucial property that $T_{11},\ldots, T_{1p}$
commute pairwise and each $T_{1j}$  fixes a hyperplane.
 This allows us to express the family of involutions via a single linear mapping $\phi_1$:
$$
T_{1j}=\var_1\phi_1Z_j\phi_1^{-1}\var_1^{-1}.
$$
Here the linear mapping $\var_1$ depends only on $\la_1,\ldots, \la_p$.
Expressing  $\phi_1$
 in  a  non-singular $p\times p$  constant
matrix $\mathbf B$, the normal form for $\{T_{11}, \ldots, T_{1p},\rho\}$ consists of invariants $\la_1,\ldots,
\la_p$ and a normal form  of $\mathbf B$.  After we obtain the normal form for  $\mathbf B$,
we will construct the normal form of the quadrics by using the realization procedure  in the  \rp{mmtp} (see the proof in ~\ci{GS15})

\medskip

We now carry out the details.

Let  $T_1=T_{11}\cdots T_{1p}$,
   $T_2=\rho T_1\rho$ and
$
S=T_1T_2.
$
   Since $T_i$ and $\rho$ are involutions, then $S$ is reversible with respect to $T_i$
   and $\rho$, i.e. $$S^{-1}=T_i^{-1}ST_i, \quad S^{-1}=\rho^{-1} S\rho, \quad T_i^2= I, \quad\rho^2= I.
   $$
   Therefore,
  if $\kappa$ is an eigenvalue of $S$ with a  (non-zero)   eigenvector $u$, then
\eq{}
Su=\kappa u, \quad S(  T_iu)=\kappa^{-1}T_iu, \quad S(\rho u)=\ov\kappa^{-1}\rho u,\quad
S(\rho T_iu )=\ov\kappa\rho T_iu. \nonumber
\eeq
  Following \cite{MW83} and  [St07], we will divide eigenvalues of   product quadrics that satisfy condition $E$ into $3$ types: $\mu$ is {\it elliptic} if $\mu\neq\pm1$
and $\mu$ is real, $\mu$ is {\it hyperbolic} if $|\mu|=1$ and $\mu\neq1$,
and $\mu$ is {\it complex} otherwise.   The classification of $\sigma$ into the types
corresponds to the classification of the types of complex tangents described in section~\ref{secinv}; namely,  an elliptic (resp. hyperbolic)  complex tangent is tied to
 a hyperbolic (resp. elliptic) mapping $\sigma$.
%
%
%
%
%

We first characterize the linear family $\{T_1,T_2,\rho\}$ that can be realized by a product quadric with $S$ being diagonal.
\begin{lemma}\label{incomp}
Let $\{T_1,T_2\}$ be a pair  of linear involutions on $\cc^{2p}$ satisfying  \rea{bilii++}. Suppose that $T_2=\rho T_1\rho$ for a linear anti-holomorphic involution and $S=T_1T_2$ is diagonalizable. Then $\{T_1,T_1,\rho\}$ is realized by the product of quadrics of type elliptic, hyperbolic, or complex.   In particular, if $S$  has $2p$ distinct eigenvalues, then $1$ and $-1$ are not eigenvalues of $S$.
\end{lemma}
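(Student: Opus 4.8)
The plan is to decompose $\cc^{2p}$ into $\{S,\rho\}$-invariant blocks using the eigenstructure of $S$, and then realize each block by a single Bishop or complex quadric factor via Proposition~\ref{mmtp}. First I would use the reversibility relations $S^{-1}=T_iST_i$, $S^{-1}=\rho S\rho$ recorded just before the statement: they show that the eigenvalues of $S$ occur in the quadruples $\{\kappa,\kappa^{-1},\ov\kappa^{-1},\ov\kappa\}$, so I can organize the (generalized) eigenspaces of $S$ into three kinds of invariant packages, according to whether $\kappa$ is real (a pair $\kappa,\kappa^{-1}$ with its eigenspace carrying an involution $T_1$ swapping the two lines — the elliptic case; conjugation by $\rho$ being internal), whether $|\kappa|=1$ (a pair $\kappa,\ov\kappa=\kappa^{-1}$, the hyperbolic case), or whether $\kappa$ is genuinely complex off the unit circle (a full quadruple $\kappa,\kappa^{-1},\ov\kappa,\ov\kappa^{-1}$, the complex case). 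Since $S$ is diagonalizable, each such package is a genuine eigenspace-sum, and $T_1,\rho$ restrict to it.

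Next I would check that on each package the restricted data $\{T_1|_V,\rho|_V\}$ is linearly equivalent to the model pair $\{T_1,T_2,\rho\}$ attached to one of the quadrics $Q_{\gaa_1}$ of \re{z2z12} (elliptic/hyperbolic, $p=1$) or $Q_{\gaa_s}$ of \re{z3z42} (complex, $p=2$). For the elliptic/hyperbolic $2$-dimensional package this is essentially the Moser–Webster computation recalled in section~\ref{secinv}: $T_1$ swaps the two eigenlines of $S$, so in a basis $\xi,\eta$ adapted to those lines $T_1\colon\xi'=\la\eta,\ \eta'=\la^{-1}\xi$ with $\la^2=\mu$, and $\rho$ acts either as $(\ov\eta,\ov\xi)$ (real $\mu$) or $(\ov\xi,\ov\eta)$ ($|\mu|=1$), which is exactly the linear part of the deck transformations of $Q_{\gaa}$ with $\gaa=(\la+\la^{-1})^{-1}$. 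For the $4$-dimensional complex package one matches against \re{taus}–\re{rhos} with $\la_1=\la_s$, $\la_2=\ov\la_s^{-1}$. In each case the realization is produced by the explicit construction recalled after Proposition~\ref{mmtp}: condition \re{conreal} is precisely \re{bilii++} restricted to the block, so the map $\var$ there is a genuine linear isomorphism and $M\colon z_{p+j}=\tilde B_j^2\circ\var^{-1}$ is the desired quadric factor.

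Finally, taking the product of these factors over all packages gives a product quadric whose associated $\{T_1,T_2,\rho\}$ is linearly equivalent to the given one — this is just the observation that realizations (and the involutions) multiply across a direct sum decomposition. The last sentence of the lemma is then immediate: if $1$ were an eigenvalue of $S$, then by the quadruple structure the model could only be a parabolic Bishop factor or a $\gaa_s=1/2$ complex factor, and in either case (see \re{jsig} and the remark around it) $S$ has a Jordan block, so $1$ would be a repeated eigenvalue; likewise $-1$ is excluded because $\mu_s\neq -1$ is forced in \re{mu1im} and $-1$ is never an eigenvalue of a Bishop $\sigma$ — hence $2p$ distinct eigenvalues rules both out.

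The step I expect to be the main obstacle is the \emph{bookkeeping of $\rho$ across the decomposition}: one must be careful that the eigenspace packages are chosen $\rho$-invariant (not merely $S$-invariant), that $\rho$ interchanges the $\kappa$- and $\ov\kappa^{-1}$-eigenspaces correctly, and that after choosing an adapted basis on one half of a complex quadruple the basis on the other half is \emph{forced} by $\rho$ so that no spurious continuous modulus appears. Equivalently, one has to verify that the linear centralizer considerations used to put $\rho$ into the normal forms \re{rhohy}/\re{rhos} indeed leave no freedom beyond discrete choices; this is where the hypothesis \rea{bilii++} (which forbids $T_1$- and $T_2$-invariant common directions, hence pins down how $T_1$ pairs eigenlines of $S$) does the real work.
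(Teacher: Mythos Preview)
Your plan has the right skeleton (quadruple structure, block-by-block realization), but the treatment of the degenerate eigenvalues $\pm1$ has a genuine gap, and this is precisely where the paper does the real work.

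For $\kappa=1$: your argument is circular. You say that if $1$ were an eigenvalue the model would have to be parabolic or $\gamma_s=1/2$, hence non-diagonalizable, hence a repeated eigenvalue. But this presupposes a realization by a quadric model, which you have not established on a $1$-eigenspace; your three ``packages'' all tacitly assume $\kappa\neq\pm1$. The paper instead proves \emph{directly}, using only $\fix T_1\cap\fix T_2=\{0\}$ and $\dim\fix T_i=p$, that $1$ is never an eigenvalue of $S$ (regardless of diagonalizability or distinctness): if $Se=e$ then either $T_1e=e'$ with $e'$ independent of $e$, whence $e+e'\in\fix T_1\cap\fix T_2$, or $T_1e=\pm e$, and a dimension count in $\cc^{2p}\ominus E(1)$ forces $\fix T_1\cap\fix T_2\neq\{0\}$. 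This is the step you are missing.

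For $\kappa=-1$: your claim that ``$-1$ is never an eigenvalue of a Bishop $\sigma$'' is false. The quadric $Q_\infty$ (i.e.\ $\gamma=\infty$, hyperbolic type) has $\tau_1(z_1,w_1)=(z_1,-w_1)$ and $\sigma=-I$, so $\mu=-1$ with multiplicity two. The paper's proof does not exclude $-1$; it shows instead that every $-1$-eigenline is paired (via the action of $T_1$ and $\rho$) into a $2$- or $4$-dimensional block realized by one or two copies of $Q_\infty$, so that $-1$ always occurs with multiplicity at least $2$. Only \emph{then} does distinctness rule it out.

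There is also a secondary issue: the realization assertion is stated under diagonalizability alone, not distinctness. Your packaging ``real $\Rightarrow$ $2$-dimensional elliptic block, $|\kappa|=1\Rightarrow$ $2$-dimensional hyperbolic block'' implicitly uses distinctness to force $\rho E(\nu)=T_1E(\nu)$. Without it the paper shows that for real $\nu>0$ one may have $\rho E(\nu)\neq T_1E(\nu)$, giving a $4$-dimensional block of \emph{complex} type; for $\nu<0$ this is in fact forced (a short computation with $T_2=\rho T_1\rho$ shows $\rho E(\nu)=T_1E(\nu)$ would imply $\nu>0$). The correct case analysis is thus finer than your three cases.
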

\begin{proof}
The last assertion follows from the first part of the lemma immediately. Thus the following argument does not  assume that $S$ has distinct eigenvalues.  Let $E_i(\nu_i)$ with $i=1,\dots, 2p$ be  eigenspaces of $S=T_1T_2$ with eigenvalues $\nu_i$.  Thus
$$
\cc^{2p}=\bigoplus_{i=1}^{2p} E_i(\nu_i), \quad \cc^{2p}{\ominus}E_i(\nu_i):=\bigoplus_{j\neq i}E_j(\nu_j).
$$
Fix an $i$ and denote the corresponding space by $E(\nu)$. Since $\sigma^{-1}=T_1\sigma T_1$, then $T_1E(\nu)=T_2E(\nu)$, which is equal to   some invariant space $E(\nu^{-1})$.
Take an eigenvector $e\in E(\nu)$ and set $e'=T_1e$.

Let us first show that $1$ is not an eigenvalue. Assume for the sake of contradiction that $E(1)$  is spanned by a (non-zero) eigenvector $e$.
Then $T_1$ preserves $E(1)$. Otherwise, $e'$ and $e$ are independent.  Now $T_2e=T_1e =e '$ and  $
T_i(e +e ')=e '+e ,
$
which contradicts $\fix T_1\cap\fix T_2=\{0\}$. With $E(1)$ being preserved by $T_i$, we have $T_ie=\e e$ and $\e=\pm1$, since $T_i$ are involutions.  We have $\e\neq1$ since $\fix T_1\cap\fix T_2=\{0\}$. Thus $T_1e=-e=T_2e$.
Then $\fix T_1$ and $\fix T_2$ are subspaces of $ \cc^{2p}\ominus E(1)$ and both are of dimension $p$. Hence $\fix T_1\cap\fix T_2\neq\{0\}$, a contradiction.

Since $S^{-1}=\rho^{-1}S\rho$ and $S^{-1}=T_i^{-1}ST_i$ then $T_1$ sends $E(\nu)$ to some $E(\nu^{-1})$ as mentioned earlier,  while $\rho$ sends $E(\nu)$ to some $E(\ov\nu^{-1})$. In such a way, each of $T_i,\rho$ yields an involution on the set $\{E(\nu_1), \dots, E(\nu_{2p})\}$.

Let $E_1(-1), \dots, E_k(-1)$ be all spaces invariant by $T_1$.
Since $T_2=T_1S$,  they are also invariant by $T_2$.
Then none of the $k$ spaces is invariant by $\rho$. Indeed, if one of   them, say $E_j$ generated by $e_j$, is invariant by $\rho$, we have $T_1e_j=\e e_j$ and $\rho e_j=be_j$ with $\e^2=1=|b|$. We get $T_2e_j=(\rho T_1\rho)e_j=\e e_j$ and $\sigma e_j=e_j$, which contracts that $\sigma$ has eigenvalue $-1$ on $E_j$.  Furthermore, if $E(-1)$ is invariant by $T_1$, then $\rho E(-1)$ is also invariant by  $T_1$ 
 as $T_1=\rho T_2\rho$.
Thus we may assume that   $\rho E_j=E_{\ell+j}$ for $1\leq j\leq \ell:=k/2$.  For each $j$ with $1\leq j\leq \ell$,  either $T_1=I=-T_2$ on $E_j$ and $T_1=\rho T_2\rho=-I$ on $E_{\ell+j}$,
or $T_1=-I$ on $E_j$ and $T_1=I$ on $E_{j+\ell}$. Interchanging $E_j, E_{\ell+j}$ if necessary, we may assume that $T_1=I=-T_2$ on $E_j$ and $T_1=-I=-T_2$ on $E_{\ell+j}$.  We can restrict the involutions $T_1,T_2,\rho$ on $\cc^2:=E_j\oplus E_{\ell+j}$ as it is invariant by the three involutions. By the realization in~\cite{MW83},  $\{T_1,T_2,\rho\}$ is realized by a Bishop quadric; in fact, it is   $Q_\infty$.
Assume now that $E(-1)$ is not invariant by $T_1$. Thus
$T_i$ sends $E(-1)$ into a different $\tilde E(-1). $
Assume first that $E(-1)$ is  invariant by $\rho$.  Then $\tilde E(-1)$ is also invariant by $\rho$ as $\rho=T_2\rho T_1$.
 Thus as the previous case $\{T_1,T_2,\rho\}$, restricted to $E(-1)\oplus \tilde E(-1)$ is realized by $Q_\infty$.

 Suppose now that  $\rho$ does not preserve $E(-1)$. Recall that we already assume that $T_1(E(-1))=\tilde E(-1)$ is different from $E(-1)$.   Let us  show that $\tilde E(-1)\neq \rho E(-1)$. Otherwise, we let $\tilde e=\rho e$ with
$e$ being an eigenvector in $E(-1)$. Then $T_1e=a  \tilde e$. So $T_2e=\rho T_1\rho e=\ov a^{-1}\tilde e$ and $T_1T_2e=|a|^{-2}e$.  This contracts   $Se=-e$.  We now realize
$E(-1)\oplus \rho E(-1)\oplus \tilde E(-1)\oplus \rho\tilde E(-1)$ by a product of two copies of $Q_\infty$ as follows. Take a non-zero  vector $e\in E(-1)$. Define $e_1=e+T_1e$. So $T_1e_1=e_1$, $Se_1=-e_1$, and $T_2e_1=T_1Se_1=-e_1$.
Define $\tilde e_1=\rho e_1$; then $T_1\tilde e_1=\rho T_2\rho \tilde e_1=-\tilde e_1$. Define  $ \tilde e_2=e_1-T_1e_1$; then $T_1\tilde e_2=-\tilde e_2$ and $T_2\tilde e_2=\tilde e_2$.  Define $e_2=\rho \tilde e_2$; then $T_1e_2=\rho T_2\rho e_2=e_2$.
  In coordinates
$
z_1e_1+w_1\tilde e_1+z_2e_2+w_2\tilde e_2,
$
we have $T_1(z_j)=z_j$ and $T_1(w_j)=-w_j$ and $\rho (z_j)=\ov{w_j}$.  
Therefore, $\{T_1,T_2,\rho\}$ is realized by the product of two copies of $Q_\infty$.

Consider now the case $\nu$ is positive and $\nu\neq1$.  We have
\eq{tiem}
T_i\colon E(\nu)\to E(\nu^{-1}), \quad i=1,2.
\eeq
There are two cases: $\rho E(\nu)=E(\nu^{-1})$ or $\rho E(\nu):=\tilde E(\nu^{-1})\neq E(\nu^{-1})$. For the first case,
the family $\{T_1,T_2,\rho\}$, restricted to $E(\nu)\oplus E(\nu^{-1})$, is realized by an elliptic Bishop quadric $Q_\gaa$
with $\gaa\neq0$. For the second case, we want to verify that $\{T_1,T_2,\rho\}$, restricted to $E(\nu)\oplus \rho E(\nu)\oplus E(\nu^{-1})\oplus \rho E(\nu^{-1})$,  is  realized by a quadric of complex type singularity. Write $\nu_1:=\nu=\la_1^2$ with $\la_1>0$, $\la_2:=\la_1^{-1}$, and $\nu_2:=\nu_1^{-1}$.  Let $u_1$
be an eigenvector in $E(\nu)$. Define $v_1=\la_1T_1u_1\in E(\nu^{-1})$. Then $T_ju_1=\la_j^{-1}v_1$. Define $u_2=\rho u_1$ and $v_2=\rho v_1$. Then $T_1u_2=\rho T_2\rho u_2=\rho T_2u_1=\la_2^{-1}v_2$.   Thus $\sigma u_j=\nu_j u_j$ and $\sigma v_j=\nu_j ^{-1}v_j$. We now realize the family of involutions by a quadratic submanifold. For the convenience of the reader, we repeat   part of argument in~\cite{GS15}; see the paragraph after \rp{mmtp}.
In coordinates
$
\xi_1u_1+\xi_2u_2+\eta_1v_1+\eta_2v_2,
$
we have $T_i(\xi,\eta)=(\la_i\eta,\la_i^{-1}\xi)$ and $\rho(\xi,\eta)=(\ov\xi_2,\ov\xi_1,\ov\eta_2,\ov\eta_1)$. Let
\gan
z_j=\xi_j+\la_j\eta_j,  \quad w_j=\ov{z_j\circ\rho},\quad j=1,2;\\
z_3=(\eta_1-\la_1^{-1}\xi_1)^2, \quad z_4=(\eta_2-\la_2^{-1}\xi_2)^2.
\end{gather*}
Expressing $\xi_j,\eta_j$ via $(z_1,z_2,w_1,w_2)$, we obtain
$$
z_3=L_1^2(z_1,z_2,w_1,w_2), \quad z_4=L_2^2(z_1,z_2,w_1,w_2).
$$
Setting $w_1=\ov z_1$ and $w_2=\ov z_2$, we obtain the defining equations of $M\subset \cc^4$ that is a realization of $\{T_1,T_2,\rho\}$.

Assume now that $\nu<0$ and $\nu\neq-1$. We still have \re{tiem}.  We want to show that  $\rho(E(\nu))\neq E(\nu^{-1})$ where $E(\nu^{-1})$ is in \re{tiem}, i.e. the above second case in $\nu>0$ occurs and the above argument shows that $\{T_1,T_2,\rho\}$, restricted to $E(\nu)\oplus \rho E(\nu)\oplus E(\nu^{-1})\oplus \rho E(\nu^{-1})$,  is  realized by a quadric of complex type singularity. Suppose that $\rho E(\nu)=E(\nu^{-1})$.
Take $e\in E(\nu)$. We can write $\tilde e=\rho e\in E(\nu^{-1})$. Then $T_1e=a\tilde e$. We have $T_2e=T_1Se=\nu a\tilde e$ and $T_2e=\rho T_1\rho e=\rho(a^{-1}e)=\ov a^{-1}\tilde e$. We obtain $\nu=|a|^{-2}>0$, a contradiction.

Analogously, if $\nu$ has modulus $1$ and is different from $\pm1$, we have two cases: $\rho E(\nu)= E(\nu^{-1})$   or $\rho E(\nu):=\tilde E(\nu^{-1})\neq E(\nu^{-1})$. In the first case, $\{T_1,T_2,\rho\}$ restricted to the two dimensional subspace is realized by a hyperbolic quadric $Q_\gaa$ with $\gaa\neq\infty$. In the second case its restriction to the $4$-dimensional subspace is realized by a quadric of complex CR singularity with $|\nu|=1$. In fact the same argument is valid.  Namely,  let $\la_1^2=\nu=\nu_1$. Let $\la_2=\la_1^{-1}$ and $\nu_2=\nu_1^{-1}$. Take an eigenvector $e_1\in E(\nu)$. Define $\tilde e_1=\la_1 T_1e_1, e_2=\rho e_1$ and $\tilde e_2=\rho\tilde e_1$. Then define $z_j,w_j$ and $L_j$ as above, which gives us a realization. We leave the details to the reader.
Finally, if $\nu,\ov\nu^{-1},\nu^{-1},\ov\nu$ are distinct, then we have a realization proved in \rt{quadclass} for a general case where all eigenvalues are distinct. \end{proof}


  Of course, there are non-product quadrics that realize $\{T_1,T_2,\rho\}$ in \rl{incomp} and the main purpose of this section is to classify  them under condition E.
We now assume conditions E and     \re{ilii++}-\re{bilii++} for the rest of the section to derive a normal form for $T_{1j}$ and $\rho$.

We need to choose the eigenvectors of $S$  and their
eigenvalues in such a way that $T_1,T_2$ and $\rho$ are in a normal form. We will first
choose eigenvectors to put
$\rho$ into a normal form. After normalizing $\rho$, we will then choose eigenvectors to normalize $T_1$
and $T_2$.

First, let us consider  an elliptic eigenvalue $\mu_e$. Let $u$ be an eigenvector of $\mu_e$. Then $u$ and $v=\rho (u)$ satisfy
\begin{equation}\label{svmN}
S(v)=\mu_e^{-1}v, \quad T_j(u)=\la_j^{-1}v, \quad \mu_e=\la_1\la_2^{-1}.
\end{equation}
Now $T_2(u)=\rho T_1\rho(u)$ implies that
$$\lambda_2=\ov\la_1^{-1},\quad \mu_e=|\la_1|^2.
$$
 Replacing $(u,v)$ by $(cu,\ov cv)$, we may assume that $\la_1>0$
 and $\la_2=\la_1^{-1}$. Replacing $(u,v)$ by $(v,u)$ if necessary,
  we may further  achieve
 $$
 \rho(u)=v, \quad \la_1=\la_e>1, \quad\mu_e=\la_e^2>  1.
 $$
We still have the freedom to replace $(u,v)$ by $(ru,rv)$ for $r\in\rr^*$, while
preserving the above conditions.

Next,  let $\mu_h$ be a
 hyperbolic eigenvalue of $S$ and $S(u)=\mu_h u$. Then $u$ and
$v=T_1(u)$ satisfy
$$
\rho (u)=au, \quad \rho (v)=bv, \quad |a|=|b|=1.
$$
Replacing $(u,v)$ by $(cu,v)$, we may assume that $a=1$.
Now $T_2(v)=\rho T_1\rho(v)=\ov bu$.   To obtain $b=1$, we replace $(u,v)$ by $(u, b^{-1/2}v)$.
This give us \re{svmN} with $|\la_j|=1$. Replacing $(u,v)$ by $(v,u)$ if necessary,
we may further achieve
\eq{}
\nonumber
\rho (u)=u, \quad \rho( v)=v, \quad \la_1=\la_h, \quad \mu_h=\la_h^2, \quad  \arg\la_h\in(0,\pi/2).
\eeq
Again, we  have the freedom to replace $(u,v)$ by $(ru,rv)$ for $r\in\rr^*$, while
preserving the above conditions.

 Finally, we consider a complex eigenvalue $\mu_s$. Let $S(u)=\mu_s u$. Then $\tilde u=\rho (u)$
satisfies $S(\tilde u)=\ov\mu_s^{-1}\tilde u$.  Let $u^*=T_1(u)$
and $\tilde u^*=\rho(u^*)$. Then $S(u^*)=\mu_s^{-1}u^*$ and $S(\tilde u^*)=\ov\mu_s \tilde u^*$. We change eigenvectors by
$$(u,\tilde u,u^*,\tilde u^*)\to (u,\tilde u, cu^*,\ov c\tilde u^*)$$
so that
\gan
\rho(u)=\tilde u, \quad\rho(u^*)=\tilde u^*,
\\ T_j(u)=\la_j^{-1}u^*,\quad
T_j(\tilde u)=\ov\la_j\tilde u^*,\quad \la_2=\la_1^{-1}.
\end{gather*}
Note that $S(u)=\la_1^2 u$, $S(u^*)=\la_1^{-2}u^*$, $S(\tilde u)=\ov\la_1^{-2}\tilde u$, and $S(\tilde u^*)=\ov\la_1^2\tilde u^*$.
Replacing $(u,\tilde u,u^*,\tilde u^*)$ by $(u^*,\tilde u^*,u,\tilde u)$ changes the argument and the modulus of $\la_1$ as  $\la_1^{-1}$
becomes $\la_1$. Replacing them by $(\tilde u,u,\tilde u^*,u^*)$ changes only the modulus as
 $\la_1$ becomes $ \bar \la_1^{-1}$ and then replacing them by $(u^*,\tilde u^*, -u,-\tilde u)$ changes the sign of $\la_1$. Therefore, we
may achieve
\eq{las}\nonumber
\mu_s=\la_s^2, \quad\la_1=\la_s, \quad  \arg\gaa_s\in(0,\pi/2), \quad |\la_s|>1.
\eeq
 We still have the freedom to replace $(u,u^*,\tilde u,\tilde u^*)$ by $(cu,cu^*,\ov c\tilde u,\ov c\tilde u^*)$.

We  summarize  the above choice of eigenvectors and their corresponding coordinates.
First, $S$
has distinct eigenvalues
$$
\lambda_e^2=\ov\la_e^2, \quad\lambda_e^{-2};\qquad
\lambda_h^2,\quad\ov\lambda_h^2=\lambda_h^{-2}; \qquad \lambda_s^2,\quad \lambda_s^{-2},
\quad\overline\lambda_s^{-2},\quad\overline\lambda_s^2.
$$
Also, $S$ has linearly independent  eigenvectors satisfying
\gan
Su_e=\lambda_e^2u_e, \quad Su_e^*=\lambda_e^{-2}u^*_e,\\
 Sv_h=\la_h^2v_h,\quad Sv_h^*=\la_h^{-2}v_h^*,\\
  Sw_s=\lambda_s^2w_s,
\quad   Sw_s^*=\lambda_s^{-2}w_s^*,
\quad S\tilde w_s=\ov\lambda_s^{-2}\tilde w_s,\quad S\tilde w_s^*=\ov\lambda_s^2 \tilde w_s^*.
\end{gather*}
Furthermore,  the $\rho$, $T_1$, and the chosen eigenvectors of $S$ satisfy
\begin{gather*}
 \rho u_e= u^*_e,\quad  T_1u_e=\lambda_e^{-1}u_e^*;\\
 \rho v_h= v_h,\quad \rho v_h^*=v_h^*,
 \quad T_1v_h=\lambda_h^{-1}v_h^*; \\
 \rho w_s= \tilde w_s,\quad \rho w_s^*=\tilde w_s^*,
 \quad T_1w_s=\lambda_s^{-1}  w_s^*,\quad T_1\tilde w_s=\ov\lambda_s\tilde w_s^*.
 \end{gather*}

 For normalization, we collect elliptic eigenvalues
$\mu_e$ and $ \mu_e^{-1}$,    hyperbolic eigenvalues
$\mu_h$ and $\mu_h^{-1}$, and   complex eigenvalues
in $\mu_s,\mu_s^{-1}, \ov\mu_s^{-1}$ and $\ov\mu_{s}$.
We put them in the order
\gan
\mu_e=\ov\mu_e,\quad \mu_{p+e}=\mu_e^{-1},  \\
\mu_h,\quad\mu_{p+h_*+h}=\ov\mu_h,   \\
 \mu_s,\quad\mu_{s+s_*}=
\overline\mu_s^{-1},\quad \mu_{p+s}= \mu_s^{-1},
\quad\mu_{p+s_*+s}=\overline\mu_s .
\end{gather*}
Here and throughout the paper the ranges of subscripts $e,h, s$ are restricted to
$$
1\leq e\leq e_*, \quad e_*< h\leq e_*+h_*, \quad e_*+h_*<s\leq p-s_*.
$$
 Thus
$
e_*+h_*+2s_*=p.
$
Using the new coordinates
$$
\sum(\xi_eu_e+\eta_eu_e^*)+\sum(\xi_hv_h+\eta_hv_h^*)+
\sum (\xi_{s} w_s+\xi_{s+s_*}\tilde w_s+\eta_{s} w^*_s+ \eta_{s+s_*}\tilde w^*_s),
$$
we have normalized $\sigma, T_1, T_2$ and $\rho$.  In summary, we have the following normal form.
\begin{lemma}\label{t1t2sigrho}Let $T_1,T_2$ be linear holomorphic involutions on $\cc^{n}$ that satisfy \rea{bilii++}.
Then $n=2p$ and $\dim [\mathfrak M_n]_1^{T_i}=p$. Suppose that $T_2= \rho_0 T_1\rho_0$ for some anti-holomorphic linear involution $\rho_0$.
Assume that $S=T_1T_2$ has $n$ distinct eigenvalues.
There exists a linear change of holomorphic coordinates that transforms
 $T_1,T_2, S,\rho_0$  simultaneously   into the normal forms $\hat T_1,\hat T_2,\hat S,\rho:$
\ga
\label{hatt1}
\hat T_1\colon \xi_j'=\la_j\eta_j,\quad \eta_j'=\la_j^{-1}\xi, \quad 1\leq j\leq p;
\\
\hat T_2\colon \xi_j'=\la_j^{-1}\eta_j,\quad \eta_j'=\la_j\xi_j, \quad 1\leq j\leq p;\\
\label{hsxie}\hat S\colon\xi_j'=\mu_j\xi_j, \quad\eta_j'=\mu_j^{-1}\eta_j, \quad 1\leq j\leq p;\\
\label{eqrh}
\rho\colon \left\{\begin{array}{lr}
\xi_e'=\ov\eta_e, &  \qquad \eta_e'= \ov\xi_e,\vspace{.75ex}\\
\xi_h'=\ov\xi_h, & \qquad \eta_h'=\ov\eta_h,\\
\xi_{s}'=\ov\xi_{s+s_*}, &\xi_{s+s_*}'=\ov\xi_{s},\\
\eta_{s}'=\ov\eta_{s+s_*}, &\eta_{s+s_*}'=\ov\eta_{s}.
\end{array}\right.
\end{gather}
Moreover, the eigenvalues $\mu_1,\ldots, \mu_p$ satisfy
\ga\label{mula}
\mu_j=\la_j^2, \quad 1\leq j\leq  p;\\
\label{muor}
\la_e>1,\quad |\la_h|=1,\quad |\la_s|>1,\quad\la_{s+s_*}=\ov\la_{s}^{-1};\\
\arg\la_{h}\in(0,\pi/2), \quad\arg\la_s\in(0,\pi/2);\\
\label{laee} \la_{e'}<\la_{e'+1}, \quad 0<\arg\la_{h'}<\arg\la_{h'+1}<\pi/2;\\
 \label{lass}\text{$\arg\la_{s'}<\arg\la_{s'+1}$, or $\arg\la_{s'}=\arg\la_{s'+1}$
 and $|\la_{s'}|<|\la_{s'+1}|$.}
\end{gather}
Here $1\leq e'<e_*$, $e_*<h'<e_*+h_*$, and $e_*+h_*<s'<p-s_*$. And $1\leq e\leq e_*$, $e_*<h\leq e_*+h_*$, and $e_*+h_*<s\leq p-s_*$.
 If $\tilde S$
is also in the normal form \rea{hsxie} for possible different eigenvalues $\tilde \mu_1,\dots, \tilde \mu_p$
satisfying \rea{mula}-\rea{lass},  then $S$ and $\tilde S$
are equivalent if and only if their eigenvalues are identical.
\end{lemma}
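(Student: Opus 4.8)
The plan is to carry out the eigenvector-selection procedure described in the paragraphs preceding the statement, organized according to the three types of eigenvalues, and then to read off that all choices made are forced up to the admissible remaining freedom. First I would invoke \rl{incomp}: since $S=T_1T_2$ is diagonalizable with $2p$ distinct eigenvalues, the family $\{T_1,T_2,\rho_0\}$ is realized by a product of quadrics of elliptic, hyperbolic, or complex type, and in particular $\pm 1$ are not among the eigenvalues of $S$. Combined with the reversibility relations $S^{-1}=T_iST_i$ and $S^{-1}=\rho_0 S\rho_0$ (which are immediate from $T_i^2=\I$, $\rho_0^2=\I$, $T_2=\rho_0T_1\rho_0$), this shows that the set of eigenvalues of $S$ is invariant under $\kappa\mapsto\kappa^{-1}$ and $\kappa\mapsto\overline\kappa^{-1}$, and that each eigenvalue is of exactly one of the three types: elliptic ($\mu\in\rr\setminus\{\pm1\}$, so $\mu^{-1}=\overline\mu^{-1}$ is the only partner), hyperbolic ($|\mu|=1$, $\mu\neq\pm1$, so $\overline\mu=\mu^{-1}$), or complex (the four values $\mu,\mu^{-1},\overline\mu^{-1},\overline\mu$ are distinct). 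This partitions the $2p$ eigenvalues and yields the numbers $e_*,h_*,s_*$ with $e_*+h_*+2s_*=p$.

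Next I would treat each type in turn, exactly as in the text. For an elliptic eigenvalue $\mu_e$: pick an eigenvector $u$, set $v=\rho_0(u)$; the relations $Sv=\mu_e^{-1}v$ and $T_2=\rho_0T_1\rho_0$ force $T_1u=\la_1^{-1}v$, $T_2u=\rho_0T_1\rho_0 u$, hence $\la_2=\overline\la_1^{-1}$ and $\mu_e=|\la_1|^2>0$; rescaling $(u,v)\to(cu,\overline c v)$ makes $\la_1>0$, $\la_2=\la_1^{-1}$, and swapping $(u,v)$ if needed gives $\la_e>1$, leaving only the real rescaling $(u,v)\to(ru,rv)$, $r\in\rr^*$. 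For a hyperbolic eigenvalue $\mu_h$: take $Su=\mu_h u$, $v=T_1u$; then $\rho_0 u=au$, $\rho_0 v=bv$ with $|a|=|b|=1$; the substitution $(u,v)\to(cu,b^{-1/2}v)$ with suitable $c$ achieves $\rho_0u=u$, $\rho_0v=v$, whence $|\la_j|=1$, and swapping $(u,v)$ arranges $\arg\la_h\in(0,\pi/2)$, again leaving the real rescaling. For a complex eigenvalue $\mu_s$: take $Su=\mu_s u$ and form $\tilde u=\rho_0 u$, $u^*=T_1 u$, $\tilde u^*=\rho_0 u^*$, which sit in the four distinct eigenspaces; the substitution $(u,\tilde u,u^*,\tilde u^*)\to(u,\tilde u,cu^*,\overline c\tilde u^*)$ arranges $\rho_0 u=\tilde u$, $\rho_0 u^*=\tilde u^*$, $T_j u=\la_j^{-1}u^*$, $T_j\tilde u=\overline{\la_j}\tilde u^*$ with $\la_2=\la_1^{-1}$; the three swaps $(u,\tilde u,u^*,\tilde u^*)\leftrightarrow(u^*,\tilde u^*,u,\tilde u)$, $\leftrightarrow(\tilde u,u,\tilde u^*,u^*)$, $\to(u^*,\tilde u^*,-u,-\tilde u)$ act on $\la_1$ by $\la_1\mapsto\la_1^{-1}$, $\la_1\mapsto\overline\la_1^{-1}$, $\la_1\mapsto-\la_1$ respectively, so we may force $|\la_s|>1$ and $\arg\la_s\in(0,\pi/2)$, leaving the rescaling $(u,\tilde u,u^*,\tilde u^*)\to(cu,\overline c\tilde u,cu^*,\overline c\tilde u^*)$. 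Assembling these bases in the prescribed order and using them as coordinates $\xi,\eta$ gives simultaneously the normal forms \re{hatt1}–\re{eqrh} with \re{mula}–\re{muor} and the argument constraints; finally, reordering the blocks within each type imposes \re{laee}–\re{lass}, which uses only permutations of coordinates that still preserve the forms of $\rho$, $\hat T_1$, $\hat T_2$, $\hat S$.

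For the last assertion, if $S$ and $\tilde S$ are both in the form \re{hsxie} and are linearly equivalent, then they are conjugate as linear maps, hence have the same eigenvalues as multisets; the normalizations \re{mula}–\re{lass} pin down a unique ordering of the $\mu_j$ from the unordered spectrum (the sign and modulus/argument conditions single out one representative in each $S$-orbit $\{\mu,\mu^{-1},\overline\mu^{-1},\overline\mu\}$, and \re{laee}–\re{lass} order the representatives within each type), so $\mu_j=\tilde\mu_j$ for all $j$. The converse is trivial.

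The main obstacle I anticipate is purely bookkeeping rather than conceptual: keeping careful track, type by type, of exactly which rescalings and coordinate swaps remain available after each normalization step, and checking that the swaps used to enforce the ordering conditions \re{laee}–\re{lass} genuinely preserve the already-achieved normal forms of $\rho$, $\hat T_1$, $\hat T_2$ simultaneously (for the complex blocks one must swap the pair of $s$-indices together with the pair of $(s+s_*)$-indices, and for elliptic/hyperbolic blocks one permutes the $\xi$ and $\eta$ slots in parallel). Everything else is a direct computation using only the involution and reversibility relations already recorded, together with \rl{incomp}.
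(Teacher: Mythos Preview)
Your proposal is correct and follows essentially the same approach as the paper: the lemma is stated as a summary of the eigenvector-selection procedure carried out in the preceding paragraphs, and you have accurately reproduced that procedure type by type (elliptic, hyperbolic, complex), together with the invocation of \rl{incomp} to exclude $\pm1$ and the final reordering to impose \re{laee}--\re{lass}. The only thing to add is that the paper treats this lemma as already proved by the preceding discussion rather than giving a separate argument, so your write-up is in fact more explicit than what appears there.
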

The above normal form of $\rho$
 will be fixed
for the rest of paper.
 Note that in case of non-linear involutions
$\{\tau_{11},\ldots,\tau_{1p},\rho\}$ of which the
linear part are given by $\{T_{11},\dots,T_{1p},\rho\}$
we can always linearize $\rho$ first
under a holomorphic  map of which the
linear part at the origin is described  in above
normalization
for the linear part of $\{\tau_{11},\ldots, \tau_{1p},\rho\}$.
Indeed, we may assume that the linear part of the latter family is already
in the normal form. Then $\psi=\f{1}{2}(I+(L\rho)\circ\rho)$ is tangent
to the identity and $(L\rho)\circ\psi \circ\rho=\psi$, i.e. $\psi$
transforms $\rho$ into $L\rho$ while preserving the linear parts
of $\tau_{11},\dots, \tau_{1p}$.
  Therefore in the non-linear case, we can
assume that $\rho$ is given by the above normal form.
 The above lemma   tells us  the ranges
of eigenvalues $\mu_e,\mu_h$ and $\mu_s$ that can be realized by  quadrics that satisfy conditions E and     \re{ilii++}-\re{bilii++}.

Having normalized $T_1$ and $\rho$, we want to  further
 normalize $\{T_{11},\ldots, T_{1p}\}$ under linear maps that preserve the normal forms   of
$   \hat T_1$ and $\rho$.
 We know that the composition of $T_{1j}$ is in the normal form, i.e.
\eq{t11h}
T_{11}\cdots T_{1p}= \hat T_1
\eeq
   is given in \rl{t1t2sigrho}.
We  first   find an expression for  all   $T_{1j}$ that  commute pairwise and satisfy \re{t11h}, by using
 invariant and skew-invariant functions of $\hat T_1$.   Let
\eq{var1z}\nonumber
(\xi,\eta)=\var_1(z^+,z^-)\eeq
 be defined  by
\begin{alignat}{3}\label{zep1}
&z_e^+ &&=\xi_e+\la_e\eta_e, \quad &&
z_e^-=\eta_e-\la^{-1}_e\xi_e, \\
&z_h^+ &&=\xi_h+\la_h\eta_h, \quad
&&z_h^-=\eta_h-\ov\la_h\xi_h, \\
&z_{s}^+&&=\xi_{s}+\la_s\eta_{s}, \quad
&&z_{s}^-=\eta_{s}-\la^{-1}_s\xi_{s}, \\
 &z_{s+s_*}^+&&=\xi_{s+s_*}+\ov\la^{-1}_s\eta_{s+s_*}, \quad
 &&z_{s+s_*}^-=\eta_{s+s_*}-\ov\la_s\xi_{s+s_*}.
 \label{zep4}
\end{alignat}
 In $(z^+,z^-)$ coordinates, $\var_1^{-1}\hat T_1\var_1$ becomes
$$
Z\colon z^+\to z^+, \quad z^-\to -z^-.
$$
We decompose $Z=Z_{1}\cdots Z_{p}$ by using
$$
Z_{j}\colon (z^+,z^-)\to (z^+, z_1^-,\ldots, z^-_{j-1},-z_j^-,z_{j+1}^-, \ldots,
z_{p}^-).
$$

To keep simple notation, let us use the same notions $x,y$ for a linear transformation $y=A(x)$
and its matrix representation:
$$
A\colon x\to \mathbf Ax.
$$
The following lemma, which can be verified immediately, shows the advantages of coordinates $z^+, z^-$.
\begin{lemma}\label{zcent}The   linear centralizer of $Z$ is the set of  mappings of the form
\eq{vaph}
\phi\colon (z^+,z^-)\to (\mathbf{A}z^+,\mathbf{B}z^-),
\eeq
where $\mathbf{A},\mathbf{B}$ are constant and possibly singular 
matrices.
 Let $\nu$ be a permutation of $\{1,\ldots, p\}$.
Then $Z_j\phi=\phi Z_{\nu(j)}$ for all $j$ if and only  if $\phi$ has the above form with
$\mathbf{B}=\diag_\nu \mathbf{d}$. Here
\eq{dfndiag}
\diag_\nu(d_1,\ldots, d_p):= (b_{ij})_{p\times p}, \quad b_{j\nu(j)}=d_j, \quad b_{jk}=0 \ \text{if $k\neq\nu(j)$}.
\eeq
In particular, the linear centralizer of
$\{Z_{1},\ldots, Z_{p}\}$ is   the set of  mappings \rea{vaph} in  which  $\mathbf{B}$ are diagonal.
 \end{lemma}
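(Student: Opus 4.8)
The plan is to work directly from the definitions of $Z$, $Z_j$, and the $\diag_\nu$ notation, reducing everything to elementary linear algebra about which matrices commute with a fixed diagonal sign matrix. First I would write a general linear map $\phi\colon(z^+,z^-)\mapsto (\mathbf{P}z^+ + \mathbf{Q}z^-,\ \mathbf{R}z^+ + \mathbf{S}z^-)$ in block form with $p\times p$ blocks $\mathbf P,\mathbf Q,\mathbf R,\mathbf S$. Since $Z$ acts as $(z^+,z^-)\mapsto (z^+,-z^-)$, its matrix is the block-diagonal involution $\operatorname{diag}(\mathbf I_p,-\mathbf I_p)$. Imposing $Z\phi = \phi Z$ and comparing the four blocks forces $\mathbf Q = 0$ and $\mathbf R = 0$, leaving precisely the maps of the form \re{vaph} with $\mathbf A = \mathbf P$, $\mathbf B = \mathbf S$ arbitrary (possibly singular). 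This establishes the first assertion.

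Next, for the refined statement involving a permutation $\nu$, I would compute the two compositions $Z_j\phi$ and $\phi Z_{\nu(j)}$ for a map $\phi$ already of the form \re{vaph}; note $Z_j$ does not touch $z^+$, so the condition $Z_j\phi = \phi Z_{\nu(j)}$ for all $j$ involves only the $\mathbf B$-block. Writing $\mathbf B = (b_{ik})$, the action of $Z_j$ on the left flips the sign of the $j$th output coordinate, while $Z_{\nu(j)}$ on the right flips the sign of the $\nu(j)$th input coordinate; equating them row by row gives, for each row $i$ and each $j$, the relation $(1 - 2\delta_{ij})\,b_{ik} = b_{ik}\,(1 - 2\delta_{k\nu(j)})$ for all $k$, i.e. $b_{ik}\bigl(\delta_{ij} - \delta_{k\nu(j)}\bigr) = 0$. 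Running over all $j$ shows that $b_{ik}$ can be nonzero only when $k = \nu(i)$, which is exactly the assertion $\mathbf B = \diag_\nu\mathbf d$ with $d_i := b_{i\nu(i)}$; conversely any such $\mathbf B$ manifestly intertwines the $Z_j$ and $Z_{\nu(j)}$. Taking $\nu = \mathrm{id}$ yields the final sentence: the centralizer of the whole family $\{Z_1,\ldots,Z_p\}$ consists of the maps \re{vaph} with $\mathbf B$ diagonal.

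There is no serious obstacle here; the lemma is purely computational, as the statement already anticipates ("which can be verified immediately"). The only point requiring a little care is bookkeeping the sign-flip relations so as to conclude the off-diagonal (relative to $\nu$) entries of $\mathbf B$ vanish while the relevant ones are unconstrained, and to double-check that no condition is imposed on $\mathbf A$ at all, since the $Z_j$ act trivially on the $z^+$ factor. I would present the block-matrix computation compactly and leave the coordinate-by-coordinate verification to the reader.
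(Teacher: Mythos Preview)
Your proposal is correct and complete; the paper itself gives no proof, merely remarking that the lemma ``can be verified immediately,'' and your block-matrix computation is exactly the immediate verification intended. One small point worth making explicit: in the second part you assume $\phi$ is already of the form \re{vaph}, which is justified because taking the product of $Z_j\phi=\phi Z_{\nu(j)}$ over all $j$ gives $Z\phi=\phi Z$ (since $\nu$ is a permutation), so the first part applies.
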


 To continue our normalization for the family $\{T_{1j}\}$, we note that
 $\var_1^{-1}T_{11}\var_1,\ldots$, $\var_1^{-1}T_{1p}\var_1$  generate an abelian group of $2^p$  involutions
 and each of these $p$ generators fixes
a hyperplane.   By \cite[Lemma 2.4]{GS15},
 there is a linear transformation $\phi_1$ such that
\eq{phi1-}\nonumber
\var_1^{-1}T_{1j}\var_1= \phi_1Z_{j}\phi_1^{-1}, \quad 1\leq j\leq p.
\eeq
 Computing two compositions on both sides, we see that   $\phi_1$  must be in the linear centralizer of $Z$. Thus, it is in the form \re{vaph}.
 Of course,
$\phi_1$ is not unique;
$\tilde\phi_1$ is another such linear map for the same $T_{1j}$ if and only if  $\tilde\phi_1=\phi_1\psi_1$
with $\psi_1\in\cL C(Z_{1},\ldots, Z_{p})$.
By \re{vaph}, we may restrict ourselves to $\phi_1$  given by
\eq{phi1z}
\phi_1\colon (z^+,z^-)\to (z^+,\mathbf{B}z^-).
\eeq
Then $\tilde \phi_1$ yields  the same family
$\{T_{1j}\}$ if and only if its corresponding matrix
 $\widetilde{\mathbf{B}}=\mathbf{B}\mathbf{D}$ for a diagonal matrix $\mathbf{D}$.

In the above we have expressed all $T_{11},\ldots, T_{1p}$ via equivalence classes of matrices. It will be convenient to
restate them via matrices.

For simplicity,  $T_i$ and $S$ denote $\hat T_i,\hat S$, respectively.  In matrices, we write
$$
 T_1\colon
\left(\begin{array}{c}\xi \\ \eta \end{array}\right)
 \to \mathbf{T}_1\left(\begin{array}{c}\xi \\ \eta \end{array}\right),\quad
 \rho\colon \left(\begin{array}{c}\xi \\ \eta \end{array}\right)\to \boldsymbol{\rho}\left(\begin{array}{c}
 \ov\xi \\
 \ov\eta \end{array}\right), \quad
S\colon \left(\begin{array}{c}\xi \\ \eta \end{array}\right)\to \mathbf{S}\left(\begin{array}{c}\xi \\ \eta \end{array}\right).
$$
Recall that  the bold faced  $\mathbf{ A}$  represents a linear map $A$.  Then
\gan
\mathbf{T_1}=\begin{pmatrix}
\mathbf{ 0} &\mathbf{\Lambda}_1  \\
\mathbf{\Lambda}_1^{-1}  &\mathbf{ 0}
\end{pmatrix}_{2p\times2p}, \quad \mathbf{S}=\begin{pmatrix}
\mathbf{\Lambda}_1^2 & \mathbf{ 0} \\
\mathbf{ 0} & \mathbf{\Lambda}_1^{-2}
\end{pmatrix}_{2p\times2p}.
\end{gather*}
We will abbreviate
$$
{\boldsymbol\xi}_{e_*}=(\xi_1,\ldots,\xi_{e_*}), \quad
{\boldsymbol\xi}_{h_*}=(\xi_{e_*+1},\ldots, \xi_{e_*+h_*}), \quad
{\boldsymbol\xi}_{2s_*}=(\xi_{e_*+h_*+1},\ldots,\xi_p).
$$
We use the same abbreviation for $\eta$. Then $({\boldsymbol\xi}_{e_*},{\boldsymbol\eta}_{e_*})$, $({\boldsymbol\xi}_{h_*},{\boldsymbol\eta}_{h_*})$, and
$({\boldsymbol\xi}_{2s_*},{\boldsymbol\eta}_{2s_*})$ subspaces are invariant under $T_{1j}$,
$T_1$, and $\rho$. We also denote by
$T^{e_*}_1, T^{h_*}_1,T^{s_*}_1$ the restrictions of $T_1$ to these
subspaces. Define analogously for the restrictions of $\rho, S$ to these subspaces.
Define diagonal matrices $\mathbf{\Lambda}_{1e_*}, \boldsymbol{\Lambda}_{1h_*}, \mathbf{\Lambda}_{1s_*}$, of size $e_*\times e_*,h_*\times h_*$ and $ s_*\times s_*$ respectively, by
$$
\boldsymbol{\Lambda}_{1}=\begin{pmatrix}
\mathbf{\Lambda}_{1e_*} &\mathbf{ 0 } &\mathbf{ 0} &\mathbf{ 0} \\
\mathbf{ 0} & \mathbf{\Lambda}_{1h_*}&  \mathbf{ 0}&\mathbf{ 0}\\
\mathbf{ 0} & \mathbf{ 0} &\mathbf{\Lambda}_{1s_*} &\mathbf{ 0}\\
\mathbf{ 0}&\mathbf{ 0}&\mathbf{ 0}&\ov {\mathbf{\Lambda}}_{1s_*}^{-1}
\end{pmatrix} ,\quad
\ov{\boldsymbol{\Lambda}_{1}}=\begin{pmatrix}
\mathbf{\Lambda}_{1e_*} &\mathbf{ 0}  &\mathbf{ 0} &\mathbf{ 0} \\
\mathbf{ 0} & \mathbf{\Lambda}_{1h_*}^{-1}& \mathbf{  0}&\mathbf{0}\\
\mathbf{0} & \mathbf{0} &\ov \Lambda_{1s_*} &\mathbf{0}\\
\mathbf{0}&\mathbf{\mathbf{0}}&\mathbf{0}&\mathbf{\Lambda}_{1s_*}^{-1}
\end{pmatrix}.
$$
Thus, we can
express  $T_1^{s_*}$ and $S^{s_*}$ in $(2s_*)\times(2s_*)$
matrices
$$
\mathbf{T}_1^{s_*}
=\begin{pmatrix}
\mathbf{0}& \mathbf{0} &\mathbf{\Lambda}_{1s_*} &  \mathbf{0} \\
 \mathbf{0}& \mathbf{0} &\mathbf{0}&\ov{\mathbf{\Lambda}}_{1s_*}^{-1}\\
 \mathbf{\Lambda}_{1s_*}^{-1}&\mathbf{0}&\mathbf{0} &\mathbf{0}\\
\mathbf{0} &\ov{\mathbf{\Lambda}}_{1s_*}&\mathbf{0} & \mathbf{0}
\end{pmatrix},\quad
\mathbf{S}^{s_*}=\begin{pmatrix}
\mathbf{\Lambda}_{1s_*}^2 & \mathbf{0}& \mathbf{0} & \mathbf{0} \\
\mathbf{0} &\ov{\mathbf{\Lambda}}_{1s_*}^{-2} & \mathbf{0} & \mathbf{0}\\
\mathbf{0}&\mathbf{0} & \mathbf{\Lambda}_{1s_*}^{-2}&\mathbf{0}\\
\mathbf{0}&\mathbf{0} & \mathbf{0} &\ov{\mathbf{\Lambda}}_{1s_*}^{2}
\end{pmatrix}.
$$
Let $\mathbf{ I}_k$ denote the $k\times k$
identity matrix.
With the abbreviation, we can express $\rho$ as
\begin{gather*}
\boldsymbol{\rho}^{e_*}=\begin{pmatrix}
\mathbf{0} & \mathbf{I}_{e_* }\\
\mathbf{I} _{e_* }& \mathbf{0}
\end{pmatrix},\quad
\boldsymbol{\rho}^{h_*}=\mathbf{I}_{2h_*},\quad
\\
\boldsymbol{\rho}^{s_*}
=\begin{pmatrix}
\mathbf{0} & \mathbf{I}_{s_*}
& \mathbf{0} & \mathbf{0} \\
\mathbf{I}_{s_*} & \mathbf{0} & \mathbf{0} & \mathbf{0}\\
\mathbf{0}&\mathbf{0} & \mathbf{0}& \mathbf{I}_{s_*}\\
\mathbf{0}&\mathbf{0} & \mathbf{I} _{s_*}&\mathbf{0}
\end{pmatrix}.
\end{gather*}
Note that   $\rho$ is anti-holomorphic linear transformation.  
If $A$ is a complex linear transformation,  in $(\xi,\eta)$ coordinates the matrix of $\rho A$ is 
$\boldsymbol{\rho}\ov{\mathbf{A}}$, i.e.
$$
\rho A\colon
\begin{pmatrix}\xi\\ \eta\end{pmatrix}\to \boldsymbol{\rho}\ov{\mathbf A}\begin{pmatrix}\ov \xi\\ \ov\eta\end{pmatrix}
$$
 with
\ga\label{lrho}
\nonumber
\boldsymbol{\rho}=
\begin{pmatrix}
\mathbf{0} &\mathbf{0}  &\mathbf{0}   &\mathbf{0}&\mathbf{I}_{e_*}  & \mathbf{0} & \mathbf{0} &\mathbf{0}  \\
\mathbf{0}&\mathbf{I}_{h_*}   & \mathbf{0} & 0&\mathbf{0}&\mathbf{ 0}  &\mathbf{ 0}  &\mathbf{ 0}  \\
\mathbf{ 0} & \mathbf{ 0} &  \mathbf{  0} & \mathbf{I}_{s_*} &\mathbf{ 0 } & \mathbf{ 0} & \mathbf{ 0} &\mathbf{0}\\
\mathbf{0} & \mathbf{0} &  \mathbf{I}_{s_*} & \mathbf{0}&\mathbf{0} & \mathbf{0} & \mathbf{0} & \mathbf{0} \\
\mathbf{I}_{e_*} & \mathbf{0} &\mathbf{0} &\mathbf{0} & \mathbf{0} & \mathbf{0} & \mathbf{0} & \mathbf{0} \\
\mathbf{0} &\mathbf{0}  & \mathbf{0}  & \mathbf{0}&\mathbf{0} &\mathbf{I}_{h_*}  &\mathbf{0}  & \mathbf{0} \\
\mathbf{0} & \mathbf{0} & \mathbf{0}  &\mathbf{0}&\mathbf{0}  &\mathbf{0}  & \mathbf{0} &  \mathbf{I}_{s_*} \\
 \mathbf{0}&\mathbf{0}  & \mathbf{0}  &\mathbf{0}& \mathbf{0} & \mathbf{0} & \mathbf{I}_{s_*}  &\mathbf{0}
\end{pmatrix}.
\end{gather}

For an invertible $p\times p$ matrix $\mathbf A$,  let us define an $n\times n$ matrix $\mathbf E_{\mathbf A}$ by
\eq{Elam}
\mathbf{E}_{\mathbf A}:=\f{1}{2}\begin{pmatrix}
\mathbf{I}_p & -\mathbf{A} \\
\mathbf{A}^{-1} & \mathbf{I}_p
\end{pmatrix},\quad
\mathbf{E}_{\mathbf A}^{-1}=\begin{pmatrix}
\mathbf{I}_p & \mathbf{A} \\
-\mathbf{A}^{-1} & \mathbf{I}_p
\end{pmatrix}.
\eeq
For a $p\times p$ matrix $\mathbf{B}$,  we define
\eq{bstar}\nonumber
\mathbf{B}_*:=\begin{pmatrix}
\mathbf{I} _p&\mathbf{0} \\
\mathbf{0} & \mathbf{B}
\end{pmatrix}.
\eeq
Therefore,   we can express
\ga\label{t1jd}
\mathbf{T}_{1j}=\mathbf{E}_{\mathbf \Lambda_1} \mathbf{B}_*\mathbf{Z}_j\mathbf{B}_*^{-1}\mathbf{E}_{\mathbf \Lambda_1}^{-1}, \quad \mathbf{T}_{2j}=\boldsymbol{\rho} \ov{\mathbf{T}_{1j}}\boldsymbol{\rho}, \\
\mathbf{Z}_j=\diag(1,\ldots,1,-1,1,\ldots, 1).\label{t1jd+}
\end{gather}
Here $-1$ is at the $(p+j)$-th place.   By \rl{zcent},  $\mathbf{B}$ is uniquely determined up to equivalence relation via diagonal matrices $\mathbf{D}$:
\eq{bsdb}
\mathbf{B}\sim \mathbf{B}\mathbf{D}.
\eeq
We have  expressed all $\{T_{11},\ldots, T_{1p},\rho\}$    for which
  $\hat T_1=T_{11}\cdots T_{1p}$ and $\rho$ are in the normal forms  in \rl{t1t2sigrho} and we have found an equivalence relation
 to classify the involutions. Let us summarize the results in a lemma.
\begin{lemma}\label{sett}
 Let $\{T_{11},\ldots, T_{1p},\rho\}$ be the involutions of a quadric manifold $M$. Assume that $
 S =T_1\rho T_1\rho$ has
distinct eigenvalues. Then in suitable linear $(\xi,\eta)$
 coordinates, $T_{11},\ldots, T_{1p}$ are given by   \rea{t1jd}, while $T_{11}\cdots T_{1p}=\hat T_1$ and $ \rho$ are given by
\rea{hatt1} and \rea{eqrh}, respectively.
Moreover, $\mathbf{B}$ in \rea{t1jd} is uniquely determined by the equivalence relation \rea{bsdb} for diagonal matrices $\mathbf{D}$.
\end{lemma}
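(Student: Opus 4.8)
The plan is to assemble the statement from the normalizations already carried out in the section; there is essentially no new content, only bookkeeping. First I would invoke \rl{t1t2sigrho}: the involutions of a quadric manifold satisfy \re{bilii++}, and since $S=T_1\rho T_1\rho=T_1T_2$ has $2p$ distinct eigenvalues, there is a linear change of holomorphic coordinates putting $T_1=T_{11}\cdots T_{1p}$, $S$ and $\rho_0$ simultaneously into the normal forms $\hat T_1$, $\hat S$, $\rho$ of \re{hatt1}, \re{hsxie}, \re{eqrh}. I fix these coordinates once and for all; this already yields the claimed normal forms for $T_{11}\cdots T_{1p}$ and $\rho$.

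Next I would pass to the coordinates $(z^+,z^-)=\var_1^{-1}(\xi,\eta)$ of \re{zep1}--\re{zep4}, in which $\hat T_1$ becomes $Z\colon(z^+,z^-)\to(z^+,-z^-)$ and $Z=Z_1\cdots Z_p$. Because the conjugated involutions $\var_1^{-1}T_{1j}\var_1$ form an abelian group of $2^p$ involutions, each fixing a hyperplane, whose product is $Z$, \cite[Lemma 2.4]{GS15} produces an invertible linear $\phi_1$ with $\var_1^{-1}T_{1j}\var_1=\phi_1Z_j\phi_1^{-1}$ for every $j$. Multiplying these relations over $j$ shows $\phi_1$ commutes with $Z$, so by \rl{zcent} it has the block form \re{vaph}; since the $z^+$-block $\mathbf A$ commutes with each $Z_j$ it drops out of $\phi_1Z_j\phi_1^{-1}$, and I may take $\phi_1$ as in \re{phi1z}, recorded by a nonsingular $p\times p$ matrix $\mathbf B$. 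Writing $\var_1$, $\phi_1$, $Z_j$ and $\hat T_1$ in matrix form and conjugating back to $(\xi,\eta)$ coordinates gives \re{t1jd}--\re{t1jd+}, while $T_{2j}=\rho T_{1j}\rho$ gives $\mathbf T_{2j}=\boldsymbol\rho\,\ov{\mathbf T_{1j}}\,\boldsymbol\rho$.

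For the uniqueness of $\mathbf B$ I would argue as follows. With $\hat T_1$ and $\rho$ fixed, if $\phi_1$ and $\tilde\phi_1$ are both of the form \re{phi1z} and satisfy $\var_1^{-1}T_{1j}\var_1=\phi_1Z_j\phi_1^{-1}=\tilde\phi_1Z_j\tilde\phi_1^{-1}$ for all $j$, then $\phi_1^{-1}\tilde\phi_1\in\cL C(Z_1,\ldots,Z_p)$, whose $z^-$-block is by \rl{zcent} an arbitrary diagonal matrix $\mathbf D$; hence $\tilde{\mathbf B}=\mathbf B\mathbf D$, and conversely every $\mathbf B\mathbf D$ arises. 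This is exactly the equivalence relation \re{bsdb}. The one step that demands care is precisely this last one: I must verify that the freedom in recovering $\phi_1$ (equivalently $\mathbf B$) from the prescribed family $\{T_{11},\ldots,T_{1p}\}$ is \emph{exactly} the diagonal group and nothing more — which is what \rl{zcent} delivers — whereas the genuinely larger coordinate freedom that can alter $\mathbf B$ beyond a diagonal factor comes from the centralizer of $\{\hat T_1,\hat S,\rho\}$ and is treated separately in \rt{quadclass}.
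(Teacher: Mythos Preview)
Your proposal is correct and follows essentially the same route as the paper: the lemma is explicitly presented there as a summary of the preceding discussion, and you have faithfully reassembled that discussion—invoking \rl{t1t2sigrho} for the normal forms of $\hat T_1$ and $\rho$, passing to $(z^+,z^-)$ coordinates via $\var_1$, using \cite[Lemma 2.4]{GS15} to obtain $\phi_1$, applying \rl{zcent} to reduce to the form \re{phi1z}, and deducing the equivalence relation \re{bsdb} from the centralizer of $\{Z_1,\ldots,Z_p\}$. Your closing remark distinguishing the diagonal freedom in $\mathbf B$ from the larger coordinate freedom handled in \rt{quadclass} is exactly the right bookkeeping.
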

  We remind the reader that we divide the classification for $\{T_{11},\ldots, T_{1p},\rho\}$ into two steps.
We have obtained the classification for the composition $T_{11}\cdots T_{1p}=\hat T_1$ and $\rho$ in \rl{t1t2sigrho}.
Having found   all $\{T_{11},\ldots, T_{1p},\rho\}$ and an equivalence relation,
we are ready to reduce their classification  
to an equivalence problem that involves two dilatations and a coordinate permutation. 
\begin{lemma}\label{unsol}    Let $\{T_{i1},\ldots, T_{ip},\rho\}$ be given by  \rea{t1jd}.  Suppose that $\hat T_1=T_{11}\cdots T_{1p}$,  $\rho$,   $\hat T_2=\rho\hat T_1\rho$, and
$\hat S=\hat T_1\hat T_2$
have the forms in \rla{t1t2sigrho}.  Suppose that $\hat S$ has distinct eigenvalues.
 Let $\{\hat{T}_{11},\ldots,\hat{ T}_{1p},\rho\}$ be given by \rea{t1jd}
where $\la_j$ are unchanged  and $\mathbf B$ is replaced by   $\hat{\mathbf B}$. Suppose that
 $R^{-1}T_{1j}R=\widehat T_{1\nu(j)}$ for all $j$ and $R\rho=\rho R$.
  Then the matrix of $R$ is
$\mathbf R=\diag( \mathbf{a}, \mathbf{a})$ with $\mathbf{a}=(\mathbf{ a}_{e_*},\mathbf{ a}_{h_*},\mathbf{ a}_{s_*},\mathbf{ a}_{s_*}')$,
while $\mathbf a$ satisfies  the reality condition
\ga
\label{aeae} \mathbf{ a}_{e_*}  \in(\rr^*)^{e_*}, \quad \mathbf{ a}_{h_*}\in(\rr^*)^{h_*}, \quad
\ov{\mathbf{ a}_{s_*}}=\mathbf{ a}_{s_*}'\in(\cc^*)^{s_*}.
\end{gather}
Moreover, there exists $\mathbf{d}\in(\cc^*)^p$  such that
\ga\label{bcbd-}
\hat {\mathbf{B}}=(\diag \mathbf{a} )^{-1}\mathbf{B}(\diag_{ \nu} \mathbf{d}), \quad i.e.,
\quad a_i^{-1}b_{i\nu^{-1}(j)}d_{\nu^{-1}(j)}=\hat b_{ij},
 \quad
1\leq i,j\leq p.
\end{gather}
Conversely, if $\mathbf{ a},\mathbf{ d}$ satisfy \rea{aeae} and \rea{bcbd-}, then $R^{-1}T_{1j}R=\hat T_{1\nu(j)}$
and $R\rho=\rho R$.
\end{lemma}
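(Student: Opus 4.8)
The statement is an ``if and only if'' relating a linear conjugacy $R$ intertwining the two families $\{T_{1j},\rho\}$ and $\{\widehat T_{1j},\rho\}$ to an explicit pair of data $(\mathbf a,\mathbf d)$ satisfying the reality condition \re{aeae} and the matrix relation \re{bcbd-}. The natural strategy is to exploit the already-established expressions \re{t1jd}: since $T_{11}\cdots T_{1p}=\hat T_1$ and $\widehat T_{11}\cdots\widehat T_{1p}=\hat T_1$ (the $\la_j$ being unchanged), the relation $R^{-1}T_{1j}R=\widehat T_{1\nu(j)}$ for all $j$ forces $R^{-1}\hat T_1 R=\hat T_1$, hence also $R^{-1}\hat T_2R=\hat T_2$ because $\hat T_2=\rho\hat T_1\rho$ and $R\rho=\rho R$; consequently $R$ commutes with $\hat S=\hat T_1\hat T_2$. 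First I would use the hypothesis that $\hat S$ has $2p$ distinct eigenvalues: the linear centralizer of $\hat S$ is then the diagonal torus in the $(\xi,\eta)$-eigenbasis, so $\mathbf R=\diag(r_1,\dots,r_p,t_1,\dots,t_p)$ for scalars $r_j,t_j$. Then I would impose $R^{-1}\hat T_1 R=\hat T_1$: since $\hat T_1$ is the off-diagonal ``swap-and-scale'' matrix $\begin{pmatrix}\mathbf 0&\mathbf\Lambda_1\\ \mathbf\Lambda_1^{-1}&\mathbf 0\end{pmatrix}$, conjugating by a diagonal matrix and demanding invariance gives $r_j=t_j$ for every $j$, i.e. $\mathbf R=\diag(\mathbf a,\mathbf a)$ with $\mathbf a=(a_1,\dots,a_p)$. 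This gives the shape of $\mathbf R$ claimed.

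\textbf{Reality condition.} Next I would extract \re{aeae} from $R\rho=\rho R$. Writing $\rho$ in matrix form as $\boldsymbol\rho\,\ov{(\cdot)}$ with $\boldsymbol\rho$ given in \re{lrho} (the permutation block matrix that swaps the $e$-blocks $\xi_e\leftrightarrow\eta_e$, fixes the $h$-blocks, and swaps $s\leftrightarrow s+s_*$), the commutation $\mathbf R\,\boldsymbol\rho\,\ov{\mathbf R}^{-1}=\boldsymbol\rho$ reads, block by block: on an elliptic index $a_e\ov a_e^{-1}=1$ together with the $\xi\leftrightarrow\eta$ swap already being consistent because $\mathbf R=\diag(\mathbf a,\mathbf a)$, hence $a_e\in\rr^*$; on a hyperbolic index $a_h=\ov a_h$, i.e. $a_h\in\rr^*$; on the complex pair, the swap $s\leftrightarrow s+s_*$ forces $a_{s+s_*}=\ov a_s$, i.e. $\ov{\mathbf a_{s_*}}=\mathbf a_{s_*}'$. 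This is exactly \re{aeae}.

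\textbf{The matrix relation and the main obstacle.} Finally I would plug the normal form \re{t1jd} for both families into $R^{-1}T_{1j}R=\widehat T_{1\nu(j)}$. Using $T_{1j}=\mathbf E_{\mathbf\Lambda_1}\mathbf B_*\mathbf Z_j\mathbf B_*^{-1}\mathbf E_{\mathbf\Lambda_1}^{-1}$ and the analogous formula with $\hat{\mathbf B}$, and the fact (to be checked by direct computation) that the diagonal matrix $\mathbf R=\diag(\mathbf a,\mathbf a)$ commutes with $\mathbf E_{\mathbf\Lambda_1}$, the relation collapses to
\[
(\diag\mathbf a)^{-1}\,\mathbf B_*\,\mathbf Z_j\,\mathbf B_*^{-1}\,(\diag\mathbf a)=\hat{\mathbf B}_*\,\mathbf Z_{\nu(j)}\,\hat{\mathbf B}_*^{-1},\qquad 1\le j\le p.
\]
Equivalently, setting $\mathbf C:=\hat{\mathbf B}_*^{-1}(\diag\mathbf a)^{-1}\mathbf B_*$ (which has the block-lower-triangular form with $\mathbf I_p$ in the top-left corner inherited from the $\mathbf B_*$-shape), one needs $\mathbf C\,\mathbf Z_j=\mathbf Z_{\nu(j)}\,\mathbf C$ for all $j$; by \rl{zcent} applied to the $\mathbf Z_j$'s, this holds precisely when the bottom-right block of $\mathbf C$ is $\diag_\nu\mathbf d$ for some $\mathbf d\in(\cc^*)^p$, which unwinds to $\hat{\mathbf B}=(\diag\mathbf a)^{-1}\mathbf B(\diag_\nu\mathbf d)$, i.e. \re{bcbd-} and its entrywise form. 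The converse is then immediate by reversing these steps: given $\mathbf a,\mathbf d$ satisfying \re{aeae}–\re{bcbd-}, define $\mathbf R=\diag(\mathbf a,\mathbf a)$; the reality condition gives $R\rho=\rho R$, and \re{bcbd-} gives $R^{-1}T_{1j}R=\widehat T_{1\nu(j)}$ after substituting into \re{t1jd}. The main obstacle I anticipate is purely bookkeeping: carefully tracking the four-block ($e_*,h_*,s_*,s_*$) structure through the conjugation $\mathbf E_{\mathbf\Lambda_1}$-sandwich and verifying that $\diag(\mathbf a,\mathbf a)$ genuinely commutes with $\mathbf E_{\mathbf\Lambda_1}$ and with the $s_*$-block permutations hidden in $\boldsymbol\rho$ — there is no conceptual difficulty, but the index gymnastics between the $(\xi,\eta)$-coordinates and the $(z^+,z^-)$-coordinates of \re{zep1}–\re{zep4} must be done with care to land exactly on the stated entrywise identity $a_i^{-1}b_{i\nu^{-1}(j)}d_{\nu^{-1}(j)}=\hat b_{ij}$.
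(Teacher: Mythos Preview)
Your proposal is correct and follows essentially the same route as the paper: deduce that $R$ commutes with $\hat S$ (hence is diagonal), then with $\hat T_1$ (hence $\mathbf R=\diag(\mathbf a,\mathbf a)$), extract \re{aeae} from $R\rho=\rho R$, observe that $\diag(\mathbf a,\mathbf a)$ commutes with $\mathbf E_{\mathbf\Lambda_1}$, and finally invoke \rl{zcent} on the resulting conjugacy to produce $\diag_\nu\mathbf d$ and the relation \re{bcbd-}. Two small slips to clean up: the top-left block of your $\mathbf C$ is $(\diag\mathbf a)^{-1}$, not $\mathbf I_p$, and your application of \rl{zcent} to $\mathbf C\mathbf Z_j=\mathbf Z_{\nu(j)}\mathbf C$ actually yields $\diag_{\nu^{-1}}\mathbf d$ in the bottom-right block---after inverting, this is absorbed into a relabeled $\mathbf d$ and you still land on \re{bcbd-}.
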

\begin{proof}
Suppose that $R ^{-1}  T_{1j} R=\widehat T_{1\nu(j)}$ and $ R \rho=\rho R $. Then $ R^{-1}\hat T_1 R= \hat T_1$ and $R^{-1}\hat SR=\hat S$.  The latter implies that the matrix of
 $R$ is diagonal. The former   implies that
\eq{phi0}
\nonumber
 R \colon \xi_j'=a_j\xi_j,\quad \eta_j'=a_j\eta_j
\eeq
with $  a_j\in\cc^*$.  Now $ R \rho=\rho R $ implies \re{aeae}.
We express $R ^{-1}  T_{1j} R=\widehat T_{1\nu(j)}$ via matrices:
\eq{ela1t}
\mathbf{E}_{\mathbf \Lambda_1}  \widehat{\mathbf{B}}_*\mathbf{Z}_{\nu(j)}\widehat{\mathbf{B}}_*^{-1} \mathbf{E}_{\mathbf \Lambda_1}^{-1}
=\mathbf{R}^{-1}\mathbf{E}_{\mathbf \Lambda_1} \mathbf{B}_*\mathbf{Z}_j\mathbf{B}_*^{-1}\mathbf{E}_{\mathbf \Lambda_1}^{-1}\mathbf{R}.
\eeq
In view of formula \re{Elam}, we see that $\mathbf{E}_{\mathbf \Lambda_1}$ commutes with $\mathbf{R}=\diag(\mathbf{a},\mathbf{a})$.
 The above is equivalent to that $\boldsymbol{\psi}:=\mathbf{B}_*^{-1}\mathbf{R}\widehat{\mathbf{B}}_*$ satisfies $ \mathbf{Z}_{\nu(j)}={\boldsymbol{\psi}}^{-1}\mathbf{Z}_j\boldsymbol{\psi}$.
   By \rl{zcent}  we obtain $\boldsymbol{\psi}=\diag(\mathbf{A},\diag_{\nu}\mathbf{d})$. This shows that
$$
\begin{pmatrix}
\mathbf{A}&\mathbf{ 0}\\
 \mathbf{0}  &  \diag_{\nu}\mathbf{d}
\end{pmatrix} =\begin{pmatrix}
\mathbf{I } &\mathbf{0}\\
 \mathbf{0}  &\mathbf{B}
\end{pmatrix}^{-1}\begin{pmatrix}
\diag \mathbf{a}  &\mathbf{0}\\
 \mathbf{0}  &\diag \mathbf{a}
\end{pmatrix}\begin{pmatrix}
\mathbf{ I} &\mathbf{0}\\
 \mathbf{0}  &  \widehat{\mathbf{B}}
\end{pmatrix}.
$$
The matrices on diagonal yield $\mathbf{A}=\diag \mathbf{a}$  and \re{bcbd-}.
The lemma is proved.
\end{proof}

\rl{unsol}  does not give us an explicit description of the normal form  for the families of
involutions $\{T_{11},\ldots, T_{1p},\rho\}$.  Nevertheless by the lemma, we can always choose a $\nu$ and $\diag {\mathbf d}$ such that the diagonal elements of $\mathbf{ \tilde B}$,
corresponding to $\{\tilde T_{1\nu(1)},\ldots, \tilde T_{1\nu(p)}, \rho\}$,  are $1$.

\begin{rem}
In what follows, we will fix a $\mathbf{B}$ and its associated  $\{\cL T_1,\rho\}$ to further
study our normal form problems.
\end{rem}

\subsection{Normal form of the  quadrics}

  We now use  the matrices $\mathbf{B}$  to express
the normal form for the quadratic submanifolds. Here we follow the realization procedure in    \rp{mmtp}.
We will use the coordinates $z^+, z^-$ again to express invariant functions of $T_{1j}$ and
use them to construct the corresponding quadric.
We will then pull back  the quadric to the $(\xi,\eta)$ coordinates and then to the $z,\ov z$ coordinates
to achieve the final normal form of the quadrics.

We return to the construction of invariant and skew-invariant functions $z^+,z^-$ in \re{zep1}-\re{zep4}
when $\mathbf{B}$ is the identity matrix. For a general $\mathbf{B}$, we define $\Phi_1$ and the matrix
$\mathbf{\Phi}_1^{-1}$ by
$$
\Phi_1(Z^+,Z^-)=(\xi,\eta), \quad
\mathbf{\Phi}_1^{-1}:=
 \mathbf{B}_*^{-1}\mathbf{E}_{\mathbf \Lambda_1}^{-1}= \begin{pmatrix}
\mathbf{I} &\mathbf{\Lambda}_1 \\
 -\mathbf{B}^{-1}\mathbf{\Lambda}_1^{-1} & \mathbf{B}^{-1}
\end{pmatrix}.
$$
Note that $Z^+=z^+$ and $\Phi_1^{-1}T_{1j}\Phi_1=Z_j$.  The  $Z^+, Z_i^-$ with $i\neq j$
are invariant functions of $T_{1j}$, while $Z_j^-$
is a skew-invariant function
of $T_{1j}$. They can be written as
\eq{Z+xi}
Z^+=\xi+\mathbf{\Lambda}_1\eta, \quad Z^-=\mathbf{B}^{-1}(-\mathbf{\Lambda}_1^{-1}\xi+\eta).
\eeq
Therefore, the invariant functions of $\cL T_1$ are generated
by
$$
Z_j^+=\xi_j+\la_{j}\eta_j, \quad (Z_j^-)^2=(\tilde {\mathbf{ B}}_j(-\mathbf{\Lambda}_1^{-1}\xi+\eta))^2, \quad 1\leq j\leq p.
$$
Here 
$\tilde {\mathbf{ B}}_j$ is the $j$th row of $\mathbf{B}^{-1}$.
The invariant (holomorphic)
 functions of $\cL T_2$ are generated
by
\eq{wjpm}
W_j^+=\ov{Z_j^+\circ\rho}, \quad( W_j^-)^2=(\ov{Z^-_j\circ\rho})^2,
\quad 1\leq j\leq p.
\eeq
  Here $W_j^-=\ov{Z^-_j\circ\rho}$.
We will soon verify that
$$
m\colon (\xi,\eta)\to (z',w')=(Z^+(\xi,\eta), W^+(\xi,\eta))
$$
is biholomorphic.  A straightforward computation shows that
$m\rho m^{-1}$ equals 
$$
\rho_0\colon (z',w')\to(\ov {w'},\ov {z'}).
$$
We define
\ga\label{m0zp}
\nonumber
M\colon z_{p+j}''=(Z_j^-\circ m^{-1}(z',\ov{z'}))^2.\end{gather}
We want to find a simpler expression for $M$.
We first separate $B$ from $Z^-$ by writing
\ga\label{Zhmi}
\mathbf{  \hat Z}^-:= (-{\mathbf \Lambda}_1^{-1}\  \mathbf{ I}),
 \quad \mathbf{ Z}^-=\mathbf{ B}^{-1}\mathbf{ \hat Z}^-.
\end{gather}
Note that $m$ does not depend on $\mathbf B$.  To compute $\hat Z^-\circ m^{-1}$, we will use
matrix expressions for  $({\boldsymbol\xi}_{e_*},{\boldsymbol\eta}_{e_*})$, $({\boldsymbol\xi}_{h_*},{\boldsymbol\eta}_{h_*})$
and $({\boldsymbol\xi}_{2s_*}, {\boldsymbol\eta}_{2s_*})$ subspaces.
Let $m_{e_*},m_{h_*},m_{s_*}$ be the restrictions  $m$ to these
subspaces. In the matrix form,
we have   by \re{wjpm}
$$
\mathbf{ W}^+=\ov {\mathbf{ Z}^+\boldsymbol{\rho}}, \quad \mathbf{ W}^-=\ov{\mathbf{  Z}^-
\boldsymbol{\rho}}.
$$
Recall that $\mathbf{\Lambda}_1=\diag({\mathbf \Lambda}_{e_*},{\mathbf \Lambda}_{h_*},{\mathbf \Lambda}_{1s_*}, \ov{\mathbf \Lambda}_{1s_*}^{-1})$. Thus
\aln
\mathbf{ m}_{e_*}&=\begin{bmatrix}
\mathbf{ I} & {\mathbf \Lambda}_{1{e_*}} \\
{\mathbf \Lambda}_{1{e_*}} & \mathbf{ I}
\end{bmatrix},\quad
\mathbf{ m}_{e_*}^{-1}=
\begin{bmatrix}
\mathbf{ I} &- {\mathbf \Lambda}_{1{e_*}} \\
-{\mathbf \Lambda}_{1{e_*}} &\mathbf{  I}
\end{bmatrix}\begin{bmatrix}
(\mathbf{ I}-{\mathbf \Lambda}_{1{e_*}}^2)^{-1} &\mathbf{0} \\
\mathbf{0} & (\mathbf{ I}-{\mathbf \Lambda}_{1{e_*}}^2)^{-1}
\end{bmatrix},\\
\mathbf{ m}_{h_*}&=\begin{bmatrix}
\mathbf{ I} & {\mathbf \Lambda}_{1{h_*}} \\
\mathbf{ I} & {\mathbf \Lambda}_{1{h_*}}^{-1}
\end{bmatrix},\hspace{5ex}
\mathbf{ m}_{h_*}^{-1}=
\begin{bmatrix}
\mathbf{  I }& -{\mathbf \Lambda}_{1{h_*}}^2 \\
-{\mathbf \Lambda}_{1{h_*}} &{\mathbf \Lambda}_{1{h_*}}
\end{bmatrix}\begin{bmatrix}
(\mathbf{ I}-{\mathbf \Lambda}_{1{h_*}}^2)^{-1}&\mathbf{0} \\
\mathbf{0} &(\mathbf{ I}-{\mathbf \Lambda}_{1{h_*}}^2)^{-1}
\end{bmatrix},
\\
\mathbf{ m}_{s_*}&=
\begin{bmatrix}
{\mathbf  I} & {\mathbf  0}&{\mathbf \Lambda}_{1{s_*}} &{\mathbf  0}\\
{\mathbf  0}&{\mathbf  I} & {\mathbf  0}&\ov{\mathbf \Lambda}_{1{s_*}}^{-1}\\
{\mathbf  0}&{\mathbf  I}&{\mathbf  0}&\ov{\mathbf \Lambda}_{1{s_*}}\\
{\mathbf  I}&{\mathbf  0}&{\mathbf \Lambda}_{1{s_*}}^{-1}&{\mathbf  0}
\end{bmatrix},\\
\mathbf{ m}_{s_*}^{-1}&=
\begin{bmatrix}
{\mathbf \Lambda}_{1{s_*}}^{-1}&{\mathbf  0}&{\mathbf  0}&-{\mathbf \Lambda}_{1{s_*}}\\
{\mathbf  0}&\ov{\mathbf \Lambda}_{1{s_*}} &-\ov{\mathbf \Lambda}_{1{s_*}}^{-1}&{\mathbf  0}\\
-{\mathbf  I}&{\mathbf  0}&{\mathbf  0}&{\mathbf  I}\\
{\mathbf  0}&-{\mathbf  I}&{\mathbf  I}&{\mathbf  0}
\end{bmatrix}
\begin{bmatrix}
\mathbf{ L}_{s_*}&{\mathbf  0}\\
{\mathbf  0}&-\ov{\mathbf L}_{s_*}
\end{bmatrix},\\
\mathbf{ L}_{s_*}&=\begin{bmatrix}
({\mathbf \Lambda}_{1{s_*}}^{-1}-{\mathbf \Lambda}_{1{s_*}})^{-1}&{\mathbf  0}\\
{\mathbf  0}& (\ov{\mathbf \Lambda}_{1{s_*}}-\ov{\mathbf \Lambda}_{1{s_*}}^{-1})^{-1}
\end{bmatrix}.
\end{align*}
Note that $\mathbf{ I}-{\mathbf \Lambda}_{1}^2$ is diagonal.  Using \re{Zhmi} and
the above formulae, the matrices of $\hat Z_{e_*}^{-1}\circ m^{-1}$,
$\hat Z_{h_*}^-\circ m^{-1}$, and $\hat Z_{s_*}^{-1}\circ m^{-1}$ are respectively given by
\begin{align*}
\mathbf{\hat Z}^-_{e_*}\mathbf{ m}_{e_*}^{-1}
&=\mathbf{ L}_{e_*}
\begin{bmatrix}
\mathbf{ I}  &-2({\mathbf \Lambda}_{1{e_*}}+{\mathbf \Lambda}_{1{e_*}}^{-1})^{-1}
\end{bmatrix},\\
\mathbf{ L}_{e_*}&=
(\mathbf{ I}-{\mathbf \Lambda}_{1{e_*}}^2)^{-1}(-{\mathbf \Lambda}_{1{e_*}}-{\mathbf \Lambda}_{1{e_*}}^{-1}),\\
\mathbf{ \hat Z}_{h_*}^-\mathbf{ m}_{h_*}^{-1}
&=\mathbf{ L}_{h_*}
\begin{bmatrix}
\mathbf{ I} & -2{\mathbf \Lambda}_{1{h_*}}({\mathbf \Lambda}_{1{h_*}}+{\mathbf \Lambda}_{1{h_*}}^{-1})^{-1}
\end{bmatrix},\\
\mathbf{ L}_{h_*}&=(\mathbf{ I}-{\mathbf \Lambda}_{1{h_*}}^2)^{-1}
(-{\mathbf \Lambda}_{1{h_*}}-{\mathbf \Lambda}_{1{h_*}}^{-1}),
\\ 
\mathbf{ \hat Z}_{s_*}^- \mathbf{ m}_{s_*}^{-1}&=
\begin{bmatrix}
-\mathbf{ I}-{\mathbf \Lambda}_{1{s_*}}^{-2}&{\mathbf  0}&{\mathbf  0}&2\mathbf{ I}\\
{\mathbf  0}&-\mathbf{ I}-\ov{\mathbf \Lambda}_{1{s_*}}^{2} &2\mathbf{ I} & {\mathbf  0}
\end{bmatrix}
\begin{bmatrix}
\mathbf{ L}_{s_*}&{\mathbf  0}\\
{\mathbf  0}&-\ov{ \mathbf{ L}}_{s_*}
\end{bmatrix}
\\
&=\mathbf{ \tilde L}_{s_*}
\begin{bmatrix}
\mathbf{ I}&{\mathbf  0}&{\mathbf  0}  &-  2(\mathbf{ I}+{\mathbf \Lambda}_{1{s_*}}^{-2})^{-1}\\
{\mathbf  0}&\mathbf{ I}&-  2 (\mathbf{ I}+\ov{\mathbf \Lambda}_{1{s_*}}^{2})^{-1} &{\mathbf  0}
\end{bmatrix},\\
\mathbf{ \tilde L}_{s_*}&=
\begin{bmatrix}
(\mathbf{ I}+{\mathbf \Lambda}_{1{s_*}}^{-2})({\mathbf \Lambda}_{1{s_*}}-{\mathbf \Lambda}_{1{s_*}}^{-1})^{-1}&{\mathbf  0}\\
{\mathbf  0}&(\mathbf{ I}+\ov{\mathbf \Lambda}_{1{s_*}}^{2})(\ov{\mathbf \Lambda}_{1{s_*}}^{-1}-\ov{\mathbf \Lambda}_{1{s_*}})^{-1}
\end{bmatrix}.
\end{align*}
Combining the above identities, we obtain
$$
\mathbf{ \hat Z}^{-1}\mathbf{ m}^{-1}=
\diag(\mathbf{ L}_{e_*}, \mathbf{ L}_{h_*},\mathbf{ \tilde L}_{s_*})\left(\mathbf{ I}_p,
  -2\diag\left(\boldsymbol{ \Gamma}_{e_*},{\mathbf \Lambda}_{1{h_*}}\boldsymbol{ \Gamma}_{h_*},
   \begin{bmatrix}0& \tilde{\boldsymbol{ \Gamma}}_{s_*}\\
  {\boldsymbol{ \Gamma}}_{s_*} &0\end{bmatrix}\right)\right)
$$
with  $\tilde{\mathbf{ \Gamma}}_{s_*}=\mathbf{ I}-\ov{\mathbf{ \Gamma}}_{1{s_*}}$  and  \ga\label{Gehs}
\mathbf{ \Gamma}_{e_*}=({\mathbf \Lambda}_{1{e_*}}+{\mathbf \Lambda}_{1{e_*}}^{-1})^{-1},
\quad\mathbf{  \Gamma}_{h_*}=({\mathbf \Lambda}_{1{h_*}}+{\mathbf \Lambda}_{1{h_*}}^{-1})^{-1},\quad
\mathbf{ \Gamma}_{s_*}= (\mathbf{ I}+\ov{\mathbf \Lambda}_{1{ s_*}}^{2})^{-1}.
\end{gather}
  We define $\tilde{\mathbf B}_j$ to be the $j$-th row of
\eq{deftb}
\tilde {\mathbf B}:=\mathbf{ B}^{-1}\diag(\mathbf{ L}_{e_*}, \mathbf{ L}_{h_*},\mathbf{ \tilde L}_{s_*}).
\eeq

With $\mathbf  z_{s_*}'=(z_{p-s_*+1}, \ldots, z_p)$,  the defining equations of $M$ are given by
$$
z_{p+j}''=\bigl\{\mathbf{ \tilde B}_j\cdot 
(\mathbf z_{{e_*}}-2\mathbf{ \Gamma}_{e_*}\ov {\mathbf z}_{e_*},
\mathbf z_{h_*}
-
2\mathbf{ \Gamma}_{h_*}\mathbf{\Lambda}_{1h_*}\ov{ \mathbf  z}_{h_*}, \mathbf z_{s_*}-2\mathbf{ \Gamma}_{s_*}\ov{ \mathbf  z}_{s_*}',
{\mathbf  z}_{s_*}'-2(\mathbf{ I}-\ov{\mathbf{ \Gamma}_{s_*}})\ov {\mathbf  z}_{s_*})\bigr\}^2.
$$
 Let us replace   $z_j$ with $j\neq h$,  $z_h$ by $iz_j$ and  $i \sqrt{\la_h}z_h$, respectively for $1\leq j\leq p$. Replace $z_{p+j}$ by $-z_{p+j}$.
 In the new coordinates, $M$ is given
by
$$
z_{p+j}''=\bigl\{\hat{\mathbf{B}}_j
\cdot(\mathbf z_{{e_*}}+2\mathbf{ \Gamma}_{e_*}\ov {\mathbf z}_{e_*},
\mathbf z_{h_*}
+  2\mathbf{ \Gamma}_{h_*}\ov{ \mathbf  z}_{h_*}, \mathbf z_{s_*}+2\tilde{\mathbf{ \Gamma}}_{s_*}\ov{ \mathbf  z}_{s_*}',
{\mathbf  z}_{s_*}'+2 {\mathbf{ \Gamma}}_{s_*}
\ov {\mathbf  z}_{s_*})\bigr\}^2.
$$
 Explicitly, we have
\begin{align}\label{qbga}
Q_{\mathbf B,\gamma}\colon
z_{p+j}&=\Bigl(
\sum_{\ell=1}^{e_*+h_*} \hat b_{j\ell}(z_\ell+2\gamma_\ell\ov z_\ell)
\\ &\qquad  
   +\sum_{s=e_*+h_*+1}^{p-s_*}
 \hat b_{js}(z_s+2\gamma_{s+s_*}\ov z_{s+s_*})  +
 \hat b_{j(s+s_*)}(z_{s+s_*}+2
 \gamma_{s}\ov z_{s})\Bigr)^2  \nonumber
\end{align}
for $ 1\leq j\leq p$. Here
\eq{gsss}\nonumber
\gaa_{s+s_*}=1-\ov\gaa_s.
\eeq
By \re{deftb}, we also obtain the following identity
\eq{defhb}
\nonumber
\hat{\mathbf B}=  \mathbf B^{-1} \diag(\mathbf{ L}_{e_*}, \mathbf{ L}_{h_*},\mathbf{ \tilde L}_{s_*})
\diag(\mathbf I_{e_*}, \mathbf{\Lambda}_{1h_*}^{1/2}, \mathbf I_{2s_*}).
\eeq
The equivalence relation \re{bcbd-} on the set of non-singular matrices $\mathbf B$ now takes the form
\eq{hbsim}
 \widehat{\tilde{\mathbf B}}= (\diag_{ \nu}\mathbf d)^{-1}\hat{\mathbf B}\diag\mathbf a,
\eeq
where $\mathbf a$ satisfies \re{aeae} and   $\diag_{ \nu}\mathbf d$  is defined in \re{dfndiag}.

Therefore, by \rp{mmtp} we obtain the following classification for
the quadrics.

\begin{thm}\label{quadclass}
Let $M$ be a quadratic submanifold defined by \rea{fmzp}-\rea{fmzp+} with $q^{-1}(0)=0$. Assume that the branched covering
 $\pi_1$ has   $2^p$  deck transformations.
 Let $T_1,T_2$ be the pair of Moser-Webster involutions
 of $M$. Suppose that  $S=T_1T_2$
has $2p$ distinct eigenvalues.
Then $M$ is holomorphically equivalent to
\rea{qbga} with  $\hat{\mathbf{ B}}\in GL(p,\cc)$ being uniquely determined by   the equivalence
  relation \rea{hbsim}.
\end{thm}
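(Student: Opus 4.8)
The plan is to assemble the linear normal-form analysis of this section and then convert it, via the realization procedure recalled after \rp{mmtp}, into a statement about the quadrics themselves. Since $M$ is \emph{quadratic}, the functions $E_j$ are homogeneous of degree two, so the Moser--Webster involutions $\tau_{11},\dots,\tau_{1p}$ and $\rho_0$ are all linear. By \rp{mmtp}(i), two such quadrics (both satisfying condition~D) are holomorphically equivalent if and only if the linear families $\{\tau_{11},\dots,\tau_{1p},\rho_0\}$ are holomorphically — hence linearly — equivalent, and under conditions~B and~D these families satisfy the transversality condition \re{bilii++} needed below. Thus the problem is exactly the linear classification carried out in \rl{incomp}, \rl{t1t2sigrho}, \rl{sett} and \rl{unsol}, and the theorem will follow by feeding their output into the realization.

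First, via \rl{incomp}, condition~E makes the $2p$ eigenvalues of $S=T_1T_2$ distinct with none equal to $\pm1$; this is what renders the diagonal matrices $\mathbf{\Lambda}_1$, $\mathbf I-\mathbf{\Lambda}_1^2$ and $\mathbf{\Lambda}_{1s_*}-\mathbf{\Lambda}_{1s_*}^{-1}$ below invertible. Then \rl{t1t2sigrho} normalizes $T_1,T_2,S,\rho_0$ simultaneously to $\hat T_1,\hat T_2,\hat S,\rho$ with eigenvalue data constrained by \re{muor}--\re{lass}, those eigenvalues forming a complete invariant of the quadruple $\{\hat T_1,\hat T_2,\hat S,\rho\}$. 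Next, \rl{sett} expresses $\{T_{11},\dots,T_{1p},\rho\}$ through a single matrix $\mathbf B\in GL_p(\cc)$ via \re{t1jd}, unique up to $\mathbf B\sim\mathbf B\mathbf D$ with $\mathbf D$ diagonal — the freedom \re{bsdb} coming from the non-uniqueness of $\phi_1$ in \rl{zcent}; and \rl{unsol} shows that two families with the same eigenvalue data are linearly equivalent precisely when the matrices are related by \re{bcbd-}, i.e.\ $\hat{\mathbf B}=(\diag\mathbf a)^{-1}\mathbf B(\diag_\nu\mathbf d)$ with $\mathbf a$ obeying \re{aeae} and $\nu$ a permutation, the extra factor $\diag\mathbf a$ and the permutation $\nu$ being the residual freedom left by \rl{t1t2sigrho}, which pins down only the eigenvalues.

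Next I would build the quadric. Using the $\cL T_1$-invariant and skew-invariant linear functions $Z^+=\xi+\mathbf{\Lambda}_1\eta$, $Z^-=\mathbf B^{-1}(-\mathbf{\Lambda}_1^{-1}\xi+\eta)$ of \re{Z+xi} together with their $\rho$-conjugates $W^\pm$, one checks that $m\colon(\xi,\eta)\mapsto(Z^+,W^+)$ is biholomorphic — this is where invertibility of $\mathbf I-\mathbf{\Lambda}_1^2$ and $\mathbf{\Lambda}_{1s_*}-\mathbf{\Lambda}_{1s_*}^{-1}$ is used, and it is precisely condition \re{conreal} — and that $m\rho m^{-1}=\rho_0$. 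Then $z_{p+j}''=(Z_j^-\circ m^{-1}(z',\ov{z'}))^2$ is a realization for $\{\tau_{11},\dots,\tau_{1p},\rho_0\}$. Separating $\mathbf B^{-1}$ off through $\mathbf Z^-=\mathbf B^{-1}\hat{\mathbf Z}^-$ and computing $\hat{\mathbf Z}^-\mathbf m^{-1}$ block by block over the elliptic, hyperbolic and complex parts (via the explicit inverses of $\mathbf m_{e_*},\mathbf m_{h_*},\mathbf m_{s_*}$), then performing the cosmetic rescalings $z_j\mapsto iz_j$, $z_h\mapsto i\sqrt{\la_h}\,z_h$, $z_{p+j}\mapsto-z_{p+j}$, one arrives at the form \re{qbga} with $\boldsymbol\gamma$ determined by the eigenvalues through \re{Gehs} and with $\hat{\mathbf B}=\mathbf B^{-1}\diag(\mathbf L_{e_*},\mathbf L_{h_*},\tilde{\mathbf L}_{s_*})\diag(\mathbf I_{e_*},\mathbf{\Lambda}_{1h_*}^{1/2},\mathbf I_{2s_*})$, still in $GL_p(\cc)$ because $\diag(\mathbf L_{e_*},\mathbf L_{h_*},\tilde{\mathbf L}_{s_*})$ is invertible. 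Under this change of matrix the equivalence \re{bcbd-} on $\mathbf B$ transports to the stated equivalence \re{hbsim} on $\hat{\mathbf B}$; and the converse — that matrices related by \re{hbsim} yield holomorphically equivalent quadrics — follows by pulling the equivalence map $R$ of the last assertion of \rl{unsol} back through $m$.

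I expect the main obstacle to be not a single computation but the bookkeeping of the ambiguities: the realization $m$ depends on the choice of $\phi_1$ and on the choice of normalizing coordinates for $\{\hat T_1,\hat T_2,\hat S,\rho\}$, and one must verify that every admissible choice produces a matrix $\hat{\mathbf B}$ in the same \re{hbsim}-class and that no finer invariant survives. That the only ambiguities are $\mathbf B\sim\mathbf B\mathbf D$ (from \rl{zcent}) and the $\diag\mathbf a$-and-$\nu$ freedom (from \rl{t1t2sigrho}) is exactly the content of \rl{unsol}, so once those lemmas are in hand the argument is essentially assembly. The only genuinely computational residue is the block inversion of $\mathbf m$ together with the check that after the rescalings the entries of $\boldsymbol\gamma$ fall in the ranges $0<\gamma_e<1/2$, $1/2<\gamma_h<\infty$, $\RE\gamma_s<1/2$, $\IM\gamma_s>0$ forced by \re{muor}--\re{lass}, which is routine given the eigenvalue normalization.
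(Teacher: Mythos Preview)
Your proposal is correct and follows essentially the same approach as the paper: the proof in the text is exactly the assembly you describe—reduce to the linear classification via \rp{mmtp}(i), apply Lemmas~\ref{incomp}, \ref{t1t2sigrho}, \ref{sett}, \ref{unsol} in that order to obtain the eigenvalue invariants and the matrix $\mathbf B$ with its equivalence class, then run the realization procedure through the invariant/skew-invariant functions $Z^\pm$ and the block inversion of $\mathbf m$ to arrive at \re{qbga}. The paper's formal proof is in fact just the sentence ``Therefore, by \rp{mmtp} we obtain the following classification for the quadrics,'' with all the substance contained in the preceding computations that you have outlined.
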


 When $\hat{\mathbf{ B}}$  is the identity, we obtain
 the product of 3 types of quadrics
\ga
\nonumber
\cL Q_{\gamma_e}\colon z_{p+e}= (z_e+2\gaa_e\ov z_e)^2;\\
\nonumber
\cL Q_{\gamma_h}\colon z_{p+h}= (z_{h}+2\gaa_{h}\ov z_h)^2;\\
\cL Q_{\gamma_s}\colon z_{p+s}= (z_s+2(1-\ov\gamma_{s}) \ov z_{s+s_*})^2,
\quad z_{p+s+s_*}=( z_{s+s_*}+2
 \gamma_{s} \ov z_{s})^{2}
 \label{skewgaas}
\end{gather}
with
\eq{gens}\nonumber
\gamma_e=\frac{1}{\la_e+\la_e^{-1}}, \quad
\gamma_h=\frac{ 1}{\la_h+\ov\la_h},\quad
 \gamma_s=\frac{1}{1+\ov{\la_s}^2}.
\eeq
Note that $\arg\la_s
\in(0,\pi/2)$ and $|\la_s|>1$. Thus
\eq{gehs}
 0<\gaa_e<1/2, \quad
 \gaa_h>1/2, \quad
\gaa_s\in \{z\in\cc\colon \RE z>1/2, \IM z>0\}.
\eeq

\begin{rem} By seeking simple     formulae \re{Z+xi} for invariant functions $Z^+$ of $\{T_{1j}\}$ and \re{wjpm} for invariant functions
$W^+$ of
$\{T_{2j}\}=\{\rho T_{1j}\rho\}$, we have mismatched  the indices so that $W_{s+s_*}^+(\xi,\eta)$,  instead of $W_{s}^+$, is invariant by $T_{2s}$.
In \re{skewgaas} for $p=2$ and $h_*=e_*=0$,  by interchanging $(z_{s},z_{p+s})$ with $(z_{s+s_*},z_{p+s+s_*})$ we get the quadric \re{Qgam}, an equivalent form of \re{skewgaas}.
\end{rem}
We define the following invariants.
\begin{defn}We call $\mathbf{ \Gamma}=\diag(\mathbf{ \Gamma}_{e_*},\mathbf{ \Gamma}_{h_*},\mathbf{ \Gamma}_{s_*}, \mathbf I_{s_*}-\bar{\boldsymbol \Gamma}_{s_*})$,  given by formulae \re{Gehs},
 the {\em Bishop invariants} of the quadrics. The  equivalence classes $\hat{\mathbf B}$ of non-singular matrices
 $\mathbf{ B}$ under the equivalence relation \re{bcbd-} are called the {\it extended   Bishop invariants}
  for the quadrics.
\end{defn}
Note that $\mathbf{ \Gamma}_{e_*}$ has
diagonal elements in $(0,1/2)$, and $\mathbf{ \Gamma}_{h_*}$
has   diagonal elements in $(1/2,\infty)$,
and $\mathbf{ \Gamma}_{s_*}$ has diagonal elements
in $  (-\infty,1/2)+ i (0,\infty)$.

We remark that $Z_j^-$ is
skew-invariant by $T_{1i}$ for $i\neq j$ and invariant by
$\tau_{1j}$. Therefore, the square of a linear combination
of $Z_1^-, \ldots, Z_p^-$ might not be invariant by all
$T_{1j}$. This explains the presence of $\mathbf B$ as invariants
in the normal form.

It is worthy stating the following normal form
for two families of linear holomorphic involutions
which may not satisfy the reality
condition.

\begin{prop}\label{2tnorm}
Let $\cL T_i=\{T_{i1},\ldots, T_{ip}\},i=1,2$ be two families
of distinct and commuting
linear holomorphic involutions on $\cc^n$.
Let $T_i=T_{i1}\cdots T_{ip}$. Suppose that for each $i$, $\fix(T_{i1}), \ldots$, $\fix(T_{ip})$
are hyperplanes intersecting transversally. Suppose that  $T_1,T_2$ satisfy \rea{bilii++} and
$S=T_1T_2$ has $2p$ distinct eigenvalues.
In suitable linear coordinates, the matrices of $T_i,S$ are
\gan
\mathbf{ T}_i=\begin{pmatrix}
                 \mathbf{  0 }& {\mathbf \Lambda}_i \\
                  {\mathbf \Lambda}_i^{-1} & \mathbf{ 0} \\
                \end{pmatrix},\quad
\mathbf{ S}=\begin{pmatrix}
               {\mathbf \Lambda}_1^2   & \mathbf{ 0} \\
               \mathbf{  0} & {\mathbf \Lambda}_1^{-2} \\
                \end{pmatrix}
\end{gather*}
with ${\mathbf \Lambda}_2={\mathbf \Lambda}_1^{-1}$  being diagonal matrix whose entries
do not contain $\pm1, \pm i$. The  ${\mathbf \Lambda}_1^2$ is uniquely determined
up to a permutation in diagonal entries.  Moreover, the matrices of $T_{ij}$ are
\eq{btij}
\mathbf{ T}_{ij}=\mathbf{ E}_{{\mathbf \Lambda}_i}(\mathbf{ B}_i)_*\mathbf{ Z}_j(\mathbf{ B}_i)_*^{-1}\mathbf{ E}_{{\mathbf \Lambda}_i}^{-1}
\eeq
for some non-singular complex matrices $\mathbf{ B}_1, \mathbf{ B}_2$
uniquely determined by   the equivalence
relation
\eq{tb1tb2}
(\mathbf{ B}_1,\mathbf{ B}_2)\sim (\mathbf{ \tilde B}_1,\mathbf{ \tilde B}_2):=
( (\diag\mathbf a) ^{-1}\mathbf{ B}_1\diag_{  \nu_1}\mathbf d_1,(\diag\mathbf a)^{-1}\mathbf{ B}_2\diag_{ \nu_2}\mathbf d_2),
\end{equation}
where $\diag_{\nu_1}\mathbf{ d}_1,\diag_{ \nu_2}\mathbf{ d}_2$ are defined as in \rea{dfndiag},
and $\mathbf{ R}=\diag(\mathbf a,\mathbf a)$ is a non-singular diagonal  complex matrix   representing  the linear transformation $\var$ such that
\eq{}
\nonumber
\var^{-1}T_{ij}\var=\tilde T_{i\nu_i(j)}, \quad i=1,2, j=1, \ldots, p.
\eeq
 Here $\tilde {\cL T}_i$ is the family of  the involutions  associated to the matrices $\mathbf {\tilde B}_i$, and
 $\mathbf E_{\mathbf \Lambda_i}$ and $\mathbf B_*$ are defined by  \rea{Elam}-\rea{bstar}.
\end{prop}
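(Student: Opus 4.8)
The statement is the ``no reality condition'' analogue of Lemmas~\ref{t1t2sigrho}, \ref{sett} and \ref{unsol}; the plan is to rerun those three arguments with the antiholomorphic involution $\rho$ and every reality normalization simply deleted. \emph{Step 0 (that $n=2p$ and $\pm1,\pm i$ are excluded).} By transversality, $\bigcap_j\fix(T_{ij})$ has dimension $n-p$, and it lies in $\fix T_i=V_+(T_i)$, which by \re{bilii++} has dimension $p$; hence $n-p\le p$. On the other hand \re{bilii++} gives $V_-(T_1)+V_-(T_2)=\cc^n$ with each $\dim V_-(T_i)=n-p$, so $2(n-p)\ge n$; therefore $n=2p$, and moreover $V_-(T_1)\cap V_-(T_2)=\{0\}$ and $\fix T_i=\bigcap_j\fix(T_{ij})$ has dimension exactly $p$. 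Since $S=T_1T_2$ has $2p$ distinct eigenvalues it is diagonalizable, and the reversibility $S^{-1}=T_iST_i$ forces each $T_i$ to interchange the $S$‑eigenspaces $E(\kappa)$ and $E(\kappa^{-1})$. To rule out $\kappa=\pm1$: on $E(1)$ one has $T_1T_2=S=I$, so $T_1=T_2$ there, and the $(-1)$‑eigenspace of this common involution lies in $V_-(T_1)\cap V_-(T_2)=\{0\}$, i.e. $T_1=T_2=I$ on $E(1)$; since the remaining $2p-1$ distinct eigenvalues pair off reciprocally and $2p-1$ is odd, one of them is self‑reciprocal, so $-1\in\operatorname{spec}(S)$ as well. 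Now count fixed dimensions along the $T_i$‑invariant decomposition of $\cc^{2p}$ into $E(1)$, $E(-1)$ and the blocks $E(\mu)\oplus E(\mu^{-1})$ ($\mu\ne\pm1$): on each such block $T_i$ interchanges two lines, hence fixes a line; on $E(1)$ both $T_1,T_2$ are the identity; and on $E(-1)$, since $T_2=T_1S=-T_1$ there, exactly one of $T_1,T_2$ is $I$ and the other is $-I$. Summing gives $\dim\fix T_1+\dim\fix T_2=2p+1$, contradicting $\dim\fix T_i=p$. Hence $\pm1\notin\operatorname{spec}(S)$; writing $\mu_j$ for a choice of representative of each reciprocal pair and $\mu_j=\lambda_{1j}^2$, the $\lambda_{1j}$ avoid $\{\pm1,\pm i\}$.

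\emph{Step 1 (normal form of $S,T_1,T_2$).} For each reciprocal pair choose an $S$‑eigenvector $u_j$ with $Su_j=\mu_j u_j$ and put $v_j=T_1u_j$; then $Sv_j=\mu_j^{-1}v_j$ and $T_1v_j=u_j$, and a diagonal rescaling turns these relations into $T_1u_j=\lambda_{1j}^{-1}v_j$, $T_1v_j=\lambda_{1j}u_j$ with $\lambda_{1j}^2=\mu_j$. In the coordinates $(\xi,\eta)$ attached to the basis $\{u_j,v_j\}$ this yields $\mathbf T_1=\bigl(\begin{smallmatrix}\mathbf0&\boldsymbol\Lambda_1\\\boldsymbol\Lambda_1^{-1}&\mathbf0\end{smallmatrix}\bigr)$ and $\mathbf S=\bigl(\begin{smallmatrix}\boldsymbol\Lambda_1^2&\mathbf0\\\mathbf0&\boldsymbol\Lambda_1^{-2}\end{smallmatrix}\bigr)$, and since $T_2=T_1S$ one reads off $\mathbf T_2=\mathbf T_1\mathbf S=\bigl(\begin{smallmatrix}\mathbf0&\boldsymbol\Lambda_1^{-1}\\\boldsymbol\Lambda_1&\mathbf0\end{smallmatrix}\bigr)$, so $\boldsymbol\Lambda_2=\boldsymbol\Lambda_1^{-1}$; as $\boldsymbol\Lambda_1^2=\diag(\mu_j)$ simply records $\operatorname{spec}(S)$, it is determined up to a permutation of its diagonal entries.

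\emph{Step 2 (the matrices $\mathbf B_i$ and the equivalence relation).} Pass to coordinates $(z^+,z^-)=\mathbf E_{\boldsymbol\Lambda_1}^{-1}(\xi,\eta)$, with $\mathbf E_{\boldsymbol\Lambda_1}$ as in \re{Elam} (this is the uniform analogue of \re{zep1}--\re{zep4} with $\rho$ removed), in which $\hat T_1$ becomes $Z\colon z^+\mapsto z^+,\ z^-\mapsto-z^-$, and $Z=Z_1\cdots Z_p$ with $Z_j$ as in \re{t1jd}. The transported family $\{T_{1j}\}$ consists of commuting involutions, each fixing a hyperplane, with product $Z$; by \cite[Lemma~2.4]{GS15} there is a linear $\phi_1$ with $T_{1j}=\varphi_1\phi_1 Z_j\phi_1^{-1}\varphi_1^{-1}$, and $\phi_1$ commutes with $Z$, hence by \rl{zcent} has block form $(z^+,z^-)\mapsto(\mathbf Az^+,\mathbf Bz^-)$; modding out by the centralizer of $\{Z_1,\dots,Z_p\}$ we may take $\phi_1\colon(z^+,z^-)\mapsto(z^+,\mathbf B_1z^-)$ with $\mathbf B_1\in GL_p(\cc)$, which is exactly \re{btij} for $i=1$. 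The identical argument applied to the second family (now with $\boldsymbol\Lambda_2=\boldsymbol\Lambda_1^{-1}$) produces $\mathbf B_2$; no reality constraint enters, so $\mathbf B_1,\mathbf B_2$ are arbitrary non‑singular complex matrices chosen independently. For the uniqueness part, suppose $\varphi$ with matrix $\mathbf R$ satisfies $\varphi^{-1}T_{ij}\varphi=\tilde T_{i\nu_i(j)}$ for $i=1,2$, where $\tilde{\cL T}_i$ is built from the same $\boldsymbol\Lambda_1$ and matrices $\tilde{\mathbf B}_i$. Taking products over $j$ and using that $(\tilde{\mathbf B}_i)_*$ commutes with $\mathbf Z=\diag(\mathbf I,-\mathbf I)$, the product of the $\tilde T_{ij}$ is again $\hat T_i$, so $\varphi^{-1}\hat T_i\varphi=\hat T_i$, hence $\varphi^{-1}\hat S\varphi=\hat S$; as $\hat S$ has distinct diagonal entries, $\mathbf R$ is diagonal, and $\varphi^{-1}\hat T_1\varphi=\hat T_1$ then forces $\mathbf R=\diag(\mathbf a,\mathbf a)$. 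Since $\diag(\mathbf a,\mathbf a)$ commutes with each $\mathbf E_{\boldsymbol\Lambda_i}$, substituting \re{btij} into $\varphi^{-1}T_{ij}\varphi=\tilde T_{i\nu_i(j)}$ and cancelling $\mathbf E_{\boldsymbol\Lambda_i}$ reduces the relation to $\mathbf Z_{\nu_i(j)}=\boldsymbol\psi_i^{-1}\mathbf Z_j\boldsymbol\psi_i$ with $\boldsymbol\psi_i=(\mathbf B_i)_*^{-1}\mathbf R(\tilde{\mathbf B}_i)_*$; by \rl{zcent}, $\boldsymbol\psi_i=\diag(\diag\mathbf a,\diag_{\nu_i}\mathbf d_i)$, and its lower diagonal block gives $\tilde{\mathbf B}_i=(\diag\mathbf a)^{-1}\mathbf B_i\diag_{\nu_i}\mathbf d_i$, which is \re{tb1tb2}. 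Running the computation backwards gives the converse, so $(\mathbf B_1,\mathbf B_2)$ is uniquely determined up to \re{tb1tb2}.

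The one genuinely delicate point is Step~0's exclusion of $\pm1$ (equivalently $\pm i$ from $\boldsymbol\Lambda_1$): diagonalizability of $S$ by itself does not suffice, and it is exactly here that the hypothesis that each $T_{ij}$ fixes \emph{only} a hyperplane is used, through the dimension count above — this is the $\rho$‑free core of the argument in \rl{incomp}. Once Step~1 is in place, everything is a transcription of the proofs of Lemmas~\ref{t1t2sigrho}, \ref{sett} and \ref{unsol} with the reality normalizations deleted.
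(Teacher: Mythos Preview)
Your proof is correct and follows essentially the same strategy as the paper's: diagonalize $S$ via eigenvectors paired by $T_1$, read off $\mathbf T_i,\mathbf S$, then pull each family back through $\mathbf E_{\boldsymbol\Lambda_i}$ to the $Z_j$'s to extract $\mathbf B_i$, and run the computation of \rl{unsol} twice for the equivalence relation. The one substantive difference is your Step~0: the paper simply invokes \rl{incomp} to exclude $\pm1$ from $\operatorname{spec}(S)$, but that lemma carries the reality hypothesis $T_2=\rho T_1\rho$, which is absent here (its proof that $1\notin\operatorname{spec}(S)$ is $\rho$-free, but its treatment of $-1$ is not). Your direct parity-plus-dimension-count argument is therefore a genuine improvement in this $\rho$-free setting, and your explicit derivation of $n=2p$ from the transversality of $\fix(T_{ij})$ together with \re{bilii++} is also more careful than what the paper writes out. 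Everything else is a faithful transcription of Lemmas~\ref{t1t2sigrho}, \ref{sett}, and \ref{unsol} with the reality normalizations stripped.
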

\begin{proof}  Let $\kappa$ be an eigenvalue of $S$ with (non-zero) eigenvector $u$. Since $T_iST_i=S^{-1}$.
Then $S(T_i(u))=\kappa^{-1}T_i(u)$. This shows that $\kappa^{-1}$ is also an eigenvalue of $S$. By \rl{incomp}, $1$ and $ -1$
are not eigenvalues of $S$. Thus, we can list the eigenvalues of $S$ as $\mu_1, \ldots, \mu_p, \mu_1^{-1}, \ldots, \mu_p^{-1}$.
Let $u_j$ be an eigenvector of $S$ with eigenvalue $\mu_j$. Fix $\la_j$ such that $\la_j^2=\mu_j$.
Then $v_j:=\la_j T_1(u_j)$ is an eigenvector of $S$ with eigenvalue $\mu_j^{-1}$. The $\sum\xi_ju_j+\eta_jv_j$ defines
a coordinate system on $\cc^n$ such that $T_i,S$ have  the above matrices $\mathbf{\Lambda}_i$ and $\mathbf S$,
respectively.  By \re{phi1z} and  \re{t1jd},
$T_{ij}$ can be expressed in \re{btij}, where each $\mathbf B_i$ is uniquely determined up to
$\mathbf B_i\diag\mathbf d_i$. Suppose that $  \{\tilde T_{1j}\}, \{\tilde T_{2j}\}$
are another pair of
families of linear involutions of which the corresponding matrices are $ \mathbf{\tilde B}_1, \mathbf {\tilde B}_2$.
  If there is a linear change of coordinates $\var$ such
that $\var^{-1} T_{ij}\var=\tilde T_{i\nu_i(j)}${,
then  in}
the matrix $\mathbf R$ of $\var$, we obtain \re{tb1tb2}; see a similar computation
for \re{bcbd-} by using \re{ela1t}.
Conversely, \re{ela1t} implies that the corresponding pairs of families
of involutions are equivalent.
\end{proof}

Finally, we conclude the section with   examples of quadratic manifolds  of maximum deck transformations for which the corresponding $\sigma$ is not diagonalizable.
\begin{exmp}\label{jordanblocks}
Let $\mathbf K$ be a $p\times p$ invertible matrix.
 Let $T_1, \rho, T_2=\rho T_1\rho, S$ have matrices
$$
\mathbf T_1=\begin{pmatrix}
                 \mathbf{  0 }& \mathbf K \\
                  \mathbf K^{-1} & \mathbf{ 0} \\
                \end{pmatrix},
                \
 \boldsymbol{\rho}=\begin{pmatrix}
                 \mathbf{  0 }&  {\mathbf I}_{p} \\
                 {\mathbf I}_{p} & \mathbf{ 0} \\
                \end{pmatrix},  \
        \mathbf T_2= \begin{pmatrix}
                 \mathbf{  0 }& \ov {\mathbf K}^{-1} \\
                  \ov{\mathbf K} & \mathbf{ 0} \\
                \end{pmatrix},
                  \
 \mathbf S=\begin{pmatrix}
                {\mathbf{  K}} \ov{\mathbf{  K}}& {\mathbf 0} \\
                  {\mathbf 0} &  {\mathbf{  K}}^{-1} \ov{\mathbf{  K}}^{-1} \\
                \end{pmatrix}.
                $$
 One can verify that the sets of fixed points of $T_1,T_2$ intersect transversally if
 \eq{detkk}
 \det({\mathbf K}-\ov{\mathbf K}^{-1})\neq0.
 \eeq
We can decompose $T_1=T_{11}\cdots T_{1p}$ where $T_{11},\dots, T_{1p}$ are commuting involutions  and each of them fixes a hyperplane by using
$$
\begin{pmatrix}
                 \mathbf{  0 }& \mathbf K \\
                  \mathbf K^{-1} & \mathbf{ 0} \\
                \end{pmatrix}
                =
  \begin{pmatrix}
          \mathbf K        &\ \mathbf{  0 }\\
                  \mathbf 0& \mathbf{ I} \\
                \end{pmatrix}
     \begin{pmatrix}
                 \mathbf{  0 }& \mathbf I \\
                  \mathbf I & \mathbf{ 0} \\
                \end{pmatrix}
    \begin{pmatrix}
                 \mathbf{ K }& \mathbf 0 \\
                  \mathbf 0& \mathbf{ I} \\
                \end{pmatrix}^{-1}.
               $$
 In coordinates, we have $T_1\colon (\xi,\eta)^t\to \mathbf T_1(\xi,\eta)^t$. Thus the linear invariant functions of $\{T_{11},\dots, T_{1p}\}$ are precisely generated by linear invariant functions of $T_1$, and they are linear combinations of the entries of the column vector
 $
 \xi^t+\mathbf K\eta^t.
 $
On the other hand, the linear invariant functions of $\{T_{21},\dots, T_{2p}\}$ are linear combinations of the entries of the vector
$\xi^t+\ov{\mathbf K}^{-1}\eta^t.
$
The two sets of entries are linearly independent functions; indeed if there
are row vectors $\mathbf a,\mathbf b$ such that
$$
\mathbf a( \xi^t+\mathbf K\eta^t)+\mathbf b(\xi^t+\ov{\mathbf K}^{-1}\eta^t)=0
$$
then $\mathbf a=\mathbf b$ and $\mathbf a(\mathbf K-\ov{\mathbf K}^{-1})=\mathbf 0$. Thus $\mathbf a=\mathbf 0$ if \re{detkk} holds.
 Thus condition \re{detkk} also implies \re{ilii++}-\re{bilii++}. By \rp{mmtp}, the family of $\{T_{11}, \dots, T_{1p}, \rho\}$, in particular the   matrix $S$, can be realized by a quadratic manifold.

For a more explicit example,  let ${\mathbf J}_p$ be the  $p\times p$ Jordan matrix with  entries $1$ or $0$. Then $\mathbf K=\la\mathbf J_p$   satisfies \re{detkk} if $\la$ is positive and $\la\neq1$, as $\ov {\mathbf K}^{-1}=\ov\la^{-1}\mathbf{J}_p^{-1}$. For another example, set
$$
\mathbf K_\la=\begin{pmatrix}
                 \mathbf{  0 }& \la{\mathbf J}_{q} \\
                 \ov \la{\mathbf J}_{q}& \mathbf{ 0} \\
                \end{pmatrix}, \quad
   \mathbf K_\la\ov{ \mathbf K_\la}=\begin{pmatrix}
               \la^2{\mathbf J}_{q} ^2  &\mathbf{  0 } \\
            \mathbf{ 0}    &   \ov \la^2{\mathbf J}^2_{q}\\
                \end{pmatrix}
                $$
with $q=p/2$ and $p$ even.
If  $\la\in\cc$ and $\la\neq0,\pm1$ then $K$ satisfies \re{detkk} as
$$
\ov{\mathbf K_\la}^{-1}=\begin{pmatrix}
                 \mathbf{  0 }& \la^{-1}{\mathbf J}_{q}^{-1} \\
                 {\ov \la}^{-1}{\mathbf J}_{q}^{-1}& \mathbf{ 0} \\
                \end{pmatrix}.
                $$
 When $\la=1$ and $q=2$, we obtain $\mathbf S$ in \re{jsig}
 if the above  $\mathbf J_2$ is replaced by $\mathbf J_2^{1/2}$, the Jordan matrix with eigenvalue $1$ and off-diagonal entries $1/2$.
\end{exmp}
\setcounter{thm}{0}\setcounter{equation}{0}

\setcounter{thm}{0}\setcounter{equation}{0}
\section{
Formal deck
transformations  and centralizers}\label{fsubm}

In section~\ref{secinv} we  describe 
 the equivalence of   the classification of real analytic submanifolds $M$ that admit the maximum number of deck transformations and the classification of the families of
involutions $\{\tau_{11}, \ldots, \tau_{1p},\rho\}$ that satisfy some mild conditions  (see \rp{mmtp}). To classify the families of involutions and to find their normal forms, we
will also study the centralizers of various linear maps to deal with  resonance.
This is relevant as the normal form of $\sigma$ will belong to the centralizer of its linear part
and any further normalization will also be performed by transformations that are
in the centralizer.

In this subsection, we   describe   centralizers regarding $\hat S, \hat T_1$ and $\hat{\mathcal T}_1$.
We will also describe the complement sets of the centralizers, i.e. the sets
of mappings which satisfy suitable normalizing conditions. Roughly speaking, our normal forms are   in
the centralizers and coordinate transformations that achieve the normal forms are normalized, while
an arbitrary formal transformation admits a unique  decomposition of a mapping in a centralizer and a mapping in the complement of
the centralizer.

Recall that
\ga\label{sxij}
\hat S\colon\xi_j'=\mu_j\xi_j, \quad\eta_j'=\mu_j^{-1}\eta_j, \quad 1\leq j\leq p,\\
\hat T_i\colon \xi_j'=\la_{ij}\eta_j,\quad \eta_j'=\la_{ij}^{-1}\xi, \quad 1\leq j\leq p
\label{tixi}
\end{gather}
with $\mu_j=\la_{1j}^2$ and $\la_{2j}^{-1}=\la_{1j}=\la_j$.

\begin{defn}Let $\cL  F$ be a family of formal mappings  on $\cc^n$  fixing
the origin.
Let $\cL C(\cL  F)$
 be the  {\it centralizer} of $ \cL   F$, i.e.
 the set of formal  holomorphic mappings $g$ that fix  the origin and
  commute with each element $f$ of $\cL  F$, i.e., $f\circ g=g\circ f$.
\end{defn}

 Note that we do not require that elements in $\cL C(\cL  F)$   be invertible or convergent.

We first compute the centralizers.
\begin{lemma}\label{cents}
Let $\hat S$  be given by \rea{sxij} with  $\mu_1,\ldots, \mu_p$
being non-resonant.
Then ${\cL C}(\hat S)$ consists of mappings of the form
\eq{pajb}
\psi\colon \xi_j'= a_j(\xi\eta)\xi_j,\quad \eta_j'= b_j(\xi\eta)\eta_j, \quad 1\leq j\leq p.
\eeq
Let $\tau_1,\tau_2$ be  formal  holomorphic
involutions  such that $\hat S=\tau_1\tau_2$. Then
$$
\tau_{i}\colon\xi_j'=\Lambda_{ij}(\xi\eta)\eta_j, \quad \eta_j'=\Lambda_{ij}^{-1}(\xi\eta)\xi_j, \quad 1\leq j\leq p
$$
with $\Lambda_{1j}\Lambda_{2j}^{-1}=\mu_j$.
The   centralizer of   $\{\hat T_1,\hat T_2\}$ consists of the above transformations satisfying
 \eq{bjaj}
 b_j=a_j, \quad 1\leq j\leq p.
 \eeq
\end{lemma}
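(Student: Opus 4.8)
The plan is to establish all three assertions by one elementary mechanism: comparing power‑series coefficients in the relevant commutation relations, with the non‑resonance of $\mu_1,\dots,\mu_p$ forcing the exponents.

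\emph{Centralizer of $\hat S$.} If $\psi$ commutes with $\hat S$, write its $\xi_j$‑component as $f_j=\sum_{A,B}c_{AB}\xi^A\eta^B$ ($A,B\in\nn^p$). The $\xi_j$‑slot of $\hat S\circ\psi=\psi\circ\hat S$ reads $\mu_jf_j(\xi,\eta)=f_j(\mu\xi,\mu^{-1}\eta)$, so $\mu_jc_{AB}=\mu^{A-B}c_{AB}$, whence $c_{AB}=0$ unless $\mu^{A-B-e_j}=1$, where $e_j$ is the $j$th unit vector. Since $A-B-e_j\in\zz^p$, \re{muqn1} forces $A=B+e_j$, and as $\xi^{B+e_j}\eta^B=\xi_j(\xi\eta)^B$ we get $f_j=\xi_ja_j(\xi\eta)$ with $a_j$ a formal series in $p$ variables; the $\eta_j$‑slot gives likewise $\eta_jb_j(\xi\eta)$, and maps of the form \re{pajb} visibly commute with $\hat S$. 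This part is routine.

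\emph{The involutions $\tau_1,\tau_2$.} Since $\tau_1,\tau_2$ are involutions with $\tau_1\tau_2=\hat S$, one has $\tau_2\tau_1=(\tau_1\tau_2)^{-1}=\hat S^{-1}$, hence $\tau_1\hat S\tau_1=\tau_1^2\tau_2\tau_1=\hat S^{-1}$ and symmetrically $\tau_2\hat S\tau_2=\hat S^{-1}$; that is, $\tau_i\circ\hat S=\hat S^{-1}\circ\tau_i$. Running the coefficient comparison above on this relation—now the $\xi_j$‑slot reads $f_j(\mu\xi,\mu^{-1}\eta)=\mu_j^{-1}f_j(\xi,\eta)$, forcing $B=A+e_j$, and symmetrically on the $\eta_j$‑slot—shows the $\xi_j$‑ and $\eta_j$‑components of $\tau_i$ have the form $\Lambda_{ij}(\xi\eta)\eta_j$ and $\gamma_{ij}(\xi\eta)\xi_j$ for formal series $\Lambda_{ij},\gamma_{ij}$, with $\Lambda_{ij}(0)\neq0\neq\gamma_{ij}(0)$ since $D\tau_i(0)$ is invertible. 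It remains to see $\gamma_{ij}=\Lambda_{ij}^{-1}$. The key point is that $\tau_i$ pulls $u_k:=\xi_k\eta_k$ back to $\Psi_{ik}(\xi\eta)$, where $\Psi_{ik}(t):=\Lambda_{ik}(t)\gamma_{ik}(t)t_k$, a series in $u=\xi\eta$ alone; comparing $\xi_j$‑components in $\tau_i^2=I$ (both sides are $\xi_j$ times a series in $u$, and $t\mapsto\xi\eta$ is injective on formal series) gives $\Lambda_{ij}(\Psi_i(u))\gamma_{ij}(u)=1$, which at $u=0$ yields $\Lambda_{ij}(0)\gamma_{ij}(0)=1$, so $\Psi_i$ is tangent to the identity; moreover $\Psi_i^2=I$ because $\tau_i^2=I$ fixes each $\xi_k\eta_k$. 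A formal self‑map of $(\cc^p,0)$ that is an involution tangent to the identity is the identity, so $\Psi_i=I$ and the relation collapses to $\gamma_{ij}=\Lambda_{ij}^{-1}$. Finally, substituting the two normal forms into $\hat S=\tau_1\tau_2$ and comparing $\xi_j$‑components (using that $\tau_2$ also preserves $\xi\eta$) gives $\Lambda_{1j}\Lambda_{2j}^{-1}=\mu_j$. I expect this last identification $\gamma_{ij}=\Lambda_{ij}^{-1}$—excluding the a priori possibility $\gamma_{ij}=(\Lambda_{ij}\circ\Psi_i)^{-1}$—to be the only genuinely delicate step, and it is exactly here that the triviality of formal involutions tangent to the identity is used.

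\emph{Centralizer of $\{\hat T_1,\hat T_2\}$.} Since $\hat S=\hat T_1\hat T_2$, every $\psi$ commuting with both $\hat T_1$ and $\hat T_2$ commutes with $\hat S$, hence has the form \re{pajb} by the first part. Because $\hat T_1$ also preserves $\xi\eta$ (by \re{tixi}), a direct computation gives the $\xi_j$‑component of $\psi\circ\hat T_1$ as $\la_{1j}a_j(\xi\eta)\eta_j$ and that of $\hat T_1\circ\psi$ as $\la_{1j}b_j(\xi\eta)\eta_j$, so commutation forces $a_j=b_j$, i.e. \re{bjaj}; conversely the same computation (and its analogue with $\la_{2j}$) shows $a_j=b_j$ makes $\psi$ commute with both $\hat T_1$ and $\hat T_2$. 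This step is routine once the first assertion is in hand.
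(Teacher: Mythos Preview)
Your proof is correct and follows essentially the same logic as the paper's; the differences are stylistic. For the form of $\tau_i$, the paper introduces the swap $T_0\colon(\xi,\eta)\mapsto(\eta,\xi)$, observes $T_0\hat S T_0=\hat S^{-1}$ so that $\tau_iT_0$ commutes with $\hat S$ and hence lies in the already-computed centralizer, then reads off $\tau_i=(\tau_iT_0)T_0$; you instead run the coefficient comparison directly on the anti-commutation $\tau_i\hat S=\hat S^{-1}\tau_i$, which is just as short. For the key step $\gamma_{ij}=\Lambda_{ij}^{-1}$, the paper writes the relation $\Lambda_{ij}(\Psi_i(\zeta))\tilde\Lambda_{ij}(\zeta)=1$ and argues by induction on degree that $\Lambda_{ij}\tilde\Lambda_{ij}\equiv1$; your packaging of this induction as ``a formal involution tangent to the identity is the identity'' is the same argument in abstract form. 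The paper omits the centralizer of $\{\hat T_1,\hat T_2\}$ entirely, so your short verification there is a welcome addition.
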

\begin{proof} Let $e_j=(0,\ldots, 1,\ldots, 0)\in \nn^p$, where $1$ is at the $j$th place.  Let $\psi$ be given by $$\xi_j'=\sum a_{j,PQ}\xi^P\eta^Q,\quad\eta_j'=\sum b_{j,PQ}\xi^P\eta^Q.
$$
By the non-resonance condition, it is straightforward that if $\psi \hat S=\hat S\psi$, then $a_{j,PQ}=b_{j,QP}=0$
if $P-Q\neq e_j$.
Note that $\hat S^{-1}=T_0\hat ST_0$ for $T_0\colon(\xi,\eta)\to(\eta,\xi)$. Thus $\tau_1T_0$ commutes with
$\hat S$. So $\tau_1T_0$ has the form \re{pajb} in which we rename $a_j,b_j$ by $\Lambda_{1j},\tilde\Lambda_{1j}$, respectively. Now $\tau_1^2=\id$ implies that
$$
\Lambda_{1j}((\Lambda_{11}\tilde\Lambda_{11})(\zeta)\zeta_1,\ldots, (\Lambda_{1p}\tilde\Lambda_{1p})(\zeta)\zeta_p)\tilde\Lambda_{1j}(\zeta)=1, \quad 1\leq j\leq p.
$$
Then $\Lambda_{1j}(0)\tilde\Lambda_{1j}(0)=1$. Applying induction on $d$, we verify that for all $j$
 $$\Lambda_{1j}(\zeta)\tilde\Lambda_{1j}(\zeta)=1+O(|\zeta|^d), \quad d>1.
 $$
 Having found the formula for
   $\tau_1T_0$, we obtain the desired formula of $\tau_1$ via composition $(\tau_1T_0)T_0$.
\end{proof}

Let ${\mathbf D}_1:=\text{diag}(\mu_{11},\ldots,\mu_{1n}),\ldots, {\mathbf D}_\ell:=\text{diag}(\mu_{\ell 1},\ldots,\mu_{\ell n})$ be diagonal invertible matrices of $\cc^n$. Let us set $D:=\{{\mathbf D}_iz\}_{i=1,\ldots \ell}$.
\begin{defn}\label{ccst}  Let $F$ be a formal mapping of $\cc^n$ that is tangent to the identity.
\bppp
\item Let $n=2p$.   $F$   is  {\it normalized} with respect to $\hat S$,  if $F=(f,g)$ is tangent to the identity
 and $F$ contains no resonant terms, i.e.
 \eq{fj0g}
 \nonumber
 f_{j,(A+e_j)A}=0=g_{j,A(A+e_j)},  \quad  |A|>1. 
 \eeq
 \item Let $n=2p$.
$F$ is {\it normalized} with respect to $\{\hat T_1,\hat T_2\}$, if $F=(f,g)$ is tangent to
the identity and
\eq{fjmg}
\nonumber
 f_{j,(A+e_j)A}=-g_{j,A(A+e_j)},\quad |A|>1.
\eeq
\item
$F$ is {\it normalized} with respect to $D$ if it does not have components along the centralizer of $D$, i.e.   for each
$Q$ with $|Q|\geq2$,
\eq{norm-D}
\nonumber
f_{ j,Q}=0,\quad\text{if}\; \mu_i^Q=\mu_{ij}\; \text{for all}\; i. 
\eeq
\eppp
Let ${\cL C}^{\mathsf{ c}}(\hat S)$ (resp. ${\cL C}^{\mathsf{ c}}(\hat T_1,\hat T_2)$, ${\cL C}^{\mathsf{ c}}(D)$) denote the set of formal mappings  normalized with respect to $\hat S$ (resp. $\{\hat T_1,\hat T_2\}$, the family $D$).
 For convenience, we
let ${\cL C}^{\mathsf{ c}}_2(\hat S)$ (resp. ${\cL C}^{\mathsf{ c}}_2(\hat T_1,\hat T_2)$, ${\cL C}^{\mathsf{ c}}_2(D)$) denote the set of formal mappings  $F-\I$ with
$F\in {\cL C}^{\mathsf{ c}}(\hat S)$ (resp. ${\cL C}^{\mathsf{ c}}(\hat T_1,\hat T_2)$, ${\cL C}^{\mathsf{ c}}(D)$).
 \end{defn}

Recall that for $j=1,\ldots, p$, we define
$$
Z_j\colon\xi'=\xi, \quad \eta_k'=\eta_k, \ k\neq j, \quad \eta_j'=-\eta_j.
$$
We have seen in section~\ref{secquad} how invariant functions of $Z_j$ play a role in constructing normal form of quadrics.
In section~\ref{nfin}, we will also need
a centralizer for non linear maps (see~\rl{cnnl}) to obtain normal forms for two families of involutions. Therefore, let us
first recall   the following lemma
on the centralizer of $Z_1,\ldots, Z_p$, which is a special case of \cite[Lemma 4.7]{GS15}.
\begin{lemma}\label{lehphi}The  centralizer, ${\cL C}(Z_1,\ldots, Z_p)$,  consists of  formal  mappings $$(\xi,\eta)\to (U(\xi,\eta),  \eta_1V_1(\xi,\eta),\dots, \eta_pV_p(\xi,\eta))$$ such that $U(\xi,\eta), V(\xi,\eta)$
 are even in each $\eta_j$. Let
 $
 {\cL C}^{\mathsf{ c}}(Z_1,\ldots, Z_p) $
 denote the set of mappings $I+ (U, V)$
 which are tangent to the identity such that
 \eq{vjpej}\nonumber
 U_{j,PQ}=  V_{j,P(e_j+Q')}=0, \quad Q, Q'\in 2\nn^p, \
|P|+|Q|>1, \  |P|+|Q'|>1. \eeq
 Let $\psi $  be   a mapping that is tangent to the identity.
  There exist  unique
$\psi_0\in {\cL C}(Z_1,\ldots, Z_p)$
and $\psi_1\in{\cL C}^{\mathsf{ c}}(Z_1,\ldots, Z_p)$ such that
$ 
 \psi=\psi_1\psi_0^{-1}.
$ 
Moreover, if $\psi$ is convergent, then $\psi_0$ and $\psi_1$ are convergent.
\end{lemma}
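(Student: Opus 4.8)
\textbf{Proof proposal for Lemma \ref{lehphi} (centralizer of $Z_1,\dots,Z_p$).}

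The plan is to split the statement into three parts: first the description of $\cL C(Z_1,\dots,Z_p)$, then the existence and uniqueness of the factorization $\psi=\psi_1\psi_0^{-1}$, then the preservation of convergence. For the first part, I would write an arbitrary formal map as $\psi=(U,W)$ with $W=(W_1,\dots,W_p)$, and compute the condition $Z_j\psi=\psi Z_j$ for each $j$. Since $Z_j$ negates exactly the variable $\eta_j$, conjugating a monomial $\xi^P\eta^Q$ by $Z_j$ multiplies it by $(-1)^{Q_j}$. The commutation relation $Z_j\psi Z_j=\psi$, read componentwise, forces: every component of $U$ and every component of $W$ that is \emph{not} the $\eta_k$-slot with $k=j$ must be even in $\eta_j$, and $W_j$ must be odd in $\eta_j$. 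Imposing this for all $j$ simultaneously shows that every component of $U$ is even in each $\eta_j$, and that $W_j$ is odd in $\eta_j$ but even in $\eta_k$ for $k\neq j$; writing $W_j=\eta_j V_j$ then gives that $V_j$ is even in each $\eta_j$, which is exactly the claimed form.

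For the factorization, I would proceed degree by degree, comparing homogeneous parts of $\psi=\psi_1\psi_0^{-1}$, or equivalently $\psi\psi_0=\psi_1$. At degree one everything is the identity. Assume inductively that $\psi_0$ and $\psi_1$ have been determined through degree $d-1$ so that $\psi\psi_0-\psi_1$ vanishes through order $d-1$; write $\psi_0=I+\sum_{k\ge 2}(\psi_0)_k$, similarly for $\psi_1$. The degree-$d$ part of $\psi\psi_0-\psi_1$ equals $(\psi)_d+(\psi_0)_d-(\psi_1)_d+(\text{known lower-order terms from the induction})$. Since $\cL C(Z_1,\dots,Z_p)$ (the span of the allowed monomials described above) and $\cL C^{\mathsf c}(Z_1,\dots,Z_p)$ (the complementary monomials, i.e. those ruled out by the parity conditions) together span all degree-$d$ vector-valued homogeneous polynomials and intersect only in $0$, the degree-$d$ equation $(\psi_0)_d-(\psi_1)_d=-[\text{known}]$ has a unique solution with $(\psi_0)_d$ in the centralizer part and $(\psi_1)_d$ in the complementary part. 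This is the key algebraic point: the vector space of degree-$d$ monomial maps decomposes as a direct sum of the ``$Z$-equivariant'' piece and the ``anti-equivariant'' piece, which is immediate from the parity classification in the first part (each monomial $\xi^P\eta^Q$ placed in slot $\xi_i$ or $\eta_i$ is either even or odd in $\eta_j$ for each $j$, hence lands in exactly one of the two pieces after testing all $j$). The induction then determines $\psi_0,\psi_1$ uniquely to all orders; invertibility of $\psi_0$ (as a formal map tangent to the identity) is automatic, so $\psi_1=\psi\psi_0$ makes sense.

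For convergence, I would invoke the standard majorant/implicit-function argument: the map $(\psi_0,\psi_1)\mapsto \psi_1-\psi\psi_0$ is, after the linear-algebra splitting, a formal diffeomorphism of the space of pairs (tangent to the identity in the appropriate sense) whose linearization at the identity is the invertible block decomposition above; hence by the analytic implicit function theorem in the space of germs (or by a direct majorant estimate on the degree-$d$ recursion, since the projections onto the two graded pieces are norm-nonincreasing on homogeneous polynomials with respect to a suitable norm), $\psi$ analytic forces $\psi_0,\psi_1$ analytic. I expect the main obstacle to be purely bookkeeping: making the parity bookkeeping in the first part airtight (keeping straight which slot is odd in which $\eta_j$), and then being careful that the complementary set $\cL C^{\mathsf c}(Z_1,\dots,Z_p)$ is exactly defined by the vanishing conditions \eqref{vjpej} so that the direct-sum decomposition is genuinely a decomposition and not merely a spanning statement. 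Since this is a special case of \cite[Lemma 4.7]{GS15}, one may alternatively just cite that result and indicate the specialization $\ell=p$, $\mathbf D_j$ diagonal with $-1$ in the $(p+j)$ slot; I would state the self-contained argument above for the reader's convenience but point to \cite{GS15} for the full details of the convergence estimate.
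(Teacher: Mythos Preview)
Your proposal is correct and follows essentially the same line as the paper (and its companion \cite{GS15}). The paper does not actually prove Lemma~\ref{lehphi}; it simply cites it as a special case of \cite[Lemma~4.7]{GS15}. The self-contained argument you sketch --- parity computation for the centralizer, then a degree-by-degree splitting using the direct-sum decomposition into equivariant and complementary monomials, then a majorant estimate for convergence --- is exactly the content of the general decomposition Lemma~\ref{fhg-} that the paper states immediately afterwards and invokes for the analogous Lemma~\ref{fhg}; your projection onto the parity pieces is the $\pi$ of that lemma, and your observation that the projection does not increase coefficient size is precisely the hypothesis $\pi(E)\prec C E_{sym}$ there.
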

 Analogously, for any formal  mapping $\psi$ that is tangent to the identity, there is a unique decomposition $\psi=\psi_1\psi_0^{-1}$ with $\psi_1\in {\cL C}^{\mathsf{ c}}(\hat S)$
and $\psi_0\in  {\cL C}(\hat S)$. If $\psi$ is convergent, then  $\psi_0,\psi_1$ are convergent.
  Let $F=(F_1,\dots, F_n)\colon\cc^n\to \cc^n$ be a formal mapping. Define a formal mapping $F_{sym}\colon\cc^n\to\cc^n$ by
$$
(F_{sym})_{i,P}=\max_{1\leq j\leq n,\nu\in S_n}|\{g_j\circ\nu\}_P|,
$$
where $S_n$ is the set of permutations $\nu$ of coordinates $z_i\to z_{\nu(i)}$.
Let us recall the following lemma from \cite[Lemma 4.3]{GS15}. \begin{lemma}\label{fhg-}  Let  $\hat {\cL H}$ be  a real subspace of $({\widehat { \mathfrak M}}_n^2)^n$.
Let $\pi : ({\widehat { \mathfrak M}_n^2)^n}\rightarrow \hat  {\cL H}$ be
a $\rr$ linear projection $($i.e. $\pi^2=\pi)$
 that preserves the degrees of the mappings and let $\hat  {\cL G}:= (\I-\pi)({\widehat { \mathfrak M}}_n^2)^n$.
Suppose that there is a positive constant $C$ such that
$ 
\pi(E)\prec  CE_{sym}
$ 
for any $E\in (\widehat { \mathfrak M}_n^2)^n$.
 Let $F$ be a formal map tangent to the identity.
There exists a unique decomposition
$ 
F=HG^{-1}
$ 
with $G-I\in\hat  {\cL G}$ and $H-I\in \hat  {\cL H}$.
  If $F$ is convergent, then $G$ and $H$ are also convergent.
\end{lemma}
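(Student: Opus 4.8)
The plan is to reduce everything to one fixed-point equation for $G$, solve it degree by degree for the formal statement, and then run a majorant argument for convergence. Put $\tilde F:=F-I$, which has order $\geq 2$. The three requirements — $F\circ G=H$, $G-I\in\hat{\cL G}$, $H-I\in\hat{\cL H}$ — are together equivalent to the single equation
$$
G-I=-(I-\pi)(\tilde F\circ G),
$$
followed by the definition $H:=F\circ G$. Indeed $(I-\pi)(F\circ G-I)=(I-\pi)(\tilde F\circ G)+(I-\pi)(G-I)$, and when $G-I\in\hat{\cL G}$ the last summand equals $G-I$, so vanishing of the left side is exactly the displayed equation, and it then forces $H-I\in\hat{\cL H}$. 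Since $\pi$ preserves degrees it restricts to an $\rr$-linear projection on the degree-$d$ homogeneous component $\mathcal E_d$ of $\mathcal E:=(\widehat{\mathfrak M}_n^2)^n$, so $\mathcal E_d=\hat{\cL H}_d\oplus\hat{\cL G}_d$ with $\hat{\cL H}_d=\pi\mathcal E_d$ and $\hat{\cL G}_d=(I-\pi)\mathcal E_d$.

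For the formal statement I would observe that, since $\tilde F$ has order $\geq2$ and $G$ is tangent to the identity, the degree-$d$ homogeneous part $(\tilde F\circ G)_d$ involves only $F_2,\dots,F_d$ and $G_2,\dots,G_{d-1}$. Hence the fixed-point equation, read in degree $d$, is $G_d=-(I-\pi)\bigl((\tilde F\circ G)_d\bigr)$, which determines $G_d\in\hat{\cL G}_d$ uniquely from the already-constructed lower-order terms. This produces the unique formal $G$ with $G-I\in\hat{\cL G}$; as $G$ is tangent to the identity it admits a formal inverse, and $F=(F\circ G)G^{-1}=HG^{-1}$ is the asserted decomposition, unique by the degreewise uniqueness of $G$.

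For convergence — the substantive part — I would argue as follows, assuming $F$ convergent. It suffices to show $G$ converges, since then $H=F\circ G$ and $G^{-1}$ do. By Cauchy estimates pick a convergent $a(t)=\sum_{k\geq2}a_kt^k$ with $a_k\geq0$ and $\tilde F_i(z)\prec a(\sigma)$ for all $i$, where $\sigma:=z_1+\cdots+z_n$; since $a(\sigma)$ is invariant under permutations of the $z_j$, also $\tilde F_{sym}\prec a(\sigma)$. Let $g(t)=\sum_{k\geq2}g_kt^k$ be the solution, convergent by the implicit function theorem, of $g=(1+C)\,a\bigl(t+ng(t)\bigr)$ with $g(0)=0$ — solvable because $a'(0)=0$, which also gives $g=O(t^2)$. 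One shows by induction on degree that $G_i(z)-z_i\prec g(\sigma)$ for all $i$. Granting this through degree $d-1$ we have $G_i\prec z_i+g(\sigma)$, so the majorant composition lemma gives $(\tilde F\circ G)_i\prec a\bigl(\textstyle\sum_j(z_j+g(\sigma))\bigr)=a(\sigma+ng(\sigma))$ for all $i$, hence also $(\tilde F\circ G)_{sym}\prec a(\sigma+ng(\sigma))$. Since the hypothesis $\pi(E)\prec CE_{sym}$ together with $E\prec E_{sym}$ gives $(I-\pi)(E)\prec(1+C)E_{sym}$ for every $E$, in degree $d$ we obtain
$$
G_{d,i}=-\bigl((I-\pi)(\tilde F\circ G)\bigr)_{d,i}\prec(1+C)\bigl[a(\sigma+ng(\sigma))\bigr]_d=\bigl[g(\sigma)\bigr]_d,
$$
which closes the induction, so $G$, and with it $H$ and $G^{-1}$, converges.

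The one point that genuinely matters is why the hypothesis is phrased with the \emph{symmetrized} majorant $E_{sym}$: because $E_{sym}$ has permutation-invariant components, a diagonal majorant of the shape $a(\sigma)$ is \emph{reproduced}, not merely bounded, after applying $\pi$ and after substituting into $G$, and this is exactly what makes the scalar functional equation $g=(1+C)a(t+ng(t))$ close the bootstrap. The remaining ingredients — that $(\tilde F\circ G)_d$ uses only $G_2,\dots,G_{d-1}$, that $[a(\sigma+ng(\sigma))]_d$ uses only $g_2,\dots,g_{d-1}$ so that $g$ is a legitimate formal (hence convergent) series, and the majorant composition lemma itself — are standard, and I would only state them.
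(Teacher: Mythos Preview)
Your proof is correct. Note, however, that the paper does not actually prove this lemma: it merely recalls it from \cite[Lemma~4.3]{GS15}, so there is no in-paper argument to compare against. Your fixed-point reformulation $G-I=-(I-\pi)(\tilde F\circ G)$ followed by a scalar majorant equation $g=(1+C)\,a(t+ng(t))$ is the natural route, and your use of the symmetrization hypothesis is exactly right: it is what allows the diagonal majorant $a(\sigma)$ to survive both the projection $I-\pi$ (via $(I-\pi)E\prec(1+C)E_{sym}$) and the composition step, so that the scalar equation closes the induction. The only place worth a sentence of extra care is the induction step, where you implicitly use that $(\tilde F\circ G)_d$ depends only on the degree-$(d-1)$ truncation of $G$; you acknowledge this, and it is indeed routine.
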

 \begin{lemma}\label{fhg}
  Let $\psi $  be   a mapping that is tangent to the identity.
  There exist  unique
$\psi_0\in \cL C(\hat T_1,\hat T_2)$
and $\psi_1\in \cL C^{\mathsf{ c}}(\hat T_1,\hat T_2)$
 such that
$ 
 \psi=\psi_1\psi_0^{-1}.
$ 
Moreover, if $\psi$ is convergent, then $\psi_0$ and $\psi_1$ are convergent. \end{lemma}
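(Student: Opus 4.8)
The plan is to read this off from the abstract decomposition \rl{fhg-}. Set $n=2p$ and use coordinates $(\xi,\eta)=(\xi_1,\dots,\xi_p,\eta_1,\dots,\eta_p)$, so that $\hat T_1,\hat T_2$ are the linear involutions \re{tixi}. I would take $\hat{\cL H}\subset(\widehat{\mathfrak M}_n^2)^n$ to be $\cL C^{\mathsf{c}}_2(\hat T_1,\hat T_2)$, the set of $F-I$ with $F$ normalized with respect to $\{\hat T_1,\hat T_2\}$ in the sense of \rd{ccst}. Since that set is cut out of $(\widehat{\mathfrak M}_n^2)^n$ by the $\cc$-linear equations $f_{j,(A+e_j)A}+g_{j,A(A+e_j)}=0$, it is a complex (hence real) subspace, as \rl{fhg-} requires.

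The real content is the choice of the projection $\pi$ onto $\hat{\cL H}$. Writing the $j$th $\xi$-component of a formal map as $\xi_j+\sum f_{j,PQ}\xi^P\eta^Q$ and the $j$th $\eta$-component as $\eta_j+\sum g_{j,PQ}\xi^P\eta^Q$, I would call a monomial resonant in the $j$th $\xi$-component (resp.\ $\eta$-component) when $(P,Q)=(A+e_j,A)$ (resp.\ $(A,A+e_j)$) for some $A$ with $|A|\geq 1$; these are the only resonant monomials, of odd degree $2|A|+1\geq 3$. I would let $\pi$ act as the identity on every non-resonant coefficient, and on each resonant pair $a:=f_{j,(A+e_j)A}$, $b:=g_{j,A(A+e_j)}$ (same $j$, same $A$) let $\pi$ replace $(a,b)$ by $\bigl(\tfrac{a-b}{2},\tfrac{b-a}{2}\bigr)$. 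Acting coefficient by coefficient, $\pi$ preserves degrees, $\pi^2=\pi$, and its image is exactly $\hat{\cL H}$. Its complement $\hat{\cL G}:=(I-\pi)\bigl((\widehat{\mathfrak M}_n^2)^n\bigr)$ consists of the $G-I$ having only resonant terms with $f_{j,(A+e_j)A}=g_{j,A(A+e_j)}$; by \rl{cents} these $G$ are precisely the elements of $\cL C(\hat T_1,\hat T_2)$ tangent to the identity, i.e.\ $\xi_j'=a_j(\xi\eta)\xi_j$, $\eta_j'=a_j(\xi\eta)\eta_j$ with $a_j(0)=1$.

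The one step needing genuine care — and the reason \rl{fhg-} is phrased with $E_{sym}$ rather than $E$ — is the majorization hypothesis $\pi(E)\prec CE_{sym}$. On non-resonant coefficients $\pi(E)$ reproduces $E$, so majorization holds with constant $1$. On a resonant coefficient, $|\pi(E)_{j,(A+e_j,A)}|=\tfrac12|a-b|\leq\max(|a|,|b|)$; here $|a|\leq(E_{sym})_{j,(A+e_j,A)}$ is immediate (identity permutation, component $f_j$), while $b$ sits at the reflected multi-index $(A,A+e_j)$, so I would capture it by applying the coordinate permutation $\nu_0$ that swaps $\xi_k\leftrightarrow\eta_k$ for every $k$: this turns the coefficient of $\xi^A\eta^{A+e_j}$ in $g_j$ into the coefficient of $\xi^{A+e_j}\eta^A$ in $g_j\circ\nu_0$, hence $|b|\leq(E_{sym})_{j,(A+e_j,A)}$ as well. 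The $\eta$-components are handled symmetrically, so $\pi(E)\prec E_{sym}$ and $C=1$ works.

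With $\hat{\cL H}$ and $\pi$ as above, all hypotheses of \rl{fhg-} are met, and applying it to $F=\psi$ (tangent to the identity) produces a unique decomposition $\psi=\psi_1\psi_0^{-1}$ with $\psi_1-I\in\hat{\cL H}$ and $\psi_0-I\in\hat{\cL G}$, convergent whenever $\psi$ is. By the descriptions of $\hat{\cL H}$ and $\hat{\cL G}$ this reads $\psi_1\in\cL C^{\mathsf{c}}(\hat T_1,\hat T_2)$ and $\psi_0\in\cL C(\hat T_1,\hat T_2)$, with $\psi_0$ tangent to the identity and hence invertible, which is the assertion. I expect the only real obstacle to be the $E_{sym}$-majorization, i.e.\ checking that the antidiagonal pairing built into $\pi$ is controlled after symmetrizing over coordinate permutations; everything else is bookkeeping once \rl{cents} identifies the centralizer.
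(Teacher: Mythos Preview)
Your proof is correct and follows essentially the same approach as the paper: both apply \rl{fhg-} with $\hat{\cL H}=\cL C^{\mathsf c}_2(\hat T_1,\hat T_2)$ and $\hat{\cL G}=\cL C_2(\hat T_1,\hat T_2)$, defining $\pi$ to be the identity on non-resonant coefficients and to antisymmetrize each resonant pair $(f_{j,(A+e_j)A},g_{j,A(A+e_j)})$. Your verification of the majorization $\pi(E)\prec E_{sym}$ via the swap permutation $\nu_0\colon\xi_k\leftrightarrow\eta_k$ is more explicit than the paper's, which simply asserts the bound, but the underlying argument is identical.
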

\begin{proof}
Let  $\hat  {\cL G}= {\cL C}_2(\hat T_1,\hat T_2)$ and $\hat  {\cL H}= {\cL C}^{\mathsf{ c}}_2(\hat T_1,\hat T_2)$. We need to find a $\rr$-linear projection such that $\hat  {\cL H}=\pi({\widehat { \mathfrak M}}_n^2)^n$,  $ \hat  {\cL G}=(\id-\pi)({\widehat { \mathfrak M}}_n^2)^n$, and
$
\pi(E)\prec CE_{sym}.
$
Note that  $g\in\cL C_2(\hat T_1,\hat T_2)$ and $h\in \cL C^{\mathsf{ c}}_2(\hat T_1,\hat T_2)$ are determined by conditions
\gan
g_{j,(\gamma+e_j)\gamma}=g_{(j+p),\gamma(\gamma+e_j)}, \quad
h_{j,(\gamma+e_j)\gamma}=-h_{(j+p),\gamma(\gamma+e_j)}, \quad 1\leq j\leq p,
\\
g_{j,PQ}=g_{(j+p),QP}=0,\quad P-Q\neq e_j.
\end{gather*}
Thus, if $h-g=K$, we determine $g$ uniquely by combining the above identities with
\aln
g_{j,(\gamma+e_j)\gamma}&=\frac{-1}{2}\left\{K_{j,(\gamma+e_j)\gamma}
+K_{(j+p),\gamma(\gamma+e_j)}\right\},\\ 
h_{j,(\gamma+e_j)\gamma}&=
\frac{1}{2}\left\{K_{j,(\gamma+e_j)\gamma}-
K_{(j+p),\gamma(\gamma+e_j)}\right\} 
\end{align*}
for $1\leq j\leq p$.
For the remaining coefficients of $h$, set $h_{i,PQ}=K_{i,PQ}$.
 Therefore, $\pi(K):=h\prec K_{sym}$  and the lemma follows form  \rl{fhg-}.
\end{proof}

\setcounter{thm}{0}\setcounter{equation}{0}

\setcounter{thm}{0}\setcounter{equation}{0}
\section{Formal normal forms of the reversible map $\sigma$ 
}\label{secnfs}

Let us first describe our plans to derive the normal forms of $M$.
We would like
to show that two  families of involutions $\{\tau_{1j},\tau_{2j},\rho\}$ and
$\{\tilde\tau_{1j},\tilde\tau_{2j},\tilde\rho\}$ are holomorphically equivalent, if their corresponding
normal forms are equivalent under a much smaller set of changes of coordinates.
Ideally, we would like to conclude that  $\{\tilde\tau_{1j},\tilde\tau_{2j},\tilde\rho\}$ are holomorphically equivalent
if and only if their corresponding normal forms are the same, or if they are the same under a change of coordinates with finitely many parameters. For instance the Moser-Webster normal form for real analytic surfaces ($p=1$) with non-vanishing elliptic Bishop invariant falls into
  the former situation, while the Chern-Moser theory \cite{chern-moser} for real analytic hypersurfaces with non-degenerate Levi-form is an example for the latter.
Such a normal form will tell us if
 the real manifolds have infinitely many invariants or not. One of our goals is to understand if the normal form so achieved can
 be realized by a convergent normalizing transformation.
We will see soon 
that we can achieve our last goal  under  some assumptions on the  family of involutions. Alternatively and perhaps for simplicity of the normal form theory, we would
like to seek normal forms which are dynamically or geometrically significant.

Recall that for each real analytic manifold that
has  $2^p$, the maximum number of,  commuting
deck transformations $\{\tau_{1j}\}$,
we have found
 a unique set of generators $\tau_{11},\ldots, \tau_{1p}$  so that each $\fix(\tau_{1j})$
has codimension $1$. More importantly $\tau_1=\tau_{11}\cdots\tau_{1p}$ is the unique deck
transformation of which the set of fixed points  
has dimension $p$. Let $\tau_2=\rho\tau_1\rho$ and $\sigma=\tau_1\tau_2$.
To normalize $\{\tau_{1j}, \tau_{2j},\rho\}$, we will choose $\rho$ to be the standard anti-holomorphic
involution determined by the linear parts of $\sigma$. Then we
normalize $\sigma=\tau_1\tau_2$ under formal mapping commuting with $\rho$. This will determine a 
normal form for $\{\tau_1^*,\tau_2^*,\rho\}$. This part of normalization is analogous to the Moser-Webster normalization.
  When $p=1$, Moser and Webster obtained a unique normal form by a simple argument.
However, this last step of simple normalization is not available when $p>1$.   By assuming
$\log \hat M$ associated to  $\hat\sigma$
 is tangent to the identity,
  we
will obtain a unique formal normal form $\hat\sigma, \hat\tau_1,\hat\tau_2$  for $\sigma,\tau_1,\tau_2$.
Next, we need to construct the normal form for the families of involutions. We first ignore the reality condition, by
finding $\Phi$ which transforms $\{\tau_{1j}\}$ into a set of
involutions  $\{\hat \tau_{1j}\}$ which is decomposed canonically according to
$\hat\tau_1$.  This allows us to express $\{\tau_{11},\ldots, \tau_{1p},\rho\}$
via $\{\hat\tau_1,\hat\tau_2,\Phi,\rho\}$,   as  in the classification of the families of linear involutions.
Finally, we further normalize $\{\hat\tau_1,\hat\tau_2,\Phi,\rho\}$ to get our normal form.

\

\begin{defn}\label{notation}
 Throughout this section and next, we denote $\{h\}_{d}$ the set of coefficients
of $h_P$ with $|P|\leq d$ if $h(x)$ is a map or function in $x$ as power series. We
  denote by $\cL A_{P}(t), \cL A(y;t)$, etc.,   a universal {\it polynomial} whose
coefficients and degree depend  on a multiindex.
The variables in these polynomials will involve a collection of Taylor coefficients of various
mappings. The collection will also depend on $|P|$.
As such dependency (or independency to  coefficients of higher degrees) is crucial to our computation,
we will remind the reader the dependency when  emphasis is necessary.
\end{defn}

For instance, let us take two formal mappings $F,G$ from $\cc^n$ into itself. Suppose
that  $F=\id+f$ with $f(x)=O(|x|^2)$ and $ G= LG+g$
with $g(x)=O(|x|^2)$ and $LG$ being linear. For $P\in\nn^n$ with $ |P|>1$, we can express
\ga\label{F-1P5}
(F^{-1})_{P}=-f_{P}+\cL F_{P}(\{f\}_{|P|-1}), \\
 (G\circ F)_{P}=g_P+((LG)\circ f)_P+\cL G_{P}(LG;\{f,g\}_{|P|-1}),
\label{GFPg}\\
\label{F-1GF}
 (F^{-1}\circ G\circ F)_{P}=g_P-(f\circ (LG))_P+((LG)\circ f)_P+\cL H_{P}(LG;\{f,g\}_{|P|-1}).
\end{gather}

\subsection{
Formal normal forms of  pair of involutions $\{\tau_1,\tau_2\}$}

We first find a normal form for $\sigma$ in $\cL C(S)$.

\begin{prop}\label{ideal0} Let $\sigma$ be a holomorphic map. Suppose that $\sigma$ has a non-resonant  linear part $$
\hat S\colon\xi_j'=\mu_j\xi_j, \quad \eta_j=\mu_j^{-1}\eta_j, \quad 1\leq j\leq p.
$$
Then there exists a unique normalized formal map $\Psi\in {\cL C}^{\mathsf{ c}}(\hat S)$ such that $\sigma^*=\Psi^{-1}\sigma\Psi\in{\cL C}(\hat S)$.
Moreover,  $\tilde \sigma=\psi_0^{-1}\sigma^*\psi_0\in\cL C(\hat S)$,
if and only if $\psi_0\in\cL C(\hat S)$  and it is invertible. Let
\gan
\sigma^*\colon\xi_j'=M_j(\xi\eta)\xi_j, \quad \eta_j'=N_j(\xi\eta)\eta_j,\\
\tilde \sigma\colon\xi_j'=\tilde  M_j(\xi\eta)\xi_j, \quad \eta_j'=\tilde  N_j(\xi\eta)\eta_j,\\
\psi_0\colon\xi_j'= a_j(\xi\eta)\xi_j,\quad \eta_j'=  b_j(\xi\eta)\eta_j. 
\end{gather*}
\bppp
\item
Assume that    $\tau_1,\tau_2$
are holomorphic involutions and $\sigma=\tau_1\tau_2$.  Then $\sigma^*=\tau_1^*\tau_2^*$ with
\ga\label{tauis}
\tau_{i}^*=\Psi^{-1}\tau_i\Psi\colon\xi_j'=\Lambda_{ij}(\xi\eta)\eta_j, \quad \eta_j'=\Lambda_{ij}^{-1}(\xi\eta)\xi_j;\\
N_j=M_j^{-1}, \quad  M_j=\Lambda_{1j}\Lambda_{2j}^{-1}.
\nonumber
\end{gather}
Let the linear part of $\tau_i$ be given by
$$
\hat T_i\colon\xi_j'=\lambda_{ij}\eta_j,\quad\eta_j'=\lambda_{ij}^{-1}\xi_j.
$$
Suppose that $\lambda_{2j}^{-1}=\lambda_{1j}$.
There exists a unique $\psi_0\in   {\cL C}^{\mathsf{ c}}(\hat T_1,\hat T_2)$ such that
\ga \nonumber
\tilde\tau_{i}=\psi_0^{-1}\tau_i^*\psi_0\colon\xi_j'=\tilde\Lambda_{ij}(\xi\eta)\eta_j, \quad \eta_j'=\tilde
\Lambda_{ij}^{-1}(\xi\eta)\xi_j;\\
\label{tl21}
\tilde  M_{j}=\tilde \Lambda_{1j}^2  =\tilde N_j^{-1}, \quad \tilde \Lambda_{2j}=\tilde \Lambda_{1j}^{-1}.
\end{gather}
 Let
 $\psi_1$ be a formal biholomorphic map. Then  $\{\psi_1^{-1}\tilde\tau_1\psi_1,
 \psi_1^{-1}\tilde\tau_2\psi\}$ has the same form as of $\{\tilde \tau_{1},\tilde \tau_{2}\}$
    if and only
if $\psi_1\in{\cL C}(\hat T_1,\hat T_2)$;   moreover, $\tilde\Lambda_{ij}(\xi\eta)$, $\tilde M_j(\xi\eta)$ are transformed
into
\eq{lamtpsi}
\tilde\Lambda_{ij}\circ\tilde\psi_1, \quad \tilde M_j\circ\tilde\psi_1.
\eeq
Here $\tilde\psi_1(\zeta)=(\diag c(\zeta))^2\zeta$ and
$\psi_1(\xi,\eta)=((\diag c(\xi\eta))\xi,(\diag c(\xi\eta))\eta)$.
\item
Assume further that $\tau_2=\rho\tau_1\rho$, where    $\rho$
is defined by \rea{eqrh}. Let
\eq{rhze}
\nonumber
{\rho_z}\colon\zeta_j\to\ov\zeta_j, \quad 1\leq j\leq e_*+h_*;
\quad \zeta_{s}\to\ov\zeta_{s+s_*}, \quad e_*+h_*<s\leq p-s_*.
\eeq
 Then $\rho\Psi=\Psi\rho$,   $\tau_2^*=\rho\tau_1^*\rho$,  and $(\sigma^*)^{-1}=\rho\sigma^*\rho$. The
  last two identities are equivalent to
\begin{alignat}{5} \label{a2e-}
&\Lambda_{2e}^{-1}
&&=\ov{\Lambda_{1e}\circ{\rho_z}}, \quad &&\ov {M_e\circ{\rho_z}}=M_e,
\quad &&1\leq e\leq e_*;\\
&\Lambda_{2h}&&=\ov{\Lambda_{1h}\circ{\rho_z}},\quad&&
\ov {M_h\circ{\rho_z}}=M_h^{-1},\quad&& e_*< h\leq h_*+e_*;\\
  & \Lambda_{2(s)}&&=\ov{\Lambda_{1(s_*+s)}\circ{\rho_z}}, &&
\\
& \Lambda_{2(s_*+s)}&&=\ov{\Lambda_{1s}\circ{\rho_z}},\quad&&
\ov {M_s^{-1}\circ{\rho_z}}=M_{s_*+s}, \quad && h_*+e_*< s\leq p- s_*.
\label{aiss}\end{alignat}
Let   $\psi_0$ and $\tilde\tau_i=\psi_0^{-1}\tau_i^*\psi_0$ be as in (i). Then $\rho\psi_0=\psi_0\rho$, and $\hat\tau_1,\hat\tau_2$ satisfy
\ga\label{ohai}
\tilde\Lambda_{ie}=\ov{\tilde\Lambda_{ie}\circ{\rho_z}}, \quad \tilde\Lambda_{ih}^{-1}=\ov{\tilde\Lambda_{ih}\circ{\rho_z}},\quad
\tilde\Lambda_{i{s+s_*}}=\ov{\tilde\Lambda_{is}^{-1}\circ{\rho_z}}.
\end{gather}
\eppp
\end{prop}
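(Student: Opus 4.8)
The plan is to prove the assertions in order, with the Poincar\'e--Dulac homological equation for $\hat S$ serving as the common engine: the non-resonance \re{muqn1} is exactly what makes the operator $A\mapsto \hat S\circ A-A\circ\hat S$ bijective on the complement of the span of the resonant monomials, which by \rl{cents} is precisely the complement of the tangent space to $\cL C(\hat S)$. First I would construct $\Psi$ degree by degree. Writing $\Psi=\I+\psi$ with $\psi=O(|z|^2)$, the equation $\sigma\circ\Psi=\Psi\circ\sigma^*$ in degree $d$ reads $\hat S\circ\psi_d-\psi_d\circ\hat S=\sigma^*_d+(\text{a polynomial in }\{\psi,\sigma^*\}_{d-1})$; splitting the right-hand side into its resonant and non-resonant parts determines $\sigma^*_d$ (the resonant part, placed in $\cL C(\hat S)$) and then $\psi_d$ uniquely, provided one forbids $\psi_d$ from containing resonant terms, i.e.\ requires $\Psi\in\cL C^{\mathsf{c}}(\hat S)$. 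This gives existence and uniqueness of $\Psi$ and of $\sigma^*$, and the stated form $\xi_j'=M_j(\xi\eta)\xi_j$, $\eta_j'=N_j(\xi\eta)\eta_j$ of $\sigma^*$ comes from \rl{cents}.

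For the parametrization clause, note that after a coordinate permutation we may assume $L\psi_0\in\cL C(\hat S)$ (both $L\tilde\sigma$ and $\hat S$ must be diagonal with the same spectrum); writing $\psi_0=(L\psi_0)\theta$ with $\theta$ tangent to the identity, and putting $\tau:=(L\psi_0)^{-1}\sigma^*(L\psi_0)\in\cL C(\hat S)$, the claim reduces to: $\theta^{-1}\tau\theta\in\cL C(\hat S)$ with $\tau\in\cL C(\hat S)$ forces $\theta\in\cL C(\hat S)$. The ``if'' direction is immediate since conjugation by an invertible element of $\cL C(\hat S)$ fixes $\hat S$, hence preserves $\cL C(\hat S)$. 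For the converse, decompose $\theta=\theta_1\theta_0^{-1}$ with $\theta_1\in\cL C^{\mathsf{c}}(\hat S)$ and $\theta_0\in\cL C(\hat S)$ (the decomposition following \rl{lehphi}); conjugation by $\theta_0$ preserves $\cL C(\hat S)$, so $\theta_1^{-1}\tau\theta_1\in\cL C(\hat S)$. If $\theta_1\neq\I$ and $d$ is the lowest degree of $\theta_1-\I$, then the degree-$d$ part of $\theta_1^{-1}\tau\theta_1$ equals $\tau'_d+\hat S\circ(\theta_1)_d-(\theta_1)_d\circ\hat S$, where $(\theta_1)_d$ is a nonzero non-resonant term; its image under the homological operator is non-resonant and nonzero, contradicting membership in $\cL C(\hat S)$. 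Hence $\theta_1=\I$, and the substitution rule $\tilde\Lambda_{ij}\mapsto\tilde\Lambda_{ij}\circ\tilde\psi_1$, $\tilde M_j\mapsto\tilde M_j\circ\tilde\psi_1$ of \re{lamtpsi} follows by direct computation.

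For part (i), $\sigma^*=\Psi^{-1}\tau_1\tau_2\Psi=\tau_1^*\tau_2^*$ is immediate, each $\tau_i^*$ is an involution with linear part $\hat T_i$, and since $\sigma$ is reversible by $\tau_1$ (as $\tau_1\sigma\tau_1=\tau_2\tau_1=\sigma^{-1}$) one gets $\tau_1^*\sigma^*\tau_1^*=(\sigma^*)^{-1}$. An induction on degree running the homological machinery of \rl{cents}, and using $(\tau_i^*)^2=\I$ together with this reversibility (the non-resonance of $\hat S$ still dictating which monomials can occur), forces $\tau_i^*$ into the $\Lambda$-form $\xi_j'=\Lambda_{ij}(\xi\eta)\eta_j$, $\eta_j'=\Lambda_{ij}^{-1}(\xi\eta)\xi_j$ and simultaneously gives $N_j=M_j^{-1}$; composing $\tau_1^*\tau_2^*$, using that $\xi_j\eta_j$ is invariant under each $\tau_i^*$, then yields $M_j=\Lambda_{1j}\Lambda_{2j}^{-1}$. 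For the further normalization one works inside the family $\cL C(\hat S)$ of admissible conjugations (the relevant family by the previous paragraph, since $\psi_0^{-1}\sigma^*\psi_0$ must again lie in $\cL C(\hat S)$) and invokes \rl{fhg} to produce a unique $\psi_0\in\cL C^{\mathsf{c}}(\hat T_1,\hat T_2)$ for which $\tilde\tau_i=\psi_0^{-1}\tau_i^*\psi_0$ satisfies $\tilde\Lambda_{2j}=\tilde\Lambda_{1j}^{-1}$; uniqueness is the same lowest-degree induction, now with the anti-homological operator attached to $\{\hat T_1,\hat T_2\}$ injective on its complement. The residual freedom is exactly $\cL C(\hat T_1,\hat T_2)$, whose elements $\xi_j\mapsto c_j(\xi\eta)\xi_j$, $\eta_j\mapsto c_j(\xi\eta)\eta_j$ act on $\tilde\Lambda_{ij},\tilde M_j$ as in \re{lamtpsi}, as one checks directly.

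For part (ii), $\tau_2=\rho\tau_1\rho$ makes $\sigma$ reversible by $\rho$ ($\rho\sigma\rho=\tau_2\tau_1=\sigma^{-1}$). The key structural point is that $\rho$, being the coordinate permutation-plus-conjugation \re{eqrh}, is compatible with the eigenvalue symmetries ($\mu_e=\ov\mu_e$, $\mu_h^{-1}=\ov\mu_h$, $\mu_{s+s_*}=\ov\mu_s^{-1}$), so conjugation by $\rho$ carries $\cL C(\hat S)$, $\cL C^{\mathsf{c}}(\hat S)$, $\cL C(\hat T_1,\hat T_2)$ and $\cL C^{\mathsf{c}}(\hat T_1,\hat T_2)$ each into itself and induces $\rho_z$ on the variables $\zeta=\xi\eta$. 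Hence $\rho\Psi\rho$ is again tangent to the identity, lies in $\cL C^{\mathsf{c}}(\hat S)$, and conjugates $\sigma$ into $\cL C(\hat S)$; by the uniqueness in the first paragraph, $\rho\Psi\rho=\Psi$, i.e.\ $\rho\Psi=\Psi\rho$. Then $\tau_2^*=\Psi^{-1}\rho\tau_1\rho\Psi=\rho\tau_1^*\rho$ and $(\sigma^*)^{-1}=(\tau_1^*\tau_2^*)^{-1}=\tau_2^*\tau_1^*=(\rho\tau_1^*\rho)(\rho\tau_2^*\rho)=\rho\sigma^*\rho$; rewriting these two identities in coordinates with \re{eqrh} and $\rho_z$ produces \re{a2e-}--\re{aiss}, and the same uniqueness argument applied to $\psi_0$ (using the $\rho$-compatibility of the $\{\hat T_1,\hat T_2\}$-splitting) gives $\rho\psi_0=\psi_0\rho$, from which \re{ohai} is the coordinate form of $\rho\tilde\tau_1\rho=\tilde\tau_2$ with $\tilde\Lambda_{2j}=\tilde\Lambda_{1j}^{-1}$. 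The main obstacle I expect is not any one computation but these rigidity statements — identifying the normalizers with $\cL C(\hat S)$ and its analogue for $\{\hat T_1,\hat T_2\}$ — since that is exactly where the non-resonance \re{muqn1} is indispensable and where one must verify that $\rho$ respects every centralizer/complement splitting used; the extraction of the $\Lambda$-form of $\tau_i^*$ from reversibility, organized along the lines of \rl{cents}, is also delicate.
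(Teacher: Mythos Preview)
Your outline matches the paper's proof in architecture: Poincar\'e--Dulac for $\Psi$, the decomposition $\theta=\theta_1\theta_0^{-1}$ plus lowest-degree uniqueness for the parametrization clause, and $\rho$-stability of the centralizer/complement splittings for (ii). Two places in (i) are handled more sharply in the paper than in your sketch. For the $\Lambda$-form of $\tau_i^*$, the paper does not run a fresh induction or cite \rl{cents} (that lemma is stated for $\tau_1\tau_2=\hat S$, not $\sigma^*$); instead it introduces $T_0(\xi,\eta)=(\eta,\xi)$, checks directly that $T_0(\sigma^*)^{-1}T_0\in\cL C(\hat S)$, so $(\tau_1^*T_0)^{-1}\sigma^*(\tau_1^*T_0)\in\cL C(\hat S)$, and then \emph{applies the parametrization clause you already proved} to conclude $\tau_1^*T_0\in\cL C(\hat S)$, from which the $\Lambda$-form and $\Lambda_{1j}^*=\Lambda_{1j}^{-1}$ (via $(\tau_1^*)^2=\I$) are immediate. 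For the construction of $\psi_0$, \rl{fhg} supplies only a decomposition, not the normalization itself; the paper writes $\psi_0$ explicitly as $\xi_j=\tilde\xi_j(1+a_j(\tilde\zeta))$, $\eta_j=\tilde\eta_j(1-a_j(\tilde\zeta))$ and solves for each $a_j$ by the implicit function theorem, after which uniqueness does follow via \rl{fhg} as you indicate. These are refinements of your plan rather than corrections to it.
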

\begin{proof} We will use the Taylor formula
$$
f(x+y)=f(x)+\sum_{k=1}^m\frac{1}{k!}D_kf(x;y)+R_{m+1}f(x;y)
$$
with $D_kf(x;y)=\{\pd_t^kf(x+ty)\}|_{t=0}$ and
\eq{taylor}
 R_{m+1}f(x;y)=(m+1)\int_0^1(1-t)^m\sum_{|\alpha|=m+1}\frac{1}{\alpha!}\partial^{\alpha}f(x+ty)y^{\alpha}\, dt.
\eeq
Set $D=D_1$.
Let $\sigma$ be given by
\eq{idfg-}
\nonumber
\xi_j'=M_j^0(\xi\eta)\xi_j+f_j(\xi,\eta), \quad \eta_j'=N_j^0(\xi\eta)\eta_j+g_j(\xi,\eta)
\eeq
with
\eq{idfg}
 (f,g)\in  {\cL C}^{\mathsf{ c}}_2(\hat S),
 \quad \ord(f,g)= d \geq2.
\eeq

We need to find    $\Phi\in  {\cL C}^{\mathsf{ c}}(S)$
such that $\Psi^{-1}\sigma\Psi=\sigma^*$ is given by
$$
\xi_j'=M_j(\xi\eta)\xi_j, \quad\eta_j'=N_j(\xi\eta)\eta_j.
$$
By definition, $\Psi$ has the form
$$
\xi'_j=\xi_j+U_j(\xi,\eta), \quad\eta_j'= \eta_j+V_j(\xi,\eta), \quad U_{j,(P+e_j)P}=V_{j,P(P+e_j)}=0.
$$
The components of $\Psi\sigma^*$  are
\al\label{xjmj}
\xi_j'&=M_j(\xi\eta)\xi_j+U_j(M(\xi\eta)\xi,N(\xi\eta)\eta),\\
\eta_j'&=N_j(\xi\eta)\eta_j+V_j(M(\xi\eta)\xi,
N(\xi\eta)\eta).
\label{ejmj}\end{align}
To derive the normal form, we only need Taylor theorem in order one. This can also demonstrate  small divisors in the normalizing transformation; however, one cannot see the small divisors in the normal forms.  Later we will  show the existence of divergent normal forms. This requires us to use Taylor formula whose remainder
has  order two.
By the Taylor theorem,
we write the components of $\sigma\Psi$ as
\al\label{mj0x}
\xi_j'&=(M^0_j(\xi\eta)+DM_j^0(\xi\eta)(\eta U +\xi V  +UV))(\xi_j+U_j)\\
\nonumber
&\quad +f_j(\xi,\eta)+ Df_j(\xi,\eta)(U, V)+A_j(\xi,\eta),\\
\label{nj0x}\eta_j'&=(N_j^0(\xi\eta)+DN_j^0(\xi\eta)(\eta U +\xi V+UV))(\eta_j+V_j)\\
\nonumber
&\quad +g_j(\xi,\eta)+Dg_j(\xi,\eta)(U,V)+B_j(\xi,\eta).
\end{align}
Recall our notation that
 $UV=(U_1(\xi,\eta)V_1(\xi,\eta),\ldots, U_p(\xi,\eta)V_p(\xi,\eta))$. The second order remainders are
\al \label{ajpq}
A_j(\xi,\eta)&=
 R_2M_j^0(\xi\eta;\xi U+\eta V+UV)(\xi_j+U_j) +R_2f_j(\xi,\eta;U,V),\\
  B_j(\xi,\eta)&=
 R_2N_j^0(\xi\eta;\xi U+\eta V+UV)(\eta_j+V_j) +R_2g_j(\xi,\eta;U,V).\label{bjpq--}
 \end{align}
Note that the remainder $R_2M^0$ is independent of the linear part of $M^0$. Thus
\gan
R_2M_j^0=R_2(M_j^0-LM_j^0), \quad R_2N_j^0=R_2(N_j^0-LN_j^0).
\end{gather*}
Let us calculate the largest degrees   $w,d'$ of coefficients of $M^0-LM^0, (U,V,f,g)$ on which $A_{j,PQ}$ depend.
  It is easy to see that $d'\geq d\geq2$  and $w\geq2$.
We have
\gan
 2(w- 2)+  2(d+1)+1\leq |P|+|Q|; \\
 3+d+d'\leq |P|+|Q| \quad \text {or} \quad  2d+d'-2\leq |P|+|Q|,
\end{gather*}
where the first two inequalities are obtained from the first term on the right-hand side of \re{ajpq} and its second
term yields the last inequality.
Thus, we have  crude bounds
$$
w\leq\f{|P|+|Q|+  1-2d}{2}, \quad d'\leq   |P|+|Q|-d.
$$
Analogously, we can estimate the degrees of coefficients of $N^0$. We obtain
\al
\label{ajpq+}
A_{j,PQ}&=\cL A _{j,PQ}( \{M^0-LM^0\}_{\f{|P|+|Q|+1-2d}{2}};\{f, U, V\}_{|P|+|Q|-d}),\\
B_{j, QP}&=\cL B _{j,QP}(\{N^0-LN^0\}_{\f{|P|+|Q|+1-2d}{2}}; \{g, U, V\}_{|P|+|Q|-d}).
\end{align}
  Recall our notation that $\{f,U,V\}_d$ is the set of coefficients of $f_{PQ}, U_{PQ}, V_{PQ}$
with $|P|+|Q|\leq d$.  Here
 $\cL A_{j,PQ}(t';t''),\cL B_{j,QP}(t';t'')$ are polynomials  of which each has  coefficients that depend only on $j,P,Q$
 and they vanish at $t''=0$.

To finish the proof of the proposition, we will not need the explicit expressions
involving $DM_j^0, DN_j^0$, $Df_j$, $Dg_j$. We will use these derivatives
in the proof of \rl{mj0mj}. So we derive these expression in this proof too.

We apply the projection \re{xjmj}-\re{ejmj} and \re{mj0x}-\re{nj0x} onto $\cL C^{\mathsf c}_{2}(S)$,  via monomials in each
component of both sides of the identities.
  The images of the mappings
\begin{align*}(\xi,\eta)&\mapsto (U(M(\xi\eta)\xi,N(\xi\eta)\eta)
,V(M(\xi\eta)\xi,N(\xi\eta)\eta)),\\
(\xi,\eta)
&\mapsto (M^0(\xi\eta)U(\xi,\eta), N^0(\xi\eta)V(\xi,\eta))\end{align*}
under the projection are $0$.
We obtain from \re{xjmj}-\re{nj0x}   and \re{ajpq}-\re{bjpq--} that $d_0=d$.
Next, we project \re{xjmj}-\re{ejmj} and \re{mj0x}-\re{nj0x} onto $\cL C_{2}(\hat S)$,  via monomials in each
component of both sides of the identities.
  Using \re{idfg} and \re{ajpq+} we obtain
\al
\label{mpmp-}
M_{j,P}&=M^0_{j,P}+\{Df_j(U,V)\}_{P+e_j,P}+\cL M_P(\{M^0\}_{\f{2|P|+1-2d}{2}}; \{f, U, V\}_{  P(d)}), \\
N_{j,P}&=N^0_{j,P}+\{Dg_j(U,V)\}_{P,P+e_j}+\cL N_P(\{N^0\}_{\f{2|P|+1-2d}{2}}; \{g, U, V\}_{P(d)})\label{npnp}
\end{align}
with
$$ 
 P(d)= 2|P|+1-d.
$$ 
  Here $\cL M_{P}, \cL N_{P}$ are polynomials  of which each has  coefficients  that depend only on $P$, and $\{M^0\}_a$
stands for the set of coefficients $M^0_{Q}$ with $|Q|\leq a$ for a real number $a\geq0$.
Note that $\cL U _{j,PQ}=\cL V _{j, QP}=0$ when $|P|+|Q|=2$, or $\ord (f,g)> |P|+|Q|$. And $\cL M_P=\cL N_P=0$ when $\ord (f,g)>  P(d)$, by \re{idfg}. We have
\al\nonumber
\{U_j(M(\xi\eta)\xi,N(\xi\eta)\eta)\}_{PQ}=\mu^{P-Q}U_{j,PQ}+\cL U_{j,PQ}(\{M,N\}_{\f{|P|+|Q|-d}{2}},\{U\}_{|P|+|Q|-2}).
\end{align}
Comparing coefficients in \re{xjmj}, \re{mj0x}, and using \re{ajpq+}, we get for $\ell=|P|+|Q|$
\aln
(\mu^{P-Q}-\mu_j)
U _{j,PQ} &=\{f _j+Df_j(U,V)\}_{PQ}\\
&\quad +\cL U _{j,PQ}( \{M^0\}_{\f{\ell+1-2d}{2}},\{M,N\}_{\f{\ell-d}{2}}; \{f, U, V\}_{\ell-2}).
\end{align*}
We have analogous formula for $V_{j,QP}$.
Using \re{mpmp-}, we obtain with $|P|+|Q|=\ell$
\al
\label{mp-q}
(\mu^{P-Q}-\mu_j)
U _{j,PQ} &=\{f _{j}+Df_j(U,V)\}_{PQ}+\cL U _{j,PQ}(  \{M^0, N^0\}_{\f{\ell-d}{2}}; \{f, g,U, V\}_{\ell-2}),\\
(\mu^{Q-P}-\mu_j^{-1})
V _{j,QP}&=\{g _{j}+Dg_j(U,V)\}_{PQ}+\cL V _{j,QP}(\{M^0,N^0\}_{\f{\ell -d}{2}}; \{f,g, U, V\}_{\ell-2}).
\label{mp-q2}\end{align}
for $\mu^{P-Q}\neq\mu_j$, which are always solvable.
 Inductively, by using
\re{mp-q}-\re{mp-q2} and \re{mpmp-}-\re{npnp}, we obtain unique solutions $U,V, M, N$. Moreover, the solutions and their dependence on the coefficients of $f,g$ and small divisors have the form
\al
(\mu^{P-Q}-\mu_j)U _{j,PQ}&=\{f _{j}+Df_j(U,V)\}_{PQ}+\cL U _{j,PQ}^*(\del_{\ell-2},
\{M^0,N^0\}_{\f{\ell-d}{2}};\{ f,g\}_{\ell-2}),
\label{ujpq+}
\\
(\mu^{Q-P}-\mu_j^{-1})V _{j,QP}&=\{g _{j}+Dg_j(U,V)\}_{PQ}+\cL V _{j,QP}^*(\del_{\ell-2},
\{M^0,N^0\}_{\f{\ell-d}{2}};\{ f,g\}_{\ell-2}).
\label{vjpq}
\end{align}
where $\ell=|P|+|Q|$ and $\mu^{P-Q}\neq\mu_j$, and
$
\del_{i}
$ is the union of $\{\mu_1,\mu_1^{-1},\ldots,
\mu_p,\mu_p^{-1}\}$ and
$$
\left\{\frac{1}{\mu^{A-B}-\mu_j}\colon |A|+|B|\leq i,j=1,\dots,p,
  A,B\in \nn^p\right\}.
$$
This shows that for any $M^0, N^0$ there exists  a unique mapping $\Psi$ transforms $\sigma$ into $\sigma^*$.
  Furthermore,  $\cL U^*_{j,PQ}(t';t''), \cL V^*_{j,QP}(t';t'')$ are polynomials  of which each has  coefficients that depend only on $j,P,Q$, and they vanish at $t''=0$.

  For later purpose,  let us express $M,N$ in terms of $f,g$. We substitute expressions \re{ujpq+}-\re{vjpq}
for $U,V$ in \re{mpmp-}-\re{npnp} to obtain
\al
M_{j,P}&=M^0_{j,P}+\{Df_j(U,V)\}_{P+e_jP}+\cL M_{j,P}^*(\del_{P(d)},\{M^0,N^0\}_{\f{P(d)}{2}}; \{f, g\}_{P(d)}),  \label{mpmpnsd}
 \\
N_{j,P}&=N^0_{j,P}+\{Dg_j(U,V)\}_{PP+e_j}+\cL N_{j,P}^*(
\del_{P(d)},\{M^0,N^0\}_{\f{ P(d)}{2}}; \{f, g\}_{P(d)}).\label{npnpnsd}
\end{align}
with $f,g$ satisfying \re{idfg}.

Assume that   $\tilde\sigma=\psi_0^{-1}\sigma^*\psi_0$ commutes with $\hat S$. By \nrc{fhg},
we can decompose  
$\psi_0=HG^{-1}$ with $G\in\cL C(\hat S)$ and $H\in\cL C^{\mathsf{ c}}(\hat S)$. Furthermore,
 $G^{-1}\tilde \sigma G$ commutes with $\hat S$ and
  $H^{-1}\sigma^* H$. By the uniqueness conclusion for the above $\psi_0$,   $H$ must be the identity.  This shows that $\psi_0\in\cL C(\hat S)$.

(i). Assume that we have normalized $\sigma$. We now use it to normalize the pair of involutions.   Assume that $\sigma=\tau_1\tau_2$ and $\tau_j^2=I$. Then $\sigma^*=\tau_1^*\tau_2^*$.
Let $T_0(\xi,\eta):=(\eta,\xi)$.
We have 
 $T_0(\sigma^*)^{-1}T_0=T_0\tau_1^*\sigma^*\tau_1^*T_0$.
 By the above normalization,  $T_0(\sigma^*)^{-1}T_0$ commutes with $\hat S$. Therefore, $\tau_1^*T_0$ belongs to the centralizer of $\hat S$ and  it must be of the form
 $(\xi,\eta)\to(\xi \Lambda_1(\xi\eta),\eta  \Lambda_1^*(\xi\eta))$. Then $(\tau_1^*)^2=I$ implies that
 $$
 \Lambda_1(\xi\eta (\Lambda_1 \Lambda_1^*)(\xi\eta)) \Lambda_1^*(\xi\eta)=1.
 $$
  The latter implies, by induction on $d>1$, that $\Lambda_1 \Lambda_1^*=1+O(d)$ for all $d>1$, i.e. $\Lambda_1 \Lambda_1^*=1$.

Let $\tau_i^*$ be given by \re{tauis}. We want to achieve
$\tilde\Lambda_{1j}\tilde\Lambda_{2j}=1$ for $\tilde\tau_i=\psi_0^{-1}\tau_i^*\psi_0$
by applying
 a transformation $\psi_0$ in $\cL C^{\mathsf{ c}}(\hat T_1,\hat T_2)$ that commutes with $\hat S$. According to Definition \ref{ccst}, it has the form
 $$
 \psi_0\colon \xi_j=\tilde\xi_j(1+a_j(\tilde\zeta)), \quad
 \eta_j=\tilde\eta_j (1-a_j(\tilde\zeta))
  $$
with $a_j(0)=0$. Here $\tilde \zeta_j:=\tilde\xi_j\tilde \eta_j$ and
 $\tilde \zeta:=(\tilde \zeta_1,\ldots, \tilde \zeta_p)$. Computing the products $\zeta$ in $\tilde\zeta$ and solving $\tilde\zeta$ in $\zeta$, we obtain
$$
\psi_0^{-1}\colon \tilde\xi_j=\xi_j(1+b_j(\zeta))^{-1}, \quad
 \tilde\eta_j=\eta_j (1-b_j(\zeta))^{-1}.
$$
Note that $(a_j^2)_P=\cL A _{j,P}(\{a\}_{|P|-1})$, and
$$\xi_j\eta_j=\tilde\xi_j\tilde\eta_j(1-a_j^2(\tilde\zeta)),\quad
\tilde\xi_j\tilde\eta_j=\xi_j\eta_j(1-b_j^2(\zeta))^{-1}.$$
From $\psi_0^{-1}\psi_0=I$, we get
\ga
b_j(\zeta)=a_j(\tilde\zeta), \quad
\label{bjpq}
b _{j,P}=a _{j,P}+\cL B _{j,P}(\{a\}_{|P|-1}).
 \end{gather}
By a simple computation we see that $\tilde\tau_i=\psi_0^{-1}\tau_i^*\psi_0$ is given by
$$
\tilde\xi_j'=\tilde\eta_j\tilde\Lambda_{ij}(\tilde\zeta),
\quad\tilde\eta_j'=\tilde\xi_j\tilde\Lambda_{ij}^{-1}(\tilde\zeta)
$$
with
$$
\tilde\Lambda_{1j}\tilde\Lambda_{2j}(\tilde\zeta)=(\Lambda_{1j}\Lambda_{2j})(\zeta)
(1+b_j(\zeta'))^{-2}(1- a_j(\tilde\zeta))^{2}.
$$
Here $\zeta_j'=\zeta_j(1-a_j^2(\tilde\zeta))$.
 Using \re{bjpq} and 
 the implicit function theorem, we determine $a_j$ uniquely to achieve $\tilde\Lambda_{1j}\tilde\Lambda_{2j}=1$.

To identify the transformations that preserve the form of $\tilde\tau_1,\tilde\tau_2$,
 we first verify that each element $\psi_1\in\cL C(\hat T_1,\hat T_2)$ preserves that form.
According to \re{bjaj}, we have
\gan
\psi_1\colon\xi_j=\tilde\xi_j  \tilde a_j(\tilde\zeta), \quad\eta_j=\tilde\eta_j\tilde a_j(\tilde\zeta),\\
\psi_1^{-1}\colon \tilde\xi_j= \xi_j\tilde b_j( \zeta), \quad\tilde\eta_j= \eta_j\tilde b_j( \zeta),\\
\tilde b_j(\zeta)\tilde a_j(\tilde\zeta)=1.
\end{gather*}
This shows that $\psi_1^{-1}\tilde\tau_i$ is given by  
$$
\tilde\xi_j'=\tilde\Lambda_{ij}(\zeta)\tilde b_j(\zeta)\eta_j,\quad \tilde\eta_j'=\tilde\Lambda_{ij}^{-1}(\zeta)\tilde b_j(\zeta)\xi_j.
$$
Then $\psi_1^{-1}\tilde\tau_i\psi_1$ is given by
$$
\tilde\xi_j'=\tilde\Lambda_{ij}(\zeta)\tilde\eta_j,\quad \tilde\eta_j'=\tilde\Lambda_{ij}^{-1}(\zeta)\tilde\xi_j.
$$
Since $\zeta_j=\tilde\zeta_j\tilde a_j^2(\tilde\zeta)$, then $\psi_1^{-1}\tilde\tau_i\psi_1$
still satisfy \re{tl21}.
Conversely,
suppose  that  $\psi_1$ preserves the forms of $\tilde\tau_1,\tilde\tau_2$.
We apply \nrc{fhg} to decompose  
$\psi_1=\phi_1\phi_0^{-1}$ with $\phi_0\in\cL C(\hat T_1,\hat T_2)$
 and $\phi_1\in\cL C^{\mathsf{ c}}(\hat T_1,\hat T_2)$. Since we just proved that each element in $\cL
 C(\hat T_1,\hat T_2)$ preserves the form of $\tilde\tau_{i}$, then $\phi_1=\psi_1\phi_0$
 also preserves the forms of $\tilde\tau_1,\tilde\tau_2$. On the other hand, we have shown that there exists a unique
 mapping   in $\cL C^{\mathsf c}(\hat T_1,\hat T_2)$ which transforms $\{\tau_1^*,\tau_2^*\}$ into $\{\tilde\tau_1,\tilde\tau_2\}$. This shows that $\phi_0=I$.
We have verified all assertions in (i).

 (ii).  
 It is easy to   see that $\cL C^{\mathsf{ c}}(\hat S)$ and $\cL C^{\mathsf{ c}}(\hat T_1,\hat T_2)$
 are invariant under conjugacy by $\rho$.
 We have $\Psi^{-1}\sigma\Psi=\sigma^*$ and $\Psi\in\cL C^{\mathsf{ c}}(\hat S)$.   Note that $\rho\sigma\rho
 =\sigma^{-1}$ and $\rho\sigma^*\rho$ have the same form as of    $(\sigma^*)^{-1}$, i.e. they
 are in $\cL C(\hat S)$   and have the same linear part.
 We have $\rho\Psi\rho\sigma\rho\Psi^{-1}\rho=\rho(\sigma^*)^{-1}\rho$.
 The uniqueness of $\Psi$ implies that $\rho\Psi\rho
=\Psi$ and
$\tau_2^*=\rho\tau_1^*\rho$. Thus, we obtain relations \re{a2e-}-\re{aiss}.
 Analogously, $\rho\psi_0\rho$ is still in $\cL C^{\mathsf{ c}}(\hat T_1,\hat T_2)$, and
 $\rho\phi_0\rho$ preserves the form of $\tilde\tau_1,\tilde\tau_2$. Thus $\rho\psi_0\rho=\psi_0$ and
 $\tilde\tau_2=\rho\tilde\tau_1\rho$, which gives us \re{ohai}.
\end{proof}

  We will also need the following uniqueness result.
\begin{cor}\label{finitever} Suppose that $\sigma$ has a non-resonant  linear part $\hat S$.
Let $\Psi$ be the 
unique
formal mapping in $\cL C^{\mathsf c}(\hat S)$ such that $\Psi^{-1}\sigma\Psi\in \cL C(\hat S)$. If $\tilde\Psi\in\cL C^{\mathsf c}(\hat S)$ is a polynomial
map of degree at most $d$ such that $\tilde\Psi^{-1}\sigma\tilde\Psi(\xi,\eta)=\tilde\sigma(\xi,\eta)+O(|(\xi,\eta)|^{d+1})$ and $\tilde\sigma\in\cL C(\hat S)$,
then $\tilde\Psi$ is unique. In fact, $\Psi-\tilde\Psi=O(d+1)$.
\end{cor}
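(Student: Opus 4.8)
\emph{Sketch of the argument.} The plan is to exploit the fact that the construction of $\Psi$ in the proof of \rp{ideal0} is recursive in the degree: there, each coefficient of the normalizing map $\Psi$ and of the normal form $\sigma^{*}$ of a given degree $\ell$ is pinned down by $\sigma$, by the small divisors, and by the coefficients of $\Psi$ and $\sigma^{*}$ of degree strictly less than $\ell$. First I would reduce the hypothesis to a conjugation identity valid up to order $d$: setting $R:=\tilde\Psi^{-1}\sigma\tilde\Psi-\tilde\sigma$, so that $\ord R\geq d+1$, one has $\sigma\tilde\Psi=\tilde\Psi\circ(\tilde\sigma+R)$; since $\tilde\Psi$ is tangent to the identity (hence invertible as a formal map) and $\tilde\sigma$ is tangent to $\hat S$, the maps $\tilde\Psi\circ(\tilde\sigma+R)$ and $\tilde\Psi\circ\tilde\sigma$ differ by a series of order $\geq d+1$. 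Thus the identity $\sigma\tilde\Psi=\tilde\Psi\tilde\sigma$ holds in every coefficient of degree $\leq d$.

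Next I would rerun, verbatim, the degree-by-degree computation of the proof of \rp{ideal0}, now with the pair $(\tilde\Psi,\tilde\sigma)$ in place of $(\Psi,\sigma^{*})$ and with $\sigma$ — decomposed once and for all into its resonant part, lying in $\cL C(\hat S)$, and its normalized part $(f,g)\in\cL C^{\mathsf{ c}}_2(\hat S)$ — left unchanged. Writing $\tilde\Psi=I+(\tilde U,\tilde V)$, the assumption $\tilde\Psi\in\cL C^{\mathsf{ c}}(\hat S)$ imposes the same vanishing of resonant coefficients that was used there, while $\tilde\sigma\in\cL C(\hat S)$ forces, by \rl{cents}, the diagonal shape $\xi_j'=\tilde M_j(\xi\eta)\xi_j$, $\eta_j'=\tilde N_j(\xi\eta)\eta_j$ with $\tilde M_j(0)=\mu_j$, $\tilde N_j(0)=\mu_j^{-1}$. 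Comparing the degree-$\ell$ coefficients of $\sigma\tilde\Psi=\tilde\Psi\tilde\sigma$ for $2\leq\ell\leq d$ then yields, for $P-Q\neq e_j$, equations of the form \re{ujpq+}--\re{vjpq} for $\tilde U_{j,PQ},\tilde V_{j,QP}$, and for $P-Q=e_j$ the normalization $\tilde U_{j,(Q+e_j)Q}=\tilde V_{j,Q(Q+e_j)}=0$ together with equations \re{mpmpnsd}--\re{npnpnsd} for $\tilde M_{j,Q},\tilde N_{j,Q}$. An induction on $\ell$ from $2$ to $d$ then closes the argument: at $\ell=2$ the right-hand sides collapse to $f_{j,PQ}$ (resp. $g_{j,QP}$), so $\tilde U,\tilde V$ coincide there with $U,V$; and for $\ell>2$, since the right-hand sides of \re{ujpq+}--\re{vjpq} and \re{mpmpnsd}--\re{npnpnsd} involve only $\sigma$, the small divisors, and coefficients of the normalizing map of degree $<\ell$ — this being precisely the dependency recorded in \rp{ideal0}, obtained there by reducing \re{mp-q}--\re{mp-q2} by means of the lower-degree instances of the same recursions — the inductive hypothesis forces $\tilde U_{j,PQ}=U_{j,PQ}$, $\tilde V_{j,QP}=V_{j,QP}$ for $|P|+|Q|=\ell$ and $\tilde M_{j,P}=M_{j,P}$, $\tilde N_{j,P}=N_{j,P}$ for $2|P|+1=\ell$. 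Hence $\Psi-\tilde\Psi=O(|(\xi,\eta)|^{d+1})$.

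Uniqueness would then be immediate: a second admissible polynomial $\tilde\Psi'$ of degree at most $d$ also satisfies $\tilde\Psi'-\Psi=O(|(\xi,\eta)|^{d+1})$ by the same reasoning, so $\tilde\Psi-\tilde\Psi'$ is a polynomial of degree at most $d$ vanishing to order $d+1$, hence is identically zero. I expect the only delicate point to be the bookkeeping in the middle paragraph: one must be certain that when a degree-$\ell$ coefficient is written in its final recursive form, no coefficient of degree $\geq\ell$ of either $\tilde\Psi$ or $\tilde\sigma$ has crept into the right-hand side. This is exactly the degree-count already carried out in the proof of \rp{ideal0}, so I would invoke it rather than repeat it; the rest is a routine induction together with the trivial observation that a polynomial of degree $\leq d$ vanishing to order $d+1$ must vanish.
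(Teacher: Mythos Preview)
Your argument is correct. You rerun the degree-by-degree recursion from the proof of \rp{ideal0}, observing that the conjugation identity $\sigma\tilde\Psi=\tilde\Psi\tilde\sigma$ holds in every coefficient of degree $\leq d$, and that the recursions \re{mp-q}--\re{mp-q2} together with \re{mpmp-}--\re{npnp} determine $(\tilde U,\tilde V,\tilde M,\tilde N)$ at each degree $\ell\leq d$ from $\sigma$ and strictly lower-degree data, forcing agreement with $(U,V,M,N)$ by induction.

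The paper takes a different, shorter route that uses \rp{ideal0} as a black box rather than reopening its proof. It applies the unique normalizing map $\Phi\in\cL C^{\mathsf c}(\hat S)$ to $\tilde\Psi^{-1}\sigma\tilde\Psi$; since this map equals $\tilde\sigma+O(d+1)$ with $\tilde\sigma\in\cL C(\hat S)$, its non-resonant part has order $\geq d+1$, and \re{ujpq+}--\re{vjpq} (whose universal polynomials vanish when $\{f,g\}=0$) give $\Phi=I+O(d+1)$. Then both $\tilde\Psi\Phi$ and $\Psi$ normalize $\sigma$, so by the uniqueness clause of \rp{ideal0} the map $\psi_0:=\tilde\Psi\Phi\Psi^{-1}$ lies in $\cL C(\hat S)$; hence $\psi_0\Psi=\tilde\Psi+O(d+1)$, and since $\psi_0\in\cL C(\hat S)$ while $\Psi,\tilde\Psi\in\cL C^{\mathsf c}(\hat S)$, comparing resonant and non-resonant parts degree by degree forces $\Psi-\tilde\Psi=O(d+1)$. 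Your approach has the virtue of being self-contained and making the degree bookkeeping explicit; the paper's trades that for brevity, leaning on the already-proved uniqueness and on the transversality of $\cL C(\hat S)$ and $\cL C^{\mathsf c}(\hat S)$.
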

\begin{proof} The proof is contained in the proof of \rp{ideal0}.
 Let us recap  it by using \re{ujpq+}-\re{vjpq} and the proposition.
We take a unique normalized mapping  $\Phi$
such that $\Phi^{-1}\tilde\Psi^{-1}\sigma\tilde\Psi\Phi\in\cL C(\hat S)$. By \re{ujpq+}-\re{vjpq},  $\Phi=I+O(d+1)$. From \rp{ideal0} it follows that
$\psi_0:=\tilde\Psi\Phi\Psi^{-1}\in \cL C(\hat S)$.  We obtain $\tilde\Psi\Phi=\psi_0\Psi$. Thus $\psi_0\Psi=\tilde\Psi+O(d+1)$. Since $\psi_0\in\cL C(\hat S)$,
and $\Psi$, $\tilde\Psi$ are in $\cL C^{\mathsf c}(\hat S)$, we conclude that   $\Psi=\tilde\Psi+O(d+1)$.
\end{proof}
 For clarity, we state the following uniqueness results on normalization.
\begin{cor}\label{uniqueM} Let $\sigma$ have a non-resonant linear part and let $\sigma$ be given by
\eq{idfg-++}
\nonumber
\xi_j'=M_j^0(\xi\eta)\xi_j+f^0_j(\xi,\eta), \quad \eta_j'=N_j^0(\xi\eta)\eta_j+g^0_j(\xi,\eta).
\eeq
Let $\Psi=I+(U,V)\in {\cL C}^{\mathsf{ c}}(\hat S)$ and let
$\sigma^*=\Psi^{-1}\sigma\Psi$ be given by
$$ 
\xi_j'=M_j(\xi\eta)\xi_j+ f_j(\xi,\eta), \quad\eta_j'=N_j(\xi\eta)\eta_j+g_j(\xi,\eta).
$$
Suppose that $
 (f^0,g^0)$ and $(f,g)$ are in $ {\cL C}^{\mathsf{ c}}_2(\hat S)$, $\ord(f^0,g^0)\geq d$, $
 \ord(f_j,g_j)\geq d$, and $d\geq2$.
Then  $\ord(U,V)\geq d$ and
\ga \label{m=m0}
M_{j,P}=M_{j,P}^0, \quad N_{j,P}=N_{j,P}^0, \quad 1\leq2|P|+1<2d-1.
\end{gather}
\end{cor}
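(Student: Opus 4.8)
The plan is to extract the uniqueness statement directly from the constructive normalization carried out in the proof of \rp{ideal0}. Recall that the normalizing map $\Psi=I+(U,V)\in\cL C^{\mathsf{c}}(\hat S)$ and the normal form data $M,N$ are obtained inductively by solving the homological equations \re{mp-q}--\re{mp-q2} for the coefficients $U_{j,PQ},V_{j,QP}$ and \re{mpmp-}--\re{npnp} for $M_{j,P},N_{j,P}$. The key structural fact I would invoke is the degree bookkeeping already recorded in the proof: the right-hand sides of \re{ujpq+}--\re{vjpq} and \re{mpmpnsd}--\re{npnpnsd} are polynomials that vanish when their $(f,g)$-arguments vanish, i.e. the polynomials $\cL U^*_{j,PQ},\cL V^*_{j,QP},\cL M^*_{j,P},\cL N^*_{j,P}$ have no constant term in the $\{f,g\}$-variables and depend only on Taylor coefficients of $f,g$ of bounded degree. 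So the whole scheme is triggered by the nonlinear part $(f^0,g^0)$ of $\sigma$.

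First I would argue $\ord(U,V)\geq d$. Since $(f^0,g^0)\in\cL C^{\mathsf{c}}_2(\hat S)$ has order $\geq d$, in the inductive solution \re{mp-q}--\re{mp-q2} for $|P|+|Q|=\ell<d$ every term on the right-hand side vanishes: the $\{f^0,\dots\}_{\ell-2}$ entries are zero because $\ell-2<d$, the correction polynomials $\cL U^*,\cL V^*$ vanish at $t''=0$, and $Df_j(U,V)$ is of order $>\ell$. Hence $U_{j,PQ}=V_{j,QP}=0$ for $|P|+|Q|<d$, which is $\ord(U,V)\geq d$. Consequently in \re{mpmpnsd}--\re{npnpnsd} the term $\{Df_j(U,V)\}_{(P+e_j)P}$ vanishes whenever $2|P|+1<2d-1$ (that is, $|P|<d-1$, so the product $Df_j\cdot(U,V)$ has order $>2(d-1)\geq |P|+|P+e_j|$ ... more precisely order $\geq d+d>2|P|+1$), and the polynomial $\cL M^*_{j,P}(\del_{P(d)},\{M^0,N^0\}_{\cdot};\{f^0,g^0\}_{P(d)})$ vanishes because $P(d)=2|P|+1-d<d$ forces $\{f^0,g^0\}_{P(d)}=0$ and the polynomial vanishes at that argument. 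Therefore $M_{j,P}=M^0_{j,P}$ and $N_{j,P}=N^0_{j,P}$ for $2|P|+1<2d-1$, which is exactly \re{m=m0}. One should double-check the threshold $P(d)=2|P|+1-d$: we need $P(d)<d$, i.e. $2|P|+1<2d$, i.e. $2|P|+1\leq 2d-1$; and we want strict inequality $2|P|+1<2d-1$ in the statement to also kill $\{Df_j(U,V)\}$, whose lowest degree is $d+d=2d>2|P|+1$. So the two conditions match up and I would spell out the arithmetic carefully here.

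The main (and essentially only) obstacle is making the degree estimates airtight: one must verify that every contribution to $M_{j,P}$ other than $M^0_{j,P}$ genuinely involves a Taylor coefficient of $(f^0,g^0)$ of degree $\leq 2|P|+1-d$ and hence vanishes in the stated range, and that no ``spontaneous'' contribution appears from the small-divisor polynomials $\cL M^*,\cL N^*$ alone. This is already implicit in the construction in \rp{ideal0}---the polynomials there are built to vanish at $t''=0$---so the proof is genuinely a matter of quoting \re{ujpq+}--\re{vjpq} and \re{mpmpnsd}--\re{npnpnsd} and tracking indices; I would write it as: ``The proof is contained in the proof of \rp{ideal0}: from $\ord(f^0,g^0)\geq d$ and \re{ujpq+}--\re{vjpq} we get $\ord(U,V)\geq d$ by induction on $|P|+|Q|$, and then \re{mpmpnsd}--\re{npnpnsd} give \re{m=m0} since for $2|P|+1<2d-1$ the terms $\{Df_j(U,V)\}$ and $\cL M^*_{j,P},\cL N^*_{j,P}$ all vanish.''
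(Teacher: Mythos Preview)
Your argument has a gap in how it identifies the map $\Psi$. The formulas \re{mp-q}--\re{mp-q2}, \re{ujpq+}--\re{vjpq}, and \re{mpmpnsd}--\re{npnpnsd} that you quote from the proof of \rp{ideal0} are derived for the \emph{unique} normalizer that sends $\sigma$ all the way into $\cL C(\hat S)$, i.e.\ with the target having $(f,g)=0$. In the corollary, however, $\Psi$ is an arbitrary element of $\cL C^{\mathsf c}(\hat S)$ and $\sigma^*=\Psi^{-1}\sigma\Psi$ still carries a non-resonant part $(f,g)$ of order $\geq d$; the homological equations for this $\Psi$ pick up an extra $-f_{j,PQ}$ on the right-hand side (coming from the $f_j$ in $\Psi\sigma^*$), and \re{mpmpnsd}--\re{npnpnsd} as written compute the coefficients of the \emph{full} normal form, not of the corollary's $M$. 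So as stated, your induction establishes $\ord\geq d$ and $M_{j,P}=M^0_{j,P}$ only for the full normalizer, not for the given $\Psi$.

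The paper avoids this by a different (and shorter) route. For $\ord(U,V)\geq d$ it invokes \nrc{finitever}: both the truncation of $\Psi$ to degree $d-1$ and the identity $I$ normalize $\sigma$ up to order $d-1$, so each differs from the full normalizer by $O(d)$, hence $\Psi=I+O(d)$. For \re{m=m0} it then simply expands $\sigma\Psi=\Psi\sigma^*$ to order $2d-2$ and compares the coefficients of $\xi^P\eta^P\xi_j$ directly, observing that $M^0_j(\xi\eta)U_j$, $DM^0_j(\xi\eta)(\xi V+\eta U)\xi_j$, $U_j(M\xi,N\eta)$, $f^0_j$, and $f_j$ all contribute no resonant monomials (the first three because $(U,V)\in\cL C^{\mathsf c}_2(\hat S)$, the last two because $(f^0,g^0),(f,g)\in\cL C^{\mathsf c}_2(\hat S)$). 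This sidesteps the polynomial bookkeeping entirely.

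Your approach is repairable: once you note that the extra $(f,g)$-contributions vanish for $\ell<d$ in the first step (since $\ord(f,g)\geq d$) and have order $\geq 2d-1$ in the second step (since $U_j(M\xi+f,N\eta+g)-U_j(M\xi,N\eta)=O(2d-1)$ and $f_j$ itself has no resonant terms), the cited formulas go through unchanged in the stated degree ranges. But you should make this explicit rather than treat $\Psi$ as the full normalizer.
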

\begin{proof}By  Corollary~\ref{finitever}, we know that $\ord(U,V)\geq d$.  Expanding both sides of $\sigma\Psi=\Psi\sigma^*$ for terms of degree less than $2d-1$, we obtain
\aln
M^0_j(\xi\eta)(\xi_j&+U_j(\xi,\eta))+DM^0_j(\xi\eta)(\xi V+\eta U)\xi_j+f_j^0(\xi,\eta)\\
&=M_j(\xi\eta)\xi_j+f_j(\xi,\eta)+U_j(M(\xi\eta)\xi,N(\xi\eta)\eta)+O(2d-1).
\end{align*}
Note that  $\xi_iV_i(\xi,\eta)\xi_j$ and $ \eta_iU_i(\xi,\eta)\xi_j$ and $U_j(M(\xi\eta)\xi,N(\xi\eta)\eta)$ do not
contain terms of the form $\xi^Q\eta^Q\xi_j$.  Comparing the coefficients of $\xi^P\eta^P\xi_j$ for $2|P|+1<2d-1$, we obtain the first identity in \re{m=m0}.  The second identity can be obtained similarly.
\end{proof}

When $p=1$, \rp{ideal0} is due to Moser and Webster~\ci{MW83}. In fact, they  achieved
$$
\tilde M_1(\zeta_1)=e^{\delta (\xi_1\eta_1)^s}.
$$
Here $\delta=0, \pm1$
for the elliptic case and  $\delta=0,\pm i$ for the hyperbolic case when $\mu_1$ is not a root of unity,   i.e.
$\gaa$ is {\em non-exceptional}.
In particular the normal form is always convergent, although the normalizing transformations
are generally divergent for the hyperbolic case.

Let us find out further normalization  that can be performed to preserve the form of $\sigma^*$.
In \rp{ideal0}, we have proved that if $\sigma$ is tangent to $\hat S$, there exists a unique
$\Psi\in\cL C^{\mathsf{ c}}(\hat S)$ such that $\Psi^{-1}\sigma\Psi$ is an element $\sigma^*$ in the
centralizer of $\hat S$. Suppose now that $\sigma=\tau_1\tau_2$
while $\tau_i$ is tangent to $\hat T_i$. Let   
$\tau_i^*=\Psi^{-1}\tau_i\Psi$.
 We have also proved that there is a unique $\psi_0\in\cL C^{\mathsf{ c}}(\hat T_1,\hat T_2)$ such that
 $\tilde\tau_i=\psi_0^{-1}\tau_i^*\psi_0$, $i=1,2$, are of the form \re{tl21}, i.e.
 \gan
 \tilde\tau_i\colon\xi_j'=\tilde\Lambda_{ij}(\zeta)\eta_j,
 \quad\eta_j'=\tilde\Lambda_{ij}^{-1}(\zeta)\xi_j;\\
 \tilde\sigma\colon\xi_j'=\tilde M_{j}(\zeta)\xi_j,\quad\eta_j'
 =\tilde M_{j}^{-1}(\zeta)\eta_j.
 \end{gather*}
Here $\zeta=(\xi_1\eta_1,\ldots, \xi_p\eta_p)$,  $\tilde\Lambda_{2j}=\tilde\Lambda_{1j}^{-1}$ and $\tilde M_j=\tilde\Lambda_{1j}^2$. We still have freedom to further normalize $\tilde\tau_1,\tilde\tau_2$ and to preserve their forms. However, any new coordinate transformation must be in $\cL C(\hat T_1,\hat T_2)$, i.e. it must have the form
$$
\psi_1\colon \xi_j\to a_j(\xi\eta)\xi_j, \quad\eta_j\to a_j(\xi\eta)\eta_j.
$$
When $\tau_{2j}=\rho\tau_{1j}\rho$, we require that
  $\psi_1$   commutes with $\rho$, i.e.
$$
a_e=\ov a_e,\quad a_h=\ov a_h, \quad a_s=\ov a_{s+s_*}.
$$
In $\zeta$ coordinates, the transformation $\psi_1$ has the form
\eq{zjaj}
\var\colon \zeta_j\to b_j(\zeta)\zeta_j, \quad 1\leq j\leq p
\eeq
with $b_j=a_j^2$.
Therefore, the mapping $\var$ needs to satisfy
$$
b_e>0, \quad b_h>0, \quad b_s=\ov b_{s+s_*}.
$$
Recall from \re{a2e-}-\re{aiss} the reality conditions on $\tilde M_j$
\begin{alignat*}{4} 
& \ov {\tilde M_e\circ{\rho_z}}&&=\tilde M_e,
\quad &&1\leq e\leq e_*;  \\
&\ov {\tilde M_h\circ{\rho_z}}&&=\tilde M_h^{-1},\quad && e_*< h\leq h_*+e_*;\\
&\tilde M_{s_*+s}&&=\ov {\tilde M_s^{-1}\circ{\rho_z}}, \quad && h_*+e_*< s\leq p- s_*.
\end{alignat*}
Here
\eq{rhoz5}\nonumber
{\rho_z}\colon\zeta_j\to\ov\zeta_j,
\quad\zeta_s\to\ov\zeta_{s+s_*}, \quad\zeta_{s+s_*}\to\ov\zeta_s
\eeq
for $1\leq j\leq e_*+h_*$ and $e_*+h_*<s\leq p-s_*$.

Therefore, our normal form problem leads to another normal form problem which is interesting in its own right.
To formulate a new normalization problem, let us  define
\ga\label{logm}
(\log \tilde M)_j(\zeta):=\begin{cases}
\log(\tilde M_j(\zeta)/{\tilde M_j(0)}), & 1\leq j\leq e_*,\\
-i\log(\tilde M_j(\zeta)/{\tilde M_j(0)}), & e_*< j\leq   p. 
\end{cases}
\end{gather}
Let $F=\log\tilde M:=((\log \tilde M)_1,\ldots, (\log \tilde M)_p)$.
Then   the reality conditions on $\tilde M$ become
\ga\label{reaf}
F={\rho_z} F{\rho_z}.
\end{gather}
 The transformations \re{zjaj}
will then satisfy
\eq{reality-phi}\nonumber
{\rho_z}\var{\rho_z}=\var, \quad b_j(0)>0, \quad 1\leq j\leq e_*+h_*.
\eeq
 By using $\log \tilde M$, we have transformed the reality condition on $M$ into a linear condition \re{reaf}. This will be useful   to further normalize $\tilde M$.
Therefore,  when $F'(0)$ is furthermore
diagonal and invertible and its $j$th diagonal entry is positive for $j=e,h$,
we apply a dilation $\varphi$ satisfying the above condition so that
$F$ is tangent to the identity. Then any further change of coordinates
must be tangent to the identity too.
Thus, we need to normalize the formal holomorphic mapping $F$ by composition $F\circ\var$,
for which we study in next subsection.

\subsection{A normal form
for maps  tangent to the identity  
}
%

Let us consider a germ of holomorphic mapping $F(\zeta)$ in ${\mathbf
C}^p$ with an invertible linear part ${\mathbf A}\zeta$ at the origin.
According to the inverse function theorem, there exists a holomorphic
mapping $\Psi$ with $\Psi(0)=0$, $\Psi'(0)=I$ such that $F\circ
\Psi(\zeta)=\mathbf A\zeta$.  On the other hand, if we impose some
restrictions on $\Psi$, we can no longer linearize
$F$ in general.

To focus on applications to CR singularity and to limit the scope
of our investigation,  we now deliberately restrict
 our analysis to the simplest case : $F$ is tangent to the identity.
We shall apply our result to $F=\log \tilde M$ as defined in the previous subsection.
In what follows, we shall devise  a normal
form of such an $F$ under right composition by $\Psi$ that
{\it preserve all coordinate hyperplanes}, i.e.
$\Psi_j(\zeta)=\zeta_j\tilde\Psi_j(\zeta)$, $j=1,\ldots, p$.

%

\begin{lemma}\label{fcfp}
Let $F$ be a  
formal holomorphic map of $\cc^p$ that is tangent to the identity  at the origin.
\bppp\item
There exists a  unique formal biholomorphic map $\psi$
which preserves all $\zeta_j=0$
such that   $\hat F \colonequals F\circ\psi$    has the form
\ga\label{hfj1}
\hat F(\zeta)= \zeta+ 
\hat f(\zeta), \quad\hat f(\zeta)=O(|\zeta|^2); \quad \pd_{\zeta_j}\hat f_j=0, \quad 1\leq j\leq p.
\end{gather}
\item If $F$ is convergent,  the $\psi$  in $(i)$ is convergent.
 If $F$ commutes with ${\rho_z}$,  so does  the
 $\psi$.
 \item   The formal normal form in (i) has the form 
 \eq{hfjqf}
 \hat f_{j,Q}=f_{j,Q} -\{Df_j \cdot f\}_{Q}+\cL F_{j,Q}(\{f\}_{|Q|-  2}), \quad q_j=0,
\quad |Q|>1.
 \eeq
 Here $\cL F_{j,Q}$ are universal polynomials   and   vanish at $0$.
\eppp
\end{lemma}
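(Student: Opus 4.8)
The plan is to construct $\psi$ by an induction on the degree $d=|Q|$, exploiting the fact that composition on the right by a map tangent to the identity affects the degree-$d$ part of $F$ only through the degree-$d$ part of $\psi$ plus terms already determined at lower degrees. Write $\psi=\I+u$ with $u_j(\zeta)=\zeta_j\tilde u_j(\zeta)$, i.e. $u_{j,Q}=0$ whenever $q_j=0$ — this is exactly the constraint ``$\psi$ preserves all hyperplanes $\zeta_j=0$''. Expanding $\hat F=F\circ\psi$ by Taylor's formula (as in \rea{GFPg}), one gets for each multiindex $Q$ with $|Q|=d>1$
\begin{gather*}
\hat f_{j,Q}=f_{j,Q}+u_{j,Q}+\{Df_j\cdot u\}_Q+\cL G_{j,Q}(\{f\}_{d-1};\{u\}_{d-1}),
\end{gather*}
where the first-order term $\{Df_j\cdot u\}_Q$ picks up $u_{j,Q}$ only with coefficient $q_j$ when $Q=(Q-e_j)+e_j$; but since $q_j=0$ forces $u_{j,Q}=0$ anyway, the relevant normalization is: for those $Q$ with $q_j\ne 0$ we are free to choose $u_{j,Q}$, and we do so to kill $\hat f_{j,Q}$, while for $q_j=0$ we must take $u_{j,Q}=0$ and accept $\hat f_{j,Q}$ as part of the normal form. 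The point is that in the displayed formula, the only occurrence of a degree-$d$ coefficient of $u$ other than $u_{j,Q}$ is inside $\{Df_j\cdot u\}_Q$, and there each such $u_{k,R}$ with $|R|=d$ is multiplied by a degree-$1$ coefficient of $f$, hence vanishes (since $f=O(|\zeta|^2)$). Therefore the degree-$d$ equations decouple across $j$ and $Q$ and are \emph{linear} in the unknown $u_{j,Q}$ with coefficient $1$ — they are always uniquely solvable. This gives existence and uniqueness of $\psi$ simultaneously, proving (i).

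For (ii): convergence follows from the abstract decomposition machinery already set up — one applies \rl{fhg-} with $\hat{\cL H}$ the space of maps tangent to the identity whose $j$th component has no $\partial_{\zeta_j}$ term (equivalently $\hat f_{j,(Q)}=0$ when $q_j=0$) and $\hat{\cL G}$ the space of maps of the form $\zeta\mapsto(\zeta_1\tilde u_1,\dots,\zeta_p\tilde u_p)$; the projection $\pi$ onto $\hat{\cL H}$ just zeroes out the components along coordinate hyperplanes, so $\pi(E)\prec E_{sym}$ trivially, and \rl{fhg-} yields the convergent factorization $F=\hat F\circ\psi^{-1}$, i.e. $\hat F=F\circ\psi$. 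Equivariance under $\rho_z$ is immediate from uniqueness: if $F\rho_z=\rho_z F$ then $\rho_z\psi\rho_z$ is another map of the required type conjugating $F$ to $\rho_z\hat F\rho_z$, which is again in normal form, so by uniqueness $\rho_z\psi\rho_z=\psi$ (one checks the normal-form space and the hyperplane-preserving space are both $\rho_z$-invariant, since $\rho_z$ only permutes and conjugates coordinates).

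For (iii), the explicit formula \rea{hfjqf}, one simply feeds the solution $u_{j,Q}=-f_{j,Q}-\cL G_{j,Q}(\{f\}_{|Q|-1};\{u\}_{|Q|-1})$ (valid when $q_j\ne0$; and $u_{j,Q}=0$, $\hat f_{j,Q}=0$ when $q_j=0$) back into the degree-$d$ equation for $\hat f$, and then recursively eliminates $u$ in favor of $f$ exactly as in the derivation of \rea{ujpq+}--\rea{mpmpnsd} in the proof of \rp{ideal0}. The leading term $f_{j,Q}-\{Df_j\cdot f\}_Q$ comes from the first-order Taylor term $\{Df_j\cdot u\}_Q$ evaluated at $u=-f+O(|f|^2)$; everything of lower order collapses into a universal polynomial $\cL F_{j,Q}(\{f\}_{|Q|-2})$ vanishing at the origin. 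The only mildly delicate bookkeeping is confirming that no degree-$(|Q|-1)$ coefficient of $f$ survives outside the displayed term $\{Df_j\cdot f\}_Q$ — but this is the same degree count as in \rp{ideal0} and is where I expect to have to be most careful. I do not anticipate a genuine obstacle: the normalization is essentially triangular with unit diagonal, so the real content is just organizing the Taylor expansion cleanly.
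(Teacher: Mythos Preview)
Your argument for (i) and (iii) is essentially the paper's: the same inductive scheme in which, at each degree $d$, the equation for $\hat f_{j,Q}$ reads
\[
\hat f_{j,Q}=u_{j,Q}+f_{j,Q}+\text{(terms in }\{f\}_{d-1},\{u\}_{d-1}\text{)},
\]
and one sets $u_{j,Q}$ to kill $\hat f_{j,Q}$ when $q_j\geq1$, while for $q_j=0$ one has $u_{j,Q}=0$ by the hyperplane constraint and $\hat f_{j,Q}$ becomes the normal-form datum. Note two slips of the pen in your write-up: in your description of $\hat{\cL H}$ and again in the parenthetical in (iii) you wrote ``$q_j=0$'' where you meant ``$q_j\geq1$'' (the normal-form coefficient $\hat f_{j,Q}$ is generically nonzero precisely when $q_j=0$, and is forced to vanish when $q_j\geq1$). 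This is purely notational; the mechanism is correct.

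Your route to convergence in (ii) differs from the paper's. The paper argues directly by Cauchy majorants: setting $w(\zeta)=\sum_k\zeta_k\bar g_k(\zeta)$ with $\psi_j=\zeta_j(1+g_j)$, one obtains $w\prec\sum_j\bar f_j(\zeta_1+w,\dots,\zeta_p+w)$ and invokes the implicit function theorem. Your appeal to \rl{fhg-} is also correct --- with $\hat{\cL H}$ the space of $h$ with $h_{j,Q}=0$ for $q_j\geq1$ and $\hat{\cL G}$ the space of $u$ with $u_{j,Q}=0$ for $q_j=0$, the projection just zeros out coefficients and trivially satisfies $\pi(E)\prec E_{sym}$, so the lemma delivers $F=HG^{-1}$ with $G=\psi$, $H=\hat F$ convergent. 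This buys you a shorter proof at the cost of quoting a black box; the paper's majorant argument is self-contained and makes the estimate explicit. Either is fine. The $\rho_z$-equivariance via uniqueness is identical to the paper's.
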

\begin{proof} (i)
Write  $F(\zeta)=\zeta+f(\zeta)$ and
\gan
\psi\colon \zeta_j' =\zeta_j+\zeta_jg_j(\zeta), \quad g_j(0)=0.
 \end{gather*}
 For $\hat F=F\circ\psi$,  we need to solve for $\hat f, g$ from
 \eq{hatfj}\nonumber
\hat f_j(\zeta) =\zeta_jg_j(\zeta)+f_j\circ\psi(\zeta).
 \eeq
Fix $Q=(q_1,\ldots, q_p)\in \nn^p$ with $|Q|>1$.
We obtain unique solutions
\ga
\label{jina} g_{j,Q-e_j}=-\{  f_j(\psi(\zeta))\}_Q,\quad q_j>0,\\
\hat f _{j,Q}:=\{  f_j(\psi(\zeta))\}_Q, \quad q_j=0.
\label{fjpk}
\end{gather}
 We first obtain $g_{j,Q-e_j}=-f_{j,Q}+\cL G_Q(\{f\}_{|Q|-1},\{g\}_{|Q|-2})$. This determines
\eq{ghqe}
g_{j,Q-e_j}=-f_{j,Q}+\cL G_Q(\{f\}_{|Q|-1}).
\eeq
Next, we expand $f_j(\psi(\zeta))=f_j(\zeta)+Df_j(\zeta)\cdot (\zeta_1 g_1(\zeta),\dots, \zeta_pg_p(\zeta))+\cL R_2f_j(\zeta; \zeta g(\zeta))$. The last term, with $\ord g\geq1$,  has the form
$$
\{\cL R_2f_j(\zeta; \zeta g(\zeta))\}_{Q}=\cL F_{j,Q}(\{f\}_{|Q|-2}, \{g\}_{|Q|-2})=\tilde {\cL F}_{j,Q}(\{f\}_{|Q|-2}).
$$
 Combining \re{fjpk}, the expansion, and \re{ghqe}, we obtain  \re{hfjqf}.

(ii)  Assume that $F$ is convergent.   Define
$  \ov h(\zeta)=\sum|h_Q|\zeta^Q.
$
We obtain for every multi-index  $Q=(q_1,\ldots, q_p)$ and for every $j$ satisfying  $q_j\geq 1$
$$
\ov g_{j,Q-e_j}\leq   \left\{\ov{  f_j}(\zeta_1+\zeta_1\ov g_1(\zeta), \ldots, \zeta_p+\zeta_p\ov g_p(\zeta))\right\}_Q.
$$
  Set $w(\zeta)=\sum\zeta_k\ov g_k(\zeta)$.  We obtain
$$
w(\zeta)\prec \sum \ov{f_j}(\zeta_1+  w(\zeta), \ldots, \zeta_p+w(\zeta)). 
$$
Note that $  f_j(\zeta)=O(|\zeta|^2)$ and $w(0)=0$.
By the Cauchy majorization and the implicit function theorem,  $w$ and hence $g, \psi, \hat f$ are convergent.

(iii)   Assume that ${\rho_z} F{\rho_z}=F$. Then $\rho_z \hat F\rho_z$ is   normalized,  $\rho_z\psi\rho_z$
is   tangent to the identity,  and  the $j$th component of $\rho_z\hat F\rho_z(\zeta)-\zeta$ is   independent of $\zeta_j$. Thus $\rho_z\psi\rho_z$ normalizes $F$ too.
   By the uniqueness of $\psi$, we obtain $\rho_z\psi\rho_z=\psi$.

  By rewriting  \re{fjpk},  we obtain
\eq{fjqfjq}
\hat f_{j,Q}=f_{j,Q}+\{f_j(\psi)-f_j\}_Q=f_{j,Q}+\cL F'_{j,Q}(\{f\}_{|Q|-1}, \{g\}_{|Q|-2}).
\eeq
 From \re{jina}, it follows that
 $$g_{k,Q-e_k}=-f_{k,Q}+\cL G_{k,Q-e_k}(\{f\}_{|Q|-1}, \{g\}_{|Q|-2}), \quad |Q|>1.$$
Note that $ \{g\}_0=0$ and $\{f\}_1=0$. Using the identity repeatedly, we obtain
$g_{k,Q-e_k}=-f_{k,Q}+\cL G_{k,Q-e_k}^*(\{f\}_{|Q|-1}).$ Therefore, we can rewrite \re{fjqfjq} as \re{hfjqf}.
\end{proof}

\subsection{A unique formal normal form of a reversible map $\sigma$}

We now state a normal form for $\{\tau_1,\tau_2,\rho\}$
under a
 condition on the third-order invariants  of $\sigma$.

\begin{thm}\label{ideal5}  Let $\tau_{1}$, $\tau_{2}$
be a pair of holomorphic involutions with linear parts $\hat T_i$. Let $\sigma=\tau_1\tau_2$.
Assume that the linear part of $\sigma$ is $$
\hat S\colon\xi_j'=\mu_j\xi_j, \quad \eta_j=\mu_j^{-1}\eta_j, \quad 1\leq j\leq p
$$
and $\mu_1,\ldots, \mu_p$ are non-resonant. Let
 $\Psi \in   {\cL C}^\mathsf{c}(\hat S)$ be the unique
 formal mapping such that
\gan
\tau_{i}^*=  \Psi^{-1}\tau_i\Psi\colon\xi_j'=\Lambda_{ij}(\xi\eta)\eta_j, \quad \eta_j'=\Lambda_{ij}(\xi\eta)^{-1}\xi_j;\\
\sigma^*=\Psi^{-1}\sigma\Psi\colon\xi_j'=M_j(\xi\eta)\xi_j, \quad \eta_j'=M_j(\xi\eta)^{-1}\eta_j
\end{gather*}
with $M_j=  \Lambda_{1j}\Lambda_{2j}^{-1}$.
Suppose that $\sigma$ satisfies the
 condition that   $\log M$
is  tangent to the identity. 
\bppp\item
Then there exists an invertible   formal map $\psi_1\in  {\cL C}(\hat S)$   such that
\ga\label{htai}\nonumber
\hat\tau_{i}=\psi_1^{-1}\tau_i^*\psi_1\colon\xi_j'
=\hat\Lambda_{ij}(\xi\eta)\eta_j, \quad \eta_j'=\hat\Lambda_{ij}(\xi\eta)^{-1}\xi_j;\\
\hat\sigma=\psi_1^{-1}\sigma^*\psi_1\colon\xi_j'=\hat
M_j(\xi\eta)\xi_j, \quad \eta_j'
=\hat M_j(\xi\eta)^{-1}\eta_j.\label{hsig}
\end{gather}
Here $\hat\Lambda_{2j}=\hat\Lambda_{1j}^{-1}$,   and  $\hat T_i$ is the linear part of $\hat\tau_i$.  Moreover,
 $\log\hat M_j(\zeta)-\zeta_j=O(2)$  is independent of $\zeta_j$
 for each $j$. 
\item    The centralizer of $\{\hat\tau_1,\hat\tau_2\}$ consists of $2^p$ dilations
  $(\xi,\eta)\to (a\xi,a\eta)$ with $a_j=\pm1$. And $\hat\Lambda_{ij}$ are unique.
   If   $ \Lambda_{ij}$ are
convergent, then $\psi_1$ is convergent too.
\item Suppose that    $\hat\sigma$ is
divergent. If $\sigma$ is formally equivalent
to a mapping $\tilde\sigma\in  {\cL C}(\hat S)$ then $\tilde\sigma$ must be divergent too.
\item  Let $\rho$ be given by \rea{eqrh} and let
 $\tau_2=
\rho\tau_1\rho$. Then the above $\Psi$ and $\psi_1$
  commute with $\rho$.
    Moreover, $\hat\tau_i$, $\hat\sigma$ are unique.
\eppp
\end{thm}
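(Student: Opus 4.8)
\textbf{Proof plan for Theorem \ref{ideal5}.}

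The four parts of the theorem are proved in sequence, each building on the earlier ones. The strategy throughout is to reduce statements about the family $\{\tau_1,\tau_2,\rho\}$ to statements about the single power series mapping $M=\Lambda_{1}\Lambda_{2}^{-1}$ (equivalently $\log M$), using the fact that by \rp{ideal0} all the relevant objects live in $\cL C(\hat S)$ and are therefore determined by their ``diagonal'' coefficient data.

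\emph{Part (i).} Start from $\sigma^*=\Psi^{-1}\sigma\Psi\in\cL C(\hat S)$, which exists and is unique by \rp{ideal0}. Any further normalization must be by $\psi_1\in\cL C(\hat S)$, and if we also insist (as we do from part (iv) on) that $\psi_1$ preserve the reality condition, $\psi_1$ acts on the $\zeta=\xi\eta$ variables as a mapping $\varphi$ of the form \re{zjaj}, $\zeta_j\mapsto b_j(\zeta)\zeta_j$. As the discussion preceding \re{logm} shows, conjugating $\sigma^*$ by such a $\psi_1$ replaces $M_j$ by $M_j\circ\varphi$, hence replaces $F=\log M$ by $F\circ\varphi$ (using that $\varphi$ preserves each coordinate hyperplane $\zeta_j=0$ and that $b_j(0)>0$ for $j=e,h$, so that $\log$ is well defined and the reality condition \re{reaf} is preserved). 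Under the hypothesis that $\log M$ is tangent to the identity, $F$ is a formal map of $\cc^p$ tangent to the identity, and we apply \rl{fcfp}(i): there is a unique formal $\varphi$, preserving all coordinate hyperplanes, with $\hat F=F\circ\varphi$ of the form \re{hfj1}, i.e. $\partial_{\zeta_j}\hat f_j=0$ for all $j$; equivalently the $j$th component of $\log\hat M(\zeta)-\zeta_j$ is independent of $\zeta_j$. Lifting $\varphi$ back to a $\psi_1\in\cL C(\hat S)$ (via $b_j=a_j^2$, choosing the square root with $a_j(0)>0$ for $j=e,h$) gives the stated $\hat\tau_i$, $\hat\sigma$; the relations $\hat\Lambda_{2j}=\hat\Lambda_{1j}^{-1}$ and $\hat M_j=\hat\Lambda_{1j}^2$ are inherited from \re{tl21} because $\psi_1$ acts diagonally.

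\emph{Part (ii).} An element of $\cL C(\hat\tau_1,\hat\tau_2)$ commutes in particular with $\hat\sigma=\hat\tau_1\hat\tau_2$, so it commutes with $\hat S$ and, by \rl{cents}, has the diagonal form $\xi_j\mapsto a_j(\xi\eta)\xi_j$, $\eta_j\mapsto a_j(\xi\eta)\eta_j$; on the $\zeta$ variables it is $\zeta_j\mapsto a_j(\zeta)^2\zeta_j=:b_j(\zeta)\zeta_j$. Commuting with $\hat\tau_1$ forces $\hat\Lambda_{1j}\circ\varphi=\hat\Lambda_{1j}$, hence $\hat F\circ\varphi=\hat F$ with $\hat F$ in the normal form \re{hfj1}. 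The uniqueness clause of \rl{fcfp}(i) (applied to $\hat F$, which is already normalized, so its normalizer $\varphi$ must be the identity) forces $\varphi=I$, i.e. $b_j\equiv 1$, hence $a_j\equiv\pm1$, giving exactly the $2^p$ dilations. Uniqueness of $\hat\Lambda_{ij}$ then follows: any two normalizations differ by an element of $\cL C(\hat\tau_1,\hat\tau_2)$, i.e. by such a dilation $R_\e$, and $R_\e$ commutes with each $\hat\tau_i$ so leaves $\hat\Lambda_{ij}$ unchanged. Convergence of $\psi_1$ when the $\Lambda_{ij}$ are convergent is the convergence clause of \rl{fcfp}(ii): $F=\log M$ is then convergent, hence so is $\varphi$, hence $\psi_1$; and the $\psi_0$ of \rp{ideal0}(i) and $\Psi$ itself were already shown convergent there, so the whole composite transformation is convergent.

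\emph{Part (iii).} This is essentially a restatement of \rp{ideal0} plus part (i). Suppose $\sigma$ is formally equivalent to some $\tilde\sigma\in\cL C(\hat S)$. Write the conjugating map as $HG^{-1}$ with $G\in\cL C(\hat S)$, $H\in\cL C^{\mathsf c}(\hat S)$ (Corollary \ref{fhg}); then $H^{-1}\sigma H\in\cL C(\hat S)$, and by the uniqueness in \rp{ideal0}, $H=\Psi$, so $\tilde\sigma=G^{-1}\sigma^* G$ with $G\in\cL C(\hat S)$ invertible. If, moreover, $\log$ of the $M$-part of $\tilde\sigma$ is tangent to the identity, then $G$ acts on $\zeta$ as a $\varphi$ preserving coordinate hyperplanes, and by \rl{fcfp}, $\tilde\sigma$ and $\hat\sigma$ differ by the (finitely many) dilations of part (ii); in particular $\tilde\sigma$ converges iff $\hat\sigma$ does. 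Hence if $\hat\sigma$ diverges, so does every $\tilde\sigma$ in the centralizer normal form.

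\emph{Part (iv).} Here $\tau_2=\rho\tau_1\rho$ with $\rho$ given by \re{eqrh}. That $\Psi$ commutes with $\rho$ is \rp{ideal0}(ii). For $\psi_1$: conjugating the identity $\hat F={\rho_z}\hat F{\rho_z}$ (which holds because $\rho_z\hat F\rho_z$ is again in normal form \re{hfj1} and the reality condition \re{reaf} propagates) and invoking the uniqueness of the normalizing $\varphi$ in \rl{fcfp}(iii) gives ${\rho_z}\varphi{\rho_z}=\varphi$, hence $\rho\psi_1\rho=\psi_1$ after choosing the square roots $a_j=\sqrt{b_j}$ compatibly with the reality constraint $a_e=\bar a_e$, $a_h=\bar a_h$, $a_s=\bar a_{s+s_*}$ (the positivity $b_e,b_h>0$ makes this choice canonical). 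Finally, uniqueness of $\hat\tau_i$ and $\hat\sigma$: by part (ii) the remaining freedom is the group $R_\e$ of sign dilations, but now $R_\e$ must also commute with $\rho$, which forces $\e_{s+s_*}=\e_s$; such $R_\e$ nonetheless leaves $\hat\Lambda_{ij}(\xi\eta)$ and $\hat M_j(\xi\eta)$ literally unchanged because they depend only on $\xi\eta$ and $R_\e$ fixes $\xi\eta$. Hence $\hat\tau_i$, $\hat\sigma$ are unique (and the ambiguity $\Phi\sim R_\e\Phi R_\e^{-1}$ recorded in \rt{nfofM} concerns only the second-stage data $\Phi$, not the $\hat M$-data).

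\textbf{Main obstacle.} The delicate point is part (i): one must check that conjugation of $\sigma^*$ by a reality-preserving $\psi_1\in\cL C(\hat S)$ really does act on $F=\log M$ exactly by right composition $F\mapsto F\circ\varphi$ with $\varphi$ an \emph{arbitrary} formal map preserving all coordinate hyperplanes and with the correct positivity/reality of its linear part — so that \rl{fcfp} applies verbatim. This requires tracking how the factorizations $M_j=\Lambda_{1j}\Lambda_{2j}^{-1}$ and the constraints $\Lambda_{2j}=\Lambda_{1j}^{-1}$, together with the reality relations \re{a2e-}--\re{aiss}, transform; the computation is the one sketched in the paragraphs before \re{logm}, but making it airtight — in particular that the logarithm is well defined and single-valued on the formal-series level, and that no additional constraint on $\varphi$ beyond hyperplane-preservation and the sign/reality of $\varphi'(0)$ survives — is the crux. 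Everything after that is bookkeeping with \rp{ideal0}, \rl{cents}, Corollary \ref{fhg}, and \rl{fcfp}.
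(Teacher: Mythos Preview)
Your proposal is correct and follows essentially the same route as the paper: reduce everything to \rp{ideal0} and \rl{fcfp}, exploiting that centralizer elements act on $M$ by right composition with a hyperplane-preserving $\varphi$ on the $\zeta$-variables. In part (ii) you invoke the uniqueness clause of \rl{fcfp} where the paper argues directly from invertibility of $\hat M-\hat M(0)$; these are equivalent.

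One genuine (though easily repaired) gap: in your part (iii) you insert the extra hypothesis ``If, moreover, $\log$ of the $M$-part of $\tilde\sigma$ is tangent to the identity,'' but the theorem must cover \emph{every} $\tilde\sigma\in\cL C(\hat S)$ equivalent to $\sigma$. The paper handles this with a preliminary step you omit: since (by \rp{ideal0}) $\tilde\sigma=G^{-1}\sigma^*G$ with $G\in\cL C(\hat S)$, one has $\tilde M=M\circ\varphi$ where $\varphi_j(\zeta)=(a_jb_j)(\zeta)\zeta_j$, hence $(\log\tilde M)'(0)=\varphi'(0)$ is diagonal invertible; a convergent dilation then makes it the identity, after which \rl{fcfp} applies as you say. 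Without this step your argument does not conclude for arbitrary $\tilde\sigma$.

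Your ``main obstacle'' is not really one: the computation that any $\psi_1\in\cL C(\hat S)$ acts on the $M_j$ by $M_j\mapsto M_j\circ\varphi$ with $\varphi$ hyperplane-preserving is a two-line calculation (write $\psi_1\colon\xi_j\mapsto a_j(\zeta)\xi_j$, $\eta_j\mapsto b_j(\zeta)\eta_j$ and observe that $\sigma^*$ preserves each $\xi_j\eta_j$, so $\psi_1^{-1}\sigma^*\psi_1$ has $\xi_j$-component $M_j(\varphi(\zeta))\xi_j$ with $\varphi_j=(a_jb_j)\zeta_j$). The paper takes this for granted.
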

\begin{proof}  Assertions in (i)  are   direct consequences of \rp{ideal0} and \rl{fcfp}
 in which $F$ is the $\tilde M$ in \rp{ideal0}.  The assertion in (ii) on   the centralizer of $\{\hat\tau_1,\hat\tau_2\}$ is obtained
from   \re{lamtpsi}   of  \rp{ideal0} in which    $\tilde \Lambda_{ij}=\hat \Lambda_{ij}$. Indeed, by \re{lamtpsi}, if $\psi$ preserves $\{\hat\tau_1,\hat\tau_2\}$, then $\psi(\xi,\eta)=(c(\xi\eta)\xi,c(\xi\eta)\eta)$ and
$
\hat M_j(c^2(\xi\eta)\xi\eta)=\hat M_j(\xi\eta).
$
This shows that $\hat M\circ\tilde\psi=\hat M$ for $\tilde\psi(\zeta)=c^2(\zeta)\zeta$.  Since $\hat M-\hat M(0)$ is invertible then $\tilde \psi$ is the identity, i.e. $c_j=\pm1$.
Now (iii) follows from (ii) too.
Indeed, suppose $\sigma$ is formally equivalent to some convergent
$$
\tilde\sigma\colon\xi_j=\tilde M_j(\xi\eta)\xi_j,\quad\eta'_j=\tilde M_j(\xi\eta)^{-1}\eta_j.
$$
Then  by the assumption on the linear part of $\log
M$,  
 we can apply a dilation to achieve that $(\log \tilde M)'(0)$ is tangent to the identity.
 By \rl{fcfp}, there exists a unique convergent mapping $\var\colon\zeta_j'
=b_j(\zeta)\zeta_j$ ($1\leq j\leq p$) with $b_j(0)=1$  such that $\log\tilde M\circ\var$ is in the normal form $\log M_*$.
Then
$$
(\xi'_j,\eta'_j)=(b_j^{1/2}(\xi\eta)\xi_j,b_j^{1/2}(\xi\eta)\eta_j), \quad 1\leq j\leq p
$$
 transforms $\tilde\sigma$ into
a convergent mapping  $\sigma_*$. Since the normal form for $\log M$ is unique,
then $\hat\sigma=\sigma_*$. In particular, $\hat\sigma$   is convergent.

(iv). Note that $\rho\sigma\rho=\sigma^{-1}$. Also $\rho(\sigma^*)^{-1}\rho$ has the same form as $\sigma^*$.
By $(\rho\Psi^{-1}\rho)\sigma(\rho\Psi\rho)=(\rho\sigma^*\rho)^{-1}$, we conclude that $\rho\Psi\rho=\Psi$.
The rest of assertions
can be verified easily.
\end{proof}
 Note that $M^{-1}(\zeta)$ is also normalized in the sense that $\log M_j^{-1}(\zeta)+\zeta_j=O(|\zeta|^2)$ is independent of $\zeta_j$.
Under the condition that  $\log M$ is
tangent to the identity,
the above theorem completely
settles  the formal
classification of $\{\tau_1,\tau_2,\rho\}$.
It also says that {\bf the normal form $\hat\tau_1,\hat\tau_2$ can be achieved by a convergent transformation, if and only if $\sigma^*$ can be achieved by some convergent transformation}, i.e.  the $\Psi$ in the theorem is convergent.

 However, we would like state clear that our results do not rule out the case where
a refined normal form for $\{\tau_1^*,\tau_2^*,\rho\}$ is achieved  by  convergent transformation, while $\Psi$ is divergent, when
 $\log M$ is tangent to the identity. 

\subsection{An algebraic manifold with linear $\sigma$}

We conclude the section showing that when $\tau_1,\tau_2$ are normalized as   in this section,
$\{\tau_{ij}\}$ might still be very general;
  in particular $\{\tau_{1j},\rho\}$ cannot always be simultaneously linearized
even at the formal level.
This is one of main differences between $p=1$
and $p>1$.

\begin{exmp}\label{texpl} Let $p=2$.
Let $\phi$ be a holomorphic mapping of the form  
$$
\phi\colon\xi'_i=\xi_i+q_i(\xi,\eta), \quad \eta'_i=\eta_i+\la^{-1}_iq_i(T_1(\xi,\eta)),\; i=1,2.
$$
Here $q_i$ is a homogeneous quadratic polynomial map and
$$T_1(\xi,\eta)=(\la_1\eta_1,\la_2\eta_2,\la^{-1}_1\xi_1,\la^{-1}_2\xi_2).$$
Let $\tau_{1j}=\phi T_{1j}\phi^{-1}$ and $\tau_{2j}=\rho\tau_{1j}\rho$. Then $\phi$ commutes with $T_1$
and $\tau_1=T_1$. In particular $\tau_2=\rho T_1\rho$ and $\sigma=\tau_1\tau_2$ are in linear normal forms.
However,  $\tau_{11}$ is given by
\aln
 \xi_1'&=\la_1\eta_1-q_1(\la\eta,\la^{-1}\xi)+q_1(\la_1\eta_1,\xi_2,\la_1^{-1}\xi_1,\eta_2)+O(3),\\
 \xi_2'&=\xi_2-q_2(\xi,\eta)+q_2(\la_1\eta_1,\xi_2,\la_1^{-1}\xi_1,\eta_2)+O(3),\\
 \eta_1'&=\la_1^{-1}\xi_1-\la_1^{-1}q_1(\xi,\eta)+\la_1^{-1}q_1(\xi_1,\la_2\eta_2,\eta_1,\la_2^{-1}\xi_2)+O(3),\\
 \eta_2'&=\eta_2-\la_2^{-1}q_2(\la\eta,\la^{-1}\xi)+\la_2^{-1}q_2(\xi_1,\la_2\eta_2,\eta_1,\la_2^{-1}\xi_2)+O(3).
 \end{align*}
 Notice that the common zero set $V$ of $\xi_1\eta_1$ and $\xi_2\eta_2$ is invariant under $\tau_1,\tau_2,\sigma$ and $\rho$.
 In fact, they are linear on $V$.
 However, for $(\xi',\eta')=\tau_{11}(\xi,\eta)$, we have
\aln
\xi_1'\eta_1'&=-\eta_1q_1(0,\xi_2,\eta)+\eta_1q_1(0,\la_2\eta_2,\eta_1,\la_2^{-1}\xi_2) -\la_1^{-1}\xi_1q_1(0,\la_2\eta_2,\la^{-1}\xi)\\
&\quad+\la_1^{-1}\xi_1q_1(0,\xi_2,\la_1^{-1}\xi_1,\eta_2)\mod (\xi_1\eta_1,\xi_2\eta_2
, O(4)).
\end{align*}
For a generic $q$, $\tau_{11}$ does not preserve $V$.
\end{exmp}

By \rp{ideal0},  when the above linear $\sigma$ is non-resonant,  $\{\tau_{11},\tau_{12},\rho\}$ is not linearizable. 
By a simple computation, we can verify that $\sigma_j=\tau_{1j}\tau_{2j}$ for $j=1,2$ do not commute with each other. In fact,
we  proved in~\cite{GS15} that  if the  $\mu_1,\ldots, \mu_p$ are nonresonant, $\sigma_j$ commute pairwise, and $\sigma$ is linear as above, then $\tau_{1j}$ must be linear.

\setcounter{thm}{0}\setcounter{equation}{0}

\section{
Divergence of all 
normal forms of
 a reversible map $\sigma $}\label{div-sect}

Unlike the Birkhoff normal form for a Hamiltonian system, the Poincar\'{e}-Dulac normal form
is not unique for a general $\sigma$;   it just belongs to the centralizer of the linear part $S$ of $\sigma$. One can obtain a divergent normal form  easily from any non-linear Poincar\'{e}-Dulac normal form of
$\sigma=\tau_1\tau_2$
by conjugating with a divergent transformation in the centralizer of $S$; see \re{lamtpsi}.
We have seen  how
the small divisors   enter in the computation of the normalizing transformations via \re{ujpq+}-\re{vjpq}   and   \re{mpmp-}-\re{npnp}  in the computation of the normal forms.
To see the effect of small divisors on normal forms,  we  first assume a condition,  to be achieved later,
 on the third order invariants of $\sigma$ and then we shall need   to modify the
normalization procedure.
We will use two sequences of normalizing mappings to normalize $\sigma$.
The composition of
normalized mappings might not be normalized. Therefore, the new normal form $\tilde \sigma$
 might not be the
$\sigma^*$ in \rp{ideal0}. 
We will show that this $\tilde\sigma$,
  after it is transformed into the normal form $\hat\sigma$
  in  \rt{ideal5} (i),  is divergent. Using the divergence
of  $\hat\sigma$, we will then show that any other normal forms of $\sigma$ that are in the centralizer of $S$ must be divergent too. This last step requires a convergent solution
given by \rl{fcfp}.

Our goal is to see a small divisor in a normal form $\tilde\sigma$; however they appear as a product. This is more complicated
than the situation for the normalizing transformations, where a small divisor appears in a much simple
way.  In essence, a small divisor problem occurs naturally when one applies a Newton iteration scheme for
a convergence proof.
For a small divisor to show up in the normal form, we have to go beyond the Newton iteration scheme, measured
in the degree or order of approximation in power series.
Therefore, we first  refine the formulae \re{mpmp-}.
\begin{lemma}\label{mj0mj}
 Let $\sigma$ be a holomorphic mapping,
given by
$$
\xi_j'=M_j^0(\xi\eta)\xi_j+f_j(\xi,\eta), \quad
\eta_j'=N_j^0(\xi\eta)\eta_j+g_j(\xi,\eta), \quad 1\leq j\leq p.
$$
Here $ M_j^0(0)=\mu_j=N^0_j(0)^{-1}.$
Suppose that $\ord (f,g)\geq d  \geq4$ 
 and $I+(f,g)\in \cL C^{\mathsf c}(S)$.
There exist unique polynomials $U,V$
 of degree at most $2d-1$ such that $\Psi=I+(U,V)\in\cL C^{\mathsf c}(S)$  transforms $\sigma$
into
$$
\sigma^*\colon\xi'= M(\xi\eta)\xi+\tilde f(\xi,\eta),\quad
\eta'=N(\xi\eta)\eta+\tilde g(\xi,\eta)
 $$
 with $I+(\tilde f,\tilde g)\in\cL C^{\mathsf c}(S)$ and $\ord(\tilde f,\tilde g)\geq 2d$.
Moreover,  
\al
\label{ujpqcc}
U _{j,PQ}&=(\mu^{P-Q}-\mu_j)^{-1}\left\{f _{j,PQ}+\cL U _{j,PQ}^*(\del_{ \ell- 2},
\{M^0,N^0\}_{ \f{\ell-d}{2}};\{ f,g\}_{\ell-2})\right\},\\
V _{j,QP}&=(\mu^{Q-P}-\mu_j^{-1})^{-1}\left\{g _{j,QP}+\cL V _{j,QP}^*(\del_{\ell-2},
\{M^0,N^0\}_{d};\{ f,g\}_{\ell-2})\right\},
\label{ujpqcc6}
\end{align}
for  $2\leq |P|+|Q|=\ell\leq 2d-1$ and $\mu^{P-Q}\neq\mu_j$.  In particular,  $\ord(U,V)\geq d$.  Also,
\ga
M_{j,P}=M_{j,P}^0,  \quad  2|P|+1<2d-1,
 \label{mjpp0}  \\
 \label{mjppn}
 M_{j,P}=M_{j,P}^0+   \{Df_j(U,V)\}_{(P+e_j)P}, \quad
  2|P|+1=2d-1.
\end{gather}
 Assume further that
   \eq{dijm}
(M^0)'(0)=\diag(\mu_1, \ldots, \mu_p). 
\eeq
 Then for $2|P|+1=2d+1$, we have
 \begin{gather}
\label{mjpp--}
 M_{j,P}=M_{j,P}^0+ \mu_j\left\{2(U_j V_j)_{PP}+(U_j^2)_{(P+e_j)(P-e_j)}\right\}+\{Df_j(U,V)\}_{(P+e_j)P}.
\end{gather}
\end{lemma}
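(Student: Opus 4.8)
\textbf{Proof plan for Lemma~\ref{mj0mj}.}

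The plan is to run the standard one-step normalization of $\sigma$ inside $\cL C^{\mathsf c}(S)$, exactly as in the proof of \rp{ideal0}, but to carry the Taylor expansions of $M_j^0,N_j^0,f_j,g_j$ to \emph{second} order (using the remainder formula \re{taylor} with $m=2$) instead of first order. First I would substitute $\Psi=I+(U,V)$ into $\sigma\Psi=\Psi\sigma^*$ and expand both sides degree by degree, obtaining the relations \re{mj0x}--\re{nj0x} together with the second order remainders \re{ajpq}--\re{bjpq--}. Projecting onto $\cL C_2^{\mathsf c}(S)$ determines $(U,V)$ by the homological equations, and projecting onto $\cL C_2(S)$ determines $(M,N)$; solving these inductively in $\ell=|P|+|Q|$ for $2\leq\ell\leq 2d-1$ gives the existence and uniqueness of the polynomial $\Psi$ of degree $\leq 2d-1$, the explicit solution formulae \re{ujpqcc}--\re{ujpqcc6}, the bound $\ord(U,V)\geq d$ (since $\cL U^*_{j,PQ}$ and $\cL V^*_{j,QP}$ vanish at $\{f,g\}=0$ and $\ord(f,g)\geq d$), and $\ord(\tilde f,\tilde g)\geq 2d$ by construction. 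These are verbatim the arguments already carried out for \re{ujpq+}--\re{vjpq} and \re{mpmpnsd}--\re{npnpnsd}, with the only change being the range of degrees that must be handled before the remainder terms stop contributing; I would simply point to \rp{ideal0} and \nrc{finitever} for this part.

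Next I would establish the three formulae for $M_{j,P}$, which is where the refinement matters. Formula \re{mjpp0} for $2|P|+1<2d-1$ is immediate from \nrc{uniqueM} (applied with $M^0=M^0$, since $\ord(f,g)\geq d$). For \re{mjppn}, at the borderline degree $2|P|+1=2d-1$ one collects the coefficient of $\xi^P\eta^P\xi_j$ in \re{mj0x}: the contributions of $DM_j^0(\xi\eta)(\xi V+\eta U+UV)\,\xi_j$, of the remainder $A_j$, and of the terms $U_j(M(\xi\eta)\xi,N(\xi\eta)\eta)$ and $M_j^0(\xi\eta)U_j$ from the $\Psi\sigma^*$ side all have order $\geq 2d$ in the relevant monomial — here one uses $\ord(U,V)\geq d$ so that $\xi V,\eta U, UV$ all have order $\geq d+1$ and hence the $DM^0$ term has order $\geq d+3>2d-1$ in this range of $d\geq 4$, while $U_j(M\xi,N\eta)$ contributes no $\xi^P\eta^P\xi_j$ monomial by the centralizer structure — leaving only $M_{j,P}^0$ and $\{Df_j(U,V)\}_{(P+e_j)P}$. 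This is essentially the same bookkeeping as in the derivation of \re{mpmpnsd}, just tracked one degree further.

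The main obstacle — and the real content of the lemma — is \re{mjpp--}, the formula at degree $2|P|+1=2d+1$ under the nondegeneracy hypothesis \re{dijm}. Here I would use \re{dijm} to write $M_j^0(\zeta)=\mu_j+O(|\zeta|^2)$, so that the product $M_j^0(\xi\eta)\big(\xi_j+U_j\big)$ on the $\sigma\Psi$ side contributes $\mu_j U_j$ plus the term $DM_j^0(\xi\eta)(\xi V+\eta U+UV)\,\xi_j$, whose lowest order part is now $O(|\zeta|^2)\cdot(\xi V+\eta U)\cdot\xi_j$ of order $\geq d+3$, which can still reach the monomial $\xi^P\eta^P\xi_j$ only when $d+3\leq 2d+1$, i.e. $d\geq2$ — so I must check whether this term actually contributes at $2|P|+1=2d+1$; it does not, because with $(M^0)'(0)$ exactly diagonal the relevant second-order coefficients of $M^0$ would push its contribution to order $\geq 2\cdot2 + (d+1) + 1 = d+6 > 2d+1$ for $d\geq 4$, wait — more carefully, I would instead track the term $U_j\big(M(\xi\eta)\xi,N(\xi\eta)\eta\big)$ coming from $\Psi\sigma^*$: expanding $M_k(\xi\eta)=\mu_k+O(|\xi\eta|)$ and $N_k=\mu_k^{-1}+O(|\xi\eta|)$ and using $\ord(U)\geq d$, the monomial $\xi^{P+e_j}\eta^P$ in $U_j(\mu\xi+\dots,\mu^{-1}\eta+\dots)$ receives, besides $\mu^{P-Q}U_{j,PQ}$-type diagonal terms, a \emph{quadratic correction} from $U_j$ composed with the order-$d$ part of $(M-\mu,N-\mu^{-1})$, but since $\ord(U)\geq d$ that correction has order $\geq 2d+1$ only through the combinations $(U_jV_j)$ and $U_j^2$ shifted appropriately — this is exactly how the terms $2\mu_j(U_jV_j)_{PP}$ and $\mu_j(U_j^2)_{(P+e_j)(P-e_j)}$ arise. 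So the key step is a careful extraction, at the single degree $2d+1$, of the coefficient of $\xi^{P+e_j}\eta^P$ in $M_j^0(\xi\eta)\xi_j+M_j^0(\xi\eta)U_j(\xi,\eta)$ versus $M_j(\xi\eta)\xi_j+U_j(M(\xi\eta)\xi,N(\xi\eta)\eta)+\{Df_j(U,V)\}$, where the surviving quadratic-in-$(U,V)$ contributions are precisely those two terms and the remainder $A_j$ and all $DM_j^0$ contributions are of strictly higher order thanks to \re{dijm}. I expect the bulk of the work, and the only genuinely delicate point, to be verifying that \emph{no other} quadratic-in-$(U,V)$ monomial survives at degree $2d+1$ — in particular that the $DM_j^0$ term and the off-diagonal pieces of $U_j(M\xi,N\eta)$ contribute nothing — which reduces to a monomial-degree count using $\ord(U,V)\geq d$, $d\geq4$, and the exact diagonality of $(M^0)'(0)$. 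Once that is checked, \re{mjpp--} follows by reading off coefficients, and the lemma is proved.
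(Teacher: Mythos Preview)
Your overall plan---rerun the normalization of \rp{ideal0} with the second-order Taylor remainder and read off coefficients---is exactly right, and your treatment of existence/uniqueness of $\Psi$, of \re{ujpqcc}--\re{ujpqcc6}, and of \re{mjpp0} via \nrc{uniqueM} is fine. The gap is in where you locate the quadratic-in-$(U,V)$ contributions for \re{mjpp--}, and a related slip in the argument for \re{mjppn}.

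For \re{mjppn} your order count is wrong: $DM_j^0(\xi\eta)$ has a nonzero constant term, so $DM_j^0(\xi\eta)(\eta U+\xi V)\xi_j$ has order only $d+2$, which is $\leq 2d-1$ for $d\geq3$. The reason this term does not contribute to the coefficient of $\xi^{P+e_j}\eta^P$ is \emph{structural}, not a matter of order: since $U_i\in\cL C_2^{\mathsf c}(S)$ has no $\xi^{A+e_i}\eta^A$ term, $\eta_iU_i$ has no $\xi^D\eta^D$ term, and similarly for $\xi_iV_i$; multiplying by the diagonal series $D_iM_j^0(\xi\eta)$ still yields no $\xi^P\eta^P$ term. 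The remaining piece $DM_j^0(UV)\xi_j$ has order $\geq 2d+1>2d-1$, and $DM_j^0(\cdot)U_j$ has order $\geq 2d+1$ as well; this is what kills them at degree $2d-1$.

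For \re{mjpp--} you have the mechanism backwards. The term $U_j(M(\xi\eta)\xi,N(\xi\eta)\eta)$ on the $\Psi\sigma^*$ side contributes \emph{nothing} to $\xi^{P+e_j}\eta^P$, at any degree: write it as $\sum_{A,B}U_{j,AB}M^A(\xi\eta)N^B(\xi\eta)\xi^A\eta^B$; since $M^AN^B$ is a function of $\xi\eta$, a $\xi^{P+e_j}\eta^P$ term would force $A-B=e_j$, and those $U_{j,AB}$ vanish. (Also, \re{dijm} does not say $M_j^0(\zeta)=\mu_j+O(|\zeta|^2)$; it says $\partial_{\zeta_i}M_j^0(0)=\mu_j\delta_{ij}$, so $M_j^0(\zeta)=\mu_j+\mu_j\zeta_j+O(|\zeta|^2)$.) The quadratic terms in \re{mjpp--} come from the $\sigma\Psi$ side, from the expansion $(M_j^0+DM_j^0(\eta U+\xi V+UV))(\xi_j+U_j)$. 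Using \re{dijm}, for $i\neq j$ one has $D_iM_j^0(\xi\eta)=O(|\xi\eta|)$, so $D_iM_j^0\cdot U_iV_i\cdot\xi_j$ and $D_iM_j^0\cdot(\eta_iU_i+\xi_iV_i)\cdot U_j$ are $O(2d+3)$; for $i=j$, $D_jM_j^0(0)=\mu_j$ gives exactly three surviving pieces at degree $2d+1$: $\mu_j U_jV_j\cdot\xi_j$ contributes $\mu_j(U_jV_j)_{PP}$, $\mu_j\xi_jV_j\cdot U_j$ contributes another $\mu_j(U_jV_j)_{PP}$, and $\mu_j\eta_jU_j\cdot U_j$ contributes $\mu_j(U_j^2)_{(P+e_j)(P-e_j)}$. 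That is how the factor $2$ and the $U_j^2$ term arise. Once you redirect the bookkeeping to the $DM_j^0$ term, your proof goes through.
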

\begin{rem}\label{keyrem}  Note that \re{mjpp0} follows from  \re{m=m0}.
Formulae  \re{mjpp0}, \re{mjppn}, \re{mjpp--} give us an effective way to compute the Poincar\'e-Dulac normal form.
Although \re{mjppn} contains small divisors, it will be more convenient to associate small divisors to \re{mjpp--} when we have $3$ elliptic components in $\sigma$.
 \end{rem}
\begin{proof}
Let $D_i$ denote $\partial_{\zeta_i}$. Let $Du(\xi,\eta)$ and $Dv(\zeta)$ denote the gradients of two functions.
    Let us expand both sides of the $\xi_j$ components of $\Psi\sigma^*=\sigma\Psi$ for terms of degree $2d+2$. For its left-hand side,   Corollary~\ref{finitever} implies that $\ord(U,V)\geq d$ and we can use $\ord DU_j\cdot (\tilde f,\tilde g)\geq 2d-1+d\geq 2d+2$ as $d\geq3$.
For its right-hand side, we use \re{mj0x}-\re{bjpq--}. We obtain
\al\label{mjxexc}
M_j\xi_j&+\tilde f_j(\xi,\eta)+U_j(M\xi,N \eta)
=f_j(\xi,\eta)+ Df_j(\xi,\eta)(U, V)\\
\nonumber&  +A_j(\xi,\eta)
+\left(M^0_j +DM_j^0 (\eta U +\xi V  +UV)\right)(\xi_j+U_j)+O(2d+2), 
\end{align}
 where $M,N,M^0,N^0$ are evaluated at $\xi\eta$ and
$U,V$ are evaluated at $(\xi,\eta)$.

Since $\tilde f(M(\xi\eta)\xi,N(\xi\eta)\eta)=O(|\xi,\eta|^{2d})$, then      \re{ujpqcc}-\re{ujpqcc6} follow from \re{ujpq+}-\re{vjpq},
where  by Definition~\ref{notation}
 $$\cL U_{j,PQ}^*(\cdot; 0)=\cL V^*_{j,QP}(\cdot;0) =0.$$
Next, we refine \re{mpmpnsd} to verify the remaining assertions.
We recall  from \re{ajpq} the   remainders
\aln 
A_j(\xi,\eta)&=
 R_2M_j^0(\xi\eta;\xi U+\eta V+UV)(\xi_j+U_j) +R_2f_j(\xi,\eta;U,V). 
 \end{align*}
Here by \re{taylor}, we have the Taylor remainder formula
\eq{}
\nonumber
 R_{2}f(x;y)=2\int_0^1(1-t)\sum_{|\alpha|=2}\frac{1}{\alpha!}\partial^{\alpha}f(x+ty)y^{\alpha}\, dt.
\eeq
Since $\ord(U,V)\geq d$, $\ord(f,g)\geq d$, and  $d\geq4$, then $A_j$, 
defined by \re{ajpq},
 satisfies
\gan
A_j(\xi,\eta)=O(|(\xi,\eta)|^{2d+2}). 
\end{gather*}
Recall that $f_j(\xi,\eta)$ and $ U_j(\xi,\eta)$ do not contain terms of the form $\xi_j\xi^P\eta^P$, while
 $g_j(\xi,\eta)$ and $ V_j(\xi,\eta)$ do not contain terms of the form $\eta_j\xi^P\eta^P$.
 Comparing both sides of \re{mjxexc} for coefficients of $\xi_j\xi^P\eta^P$ with   $|P|=d-1$, we get  \re{mjppn}.

   Assume now that \re{dijm} holds.
Assume that $i\neq j$.  Then $D_iM_j^0(\xi\eta)=O(|\xi\eta|)$. We  see  that
$D_iM_j^0(\xi\eta)\eta_iU_i(\xi,\eta)$ and $ D_iM_j^0(\xi\eta)\xi_iV_i(\xi,\eta)$ do not
contain terms of $\xi^P\eta^P$, and
$$D_iM_j^0(\xi\eta)\xi_iU_i(\xi,\eta)V_i(\xi,\eta)=O(2d+3).$$
 Since $(\tilde f,\tilde g)\in \cL C^{\mathsf c}_2(S)$ and
$(\tilde f,\tilde g)=O(2d)$, then
$\tilde f_j(M(\xi\eta)\xi,N(\xi\eta)\eta)$ does not contain terms $\xi^P\eta^P\xi_j$ for $2|P|+1=2d+1$.
Now \re{mjpp--} follows from a direction computation. \end{proof}

Set  $|\delta_N(\mu)|\colonequals\max\left\{|\nu|\colon\nu\in\delta_N(\mu)\right\}$ for
\eq{delnmu}\nonumber
\delta_N(\mu)=\bigcup_{j=1}^p\left\{\mu_j, \mu_j^{-1}, \f{1}{\mu^{P}-\mu_j}\colon P\in\zz^p, P\neq e_j,|P|\leq N\right\}.
\eeq
 \begin{defn}\label{sdon}We
say that   $\mu^{P_*-Q_*}-\mu_j$ and
 $\mu^{Q_*-P_*}-\mu_j^{-1}$ are {small divisors} of   {\it height} $N$, if there exists a partition
\gan
\bigcup_{  i=1}^p\Bigl\{|\mu^{P-Q}-\mu_i|\colon P,Q\in\nn^p, |P|+|Q|\leq N, \mu^{P-Q}\neq\mu_i\Bigr\}=S_N^0\cup S^1_N
\end{gather*}
with $
 |\mu^{P_*-Q_*}-\mu_j|\in S_N^0$ and $
S^1_N\neq\emptyset$ such that
\gan
 \max S_N^0<C\min S_N^0,\quad
\max S_N^0<  
(\min S^1_N)^{L_N}<1.
\end{gather*}
Here $C$ depends only on an upper bound of  $|\mu|$ and $|\mu|^{-1}$ and
$$
L_N\geq N.
$$
  If $|\mu^{P_*-Q_*}-\mu_j|$ is in $S_N^0$ and if $P_*,Q_*\in\nn^p$,  we call $  |P_*-Q_*|$ the   {\it degree} of the small
divisors $\mu^{P_*-Q_*}-\mu_j$ and $\mu^{Q_*-P_*}-\mu_j^{-1}$.
\end{defn}

  To avoid confusion, let
us call $\mu^{P_*-Q_*}-\mu_j$ that appear in $S_N^0$ the  {\it exceptional} small divisors. These small divisors have been used by Cremer~\cite{Cr28} and Siegel~\ci{Si41}.
The degree and height play different roles in computation. The height serves as the maximum degree of all small  divisors that
need to be considered in computation.

Roughly speaking, the quantities in $S^0_N$ are comparable but they are much smaller than the ones in $S^1_N$.
We will construct $\mu$ for any prescribed
sequence of positive integers $L_N$ so that
$$
\max S_N^0<(\min S^1_N)^{L_N}<1
$$
for a subsequence $N=N_k$ tending to $\infty$. Furthermore, to use the small divisors we will
 identify  all   exceptional small divisors of   height  $2N_k+1$ and  all    degrees of the exceptional small divisors
with $N_k$ being the smallest.

We start with the following lemma which gives us small divisors that decay as rapidly as we wish.
\begin{lemma}\label{smallvec}
Let  $L_k$ be a   strictly increasing
 sequence of positive integers. There exist a real number $\nu\in(0,1/2)$
  and a sequence $(p_k,q_k)\in \nn^2$  such that
$e,1,\nu$ are linearly independent over $\qq$, and
\begin{gather}\label{epq-}
|q_k\nu-p_k-e|\leq\Delta(p_k,q_k)^{ L_{  p_k+q_k}},  \\
\label{dpkqk}\Delta(p_k,q_k)=\min\Bigl\{\frac{1}{2}, |q\nu-p-re|\colon  0<|r|+|q|<3(q_k+1),
\\
\nonumber  (p,q,r)\neq  0, \pm(p_k,q_k,1), \pm2(p_k,q_k,1)\Bigr\}.
\end{gather}
\end{lemma}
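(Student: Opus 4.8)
The statement is a Liouville-type construction: we must produce an irrational $\nu\in(0,1/2)$, algebraically independent from $e$ over $\qq$ together with $1$, whose continued-fraction-like approximations to $e$ (shifted) are super-exponentially good, with the rate governed by the prescribed sequence $L_k$. The natural approach is a nested-interval / diophantine-approximation argument: build $\nu$ as a limit of rationals $p_k/q_k$ chosen inductively so that $|q_k\nu - p_k - e|$ is forced to be extremely small, while keeping $\nu$ inside a shrinking sequence of intervals that avoids the algebraic relations we want to exclude. The role of $\Delta(p_k,q_k)$ is that it is a strictly positive quantity (once the triple is not among the finitely many excluded ones, and $e$ is irrational so $q\nu - p - re \neq 0$ for the relevant $(p,q,r)$ when $\nu$ is already pinned down enough), so raising it to the power $L_{p_k+q_k}$ still leaves room to satisfy \eqref{epq-}.

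\textbf{Key steps.} First I would fix an enumeration and work inductively. At stage $k$ we have chosen $(p_1,q_1),\dots,(p_{k-1},q_{k-1})$ and a closed interval $I_{k-1}\subset(0,1/2)$ of positive length such that every $\nu\in I_{k-1}$ satisfies \eqref{epq-} for $j<k$; we also have enough of the binary/continued-fraction expansion of any $\nu\in I_{k-1}$ specified that the linear-independence-over-$\qq$ condition for $\{1,e,\nu\}$ can be guaranteed in the limit (this is a countable list of conditions $a + be + c\nu \neq 0$, $(a,b,c)\in\zz^3\setminus 0$; since $e$ is transcendental, for fixed $(a,b,c)$ with $c\neq 0$ this excludes a single value of $\nu$, and for $c=0$ it is automatic — so at each stage we only need to avoid finitely many points, which is possible since $I_{k-1}$ has positive length). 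Second, choose $q_k$ large: pick a rational $r_k = p_k/q_k \in \mathrm{int}(I_{k-1})$ with $q_k$ large enough that $3(q_k+1)$ exceeds all the bookkeeping bounds from previous stages and that the new constraint interval will be nonempty. Third, the crucial estimate: $\Delta(p_k,q_k)$ as defined in \eqref{dpkqk} is a minimum over a finite set of nonzero reals (nonzero because the excluded triples are removed and because, for the remaining triples with $r\neq 0$, $q\nu - p - re$ cannot vanish for $\nu$ in a suitably small interval around $r_k$ — here one uses that $e\notin\qq$; for $r=0$ it is $q\nu - p$ which is nonzero for $\nu$ irrational, but at this stage $\nu$ is not yet fixed, so instead one replaces $\nu$ by $r_k$ in the definition of $\Delta$ up to a harmless factor, or works with $\liminf$), hence $\Delta(p_k,q_k)>0$. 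Then set $I_k$ to be the sub-interval of $I_{k-1}$ consisting of those $\nu$ with $|q_k\nu - p_k - e| \le \Delta(p_k,q_k)^{L_{p_k+q_k}}$; since $\Delta(p_k,q_k)^{L_{p_k+q_k}}>0$, this is an interval of positive length around the point $(p_k+e)/q_k$, and by taking $q_k$ large we can also arrange $|I_k| \le |I_{k-1}|/2$ and that $I_k$ avoids the finitely many forbidden points for the independence conditions. Finally, $\nu := \bigcap_k I_k$ is a single well-defined real in $(0,1/2)$ satisfying all of \eqref{epq-}, and by construction $1,e,\nu$ are $\qq$-linearly independent.

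\textbf{Main obstacle.} The delicate point is the circularity in the definition of $\Delta(p_k,q_k)$: it refers to $\nu$, but $\nu$ is only determined at the end of the construction. The clean way around this is to note that $\Delta(p_k,q_k)$ only depends on $\nu$ through the values $q\nu - p - re$ for a \emph{finite} set of $(p,q,r)$, and these are continuous (linear) in $\nu$; so as long as we first choose the \emph{length} of $I_k$ small enough — smaller than, say, half the distance from $r_k$ to the nearest "bad" hyperplane $\{q\nu = p + re\}$ among the finitely many relevant ones — the value of $\Delta(p_k,q_k)$ computed at any $\nu\in I_k$ is comparable (within a factor $2$, absorbed into the constant) to the value computed at the center $r_k$, which \emph{is} explicitly defined. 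Then one shrinks $I_k$ further to enforce \eqref{epq-} with this now-well-defined quantity. Making this two-step shrinking precise and checking the finiteness of all the sets involved (so that each infimum is a genuine positive minimum) is the technical heart; everything else is a standard nested-intervals Liouville argument. Once $\nu$ is in hand the subsequent application (identifying the exceptional small divisors of height $2N_k+1$ with $N_k = p_k + q_k$ and reading off their degrees) is bookkeeping carried out in the sections that follow.
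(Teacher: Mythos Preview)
Your nested-interval scheme is valid and genuinely different from the paper's argument. The paper builds $\nu$ explicitly as $\nu=\sum_{\ell\ge1}\frac{1}{m_\ell!}\sum_{j=0}^{n_\ell}\frac{1}{j!}$ with $q_k=m_k!$: each inner block is a partial sum of the series for $e$, so $q_k\nu$ minus its integer part lands near $e$ with an error controlled by the tail, and the inductive choice of $(m_{k+1},n_k)$ enforces \re{epq-}. Both the $\qq$-linear independence of $1,e,\nu$ and the required lower bound on $|q\nu-p-re|$ for the competing triples are then read off from this factorial structure via the elementary estimate $|p+qe|\ge\frac{1}{(q-1)!}\min\bigl\{3-e,\frac{1}{q+1}\bigr\}$. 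Your approach is more portable---it uses only the irrationality of $e$ and would work with any irrational in its place---while the paper's explicit construction delivers a self-contained verification of independence without having to dodge countably many bad points along the way.

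One correction is needed. Evaluating $\Delta(p_k,q_k)$ at the rational center $r_k=p_k/q_k$ gives zero, since the triple $(p_k,q_k,0)$ is not among the excluded ones and $|q_kr_k-p_k-0\cdot e|=0$. The correct reference point is $c_k=(p_k+e)/q_k$, which you already identify as the center of the constraint interval; at $c_k$ the only vanishing linear forms $qc_k-p-re$ come from $(p,q,r)=m(p_k,q_k,1)$, and those with $|m|\le2$ are excluded while $|m|\ge3$ forces $|q|+|r|=|m|(q_k+1)\ge3(q_k+1)$, outside the admissible range. With $c_k$ in place of $r_k$ your two-step shrinking goes through exactly as you describe; just be sure to choose $(p_k,q_k)$ so that $c_k$, not merely $r_k$, lies in $\mathrm{int}(I_{k-1})$, which is immediate for $q_k$ large since $|c_k-r_k|=e/q_k$.
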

\begin{proof} We consider two increasing sequences  $  \{m_k\}_{k=1}^\infty, \{n_k\}_{k=1}^\infty$ of positive integers, which are
to be chosen. For $k=1,2,\ldots$, we set
\gan
\nu=\nu_k+\nu_k',\quad \nu_k=\sum_{\ell=1}^{ k}\frac{1}{m_\ell!}\sum_{j=0}^{  n_\ell}\f{1}{j!}, \quad
\nu_k'=\sum_{\ell>k}\frac{1}{m_\ell!}\sum_{j=0}^{  n_\ell}\f{1}{j!},\\
q_k=m_k!. \end{gather*}
We choose $m_k>(m_\ell)!(  n_\ell!)$ for $k>\ell$ and decompose
\begin{gather*}
q_k\nu=p_k+e_k+e_k',\\
p_k=m_k!\nu_{k-1}\in\nn,\quad
e_k=\sum_{\ell=0}^{  n_k}\f{1}{k!},\quad e_k'=m_k!\nu_k'.
\end{gather*}
We have   $e_k'<m_k!\sum_{\ell>k}\f{e}{m_\ell!}$ and
\ga
\nonumber
q_k\nu=p_k+e+e_k'-\sum_{\ell=  n_k+1}^\infty\frac{1}{\ell!},\\
\label{ekpl}
|q_k\nu-p_k-e|\leq m_k!\nu_k'+\sum_{\ell=n_k+1}^\infty\f{1}{\ell!}<\left\{12(3(q_k+1)^3)!\right\}^{-    L_{ p_k+q_k}}.
\end{gather}
Here $\re{ekpl}_k$ is achieved by  choosing   $(m_2,n_1)$, \ldots, $(m_{k+1},n_{k})$ successively.
 Clearly we can get $0<\nu<1/2$ if $m_1$ is sufficiently large.

Next, we want to show that $re+p+q\nu\neq0$ for all integers $p, q,r$ with $(p,q,r)\neq(0,0,0)$. Otherwise, we  rewrite  $-m_k!p=
m_k!(q\nu+re)$ as
$$
-m_k!p=qp_k+r\sum_{j=  0}^{m_k}\f{m_k!}{j!}+qe+q\left(e_k'-\sum_{\ell=  n_k+1}^\infty\f{1}{\ell!}\right)+r\sum_{j>m_k}\frac{m_k!}{j!}.
$$
The left-hand side is an integer. On the right-hand side,     the first two terms are integers, $qe$ is a fixed
irrational number, and the rest terms tend to $0$
as $k\to\infty$. We get a contradiction.

To verify \re{epq-}, we need to show that for each tuple $(p,q,r)$ satisfying \re{dpkqk},
\eq{qn-p}
|q\nu-p-re|\geq |q_k\nu-p_k-e|^{\f{1}{  L_{p_k+q_k}}}.
\eeq
We first note the following elementary inequality
\eq{elemineq}
|p+qe|\geq \frac{1}{(q-1)!}\min\left\{3-e, \frac{1}{q+1}\right\}, \quad p, q\in \zz, \quad q\geq1.
\eeq
Indeed,  the inequality holds for $q=1$. For $q\geq2$ we have $q!e=m+\e$ with $m\in \nn$ and
$$
\e:=\sum_{k=q+1}^\infty\frac{q!}{k!}>\frac{1}{q+1}.
$$
Furthermore,  $1-\e>1-\frac{2}{q+1}=\frac{q-1}{q+1}$ as
$$
\e<\frac{1}{q+1}+\sum_{k\geq q+2}\frac{1}{k(k-1)}=\frac{2}{q+1}.
$$
We may assume
that $q\geq0$.
If $q=0$, then $|r|<3q_k+3$   by condition in \re{dpkqk} and hence
$|p+re|\geq \frac{1}{(3q_k+4)!}$ by \re{elemineq}. Now \re{qn-p} follows from \re{ekpl}.
  Assume that $q>0$.
We have
\al \label{qnpq}
|-q\nu+p+re|&\geq|-q\f{p_k+e}{q_k}+p+re|-q
\f{|e+p_k-q_k\nu|}{q_k}\\
&=\left|\f{q_kp-qp_k}{q_k}+\f{rq_k-q}{q_k}e\right|-q
\f{|e+p_k-q_k\nu|}{q_k}.
\nonumber\end{align}
We first verify that $q_kp-qp_k$ and $q-rq_k$ do not vanish simultaneously. Assume that both are zero. Then $(p,q,r)=r(p_k,q_k,1)$. Thus $|r|\neq 1,2$,  and   $|r|\geq3$ by conditions in \re{dpkqk};
we obtain  $|r|+|q|\geq3(|q_k|+1)$, a contradiction.  Therefore,
 either $q_kp-qp_k$ or $rq_k-q$
is not zero. By \re{elemineq} and \re{qnpq},
\begin{align*}
&|-q\nu+p+re|\geq  \f{1}{q_k}\cdot \frac{1}{3}\cdot \f{1}{(|rq_k-q|+1)!}-q
\f{|e+p_k-q_k\nu|}{q_k}\\
&\qquad\geq  \f{1}{(3q_k+4)^2!}-4|e+p_k-q_k\nu|.
\end{align*}
Using \re{ekpl} twice,  we obtain the next two inequalities:
$$|-q\nu+p+re|\geq\yt
\left\{(3q_k+4)^2!\right\}^{-1}\geq|p_k+e-q_k\nu|^{\f{1}{  L_{p_k+q_k}}}.$$  The two ends give us \re{qn-p}.
\end{proof}

We now reformulate the above lemma as follows.
\begin{lemma}\label{smallvec+} Let $L_k$ be a strictly increasing sequence of positive integers. Let  $\nu\in(0,1/2)$, and let $p_k$ and $ q_k$ be
positive integers as in \rla{smallvec}.
Set 
$(\mu_1,\mu_2,\mu_3):=(  e^{-1},e^\nu, e^{e})$.  Then
\begin{align}\label{epqcc}
|\mu^{P_k}-\mu_3|&\leq (C\Delta^*(P_k))^{  L_{|P_k|}},\quad  P_k= (p_k,q_k,0),\\
\label{delspk} \Delta^*(P_k)&=\min_j\Bigl\{|\mu^{R}-\mu_j|\colon R\in\zz^3,  |R|\leq 2(  q_k+p_k)+1,
\\
&\quad  R-e_j\neq 0,  \pm(  p_k,q_k,-1), \pm2(p_k,q_k,-1)\Bigr\}.
\nonumber\end{align}
Here $C$ does not depend on $k$.  Moreover,
  all   exceptional small divisors   of height $2|P_k|+1$ have   degree at least $|P_k|$. Moreover,
   $\mu^{P_k}-\mu_3$ is the only   exceptional small divisor
of  degree $|P_k|$ and
 height $2|P_k|+1$.
\end{lemma}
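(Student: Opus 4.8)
The statement is a translation of Lemma~\ref{smallvec} into multiplicative language, together with two bookkeeping assertions about the exceptional small divisors attached to the sequence $P_k$. The first step is purely computational: with $(\mu_1,\mu_2,\mu_3)=(e^{-1},e^{\nu},e^{e})$ and $P_k=(p_k,q_k,0)$, one has $\mu^{P_k}=e^{-p_k}e^{q_k\nu}=e^{\,q_k\nu-p_k}$, while $\mu_3=e^e$, so
\[
\mu^{P_k}-\mu_3=e^e\bigl(e^{\,q_k\nu-p_k-e}-1\bigr).
\]
Since $q_k\nu-p_k-e\to0$ by \re{epq-}, we get $|\mu^{P_k}-\mu_3|\le C|q_k\nu-p_k-e|$ for a constant $C$ depending only on $e$. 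More generally, for any exponent $R\in\zz^3$ with bounded norm one writes $\mu^R-\mu_j=\mu_j\bigl(e^{\langle R-e_j,\,(-1,\nu,e)\rangle}-1\bigr)$, and on the relevant bounded range $|e^x-1|$ is comparable to $|x|$ when $|x|$ is small and bounded below otherwise; hence $|\mu^R-\mu_j|$ is comparable, up to constants depending only on $|\mu|^{\pm1}$, to $|q'\nu-p'-r'e|$ where $(p',q',r')=-(R-e_j)$ read off in the coordinates $(-1,\nu,e)$. Applying this to the set appearing in \re{dpkqk} — noting that the excluded vectors $\pm(p_k,q_k,1),\pm2(p_k,q_k,1)$ for $(p,q,r)$ correspond exactly to the excluded $R-e_j\in\{0,\pm(p_k,q_k,-1),\pm2(p_k,q_k,-1)\}$ after the sign/coordinate bookkeeping — identifies $\Delta^*(P_k)$ with $\Delta(p_k,q_k)$ up to a fixed multiplicative constant, and translates \re{epq-} into \re{epqcc}. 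This is the routine part; the only mild care needed is matching the index ranges: $|R|\le 2(q_k+p_k)+1$ corresponds to $|P|+|Q|\le 2(p_k+q_k)+1$, i.e. $0<|r|+|q|<3(q_k+1)$ after accounting for the $e$-component, and verifying $|P_k|=p_k+q_k$.

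The second step establishes the two statements about exceptional small divisors. Recall from Definition~\ref{sdon} that an exceptional small divisor of height $N$ is one whose modulus lies in the "small" block $S_N^0$ of a partition of $\{|\mu^{P-Q}-\mu_i|\}$ with $\max S_N^0<(\min S_N^1)^{L_N}<1$; and by Definition~\ref{sdon} its degree is $|P_*-Q_*|$. With the above identification, at height $N=2|P_k|+1$ the quantity $|\mu^{P_k}-\mu_3|$ is bounded by $(C\Delta^*(P_k))^{L_{|P_k|}}$, so taking $S_N^0$ to be the set of $|\mu^{P-Q}-\mu_i|$ of size $\le(C\Delta^*(P_k))$ and $S_N^1$ its complement gives a valid partition of the required type (here one uses $L_{|P_k|}\ge |P_k|$ and that the $L_k$ are chosen as an arbitrarily fast sequence, so $L_N$ can be taken $\ge N$). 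The content is then: \emph{which} exponents land in $S_N^0$. The definition of $\Delta^*(P_k)$ as a minimum over $R-e_j\notin\{0,\pm(p_k,q_k,-1),\pm2(p_k,q_k,-1)\}$ with $|R|\le 2(q_k+p_k)+1$ means precisely that every $|\mu^R-\mu_j|$ with exponent \emph{not} of the excluded form is $\ge\Delta^*(P_k)$, hence sits in $S_N^1$; so the only exponents that can produce something smaller than $\Delta^*(P_k)$ — i.e. the only candidates for $S_N^0$ — are those with $R-e_j\in\{0,\pm(p_k,q_k,-1),\pm2(p_k,q_k,-1)\}$. The case $R-e_j=0$ is not a small divisor at all (it is $\mu_j-\mu_j=0$, excluded from the list). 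The remaining ones have $R-e_j=\pm(p_k,q_k,-1)$ (degree $p_k+q_k+1=|P_k|+1$, but note the monomial $\mu^{P_k-Q_k}$ realizing exponent difference $(p_k,q_k,-1)$ with $P_k-Q_k=(p_k,q_k,-1)$ has $|P_k-Q_k|$ equal to $|P_k|$ in the sense of the degree used in Definition~\ref{sdon}, where one takes $P_*,Q_*$ with disjoint supports, giving $|P_*-Q_*|=p_k+q_k=|P_k|$) or $\pm2(p_k,q_k,-1)$ (degree $2|P_k|$). Thus the minimal degree among exceptional small divisors of height $2|P_k|+1$ is $|P_k|$, achieved only by the pair $\mu^{P_k}-\mu_3$ and $\mu^{Q_k}-\mu_3^{-1}$ (where $\mu^{P_k}=\mu^{(p_k,q_k,0)}$ and the "$-\mu_3$" records $e_j=e_3$, since $R=e_3$ and $R-e_3=-(p_k,q_k,-1)$ gives $R=(-p_k,-q_k,1)+e_3=(-p_k,-q_k,1)+\ldots$; concretely the exponent-difference $(p_k,q_k,-1)$ against $\mu_3$ is $\mu^{(p_k,q_k,0)}\mu_3^{-1}\cdot\mu_3-\mu_3$-type, i.e. exactly $\mu^{P_k}-\mu_3$). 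Finally, one checks that $\pm(p_k,q_k,-1)$ with the opposite sign, and the genuinely different exponents at the same degree, either coincide with this pair or reduce to it by the reversibility-type symmetry $\mu^{R}-\mu_j \leftrightarrow \mu^{-R}-\mu_j^{-1}$, so "only one" is meant up to this obvious pairing — matching the phrasing in the lemma.

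\textbf{Main obstacle.} The analytic estimates are essentially immediate given Lemma~\ref{smallvec}: the $e^x-1$ comparison and the index-range translation are routine. The genuinely delicate point — and the one I would write out carefully — is the degree/height bookkeeping in Definition~\ref{sdon}: one must be scrupulous about (a) the distinction between the \emph{height} (an upper bound $2|P_k|+1$ on all exponent-norms considered) and the \emph{degree} $|P_*-Q_*|$ of a particular small divisor, (b) the fact that a fixed exponent-\emph{difference} vector $(p_k,q_k,-1)$ can be realized by many monomial pairs $(\xi^P\eta^Q)$ but has a well-defined degree $|P_k|$ when $P,Q$ are taken with disjoint support, and (c) the exclusion list in the definition of $\Delta^*(P_k)$ is exactly engineered so that \emph{everything not on it} is bounded below by $\Delta^*(P_k)$ — this is what forces the exceptional small divisors to be confined to the multiples $\pm(p_k,q_k,-1)$ and $\pm2(p_k,q_k,-1)$, and in particular makes $\mu^{P_k}-\mu_3$ the unique one of minimal degree $|P_k|$. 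Getting these three conventions to line up consistently with the statement is where the real care lies; once they do, the proof is a direct unwinding of definitions combined with \rl{smallvec}.
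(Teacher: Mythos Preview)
Your overall strategy is the paper's: translate the additive estimate of Lemma~\ref{smallvec} to multiplicative form via $\mu^R-\mu_j=\mu_j(e^{\langle R-e_j,\nu^*\rangle}-1)$ with $\nu^*=(-1,\nu,e)$, use $|e^x-1|\asymp|x|$ on the bounded range to identify $\Delta^*(P_k)$ with $\Delta(p_k,q_k)$ up to constants, and read off \re{epqcc} from \re{epq-}. Two steps in your write-up need repair.

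First, the index-range translation is not just bookkeeping. To show that $|R|\le 2(p_k+q_k)+1$ forces the additive constraint $0<|r|+|q|<3(q_k+1)$ for $R':=R-e_j=(p,q,r)$, you need $2(p_k+q_k)+2<3(q_k+1)$, i.e.\ $2p_k\le q_k$. The paper deduces this from $|q_k\nu-p_k-e|<1$ and $0<\nu<1/2$, which give $p_k<q_k\nu<q_k/2$. Without this arithmetic fact the comparison $\Delta^*(P_k)\ge c\,\Delta(p_k,q_k)$ does not close, so your phrase ``after accounting for the $e$-component'' hides the only nontrivial inequality in the proof of \re{epqcc}.

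Second, your degree discussion computes the wrong quantity. By Definition~\ref{sdon} the degree of $\mu^R-\mu_j$ is $|R|$, the $\ell^1$-norm of the exponent $R\in\zz^3$, not $|R-e_j|$. For the excluded value $R-e_3=(p_k,q_k,-1)$ one has $R=P_k$ with degree $|P_k|$; for $R-e_3=-(p_k,q_k,-1)$ one has $R=(-p_k,-q_k,2)$ with degree $|P_k|+2$. Your write-up lumps the two signs together with a spurious degree $|P_k|+1$ and then attempts a fix via disjoint-support decompositions of $R-e_j$ rather than of $R$; this never lands on the correct list. The paper (in the remark following the lemma) simply enumerates, for $j=3$, the four excluded $R$'s as $P_k$, $-P_k+2e_3$, $2P_k-e_3$, $-2P_k+3e_3$ with degrees $|P_k|$, $|P_k|+2$, $2|P_k|+1$, $2|P_k|+3$, and takes $S_N^0$ to consist of the corresponding four moduli; the uniqueness of degree $|P_k|$ is then read off directly.
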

In the definition of $\Delta^*(P_k)$, equivalently we require that
$$ 
R \neq  P_k,\     R_k^1, \ R_k^2,\  R_k^3
$$ 
with  $R_k^1:=- P_k+2e_3,  R_k^2:=2 P_k-e_3,$ and $ R_k^3:=-2P_k+3e_3$.
Note that $ |R_k^1|=|P_k|+2, |R_k^2|=2|P_k|+1$, and $|R_k^3|=2|P_k|+3$  are bigger
than  $|P_k|$, i.e.  the degree of the exceptional small divisor $\mu^{P_k}-\mu_3$.
 Each $\mu^{R_k^i}-\mu_3$
is a small divisor comparable with $ \mu^{P_k}-\mu_3$. Finally,  $\Del^*(P_k)$ tends to zero as $|P_k|\to\infty$.
Let us set
$N:=2|P_k|+1$,
 and
\aln
S_N^0:&=\left\{  |\mu^{P_k}-\mu_3|,|\mu^{R_k^1}-\mu_3|,|\mu^{R_k^2}-\mu_3|,|\mu^{R_k^3}-\mu_3|\right\},\\
S_N^1: &= \bigcup_j\Bigl \{ |\mu^{R}-\mu_j|\colon R\in\zz^3,  |R|\leq 2(q_k+p_k)+1,  
\\
\nonumber&\qquad\quad R-e_j\neq 0,\   \pm(p_k,q_k,-1),\  \pm2(p_k,q_k,-1)\Bigr\}.\end{align*}
This
implies that the last paragraph of Lemma~\ref{smallvec+} holds when
the $L_N$ in Definition~\ref{sdon}, denoted it by $L_N'$, takes the value
$L_N'=   L_{2N+1}$, while  $L_N$ is prescribed in  Lemma~\ref{smallvec+}.
\begin{proof}
By \rl{smallvec}, we find a real number $\nu\in(0,1/2)$ and positive integers $p_k,q_k$ such that $e,1,\nu$ are linearly independent over $\qq$ and \begin{gather}\label{epq}
|q_k\nu-e-p_k|\leq\Delta(p_k,q_k)^{  L_{|P_k|}},\\
\Delta(p_k,q_k)=\min\left\{|q\nu-re-p|\colon  0<|r|+|q|<3(q_k+1),\right.\nonumber
\\
 \qquad \qquad\qquad\left. (p,q,r)\neq  0, \pm(  p_k,q_k, 1), \pm2(p_k,q_k,1)\right\}.
\nonumber\end{gather}
 Note that $\mu_1,\mu_2,\mu_3$
 are   positive 
  and non-resonant.  We have
  $$
 |\mu^{P_k}-\mu_3|=|\mu_3|\cdot|e^{q_k\nu-p_k-e}-1|.
 $$
Let $\nu^*:=(  -1,\nu,e)$. If $|R\cdot\nu^*-\nu_j^*|<2$, then by the intermediate value theorem
$$
  e^{-2}|\mu_j||R\cdot\nu^*-\nu_j^*|\leq |\mu^R-\mu_j|\leq  e^2|\mu_j||R\cdot\nu^*-\nu_j^*|.
$$
If $R\cdot\nu^*-\nu_j^*>2$ or $R\cdot\nu^*-\nu_j^*<-2$, we have
$$
|\mu^R-\mu_j|\geq   e^{-2}|\mu_j|.
$$
Thus,  we can restate the properties of $\nu^*$ as   follows:
\begin{align*}
\nonumber
&|\mu^{-(p_k,q_k,0)}-\mu_3|\leq  C' (C'\tilde\Delta(p_k,q_k))^{  L_{|P_k|}},\\
& \tilde\Delta(p_k,q_k)=\min\left\{ |\mu^{(p,q,r)}-1|\colon   0<|r|+|q|<3(q_k+1),\right.
\\
&\qquad\qquad\qquad  \left. (p,q,r)\neq 0,\pm(  p_k,q_k,-1),
\pm2(p_k,q_k,-1)\right\}.
\end{align*}
Recall that $0<\nu<1/2$. By \re{epq}, we have $|q_k \nu - e - p_k|<1$. Since
$p_k, q_k$ are positive,  then $p_k<\nu q_k<q_k/2$.
Assume that $|  \mu^{R}-\mu_j|=\Del^*(P_k)$,  $|R|\leq 2(p_k+q_k)+1$, and
$$R-e_j\neq 0,\pm(p_k,q_k,-1), \pm 2(p_k,q_k,-1).$$
Set $R':=R-e_j$ and $(p,q,r):=R'$. Then $\Del^*(P_k)=|\mu_j||\mu^{R'}-1|$.
Also, $|r|+|q|\leq|R'|\leq |R|+1\leq
2(p_k+q_k)+2\leq q_k+2q_k+2<3(q_k+1)$.  This shows that $|\mu^{Q'}-1|\geq\tilde\Del(p_k,q_k)$.
We obtain $\Del^*(P_k) \geq\mu_j\Del(p_,q_k)$.  We have verified \re{epqcc}.
  For the remaining assertions, see the remark following the lemma.
\end{proof}

  In the above we have retained $\mu_j>0$ which
are sufficient to realize $\mu_1, \mu_2, \mu_3$, $\mu_1^{-1},\mu_2^{-1},\mu_3^{-1}$ as eigenvalues of $\sigma$ for an elliptic
complex tangent.  Indeed, with $0<\mu_1<1$, interchanging $\xi_1$ and $\eta_1$ preserves $\rho$ and changes the $(\xi_1,\eta_1)$
components of $\sigma$ into   $(\mu_1^{-1}\xi_1,\mu_1\eta_1)$.

We are ready to prove   \rt{divsig}, which is restated here:
\begin{thm}\label{divnf1} 
There exists a non-resonant elliptic  real analytic $3$-submanifold $M$ in $\cc^6$ such
that $M$
  admits the maximum number of deck transformations
 and all Poincar\'e-Dulac normal forms of  the $\sigma$ associated to $M$ are divergent.
 \end{thm}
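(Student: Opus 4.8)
The goal is to produce a single real analytic submanifold $M\subset\cc^6$ with $p=3$ elliptic components, admitting the maximum number of deck transformations, whose associated reversible map $\sigma$ has \emph{all} Poincar\'e--Dulac normal forms divergent. By \rp{mmtp} and the realization procedure recalled after it, it suffices to construct a suitable family $\{\tau_{11},\tau_{12},\tau_{13},\rho\}$ of commuting holomorphic involutions intertwined by $\rho$; equivalently, by the work of section~\ref{secnfs}, it suffices to construct a convergent $\sigma=\tau_1\tau_2$ with a prescribed divergent behaviour. The strategy is: (1) choose the eigenvalues $\mu_1,\mu_2,\mu_3$ of the linear part $\hat S$ using the Liouville-type construction of \rl{smallvec} and \rl{smallvec+}, so that there is a sparse sequence of \emph{exceptional} small divisors $\mu^{P_k}-\mu_3$ of degree $|P_k|$ and height $2|P_k|+1$ that decay faster than any prescribed power; (2) build a convergent $\sigma$ (hence a convergent $M$) whose cubic invariant $\log M$ is tangent to the identity and whose higher-order Taylor data at the "resonant-like" slots are arranged so that, in passing to the normal form $\sigma^*$, formula \re{mjpp--} of \rl{mj0mj} forces the normal-form coefficient $M_{j,P_k}$ to contain a genuine product of two inverse small divisors; (3) conclude from the size estimates that the normal form $\hat\sigma$ of \rt{ideal5} diverges, and then bootstrap via \rt{ideal5}(iii) to get divergence of \emph{every} Poincar\'e--Dulac normal form.

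\textbf{Carrying it out.} First I would fix the sequence $L_N$ (to be chosen enormous, e.g.\ $L_N$ growing fast enough to beat the Cauchy radius bounds that will appear) and apply \rl{smallvec+} to get $\mu=(\mu_1,\mu_2,\mu_3)=(e^{-1},e^{\nu},e^{e})$, positive and non-resonant, together with multi-indices $P_k=(p_k,q_k,0)$ satisfying $|\mu^{P_k}-\mu_3|\le (C\Delta^*(P_k))^{L_{|P_k|}}$, where $\Delta^*(P_k)$ is itself the minimal modulus of all competing small divisors of height $2|P_k|+1$. The key combinatorial point, already isolated in the remark after \rl{smallvec+}, is that the only exceptional small divisors of height $2|P_k|+1$ have degree $\ge|P_k|$, and $\mu^{P_k}-\mu_3$ is the \emph{unique} one of degree exactly $|P_k|$. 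Next I would perform the normalization of $\sigma$ degree by degree following \rl{mj0mj}, but using \emph{two} sequences of normalizing maps rather than the single $\Psi$ of \rp{ideal0}: at stage $k$ I first clear all terms up to order $2|P_k|-1$ by a polynomial map (so $\ord(f,g)\ge d:=2|P_k|$ afterwards, $d\ge4$), which by \re{mjpp0} leaves the normal-form coefficients of degree $<2d-1$ untouched; then I insert, by hand, a single nonzero Taylor coefficient into $f_3$ (or $g_3$) at a slot with $|P|=d-1=|P_k|-1$ that feeds into the $(P_k+e_3,P_k)$-coefficient via the $U_3^2$ and $U_3V_3$ terms of \re{mjpp--}. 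Since $U_{3,PQ}$ and $V_{3,QP}$ carry the factors $(\mu^{P-Q}-\mu_3)^{-1}$ and $(\mu^{Q-P}-\mu_3^{-1})^{-1}$ by \re{ujpqcc}--\re{ujpqcc6}, and the only such factor available at the relevant degree is the exceptional one $(\mu^{P_k}-\mu_3)^{-1}$, the quantity $\{2(U_3V_3)_{P_kP_k}+(U_3^2)_{(P_k+e_3)(P_k-e_3)}\}$ contains a term of size $\gtrsim |\mu^{P_k}-\mu_3|^{-2}$ times the inserted coefficient, not cancelled by anything else (here one uses the uniqueness part of the degree/height count). Hence $|M_{3,P_k}|\gtrsim |\mu^{P_k}-\mu_3|^{-2}\gg(C\Delta^*(P_k))^{-L_{|P_k|}}$, which, since $\Delta^*(P_k)\to0$ and $L_{|P_k|}\ge|P_k|$, grows faster than $R^{|P_k|}$ for every fixed $R$; so the power series $\hat M_3$ (and a fortiori $\hat\sigma$ in \rt{ideal5}(i), whose cubic invariant is tangent to the identity by construction) has zero radius of convergence. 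Throughout, one must keep $M$ real analytic: the reality constraints \re{a2e-}--\re{aiss} for three elliptic components read $\ov{M_e\circ\rho_z}=M_e$, i.e.\ each $M_e$ has "real" coefficients in the appropriate sense, so the hand-inserted coefficient should be chosen real (or conjugate-symmetric) — this is compatible because all $\mu_j$ are real and positive.

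\textbf{From divergence of $\hat\sigma$ to divergence of all normal forms.} Once $\hat\sigma$ is shown divergent, \rt{ideal5}(iii) applies directly: under the standing assumption that $\log M$ is tangent to the identity, if $\sigma$ were formally equivalent to \emph{any} convergent $\tilde\sigma\in\cL C(\hat S)$, then a dilation followed by the unique convergent normalization of \rl{fcfp} would transform $\tilde\sigma$ into the unique formal normal form, which is $\hat\sigma$; hence $\hat\sigma$ would be convergent, a contradiction. Since every Poincar\'e--Dulac normal form of $\sigma$ is by definition an element of $\cL C(\hat S)$ conjugate to $\sigma$, all of them diverge. Finally, \rp{mmtp} (realization) and the construction of the convergent $M$ from the convergent $\{\tau_{ij},\rho\}$ (via the explicit defining equations $z_{p+j}=\tilde B_j^2\circ\varphi^{-1}$) give the desired real analytic $M\subset\cc^6$ with a CR singularity of pure elliptic type, admitting $2^3$ deck transformations, and all Poincar\'e--Dulac normal forms of its $\sigma$ divergent.

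\textbf{The main obstacle.} The delicate point is step (2): one must show that the hand-inserted Taylor coefficient \emph{actually survives} in $M_{3,P_k}$, i.e.\ that no other contributions to \re{mjpp--} — coming from $\{Df_3(U,V)\}_{(P_k+e_3)P_k}$, from lower-order $M^0$ data fed forward by the two-step procedure, or from other monomials in $U_3^2$ and $U_3V_3$ — conspire to cancel the dominant $|\mu^{P_k}-\mu_3|^{-2}$ term. This is exactly where the \emph{uniqueness} clause of \rl{smallvec+} (that $\mu^{P_k}-\mu_3$ is the only exceptional small divisor of degree $|P_k|$ and height $2|P_k|+1$) is used: every competing term either involves a small divisor of strictly larger degree — hence, by the height estimate $\max S_N^0<(\min S_N^1)^{L_N}$, is negligible compared to $|\mu^{P_k}-\mu_3|^{-1}$ — or involves only non-small (bounded-below) divisors and the controlled, convergent lower-order data. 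Making this "no accidental cancellation" argument watertight, with explicit control of the universal polynomials $\cL U^*_{j,PQ}$, $\cL V^*_{j,QP}$ appearing in \re{ujpqcc}--\re{ujpqcc6}, is the technical heart of the proof.
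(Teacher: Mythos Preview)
Your overall architecture is the paper's: pick $\mu$ via \rl{smallvec+}, feed the exceptional small divisor $\mu^{P_k}-\mu_3$ into the normal-form coefficient through the quadratic term $2(U_3V_3)_{P_kP_k}$ in \re{mjpp--}, show this forces $|\hat M_{3,P_k}|\gtrsim|\mu^{P_k}-\mu_3|^{-2}$, hence $\hat\sigma$ diverges, then invoke \rt{ideal5}(iii). Two problems, one minor and one substantive.

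\emph{Minor: the orders are scrambled.} In \rl{mj0mj} one takes $d=\ord(f,g)$ and reads off the normal-form coefficient at $|P|=d$ via \re{mjpp--}. The paper sets $d_k=|P_k|$, clears terms up to order $d_k-1$, and then the critical coefficient lives at degree $2d_k+1$. Your ``$d:=2|P_k|$'' and ``slot with $|P|=d-1=|P_k|-1$'' are inconsistent with each other and with the lemma; as written you would be inserting a coefficient below the order you just cleared.

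\emph{Substantive: you have not explained how to build a \emph{convergent} $\sigma$ of the form $\tau_1(\rho\tau_1\rho)$ while retaining free choice of the coefficients $f^{(k)}_{3,P_k0}$, $g^{(k)}_{3,0P_k}$ at each stage.} You cannot ``insert by hand a single nonzero Taylor coefficient into $f_3$'': $\sigma$ is not a free power series but is constrained by $\sigma=\tau_1\tau_2$ with $\tau_2=\rho\tau_1\rho$ for a \emph{holomorphic involution} $\tau_1$, and you also need the limit of all your insertions to converge. The paper's device is to start from a convergent $\tau_1^0$ already in normal form (with $(\Lambda^0_{1j})^2=\mu_je^{\xi_j\eta_j}$, so $\log M$ is tangent to the identity) and then set $\tau_1^{k}=\varphi_k\tau_1^{k-1}\varphi_k^{-1}$, $\tau_2^k=\rho\tau_1^k\rho$, where $\varphi_k(\xi,\eta)=(\xi-h^{(k)}(\xi),\eta)$ with $\ord h^{(k)}=d_k$, $d_k\ge 2d_{k-1}$, and $|h^{(k)}_P|\le 1$. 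Because $\varphi_k$ does \emph{not} commute with $\rho$, the $\sigma^k$ genuinely changes; because the orders $d_k$ grow geometrically and the coefficients are bounded, the infinite composition converges to a biholomorphism and $\sigma^\infty$ is holomorphic. A direct computation then gives $f^{(k)}_{3,P_k0}=\tilde f^{(k)}_{3,P_k0}+r^{(k)}_{3,P_k0}$ and $g^{(k)}_{3,0P_k}=\tilde g^{(k)}_{3,0P_k}+s^{(k)}_{3,0P_k}$, where the tilded terms depend only on earlier stages and $r^{(k)},s^{(k)}$ are explicit quadratics in $h^{(k)}_{3,P_k}$ and $\overline{h^{(k)}_{3,P_k}}$. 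The ``no-cancellation'' step is then the elementary observation that for $\tilde T_k(t):=(\text{product of the two factors as a function of }t=h^{(k)}_{3,P_k})$, one of $\tilde T_k(0),\tilde T_k(1),\tilde T_k(-1)$ has modulus $\ge 1/4$; so one picks $h^{(k)}_{3,P_k}\in\{0,\pm1\}$ accordingly. Your proposal lacks this mechanism, and ``choose the inserted coefficient real because the $\mu_j$ are real'' does not by itself produce a convergent $\tau_1$ with the required freedom.

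There is also a bookkeeping point you gloss over: the universal constants $L_m$ governing the growth of all intermediate quantities (formulae \re{hmk1-6}--\re{fjpqg+}) must be established \emph{before} one fixes the sequence $L_N'$ fed into \rl{smallvec+}, since $\mu$ and the $P_k$ depend on that sequence. The paper handles this by first showing the $L_m$ depend only on $m$ and the partition $(d_0,\dots,d_k)$ with $d_j\ge 2d_{j-1}$, then maximizing over all admissible partitions to define $L_N'$, and only then invoking \rl{smallvec+}. Your ``choose $L_N$ enormous'' is the right instinct but hides this circularity.
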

\begin{proof}
We will not construct  the real analytic submanifold $M$ directly. Instead, we will
construct a family  of involutions $\{\tau_{11}, \ldots, \tau_{1p},\rho\}$ so that all Poincar\'e-Dulac
normal forms of $\sigma$ are divergent. By the realization
 in \rp{mmtp}, we get the desired submanifold.

We first give an outline of the proof.
To prove the theorem, we first deal with the associated  $\sigma$ and its normal form $\tilde \sigma$,  which belongs to
the centralizer of $S$,   the linear part of $\sigma$ at the origin.
Thus $\sigma^*
$ has the form
$$
\sigma^*\colon \xi'= M(\xi\eta)\xi, \quad \eta'= N(\xi\eta)\eta.
$$
We assume that $\log M$ is tangent to identity at the origin.
We then normalize $
\sigma^*$ into the normal form
$\hat \sigma$ stated in \rt{ideal5}  (i). (In \rl{mj0mj} we take $F=\log M$
and $\hat F=\log\hat M$.)
We will show that $\hat\sigma$ is divergent if $\sigma$ is well chosen. By \rt{ideal5} (iii),   all normal forms
 of $\sigma$ in the centralizer of  $S$
  are divergent.
To get $\sigma^*$, we use the normalization  of \rp{ideal0} (i). To get $\hat\sigma$,
we  normalize further  using
\rl{fcfp}.  To find a divergent $\hat\sigma$,  we need  to tie the normalizations of
two formal normal forms together, by keeping
track of the small divisors  in the two normalizations.

We will start with our initial pair of involutions $\{\tau_1^0,\tau_2^0\}$
 satisfying $\tau_2^0=\rho\tau_1^0\rho$ such that $\sigma^0$
 is  a  third order  perturbation of $S$. We
 require that $\tau_1^0$ be the composition of $\tau_{11}^0, \ldots, \tau_{1p}^0$. The latter
can be realized by a real analytic submanifold by using \rp{mmtp}. We will then perform a sequence
of holomorphic  changes of coordinates $\var_k$ such that $\tau_1^{k}=\var_k\tau_1^{k-1} \var_k^{-1}$, $\tau_2^{k}=\rho\tau_1^{k}\rho$, and  $\sigma^{k}=\tau_1^{k}\tau_2^{k}$.
  By abuse of notation, $\tau_i^k, \sigma^k$, etc.  do not stand for iterating the maps $k$ times.
Each $\var_k$  is tangent to the identity
to order $d_k$.
For a suitable choice of $\var_k$, we want to show that
the coefficients of order $d_k$ of the normal form of $\sigma^k$
increase rapidly to the effect that the coefficients of the normal form of the limit
mapping $\sigma^\infty$  increase rapidly too.
Here we will use the  exceptional small divisors to achieve the rapid growth of the coefficients of
the normal forms.
  Roughly speaking,  the latter requires us to keep track  the rapid growth for a sequence of coefficients in the normal form in a sequence of  two-step normalizations. Recall from \rl{mj0mj} that if we have
$$
\sigma\colon \xi_j'=M_j(\xi\eta)\xi_j+f_j(\xi,\eta), \quad
\eta_j'=N_j(\xi\eta)\eta_j+g_j(\xi,\eta), \quad 1\leq j\leq p.
$$
with $\ord (f,g)\geq d  \geq4$ 
 and $(f,g)\in \cL C_2^{\mathsf c}(S)$, then there is a polynomial mapping $\Psi\colon \xi'=\xi+\hat U(\xi,\eta),\eta'=\eta+V(\xi,\eta)$ in $\cL C^{\mathsf c}(S)$ that has order $d$ and  degree at most $2d-1$ such that for the new mapping
 $$
\hat\sigma:=\Psi^{-1}\sigma\Psi\colon\xi_j'=\hat M_j(\xi\eta)\xi_j+\hat f_j(\xi,\eta), \quad
\eta_j'=\hat N_j(\xi\eta)\eta_j+\hat g_j(\xi,\eta),
 $$
the coefficients of $M_j(\xi\eta)\xi_j$ of degree $2d+1$
have the form
\eq{hatmjm}
\hat M_{j,P}=M_{j,P}+ \mu_j\left\{2(\hat U_j \hat V_j)_{PP}+(\hat U_j^2)_{(P+e_j)(P-e_j)}\right\}+\{Df_j(\hat U,\hat V)\}_{(P+e_j)P}.\eeq
It is crucial that for suitable multi-indices, {\em both} $\hat U_j,\hat V_j$ contain the exceptional small divisors of degree $d$ as formulated in \rl{smallvec+} (see also Definition~\ref{sdon}). Although $Df_j(\hat U,\hat V)$ contains (exceptional) small divisors, they can only appear at most once in each term, provided $f_j$ contains no small divisor of degree $d$.
The formula \re{hatmjm} appears as simple as it is, it requires
that $\sigma$ has been normalized to degree $d$. To achieve such a $\sigma$, we need to use a preliminary change of coordinates $\Phi\colon \xi'=\xi+U(\xi,\eta),\eta'=\eta+V(\xi,\eta)$ via polynomials of degree less than $d$. The $\Phi$ depends only on small divisors of degree $<d$, but none of them are exceptionally small. Therefore, by composing $\Psi\Phi$, we obtain \re{hatmjm} where small divisors of degree $<d$ are absorbed into terms $M_{j,P}$ and the products of two exceptional small divisors in \re{hatmjm}, if they exist, dominate the other terms in $\hat M_{j,P}$. Of course, we need to apply a sequence of transformations $\Phi,\Psi$ and we should leave the coefficients of a certain degree unchanged in the process once they become large, which are possible   by Corollary~\ref{uniqueM}.


We now present the proof. Let $\sigma^0=\tau_1^0\tau_2^0$, $\tau_2^0=\rho\tau_1^0\rho$, and
\begin{alignat*}{3}
&\tau_1^0\colon\xi_j'=\Lambda_{1j}^0(\xi\eta)\eta_j, \quad &&\eta_j'=(\Lambda_{1j}^0(\xi\eta))^{-1}\xi_j,\\
&\sigma^0\colon\xi_j'=(\Lambda_{1j}^0(\xi\eta))^{2}\xi_j, \quad &&\eta_j'=(\Lambda_{1j}^0(\xi\eta))^{-2}(\xi\eta)\eta_j.
\end{alignat*}
Since we consider the elliptic case, we require that $(\Lambda_{1j}^0(\xi\eta))^2=\mu_je^{\xi_j\eta_j}$.
 So $\zeta\to (\Lambda_{1}^0)^{2}(\zeta)$ is  biholomorphic. Recall that $\sigma^0$ can be realized by $\{\tau_{11}^0, \ldots, \tau_{1p}^0,\rho\}$.
We will take
\ga\label{rest1}
\var_k\colon\xi_j'=(\xi  -h^{(k)}(\xi),\eta),\quad  \ord h^{(k)}=d_k>3,\\
\label{rest2} d_{k}\geq   2d_{k-1},   \quad |h^{(k)}_P|\leq 1.
\end{gather}
 We will also choose each $h_j^{(k)}(\xi)$ to have one monomial only.  Let $\Del_r:=\Del_r^3$ denote the polydisc of radius $r$. Let $\|\cdot\|$
be the sup norm on $\cc^3$.
  Let $H^{(k)}(\xi)=\xi -h^{(k)}(\xi)$ and we first verify that $H_k=H^{(k)}\circ \cdots\circ H^{(1)}$ converges to a holomorphic function
 on the polydisc $\Delta_{r_1}$ for $r_1>0$ sufficiently small; consequently, $\var_k\circ\cdots\circ\var_1$ converges to a germ of  holomorphic map   $\var^\infty$ at the
 origin. Note that $H^{(k)}$ sends   $\Del_{r_k}$ into $\Del_{r_{k+1}}$ for $r_{k+1}=r_k+r_k^{d_k}$. We want to show that when $r_1$
 is sufficiently small,
 \eq{rksk}
 r_k\leq s_k:=(2-\f{1}{k})r_1.
 \eeq
 It holds for $k=1$.
 Let us show that  ${r_{k+1}}/{r_k}-1\leq \theta_k:= s_{k+1}/s_k-1$, i.e.
 $$
 r_k^{d_k-1}\leq \theta_k=\f{1}{(k+1)(2k-1)}.
 $$
 We have $(2r_1)^{d_k-1}\leq (2r_1)^k$ when $0<r_1<1/2$.  Fix $r_1$  sufficiently small such that  $(2r_1)^k<\f{1}{(k+1)(2k-1)}$
 for all $k$.  By induction, we obtain \re{rksk} for all $k$. In particular, we have $\|h^{(k)}(\xi)\|\leq \|\xi\|+\|H^{(k)}(\xi)\|\leq
 2r_{k+1}$ for $\|\xi\|<r_k$.   To show the convergence of $H_k$, we write
 $
H_{k}- H_{k-1}(\xi)= -h^{(k)}\circ   H_{k-1}.$  By the Schwarz lemma,  we obtain $$
\| h^{(k)}\circ   H_{k-1}(\xi)\|\leq \f{ 2r_{k+1}}{r_1^{d_k}}\|\xi\|^{d_k}, \quad \|\xi\|<r_1.
 $$
  Note that the above estimate is uniform under conditions \re{rest1}-\re{rest2}. Therefore, $  H_k$ converges to a holomorphic function on $\|\xi\|<r_1$.

  Throughout the proof, we make initial assumptions that
  $d_k$ and $h^{(k)}$ satisfy \re{rest1}-\re{rest2},   $e^{-1}\leq \mu_j\leq e^e$, and $\mu^Q\neq1$ for $Q\in\zz^3$ with $Q\neq0$.
Set $\sigma^{k}=\tau_1^{k}\tau_2^{k}$, $\tau_2^{k}=\rho\tau_1^{k}\rho$, and
$$
\tau_1^{k}=\var_k\tau_1^{k-1} \var_k^{-1}.
$$
We want $\sigma^{ k }$ not to be holomorphically equivalent to $\sigma^{ k-1 }$. Thus we have chosen
a $\varphi_k$ that does not commute with $\rho$ in general.  Note that $\sigma^{ k }$ is still generated by a real analytic submanifold; indeed, when $\tau_{i}^{ k-1 }
=\tau_{i1}^{ k-1 }\cdots\tau_{ip}^{ k-1 }$ and $\tau_{2j}^{ k-1 }=\rho\tau_{1j}^{ k-1 }\rho$, we
still have the same identities if the superscript $ k-1 $ is replaced by $ k $
and $\tau_{1j}^{ k }$ equals $\varphi_k\tau_{1j}^{ k-1 } \varphi_k^{-1}$.
It is clear that $\sigma^{ k }=\sigma^{ k-1 }+O(d_k)$.  As power series, we have
\eq{sinf}
\sigma^{ \ell }=\sigma^{ k-1 }+O(d_k), \quad  k \leq \ell\leq \infty.
\eeq
  Note that as limits in convergence,
$\tau_{ij}^\infty=\lim_{k\to\infty}\tau_{ij}^{ k }$, $\tau_i^\infty=\lim_{k\to\infty}\tau_i^{ k }$ and $\sigma^\infty=\lim_{k\to\infty}\sigma^{ k }$ satisfy
\begin{gather*}
\tau_{2j}^\infty=\rho\tau_{1j}^\infty\rho, \quad \tau_i^\infty=\tau_{i1}^\infty\cdots\tau_{ip}^\infty, \quad
 \tau_2^\infty=\rho\tau_1^\infty\rho,  \quad\sigma^\infty=\tau_1^\infty\tau_2^\infty.
\end{gather*}
Of course, $\{\tau_{11}, \dots,\tau_{1p},\rho\}$ satisfies all the conditions that ensure it can be realized by a real analytic submanifold.

We know that $\sigma^\infty$ does not have a unique normal form in the centralizer $  S$.
Therefore, we will choose a procedure that arrives at a unique formal normal form in $  S$. We show that this unique normal form is divergent; and hence
by \rt{ideal5} (iii) any normal form of  $\sigma$ that is  in the centralizer of $  S$ must diverge.

We now describe the procedure. For a formal mapping $F$, we have a unique decomposition
$$
F=NF+N^{\mathsf c}F, \quad NF\in \cL C(  S), \quad N^{\mathsf c}F\in\cL C^{\mathsf c}(  S).
$$
Set $\hat\sigma_{0}^\infty=\sigma^\infty$.   For $k=0,1,\ldots,$  we take a normalized polynomial map $\Phi_{k}\in \cL C_2^{\mathsf c}(  S)$
of degree less than $d_k$  such that $\sigma_{k}^\infty:=\Phi_k^{-1}\hat\sigma_{k}^\infty\Phi_{k}$ is normalized
up to degree $d_k-1$.   Specifically, we require that
\gan
\deg\Phi_k\leq d_k-1, \quad\Phi_k\in \cL C^{\mathsf c}(  S);\quad \quad N^c\sigma_k^\infty(\xi,\eta)=O({d_k}).
\end{gather*}
Take a normalized polynomial map $\Psi_{k+1}$   such that $\Psi_{k+1}$ and
 $\hat\sigma_{k+1}^\infty:=\Psi_{k+1}^{-1}\sigma_k^\infty\Psi_{k+1}$ satisfy
 \ga\label{ncsigm}
 \nonumber
\deg\Psi_{k+1}\leq 2d_k-1; \quad\Psi_{k+1}\in \cL C_2^{\mathsf c}(  S),\quad
N^c\hat\sigma_{k+1}^\infty=O(  2d_k).
\end{gather}
We can repeat this for $k=0,1,\ldots$. Thus we apply two sequences of normalization as follows
 \eq{} \nonumber
   \hat\sigma_{k+1}^\infty=\Psi_{k+1}^{-1}\circ\Phi_k^{-1}\cdots\Psi_1^{-1}
\circ \Phi_0^{-1}\circ \sigma^\infty\circ \Phi_0\circ\Psi_1\cdots\Phi_{k}\circ\Psi_{k+1}.
 \eeq
 We will show that
 $\Psi_{k+1}=I+O(d_k)$ and $\Phi_k=I+O(  2d_{k-1})$. This shows that  the
  sequence $\Phi_0\Psi_1\cdots\Phi_k\Psi_{k+1}$
  defines  a formal biholomorphic mapping $\Phi$ so that
 \eq{hsiginf}
  \hat\sigma^\infty:=\Phi^{-1}\sigma^\infty\Phi
 \eeq
 is in a normal form. Finally, we need to combine the above normalization with
the normalization for the unique normal form
 in \rl{fcfp}.  We will show that the unique normal form    diverges.

Let us recall previous results to show that $\Phi_k,\Psi_{k+1}$ are uniquely determined.
Set \ga\label{hsink}
\hat\sigma^{\infty}_k\colon\left\{
\begin{split} \xi'=\hat M^{(k)}(\xi\eta)\xi+\hat f^{(k)}(\xi,\eta), \\
\eta'=\hat N^{(k)}(\xi\eta)\eta+\hat g^{(k)}(\xi,\eta),
\end{split}\right.
\\
 (\hat f^{(k)},\hat g^{(k)})\in \cL C_2^{\mathsf c}(  S).
 \end{gather}
 Recall that $\hat\sigma_0
=\sigma^\infty$.  Assume that we have achieved
 \eq{2dk-1}
 (\hat f^{(k)},\hat g^{(k)})=O(2d_{k-1}).
  \eeq
  Here we take $d_{-1}=2$ so that \re{hsink}-\re{2dk-1} hold for $k=0$.
By \rp{ideal0}, there is  a unique normalized     mapping $\tilde\Phi_k$ that
transforms $\hat\sigma_k^\infty$ into a normal form.  We denote by $\Phi_k$
the truncated polynomial mapping of $\tilde\Phi_k$ of degree $d_k-1$. We write
\gan
\Phi_{k}\colon\xi'=\xi+U^{(k)}(\xi,\eta), \quad \eta'=\eta+V^{(k)}(\xi,\eta),\\
(U^{(k)},V^{(k)})=O(2),
\quad \deg (U^{(k)}, V^{(k)})\leq d_k-1.\end{gather*}
  By Corollary~\ref{finitever},  $\Phi_k$ satisfies
\ga
\nonumber
\sigma^\infty_k=\Phi_{k}^{-1}\hat\sigma^\infty_{k}\Phi_{k}\colon\left\{
\begin{split}\xi'=M^{(k)}(\xi\eta)\xi+f^{(k)}(\xi,\eta), \\
\eta'=N^{(k)}(\xi\eta)\eta+g^{(k)}(\xi,\eta),
\end{split}\right.
\\
 (f^{(k)},g^{(k)})\in \cL C_2^{\mathsf c}(  S), \quad \ord(f^{(k)},g^{(k)})\geq d_k.\label{fkgkd}
 \end{gather}
  In fact, by \re{ujpq+}-\re{vjpq} (or \re{mp-q}-\re{mp-q2}), we have
 \al \label{ujpq6}
U _{j,PQ}^{(k)}&=(\mu^{P-Q}-\mu_j)^{-1}\left\{\hat f ^{(k)}_{j,PQ}+\cL U _{j,PQ}(\del_{d-1},
\{\hat M^{(k)},\hat N^{(k)}\}_{[\f{d-1}{2}]};\{ \hat f^{(k)},\hat g^{(k)}\}_{d-1})\right\},\\
\label{vjpq6}
V^{(k)} _{j,QP}&=(\mu^{Q-P}-\mu_j^{-1})^{-1}\left\{\hat g^{(k)}_{j,QP}+\cL V _{j,QP}(\del_{d-1},
\{\hat M^{(k)},\hat N^{(k)}\}_{[\f{d-1}{2}]};\{ \hat f^{(k)},\hat g^{(k)}\}_{d-1})\right\},
\end{align}
for  $|P|+|Q|=d< d_k$ and $\mu^{P-Q}\neq\mu_j$.
 By \re{mpmpnsd}-\re{npnpnsd} (or \re{mpmp-}-\re{npnp}), we have
\al
\label{mpmpnsd6}
M_P^{(k)}&=\hat M^{(k)}_P+\cL M_P(\del_{2|P|-1},\{\hat M^{(k)},\hat N^{(k)}\}_{|P|-1}; \{\hat f^{(k)}, \hat g^{(k)}\}_{2|P|-1}), \\
N_P^{(k)}&=\hat N^{(k)}_P+\cL N_P(
\del_{2|P|-1},\{\hat M^{(k)},\hat N^{(k)}\}_{|P|-1}; \{\hat f^{(k)}, \hat g^{(k)}\}_{2|P|-1})  \label{npnpnsd6}
\end{align}
  for $2|P|-1< d_k$.
Recall that  ${\mathcal U}_{j,PQ}, {\mathcal V}_{j,QP}, {\mathcal M}_{j,P}$,
and ${\mathcal N}_{j,P}$  are universal polynomials in their variables. In  notation defined by Definition~\ref{notation},
\eq{uvmn=0}
\nonumber
\cL U_{j,PQ}({\sbt}\,; 0)= \cL V_{j,QP}(\sbt\,; 0)=0, \quad \cL M_{P}(\sbt\,; 0)=
\cL N_{P}(\sbt\,; 0)=0.
\eeq
We apply \re{ujpq6}-\re{vjpq6}   for $d<2d_{k-1}\leq d_k$
and \re{mpmpnsd6}-\re{npnpnsd6}  for $2|P|-1< 2d_{k-1}\leq d_k$ to obtain
\ga
\Phi_k-I=(U^{(k)}, V^{(k)})=O(2d_{k-1}),
 \label{phiki} \\ \label{mphmp}
 M_P^{(k)}=\hat M_P^{(k)}, \quad N_P^{(k)}=\hat N_P^{(k)}, \quad |P|\leq d_{k-1}.
 \end{gather}
  In fact, by Corollary~\ref{uniqueM}, the above holds for $|P|<2d_{k-1}-1$.

By \rl{mj0mj},  there is a unique normalized polynomial mapping
 \gan
 \Psi_{k+1}(\xi,\eta)=(\xi+\hat U^{(k+1)}(\xi,\eta), \eta+\hat V^{(k+1)}(\xi,\eta)),\\
 (\hat U^{(k+1)}, \hat V^{(k+1)})\in \cL C_2^{\mathsf c}(  S),\\
 (\hat  U^{(k+1)}, \hat V^{(k+1)})=O(2),\quad
  \deg (\hat U^{(k+1)},\hat V^{(k+1)})\leq 2d_k-1 \end{gather*}
such that
$\hat\sigma^\infty_{k+1}=
\Psi_{k+1}^{-1}\Phi_{k}^{-1}\sigma^\infty_{k}\Phi_{k}\Psi_{k+1}$ satisfies  the following:
\ga
\nonumber
\hat\sigma^\infty_{k+1}\colon\xi'=\hat M^{(k+1)}(\xi\eta)\xi+  \hat f^{(k+1)}, \quad
\eta'=\hat N^{(k+1)}(\xi\eta)\eta+  \hat g^{(k+1)},\\
(\hat f^{(k+1)},\hat g^{(k+1)})\in \cL C^{\mathsf c}_2(  S), \quad
\ord(\hat f^{(k+1)},\hat g^{(k+1)} )\geq2d_k.
\label{hfk1}
\nonumber
\end{gather}
 By     \re{ujpqcc}-\re{ujpqcc6}, we know that
\al
\label{ujpqc}
\hat U _{j,PQ}^{(k+1)}&=(\mu^{P-Q}-\mu_j)^{-1}\left\{f _{j,PQ}^{(k)}+\cL U _{j,PQ}^*(\del_{  \ell-1},
\{M^{(k)},N^{(k)}\}_{[\f{\ell-1}{2}]};\{ f^{(k)},g^{(k)}\}_{\ell-1})\right\},\\
\hat V _{j,QP}^{(k+1)}&=(\mu^{Q-P}-\mu_j^{-1})^{-1}\left\{g _{j,QP}^{(k)}+\cL V _{j,QP}^*(\del_{\ell-1},
\{M^{(k)},N^{(k)}\}_{[\f{\ell-1}{2}]};\{ f^{(k)},g^{(k)}\}_{\ell-1})\right\},
\label{ujpqc6} \end{align}
for $d_k\leq |P|+|Q|=\ell\leq 2d_k-1$ and $\mu^{P-Q}\neq\mu_j$.
Recall that  ${\mathcal U}^*_{j,PQ}$ and ${\mathcal V}^*_{j,QP}$  are universal polynomials in their variables. In  notation defined by Definition~\ref{notation},
$
\cL U_{j,PQ}^*(\cdot; 0)= \cL V_{j,QP}^*(\cdot; 0)=0$. Thus
\ga
\label{psik1}\Psi_{k+1}-I=(\hat U^{(k+1)}, \hat V^{(k+1)})=O(d_k), \\
\label{hUk+1}\hat U^{(k+1)}_{j,PQ}=\f{f^{(k)}_{j,PQ}}{\mu^{P-Q}-\mu_j},\quad
\hat V^{(k+1)}_{j,QP}=\f{g^{(k)}_{j,QP}}{\mu^{Q-P}-\mu_j^{-1}}, \quad |P|+|Q|=d_k.
\end{gather}
Here $ \mu^{P-Q}\neq
\mu_j.$
By \re{mjpp0}-\re{mjpp--}, we have
\al
\label{mjpp-2}
\hat M^{(k+1)}_{j,P'}&=M^{(k)}_{j,P'}, \quad |P'|<  d_k-1;\\
\hat M^{(k+1)}_{j,P'}&=M^{(k)}_{j,P}+\left\{Df^{(k)}_j(\xi,\eta)(\hat U^{(k+1)},\hat V^{(k+1)})\right\}_{(P+e_j)P}, \quad |P_k|=d_k-1;\\
\label{mjpp2}
 \hat M^{(k+1)}_{j,P}&=M^{(k)}_{j,P}+   \mu_j \left\{
 2(\hat U^{(k+1)}_j \hat V^{(k+1)}_j)_{PP}+(
 (\hat U^{(k+1)}_j)^2)_{(P+e_j)(P-e_j})  \right\}
 \\
 &\quad+\left\{Df^{(k)}_j(\xi,\eta)(\hat U^{(k+1)},\hat V^{(k+1)})\right\}_{(P+e_j)P}, \quad |P|=d_k.
\nonumber\end{align}
 As   stated in Corollary~\ref{uniqueM},
  the coefficients of $\hat M^{(k+1)}_j(\xi\eta)\xi_j$
 of degree $2d_k+1$ do not depend on the coefficients of $f^{(k)}, g^{(k)}$ of degree $\geq 2d_k$, provided
 $(f^{(k)},g^{(k)})=O(d_k)$  is in $\cL C_2^{\mathsf c}(S)$ as it is assumed.

 Next, we need to estimate the size of coefficients of $M^{(k)}$ 
  that appear
  in \re{mjpp-2}-\re{mjpp2}. Recall that we apply  two sequences of normalization. We have
 \eq{}
 \nonumber
  \hat\sigma_{k+1}^\infty=\Psi_{k+1}^{-1}\circ\Phi_k^{-1}\cdots\Psi_1^{-1}
\circ \Phi_0^{-1}\circ \sigma^\infty\circ \Phi_0\circ\Psi_1\cdots\Phi_{k}\circ\Psi_{k+1}.
 \eeq
 Thus, $M^{(k)}, N^{(k)}$ depend  only
 on $\sigma^\infty$, $\Phi_0, \Psi_1, \Phi_1, \ldots, \Psi_{k-1},\Phi_k$.

 Recall that if $u_1,\ldots, u_m$ are power series, then
 $\{u_1,\ldots, u_m\}_d$ denotes the set of their coefficients of degree at most $d$,
 and $|\{u_1,\ldots, u_m\}_d|$ denotes the sup norm.
 We  need some crude estimates on the
 growth of Taylor coefficients.
 If $F=I+f$ and
  $f=O(2)$ is a map in formal power series, then   \re{F-1P5}-\re{F-1GF} imply
 \al
 |\{F^{-1}\}_{m}|&\leq    (2+|\{f\}_{m}|)^{\ell_{m}},\\
 |\{G\circ F\}_{m}|&\leq   (2+|\{f,G\}_{m}|)^{\ell_{m}}, \nonumber \\
 |\{F^{-1}\circ G\circ F\}_m|&\leq    (2+|\{f,G\}_{m}|)^{\ell_{m}},
\label{FGPF} \end{align}
In general,  if $F_j$ are formal mappings of $\cc^n$ that are tangent to the identity, then
\eq{Lmk}\nonumber
|\{F_k^{-1}\cdots F_1^{-1}GF_1\cdots F_k\}_m|\leq
(2+|\{ F_1,\dots, F_k,G\}_{m}|)^{ \ell_{m,k}}, \quad 1\leq k<\infty
\eeq
In particular, if $F_j=I+O(j)$ for $j=1,\dots, m$, then
for any $k\geq|P| :=m$ we have
\gan
(F_k^{-1}\cdots F_1^{-1}GF_1\cdots F_k)_P=(F_{m}^{-1}\cdots F_1^{-1}GF_1\cdots F_{m})_P, \\
\label{Lmindk}\nonumber
|\{F_k^{-1}\cdots F_1^{-1}GF_1\cdots F_k\}_m|\leq
(2+|\{F_1,\dots, F_{m},G\}_{m}|)^{\ell_{m}'}, \quad 1\leq k\leq\infty.
\end{gather*}
We may take $\ell_m'$ by $\ell_m$, while $\ell_m$ depends only on $m$.  We have similar estimates for $F_k\cdots F_1GF_1^{-1}\cdots F_k^{-1}$.  Recall that $1/\sqrt 2< \la_j<e^{e/2}<4$.
  Using $\tau_{1j}^{ k }=\var_k\tau_{1j}^{ k-1 }\var_k^{-1}=\var_k\dots\var_1\tau_{1j}^0\var_1^{-1}\dots\var_k^{-1}$ and hence
  $\tau_1^{k}=\var_k\dots\var_1\tau_{1}^0\var_1^{-1}\dots\var_k^{-1}$ and $\sigma_k=\tau_1^k(\rho\tau_2^k\rho)$, we obtain
  $
  |\{\tau_1^{k}\}_m|\leq (2+|\{\tau_1^0,\var_1,\dots, \var_m\}_m|)^{\ell_m'}\leq 6^{\ell_m'} $
   and $|\{\sigma^{ k }\}_m|\leq (8^{\ell_m'})^{\ell_m}$. Thus  we obtain
  \eq{siginfP}
 |\{ \sigma^k\}_{m}| \leq   8^{\ell_m\ell_m'}, \quad
|\{\sigma^{\infty}\}_{m}|\leq     8^{\ell_m\ell_m'}.  \eeq
  Here we have used
\re{rest1}-\re{rest2}.

  For simplicity,  let $\del_i$ denote $\del_i(\mu)$.  Inductively, let us  show that   for $k=0,1, \ldots$,
  \ga
  \label{hmk1-6}
 |\{\hat M^{(k)},\hat N^{(k)}\}_{P}|\leq |\del_{d_{k-1}-1}|^{L_m}, \quad m=  2|P|+1<2d_{k-1} -1,
\\
|\{\hat\sigma^\infty_{k}\}_{PQ}|\leq  |\del_{  2d_{k-1}-1}|^{L_m}, \quad m=|P|+|Q| \geq 2d_{k-1}  -1.
\label{hsigke}\\
 |M^{(k)}_{j,P}|+|N^{(k)}_{j,P}|\leq |\del_{d_k-1}|^{L_m}, \quad m=2|P|+1,
\label{mkjpn} \\ \label{fjpqg+}
 |f_{j,PQ}^{(k)}|+|g^{(k)}_{j,QP}|
  \leq |\del_{  d_k-1}|^{L_m}, \quad m=|P|+|Q|\geq d_k.
 \end{gather}
  Note that the last inequalities are equivalent to $|\{\sigma_k^{\infty}\}_m|\leq |\del_{d_k-1}|^{L_m}$.
 Here and in what follows
$L_m$ does not depend on the choices of $\mu_j, d_k,h^{(k)}$ which  satisfy the initial conditions, i.e. $1/e\leq\mu_j\leq e^e$ and \re{rest1}-\re{rest2} but are arbitrary otherwise.   However, it suffices to find constants $L_{m,k}$ replacing $L_m$ and  depending on $k$ such that \re{hmk1-6}-\re{fjpqg+} hold. Indeed, by  \re{phiki} and \re{psik1} we have $\Psi_{k+1}=I+O(d_k)$ and $\Phi_k=I+O(2d_{k-1})$. Since $\hat\sigma^\infty_{k+1}=\Psi_{k+1}^{-1} \sigma^\infty_k \Psi_{k+1}$ and
$\sigma^\infty_{k}=\Phi_{k}^{-1}\hat\sigma^\infty_k\Phi_{k}$, then $$
\hat\sigma^\infty_{k+1} =\hat\sigma^\infty_k+O(2d_{k-1}), \quad
\sigma^\infty_{k+1}=\sigma^\infty_k+O(d_{k})$$
as $d_k\geq 2d_{k-1}$. Since $d_k$ increases  to $\infty$ with $k$, then \re{hmk1-6}-\re{fjpqg+} with $L_{m,k}$ in place of $L_m$ imply that they also hold for
\eq{minmax}\nonumber
L_m=\min_k\max\{L_{m,1},\dots, L_{m,k}\colon (\hat\sigma^\infty_{\ell} -\hat\sigma_k^\infty, \sigma^\infty_\ell-\hat\sigma_k^\infty)=O(m),\  \ell>k\}.
\eeq
Therefore, in the following  the dependence of $L_m$ on $k$ will not be indicated.
The   estimates \re{hmk1-6}-\re{hsigke}  hold trivially for  $\hat\sigma^\infty_0=\sigma^\infty$,  $k=0$ and $d_{-1}=2$ by
\re{sinf} and \re{siginfP}. Assuming \re{hmk1-6}-\re{hsigke}, we want to verify \re{mkjpn}-\re{fjpqg+}. We also want to verify \re{hmk1-6}-\re{hsigke} when $k$ is replaced by $k+1$.

%
The  $\Phi_k=I+(U^{(k)}, V^{(k)})$  is a polynomial mapping. Its  degree is  at most $d_k-1$ and   its coefficients are polynomials in
  $\{\hat\sigma_k\}_{d_k-1}$ and $\del_{d_k-1}$; see \re{ujpq6}-\re{vjpq6}.
  Hence
 \ga\label{fjpqg}
  |U^{(k)}_{j,PQ}|+|V^{(k)}_{j,QP}|\leq|\del_{  d_k-1}|^{L_m}, \quad m=|P|+|Q|.
 \end{gather}

 Applying \re{FGPF} to $\sigma_k^\infty=\Phi_k^{-1}\hat\sigma_k^\infty\Phi_k$,  we obtain \re{mkjpn}-\re{fjpqg+} from \re{hmk1-6}-\re{hsigke}.
 Here we use that fact that since $d_k  \geq 2d_{k-1}$, the small divisors in $\del_{2d_{k-1}-1}$ appear in $\del_{d_k-1}$ too.
To obtain \re{hmk1-6}-\re{hsigke} when $k$ is replaced by $k+1$,  we note that $\Psi_{k+1}$ is a polynomial map that has
 degree  at most $2d_k-1$ and
 the coefficients of degree $m$   bounded by $\del_{2d_k-1}^{L_m}$;
see \re{ujpqc}-\re{ujpqc6}.  This shows that
\eq{hUVk}
|\hat U^{(k+1)}_{j,PQ}|+|\hat V^{(k+1)}_{j,QP}|\leq  |\del_{2d_k-1}|^{L_m}, \quad |P|+|Q|=m.
\end{equation}
We then obtain \re{hmk1-6}-\re{hsigke} when $k$ is replaced by $k+1$ for $\hat\sigma_{k+1}^\infty$ by applying \re{FGPF} to  $\hat\sigma_{k+1}^\infty=\Psi_{k+1}^{-1}\sigma_k^\infty\Psi_{k+1}$ and by using \re{mkjpn}-\re{fjpqg+} for $\sigma_k^\infty$  and \re{hUVk} for $\Psi_{k+1}$.

%
%

Let us summarize the above computation for $\hat\sigma^\infty$ defined by \re{hsiginf}.
We know that $\hat\sigma^\infty$ is the unique power series such that $\hat\sigma^\infty-
\hat\sigma^\infty_k=O(d_k)$ for all $k$, and $\hat\sigma^\infty$ is a formal normal form of $\sigma^\infty$.
Let us write
\gan
\hat\sigma^{\infty}\colon\left\{
\begin{split} \xi'=\hat M^{\infty}(\xi\eta)\xi, \\
\eta'=\hat N^{\infty}(\xi\eta)\eta.
\end{split}\right.
 \end{gather*}
Let $|P|\leq d_k$. By \re{mphmp}, we get $\hat M^{(k+1)}_P=M^{(k+1)}_P$; by \re{mjpp-2} in which $k$ is replaced by $k+1$,
we get $\hat M^{(k+2)}_P=M^{(k+1)}_P$ as $|P|\leq d_k<d_{k+1}  -1$.  Therefore,
\eq{mkmdk}\hat M^{\infty}_P=\hat M^{(k+1)}_P, \quad |P|\leq d_k.
\eeq
For $|P|<d_k  -1$,  \re{mjpp-2} says that $\hat M^{(k+1)}_{j,P}=M_P^{(k)}$; by \re{mkjpn} that holds for any $P$,  we obtain
\ga\label{hmk1}
|\hat M^{\infty}_P|=|\hat M^{(k+1)}_{j,P}|\leq |\del_{d_k-1}|^{L_{m}},\quad m=2|P|+1, \quad |P|<d_k-1,  \\
\label{hmk1+}
|\hat M_{j,P}^{\infty}| \leq |\del_{d_k-1}|^{L_{m}}(1  +|\del_{d_k}|),\quad m=2|P|+1=2d_k-1.\end{gather}
 We have verified \re{hmk1-6}-\re{fjpqg+}. The sequence $L_m$ depend only on
\eq{Lmandd}
m=d_k+1, \quad d_k,  d_{k-1}, \dots, d_0, \quad d_j\geq 2d_{j-1}, \quad d_j>3.
\eeq

To obtain rapid increase of coefficients of $\hat M^{(k+1)}_{j,P}$, we want to use both small divisors
hidden in $\hat U^{(k)}_{j,PQ}$ and $\hat V^{(k)}_{j,QP}$ in \re{mjpp2}.   Therefore, if $M^{(k)}_{j,P}$  is already sufficiently
large for $|P|=d_k$ that will be specified later, we take $\var_k$ to be the identity, i.e. $\tau_1^k=\tau_1^{k-1}$. Otherwise, we need to achieve it by choosing
$$
\tau_1^k=\var_k\tau_1^{k-1}\var_k^{-1}.
$$
Therefore, we  examine the effect of a coordinate change by $\var_k$ on these coefficients.

Recall that we are in the elliptic case. We have
$
\rho(\xi,\eta)=(\ov\eta,\ov\xi)
$
and $\tau_2^k=\rho\tau_1^k\rho.$ Recall that
$$
\var_k\colon\xi_j'=(\xi  -h^{(k)}(\xi),\eta),\quad  \ord h^{(k)}=d_k>3.
$$
By a simple computation, we obtain   
\gan
\tau_1^{k}(\xi,\eta)=\tau_1^{k-1}(\xi,\eta)+(-h^{(k)}(\la\eta),\la^{-1}h^{(k)}(\xi))+O(|(\xi,\eta)|^{d_k+1}),\\
\tau_2^{k}(\xi,\eta)=\tau_2^{k-1}(\xi,\eta)+
(\la^{-1}\ov{h^{(k)}}(\eta),-\ov{h^{(k)}}(\la\xi))+O(|(\xi,\eta)|^{d_k+1}).
\end{gather*}
Then we have
\al
\label{sigkk}\sigma^{k}&=\sigma^{k-1}+(r^{(k)},s^{(k)})+O(d_k+1);\\
r^{(k)}(\xi,\eta)&=
 -\la\ov{h^{(k)}} (\la\xi)-h^{(k)}(\la^{2}\xi),
 \nonumber
  \\  
s^{(k)}(\xi,\eta)&=
 \la^{-2}\ov{h^{(k)}}(\eta)+\la^{-1}h^{(k)}(\la^{-1}\eta). 
\nonumber
\end{align}
 Since $\sigma^k$ converges to $\sigma^\infty$, from \re{sigkk} it follows that
\eq{siginfk}
\sigma^\infty=\sigma^{k-1}+(r^{(k)}, s^{(k)})+O(d_k+1).
\eeq
For $|P|+|Q|=d_k$, we have
\eq{}\begin{array}{l}
r^{(k)}_{j,PQ}=\left\{ -\la_j\ov{h_j^{(k)}}(\la\xi)-h_j^{(k)}(\la^{2}\xi)\right\}_{PQ},
\vspace{1ex}\\
s^{(k)}_{j,QP}=\left\{ \la_j^{-2}\ov{h_j^{(k)}} (\eta)+\la_j^{-1}h_j^{(k)}(\la^{-1}\eta)\right\}_{QP}.
\end{array}\nonumber
\eeq
We obtain
\begin{alignat}{4} \label{gjgj}
r^{(k)}_{j, P0} &=-\la^{P+e_j}\ov{h^{(k)}_{j,P}}-\la^{2P} h^{(k)}_{j,P},   &&
\\
s^{(k)}_{j,0P} &=\la_j^{-2}\ov{h^{(k)}_{j,P}}+\la^{-P-e_j} h^{(k)}_{j,P},\quad && |P|=d_k,\\
\label{gjgj3}
r^{(k)}_{j, PQ}&=s^{(k)}_{j,QP}=0,\quad                                                    & &|P|+|Q|=d_k,  \ Q\neq0.
\end{alignat}

The above computation is actually sufficient to construct a divergent normal form $\tilde\sigma
\in\cL C(  S)$. To show that all normal forms of $\sigma$ in $\cL C(  S)$ are divergent,
We need to related it to the normal form $\hat\sigma$ in \rt{ideal5}, which is unique. This requires us to
keep track of the small divisors in the normalization
procedure in the proof of \rl{fcfp}.

Recall that  $  F^{(k+1)}=\log \hat M^{(k+1)}$ is defined by
\eq{fjlogm}
F^{(k+1)}_j(\zeta)=\log(\mu_j^{-1}\hat M^{(k+1)}_j(\zeta))=\zeta_j+a_j^{(k+1)}(\zeta), \quad 1\leq j\leq 3.
\eeq
We also have $F^{\infty} =\log \hat M^{\infty}$ with $
F^{\infty}_j(\zeta)= \zeta_j+a_j^{\infty}(\zeta)$.
Then  by \re{mkmdk},
\eq{hajpa}
 a_{j,P}^{\infty}= a^{(k+1)}_{j,P}, \quad |P|\leq d_k.
\eeq
By \re{fjlogm}   and $\log(1+x)=x+\f{x^2}{2}+O(3)$, we have
\eq{fjpmuj}
a^{(k+1)}_{j,P}(\zeta)= \mu_j^{-1} \hat M^{(k+1)}_{j,P}+
\mu_j^{-1}\hat M_{j,P-e_j}^{(k+1)} +{\cL A}_{j,P}(\{ \hat M^{(k+1)}_j\}_{|P|-  2}), \quad|P|>1.
\eeq
By \re{hmk1}-\re{hmk1+}, we estimate the last two terms as follows
\ga\label{Ajph}
| {\cL A}_{j,P}(\{ \hat M^{(k+1)}_j\}_{|P|-  2})|\leq |\del_{d_k-1}|^{ L_m^*L_m}, \quad |P|=d_k, \quad m=2|P|+1,\\
|\hat M_{j,Q}^{(k+1)}|\leq  |\del_{d_k-1}|^{L_{m}}(1+ |\del_{d_k}|), \quad \quad m=2|Q|+1=2d_k-1.
\label{Ajph+}
 \end{gather}
 Here $L_m^*\geq 1$ is independent of $k$ and depends only on the degrees of the polynomials $\mathcal A_{j,P}$.   Recall from the formula \re{hfjqf}
  that  $F^{(k+1)}$, $F^\infty$ have the normal forms
$\hat F^{(k+1)}=I+\hat a^{(k+1)}$ and $\hat F^\infty =I+\hat a^\infty$, respectively.
The coefficients of $\hat a_{j,Q}^{(k+1)}$ and $\hat a_{j,Q}^{\infty}$
   are zero, except the ones given by
\aln
 \hat a^{(k+1)}_{j,Q}&=a^{(k+1)}_{j,Q}-  \{Da^{(k+1)}_{j} \cdot a^{(k+1)}\}_{Q}+ {\cL B}_{j,Q}(\{a^{(k+1)}\}_{|Q|-  2}),
 \\
  \hat a^{(\infty)}_{j,Q}&=a^{(\infty)}_{j,Q}-\{Da^{(\infty)}_j \cdot a^{(\infty)}\}_{Q}+ {\cL B}_{j,Q}(\{a^{(\infty)}\}_{|Q|- 2}),
 \end{align*}
  for $Q=(q_1,\ldots, q_p), q_j=0$, and $ |Q|>1$.
 Derived from the same normalization, the $ {\cL B}_{j,Q}$ in both formulae
 stands for the same polynomial   and independent of $k$.
 Hence $\hat a_{j,P}^{(\infty)}=\hat a_P^{(k+1)}$ for $|P|\leq d_k$, by \re{hajpa}.
Combining \re{mjpp2} and \re{mkmdk} yields
\al\label{uvprod}
\hat a^{\infty}_{3,Q} &= \hat a^{(k+1)}_{3,Q}= 2(\hat U^{(k+1)}_3 \hat V^{(k+1)}_3)_{QQ}  +((\hat U^{(k+1)}_3)^2)_{(Q+e_3)(Q-e_3)}
+\mu_3^{-1}M_{3,Q}^{ (k)}
\\
&\quad + \mu_3^{-1}\{Df^{( k)}_{3}(\xi,\eta)(\hat U^{(k+1)},\hat V^{(k+1)})\}_{(Q+e_3)P_k} 
+\cL {A}_{  Q}(\{
  \hat M^{(k+1)}\}_{|Q|-  2})\nonumber\\
&\quad  +\mu_3^{-1}\hat M^{(k+1)}_{Q-e_3}-\{Da^{(k+1)}_{j} \cdot a^{(k+1)}\}_{Q}.\nonumber
\end{align}
 The above
formula    holds for any $Q$ with $|Q|=d_k$.
To examine the effect
of small divisors,  we assume  that
 $$ P_k=(p_k,q_k,0), \quad |P_k|=d_k$$
are given by \rl{smallvec+}, so are $\mu_1,\mu_2$,
and $\mu_3$.  {\bf However, $P_k$ and $\mu$ depend on a sequence $L_m$ (to be renamed as $L_m'$) in \rl{smallvec+}. We will determine the sequence $L_m'$ and hence $P_k$ and $\mu_j$ later.}

Note that  the second term in \re{uvprod} is $0$ as the third component of $P_k-e_3$ is negative.
We apply the above computation to  the  $P_k.$
  Taking a subsequence of $P_k$ if necessary, we may assume that $d_k\geq 2d_{k-1}$ and $d_{k-1}>3$ for all $k\geq1$.
 The $4$ exceptional small divisors of height $2|P_k|+1$ in \re{delspk} are
$$
\mu^{P_k}-\mu_3, \quad \mu^{-P_k}-\mu_3^{-1}, \quad \mu^{2P_k-e_3}-\mu_3, \quad \mu^{-2P_k+e_3}-\mu_3^{-1}.
$$
 The last two  cannot show up in $\hat a_{3,P_k}^\infty$, since   their degree,  $2d_k+1$,  is larger than
 the degrees of Taylor coefficients in
$\hat a_{3,P_k}$ . 
We have $3$ products of   the   two exceptional small divisors of height
$2|P_k|+1$ and   degree $|P_k|$, which
are
$$
(\mu^{P_k}-\mu_3)(\mu^{-P_k}-\mu_3^{-1}), \quad
(\mu^{P_k}-\mu_3)(\mu^{P_k}-\mu_3), \quad
(\mu^{-P_k}-\mu_3^{-1})(\mu^{-P_k}-\mu_3^{-1}).
$$
The first product, but none of  the other two, appears in  $(\hat U^{(k+1)}_3 \hat V^{(k+1)}_3)_{P_kP_k}$.
The third  term and $f_3^{(k)}$ in $\hat a_{3,P_k}^\infty$
do not contain  exceptional small divisors
of degree $|P_k|=d_k>2d_{k-1}-1$.
  Since $f_3^{(k)}=O(d_k)$ by \re{fkgkd}, the  exceptional small divisors of height $2|P_k|+1$
can show up at most once in the fourth term of  $\hat a_{3,P_k}^\infty$.
Therefore, we arrive at
\aln\label{vacuouscase}
& \hat a^{\infty}_{3,P_k}=2\hat U^{(k+1)}_{3,P_k0}\hat V^{(k+1)}_{3, 0P_k}+\hat{\cL A}_{  P_k}^1(\del_{d_k-1},\f{1}{\mu^{P_k}-\mu_3}; \{f^{(k)},g^{(k)}\}_{d_k})\\
\nonumber
&   \qquad\  \ +\hat {\cL A}_{   P_k}^2(\del_{d_k-1};\{f^{(k)},g^{(k)}\}_{d_k})+\mu_3^{-1}M_{3,P_k}^{ (k)}+ \cL {A}_{   P_k}( \{
  \hat M^{(k+1)}\}_{|P_k|-  2})\\
  \nonumber
  &\qquad\ \  +\  \mu_3^{-1}M_{3,P_k-1}^{ (k+1)}-\{Da_3^{(k+1)}\cdot a^{(k+1)}\}_{P_k},\\
  \nonumber
&\hat{\cL A}_{k}^1(\del_{d_k-1},
  \f{1}{\mu^{P_k}-\mu_3}; \{f^{(k)},g^{(k)}\}_{d_k})
=(\hat U^{(k+1)}_{3,P_k0},\hat V^{(k+1)}_{3,0P_k})\cdot \hat {\cL A}_{  P_k}^3(\del_{d_k-1};\{f^{(k)},g^{(k)}\}_{d_k}).
\end{align*}
  Note that $\hat{\cL A}^i_{P_k}$ and $\cL A_{P_k}$ are polynomials independent of $k$.  Set
  $$m=2d_k+1.$$
  In the following we can increase the value of $L_m^*$     in \re{Ajph} or when it reappears  for a finite number of times such that the estimates involving $L_m^*$ are valid for all $k$. By \re{mkjpn} and \re{Ajph}, we obtain $|M_{3,P_k}^{ (k)}|+| \cL {A}^{  3}_{   P_k}( \hat M^{(k+1)}\}_{|P_k|-2})|\leq \del_{d_k-1}^{L_m^*L_m}$.   By \re{fjpmuj}, the smallest $|Q|$ for which $a_{i,Q}$ contains an exceptional small divisor in $\del_{d_k}$ is $2|Q|+1=2d_k-1$.  Now, $\{D a_3^{(k+1)}\cdot a^{(k+1)}\}_{P_k}$ is a linear combination of products of two terms and at most one of the two terms contains an exceptional small divisor; if  the both terms contain an exceptional small divisor,   one term is  $a_{3,Q'}^{(k+1)}$ with $2|Q'|+1\geq 2d_k-1$, while another   is   $a^{(k+1)}_{i,Q''}$ with $2|Q''|+1\geq2d_k-1$. (Here $Q'-e_i+Q''=P_k$ and the $i$th component of $Q'$ is positive.)  Then  $d_k=|P_k|=|Q'|+|Q''|-1\geq 2d_k-2$, a contradiction.  Therefore, by \re{fjpmuj}-\re{Ajph+},
we have
$$
|\{a_3^{(k+1)}\cdot a^{(k+1)}\}_{P_k}|\leq |\del_{d_k-1}|^{L_m^*L_m}(1+|\del_{d_k}|).
$$
By \re{Ajph+}, we also have
 $|M_{j,P_k-1}^{(k+1)}|\leq |\del_{d_k-1}|^{L_m}(1+|\del_{d_k}|)$.
Omitting  the arguments in the polynomial functions, we obtain from \re{fjpqg}-\re{hUVk},
and \re{mkmdk} that
\aln
|\hat{\cL A}_{P_k}^1|+|\hat{\cL A}_{P_k}^2|+|M_{3,P_k-1}^{(k+1)}|&+|M_{3,P_k}^{(k)}|+|\cL A_{P_k}|+  |\{Da_3^{(k+1)}\cdot a^{(k+1)}\}_{P_k}|
\\
&\leq\f{|  \del_{d_k-1}(\mu)|^{L_m^* L_{  m}}}{|\mu^{P_k}-\mu_3|},
\end{align*}
for  $m=2|P_k|+1$ and
a possibly larger $L_{m}$. 
  We remark that although each term in the inequality depends on the choices of the sequences
$\mu_i,d_j,h^{(\ell)}$, the $L_m$   does not
depend on the choices, provided that $\mu_j,d_k,h^{(i)}$ satisfy our initial conditions.
Therefore, we have
\eq{}
\nonumber
|\hat a^{\infty}_{3,P_k}|\geq2 |\hat U^{(k+1)}_{3,P_k0}\hat V^{(k+1)}_{3, 0P_k}|-| \del_{d_k-1}(\mu)|^{L_{2|P_k|+1}^* L_{2|P_k|+1}}|\mu^{P_k}-\mu_3|^{-1}.
\eeq
Recall that $\sigma^\infty_k=
\Phi_k^{-1}\Psi_{k-1}^{-1}\cdots\Phi_0^{-1}\sigma^\infty\Phi_0\Psi_1\cdots\Phi_k$.
Set
\eq{}
\nonumber
\tilde\sigma^\infty_k:=
\Phi_k^{-1}\Psi_{k-1}^{-1}\cdots\Phi_0^{-1}\sigma^{k-1}\Phi_0\Psi_1\cdots\Phi_k.
\eeq
By \re{siginfk}, we get
\eq{sigtsig}
\sigma^\infty_k=\tilde\sigma_k^\infty+(r^{(k)}, s^{(k)})+O(d_k+1).
\eeq
  Recall that  $\Phi_k$  depends only on coefficients of
$\hat\sigma^\infty_{k-1}=\Psi_{k-1}^{-1}\sigma^\infty_{k-2}\Psi_{k-1}$ of degree less than $d_k$, while $\Psi_{k-1}$
 depends only on coefficients of $\sigma_{k-1}^\infty=\Phi_{k-1}^{-1}\hat\sigma_{k-1}\Phi_{k-1}$ of
degree at most $2d_{k-1}-1$ which is less than $d_k$ too. Therefore,
 $\Phi_k, \Psi_{k-1}, \ldots, \Phi_0$   depend only on
coefficients of $\sigma^\infty$ of degree less than $d_k$. On the other hand, $\sigma^\infty=\sigma^{k-1}+O(d_k)$.
Therefore,   $\tilde\sigma_k^\infty$
depends only on  $\sigma^{k-1}$, and hence it depends only on $h^{(\ell)}$ for $\ell<k$. By \re{sigtsig},
we can express
\eq{fktfk}
f^{(k)}_{j,PQ}=\tilde f^{(k)}_{j,PQ}+r^{(k)}_{j,PQ}, \quad
g^{(k)}_{j,QP}=\tilde g^{(k)}_{j,QP}+s^{(k)}_{j,QP},\eeq
where $|P|+|Q|=d_k$ and  $\tilde f^{(k)}_{j,PQ}, \tilde g^{(k)}_{j,QP}$ depend only on
$h^{(\ell)}$ for $\ell<k$.
Collecting  \re{hUk+1},   \re{fktfk}, and \re{gjgj}-\re{gjgj3}, we obtain
\eq{}
\nonumber
|\hat a^{\infty}_{3,P_k}|\geq2 \f{|T_k
|}{|\mu^{P_k}-\mu_3||\mu^{-P_k}-\mu_3^{-1}|}-\f{| \del_{d_k-1}(\mu)|^{L_{2d_k+1}^* L_{2d_k+1}}}{|\mu^{P_k}-\mu_3|}
\eeq
with
\aln
&T_k =
(-\la^{P_k+e_3}\ov{h^{(k)}_{3,P_k}}-\la^{2P_k} h^{(k)}_{3,P_k} + \tilde f^{(k-1)}_{3,P_k0})
(\la_3^{-2}\ov{h^{(k)}_{3,P_k}}+\la^{-P_k-e_3} h^{(k)}_{3,P_k}+\tilde g^{(k-1)}_{3,0P_k})\\
&=-\la^{2P_k-2e_3}
(\la^{e_3-P_k}\ov{h^{(k)}_{3,P_k}}+ h^{(k)}_{3,P_k} -\la^{-2P_k}\tilde f^{(k-1)}_{3,  P_k0})
(\la^{e_3-P_k} h^{(k)}_{3,P_k}+\ov{h^{(k)}_{3,P_k}}+\la_3^2\tilde g^{(k-1)}_{3,0P_k}).\nonumber
\end{align*}
Set  $\tilde T_k(h^{(k)}_{3,P_k}):=-\la^{2e_3-2P_k}T_k$.
We are ready to choose $h^{(k)}_{3,P_k}$ to get a divergent normal form. We have
$|\la^{P_k-e_3}+1|\geq1$. 
 Then one of $|\tilde T_k(0)|, |\tilde T_k(1)|, |\tilde T_k(-1)|$ is at least $1/4$; otherwise,
 we would have
\aln
2|\la^{P_k-  e_3}+1|^2=|\tilde T_k(1)+\tilde T_k(-1)-2\tilde T_k(0)|<1,
\end{align*}
which is a contradiction.
This shows that by taking $h^{(k)}_{3,P_k}$ to be one of $0, 1,-1$, we have achieved
$$
|T_k|\geq\f{1}{4}\mu^{P_k -e_3}.
$$
Therefore,
\eq{divhf}
|\hat a^{\infty}_{3,P_k}|\geq\f{\mu^{  P_k-e_3} }{2|\mu^{P_k}-\mu_3||\mu^{-P_k}-\mu_3^{-1}|} -\f{| \del_{d_k-1}(\mu)|^{L_{2d_k+1}^* L_{2d_k+1}}}{|\mu^{P_k}-\mu_3|}.
\eeq
Recall that $\mu_3=e^e$. If $|\mu^{P_k}-\mu_3|<1$ then $1/2<\mu^{P_k-e_3}<2$. The above inequality implies
\eq{divhf+}
|\hat a^{\infty}_{3,P_k}|\geq \f{ \mu^{   2P_k} }{4|\mu^{P_k}-\mu_3|^2},
\eeq
provided
$$
|\mu^{P_k}-\mu_3|\leq  \f{1}{4} | \del_{d_k-1}(\mu)|^{-L_{2d_k+1}^*L_{2d_k+1}}, \quad |P_k|=d_k.
$$
For the last inequality  to hold, it suffices have
\eq{epqccc}
|\mu^{P_k}-\mu_3|\leq  | \del_{d_k-1}(\mu)|^{-L_{2d_k+1}^*L_{2d_k+1}-1}, \quad | \del_{d_k-1}(\mu)|^{-1}<1/4.
\eeq

 When \re{divhf+}, we still have \re{divhf}. Thus we have derived universal constants $L_{2d_k+1}, L_{2d_k+1}^*$ for any $P_k=(p_k,q_k,0)$ as long as $|P_k|=d_k>3$.
The sequence $L_m^*, L_m$ do not depend on the choice of $\la$ and they are independent of $k$; however it depends on $d_0,d_1,\dots, d_k$ as described in \re{Lmandd}.  Let us denote the constants $L_{2d_k+1}$, $L_{2d_k+1}^*$ in \re{epqccc}  respectively by $(L_{2d_k+1}(d_0,\dots, d_k)$, $ L_{2d_k+1}^*)(d_0,\dots, d_k)$. We now remove the dependence of $L_m$ on the partition $d_0,\dots, d_k$ and define $L_m$ for $m>7$ as follows.  For each $m>7$, define
\aln
\cL D_m&=\{(d_0,\dots, d_k)\colon 3<d_0\leq d_1/2\leq\dots\leq d_k/{2^k},  2d_k+1\leq m, k=0,1,\dots\}, \\
L_N'&=N+2\max\{(L_{ 2d_k+1} L_{2d_k+1}^*)(d_0,\dots, d_k)\colon (d_0,\dots, d_k)\in\cL D_{2N+1}\}.
\end{align*}
 Let us apply  \rl{smallvec+} to the sequence  $L_N'$.  Therefore, there exist $\mu$ and a sequence of $P_k=(p_k,q_k,0)$ satisfying  $ |\mu^{P_k}-\mu_3|\leq  (C\Delta^*(P_k))^{  L_{|P_k|}'}$.   Taking a subsequence if necessary, we may assume that $d_k=|P_k|\geq 2d_{k-1}$  and $d_k>3$. Thus
\aln
|\mu^{P_k}-\mu_3|&\leq  (C\Delta^*(P_k))^{  L_{|P_k|}'}\leq  (\Delta^*(P_k)^{1/2})^{L_{|P_k|}'}\\
&\leq (\delta_{d_k-1}(\mu))^{-L_{|P_k|}'/2}
\leq | \del_{d_k-1}(\mu)|^{ -L'_{  2d_k+1}}
\\ &\leq | \del_{d_k-1}(\mu)|^{ -L_{2D_K+1}^*(d_0,\dots, d_k)L_{  2d_k+1}(d_0,\dots, d_k)-1}, \end{align*}
which gives us \re{epqccc}.   Here the second inequality follows from       $C(\Delta^*(P_k))^{1/2}<1$ when $k$ is sufficiently large.
   The third inequality is obtained as follows.   The definition of $\Delta^*(P_k)$ and $|P_k|=d_k$
   imply that any small divisor
  in $\del_{d_k-1}(\mu)$ is contained in $\Delta^*(P_k)$. Also, $\Delta^*(P_k)<\mu_i^{-1}$
  for $i=1,2,3$ and $k$ sufficiently large.  Hence, $\Delta^*(P_k)
  \leq\del^{-1}_{d_k-1}(\mu)$, which gives us the third inequality.
  We have that $L_k\geq k$. From \re{divhf+} and \re{epqccc}
it follows that
$$
|\hat a_{3,P_k}^\infty|>\del_{d_{k-1}}^{d_k+1}(\mu)=\del_{d_{k-1}}^{|P_k|+1}(\mu),
$$
for $k$ sufficiently large.  As $\del_{d_k}(\mu)\to+\infty$, this shows that
the divergence of $ \hat F_3$   and  the
divergence of  the normal form $\hat\sigma$.

As mentioned earlier, \rt{ideal5} (iii) implies
that any normal form of  $\sigma$ that is  in the centralizer of $\hat S$ must diverge.
  \end{proof}

\setcounter{thm}{0}\setcounter{equation}{0}
\section{A unique formal normal form of  a real submanifold}\label{nfin}

 Recall that we consider submanifolds of which the complexifications  admit the maximum number of deck transformations.
   The deck transformations of $\pi_1$  are generated by
   $\{\tau_{i1},\ldots, \tau_{1p}\}$. 
 We also set  $\tau_{2j}=\rho\tau_{1j}\rho$.  Each of $\tau_{i1},\ldots, \tau_{ip}$ fixes a hypersurface and
  $\tau_i=\tau_{11}\cdots\tau_{1p}$ is the unique deck transformation of $\pi_i$ whose set of fixed points has
 the smallest dimension. We first normalize  the composition $\sigma=\tau_1\tau_2$.
 This normalization is reduced to   two normal form problems.
  In \rp{ideal0} we obtain  a transformation  $\Psi$  to transform $\tau_1,\tau_2,$
 and $\sigma$ into
 \begin{alignat*}{4}
 \tau_i^*\colon\xi_j'&=\Lambda_{ij}(\xi\eta)\eta_j,\quad &&\eta_j'=\Lambda_{ij}^{-1}(\xi\eta)\xi_j,\\
 \sigma^*\colon\xi_j'&=M_j(\xi\eta)\xi_j, \quad &&\eta_j'=M_j^{-1}(\xi\eta)\eta_j, \quad
 1\leq j\leq p.
 \end{alignat*}
 Here $\Lambda_{2j}=\Lambda_{1j}^{-1}$ and $M_j=\Lambda_{1j}^2$ are power series in the product
  $\zeta=(\xi_1\eta_1,\ldots, \xi_p\eta_p)$.
  We also  normalize the map $M\colon
 \zeta\to M(\zeta)$ by a transformation  $\var$ which preserves all coordinate
 hyperplanes. 
 This is the  second normal form problem, which is solved formally   in \rt{ideal5}
under the condition on the normal form of $\sigma$, namely,
 that  $\log \hat M$ is tangent to the identity. 
 This   gives us a map $\Psi_1$
 which transforms  $\tau_1,\tau_2$, and $\sigma$
 into $\hat\tau_1,\hat\tau_2,\hat\sigma$ of the above form
  where $\Lambda_{ij}$ and $M_j$
become  $\hat\Lambda_{ij},\hat M_j$.

In this section, we derive a {\bf unique formal normal form for
  $\{\tau_{11},\ldots, \tau_{1p},\rho\}$ under the above 
  condition on $\log\hat M$}.
In this     case,
we know from \rt{ideal5} that $ {\cL C}(\hat\sigma)$ consists of   only  $2^p$ dilatations
\eq{upsi}
R_\e\colon (\xi_j,\eta_j)\to(\e_j\xi_j,\e_j\eta_j), \quad \e_j=\pm1, \quad 1\leq j\leq p.
\eeq
  We will consider two cases. In the first case, we impose no restriction on the linear parts of $\{\tau_{ij}\}$ but
the coordinate changes are restricted to mappings that are tangent to the identity. The second is for the family $\{\tau_{ij}\}$
that arises from a higher order perturbation of a product quadric, while no restriction is imposed on the changes of coordinates.
We will   show that in both cases, if the normal form  of $\sigma$ can be achieved
by a convergent transformation,  the normal form of $\{\tau_{11}, \ldots,\tau_{1p},\rho\}$
can be achieved by a convergent transformation too.

We now restrict our real submanifolds to some classes. First, we assume that $\sigma$ and $\tau_1,\tau_2$
are already in the normal form $\hat\sigma$ and $\hat\tau_{1}, \hat\tau_2$ such that
 \ga\label{htaix}
 \hat\tau_i\colon\xi'=\hat\Lambda_{i}(\xi\eta)\eta, \quad \eta'=\hat\Lambda_i(\xi\eta)^{-1}\xi, \quad\hat\Lambda_{2}=\hat\Lambda_1^{-1},\\
\label{hsixi} \hat\sigma\colon \xi'=\hat M(\xi\eta)\xi,\quad \eta'=\hat M(\xi\eta)^{-1}\eta, \quad \hat M=\hat\Lambda_1^2.
\end{gather}

Let us start with the general situation   without imposing the restriction on  the linear part of $\log M$.
Assume that
$\hat\sigma$ and $\hat\tau_i$ are in the above forms. We want to describe $\{\tau_{1j},\rho\}$.  Let us start with the linear normal forms described in  \rl{unsol} or in \rp{2tnorm}.
Recall that $\mathbf Z_j=\diag(1,\ldots, -1,\ldots, 1)$ with $-1$ at the $(p+j)$-th place, and $\mathbf
Z:=\mathbf Z_1\cdots \mathbf Z_p$.   Let $Z_j$ (resp. $Z$) be the linear transformation with the matrix $\mathbf Z_j$
(resp. $\mathbf Z$).   We also use notation
\ga
\mathbf B_*=\begin{pmatrix}
\mathbf I & \mathbf 0 \\
\mathbf 0 &\mathbf  B
\end{pmatrix},\quad
\mathbf E_{\mathbf{\hat{ \Lambda}}_i}=\begin{pmatrix}
\mathbf I &\mathbf {\hat\Lambda}_i  \\
-\mathbf {\hat\Lambda}_i^{-1} &\mathbf  I
\end{pmatrix}.
\label{t1jd+l}
\end{gather}
Here $\mathbf B$, as well as $\mathbf {\hat\Lambda}_i$   given by \re{htaix},   is a non-singular complex $(p\times p)$ matrix. Assume that $\mathbf B_1$ and $\mathbf B_2$ are invertible $p\times p$ matrices. Define
\eq{bise}
(B_i)_*\colon \begin{pmatrix}\xi\\ \eta\end{pmatrix}\to
 (\mathbf B_i)_* \begin{pmatrix}\xi\\ \eta\end{pmatrix}, \quad
  E_{\mathbf {\hat\Lambda}_i}\colon \begin{pmatrix}\xi\\ \eta\end{pmatrix}\to
\begin{pmatrix}
\mathbf I &\mathbf {\hat\Lambda}_i(\xi\eta)  \\
-\mathbf {\hat\Lambda}_i^{-1}(\xi\eta) &\mathbf  I
\end{pmatrix} \begin{pmatrix}\xi\\ \eta\end{pmatrix}.
\eeq
  Let us assume that
in suitable linear coordinates,   the linear parts $L\tau_{ij}=T_{ij}$ of two families
of involutions $\{\tau_{i1}, \ldots, \tau_{ip}\}$ for $i=1,2$ are given by
\ga\label{ltij}
{ T}_{ij}:={ E}_{\mathbf{\Lambda}_i, \mathbf  B_i}\circ{ Z}_j\circ{ E}_{\mathbf{\Lambda}_i, \mathbf  B_i}^{-1}, \\
{ E}_{\mathbf{\Lambda}_i, \mathbf  B_i}:={ E}_{\mathbf{\Lambda}_i}\circ({ B}_i)_*, \quad \mathbf{\Lambda}_i:=\hat{\mathbf\Lambda}_i(0).
\end{gather}
Note that $(B_i)_*$ commutes with $Z$.   Also,
$E_{\mathbf {\hat\Lambda}_i}\circ\hat\tau_i=Z\circ E_{\mathbf {\hat\Lambda}_i}$.
 We have the decomposition
\ga
\label{t1jdl}
\hat\tau_i=\hat\tau_{i1}\cdots\hat\tau_{ip},\\
\quad
{ E}_{\hat{\mathbf{\Lambda}}_i, \mathbf  B_i}:={ E}_{\hat{\mathbf{\Lambda}}_i}\circ({ B}_i)_*, \quad \hat \tau_{ij}:=  { E}_{\hat{\mathbf{\Lambda}}_i, \mathbf  B_i}\circ Z_j\circ
{ E}_{\hat{\mathbf{\Lambda}}_i, \mathbf  B_i}^{-1}.\label{tPtPhat}
\end{gather}
 As before, we assume that   $S$ is non resonant.
 For real submanifolds, we still impose the reality condition $\tau_{2j}=\rho\tau_{1j}\rho$ where $\rho$ is given by \re{57rhoz}.
The following lemma  describes  a way to classify all involutions $\{\tau_{11}, \ldots, \tau_{1p}, \rho\}$ provided that  $\sigma$ is in a normal form.
\begin{lemma}\label{clas}  Let $\{\tau_{1j}\}$ and
$\{\tau_{2j}\}$
be two families  of formal holomorphic commuting involutions.
Let $\tau_i=\tau_{i1}\cdots\tau_{ip}$ and $\sigma=\tau_1\tau_2$. Suppose that
\gan
\tau_{i}=\hat\tau_i\colon\xi_j'
=\hat\Lambda_{ij}(\xi\eta)\eta_j, \quad \eta_j'=\hat\Lambda_{ij}(\xi\eta)^{-1}\xi_j;\\
\sigma=\hat\sigma\colon\xi_j'=\hat
M_j(\xi\eta)\xi_j, \quad \eta_j'=\hat M_j(\xi\eta)^{-1}\eta_j
\end{gather*}
with $\hat M_j=\hat\Lambda_{1j}^2$ and $\hat M_j(0)=\mu_j$. Suppose that $\mu_1,\dots, \mu_p,\mu_1^{-1}, \dots, \mu_p^{-1}$ satisfy the non-resonant condition \rea{muqn1}. 
 Assume further that the linear parts $T_{ij}$ of $\tau_{ij}$ are given by \rea{ltij}.
 Then we have the following~$:$
\bppp
\item For $i=1,2$
there exists   $\Phi_{i}\in\cL C(\hat{\tau}_{i})$, tangent to
the identity,
 such that
$
\Phi_{i}^{-1}\tau_{ij}\Phi_{i}=\hat \tau_{ij}
$
for $1\leq j\leq p$.
\item
Let $\{\tilde\tau_{1j}\}$ and
$\{\tilde\tau_{2j}\}$
be two families  of formal holomorphic commuting involutions.
Suppose that $\tilde\tau_{i}=\hat\tau_i$ and $\tilde\sigma=\hat\sigma$ and
$
\tilde\Phi_{i}^{-1}\tilde\tau_{ij}\tilde\Phi_{i}=\widehat{\tilde\tau}_{ij}$ with $\tilde\Phi_{i}\in\cL C(\hat\tau_i)
$ being tangent to the identity and
\ga
\label{t1jdls}
\nonumber
\widehat{\tilde\tau}_{ij}= E_{{\mathbf{\hat\Lambda}}_i,\mathbf {\tilde B}_i}
 \circ Z_j\circ E_{{\mathbf{\hat\Lambda}}_i,\mathbf {\tilde B}_i}
^{-1}.
\end{gather}
  Here for $i=1,2$, the matrix $\mathbf {\tilde B}_i$ is non-singular.
Then
\ga
\Upsilon ^{-1}\tau_{ij}\Upsilon =\tilde\tau_{i\nu_i(j)},\quad i=1,2,\ j=1,\dots, p
\label{bs-1}
\nonumber
\end{gather}
 if and only if 
there exist $\Upsilon\in\cL C(\hat{\tau}_{1},\hat{\tau}_{2})$ and    $\Upsilon_{i}\in\cL C(\hat\tau_i)$ such
that
\ga\label{tpup}
\tilde\Phi_{i}=\Upsilon ^{-1}\circ\Phi_{i}\circ\Upsilon_{i},\quad i=1,2,\\
\label{el1-}
\nonumber
\Upsilon_{i}^{-1}\hat\tau_{ij}\Upsilon_{i}=\widehat {\tilde\tau}_{i\nu_i(j)},\quad 1\leq j\leq p.
\end{gather}
Here each $\nu_i$ is a permutation of $\{1,\ldots, p\}$.
\item
Assume further that $\tau_{2j}=\rho\tau_{1j}\rho$ with $\rho$ being  defined by  \rea{57rhoz}.
Define $\hat \tau_{1j}$ by \rea{t1jdl} and  let
$
\hat\tau_{2j}\colonequals\rho\hat\tau_{1j}\rho.
$
Then
we can choose $\Phi_{2}=\rho\Phi_{1}\rho$ for $(i)$. Suppose that $\tilde\Phi_2=\rho\tilde\Phi_1\rho$ where $\tilde\Phi_{1}$ is as in $(ii)$. Then $\{\tilde\tau_{1j},\rho\}$ is equivalent to
$\{\tau_{1j},\rho\}$  if and only if
there exist $\Upsilon_{i}$,   $\nu_i$ with $\nu_2=\nu_1$,  and $\Upsilon$ satisfying the
conditions in $(ii)$ and
 $\Upsilon_{2}=\rho\Upsilon_{1}\rho$. The latter
 implies that $\Upsilon \rho=\rho\Upsilon$.
\eppp
\end{lemma}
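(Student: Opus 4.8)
\textbf{Proof plan for \rla{clas}.}

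The plan is to follow, at the non-linear level, exactly the scheme that produced the linear classification in \rla{unsol} and \rpa{2tnorm}. First I would prove (i). Since $\tau_{i1},\dots,\tau_{ip}$ are commuting involutions whose composition is $\hat\tau_i$ and whose linear parts are $T_{ij}=E_{\mathbf\Lambda_i,\mathbf B_i}Z_jE_{\mathbf\Lambda_i,\mathbf B_i}^{-1}$, I would apply \cite[Lemma 4.7]{GS15} (the non-linear analogue of \cite[Lemma 2.4]{GS15}) to the family $\{E_{\mathbf{\hat\Lambda}_i,\mathbf B_i}^{-1}\tau_{ij}E_{\mathbf{\hat\Lambda}_i,\mathbf B_i}\}$, which simultaneously linearizes these commuting involutions each fixing a hypersurface into the standard $Z_j$, via a transformation that is tangent to the identity. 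Conjugating back gives $\Psi_i$ with $\Psi_i^{-1}\tau_{ij}\Psi_i=\hat\tau_{ij}$; computing the composition over $j$ shows $\Psi_i^{-1}\hat\tau_i\Psi_i=\hat\tau_i$, i.e. $\Phi_i:=\Psi_i$ (after absorbing $E_{\mathbf{\hat\Lambda}_i,\mathbf B_i}$, which itself lies in $\cL C(\hat\tau_i)$ since it intertwines $\hat\tau_i$ with $Z$) can be chosen in $\cL C(\hat\tau_i)$ and tangent to the identity.

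Next, for (ii), suppose $\Upsilon^{-1}\tau_{ij}\Upsilon=\tilde\tau_{i\nu_i(j)}$. Since $\tau_i=\hat\tau_i=\tilde\tau_i$, taking the product over $j$ gives $\Upsilon^{-1}\hat\tau_i\Upsilon=\hat\tau_i$ for $i=1,2$, hence $\Upsilon^{-1}\hat\sigma\Upsilon=\hat\sigma$, so $\Upsilon\in\cL C(\hat\tau_1,\hat\tau_2)\subset\cL C(\hat\sigma)$; by \rta{ideal5}(ii) this centralizer is extremely small, but more importantly $\Upsilon\in\cL C(\hat\tau_i)$ for each $i$. Now define $\Upsilon_i:=\Phi_i^{-1}\Upsilon^{-1}\tilde\Phi_i$ — this is forced by \re{tpup}. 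One computes $\Upsilon_i^{-1}\hat\tau_{ij}\Upsilon_i=\tilde\Phi_i^{-1}\Upsilon\Phi_i\hat\tau_{ij}\Phi_i^{-1}\Upsilon^{-1}\tilde\Phi_i=\tilde\Phi_i^{-1}\Upsilon\tau_{ij}\Upsilon^{-1}\tilde\Phi_i=\tilde\Phi_i^{-1}\tilde\tau_{i\nu_i(j)}\tilde\Phi_i=\widehat{\tilde\tau}_{i\nu_i(j)}$, and since $\Upsilon_i$ is built from maps fixing $\hat\tau_i$ one checks $\Upsilon_i\in\cL C(\hat\tau_i)$. The converse is the same chain of identities read backwards: given $\Upsilon,\Upsilon_i$ with the stated properties, \re{tpup} yields $\Upsilon^{-1}\tau_{ij}\Upsilon=\Upsilon^{-1}\Phi_i\hat\tau_{ij}\Phi_i^{-1}\Upsilon=\tilde\Phi_i\Upsilon_i^{-1}\hat\tau_{ij}\Upsilon_i\tilde\Phi_i^{-1}=\tilde\Phi_i\widehat{\tilde\tau}_{i\nu_i(j)}\tilde\Phi_i^{-1}=\tilde\tau_{i\nu_i(j)}$.

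For (iii), the reality condition $\tau_{2j}=\rho\tau_{1j}\rho$ together with $\rho\hat\tau_{1j}\rho=\hat\tau_{2j}$ (which holds because $\rho$ conjugates the normalized pieces correctly, using the form \re{57rhoz} of $\rho$ and the compatibility of $\mathbf B_2$ with $\mathbf B_1$ under $\rho$) shows that $\rho\Phi_1\rho$ also conjugates $\tau_{2j}$ to $\hat\tau_{2j}$; by the uniqueness built into the construction in (i) — which follows from \cite[Lemma 4.7]{GS15} — we may take $\Phi_2=\rho\Phi_1\rho$. Then if $\tilde\Phi_2=\rho\tilde\Phi_1\rho$ and $\{\tilde\tau_{1j},\rho\}$ is equivalent to $\{\tau_{1j},\rho\}$ by some $\Upsilon$ commuting with $\rho$, applying (ii) gives $\Upsilon_i$ and $\nu_i$; the constraint $\Upsilon\rho=\rho\Upsilon$ forces $\nu_2=\nu_1$ (since $\rho$ swaps the two families with a fixed index correspondence given by \re{57rhoz}) and, using $\Upsilon_i=\Phi_i^{-1}\Upsilon^{-1}\tilde\Phi_i$ together with $\Phi_2=\rho\Phi_1\rho$, $\tilde\Phi_2=\rho\tilde\Phi_1\rho$, $\Upsilon=\rho\Upsilon\rho$, one gets $\Upsilon_2=\rho\Upsilon_1\rho$. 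Conversely, $\Upsilon_2=\rho\Upsilon_1\rho$ together with \re{tpup} for both $i$ and $\Phi_2=\rho\Phi_1\rho$, $\tilde\Phi_2=\rho\tilde\Phi_1\rho$ forces $\rho\Upsilon\rho=\Upsilon$ by eliminating $\Upsilon_1$: from $\tilde\Phi_1=\Upsilon^{-1}\Phi_1\Upsilon_1$ and $\tilde\Phi_2=\Upsilon^{-1}\Phi_2\Upsilon_2$ one solves for $\Upsilon_1$ two ways and compares. The main obstacle I anticipate is \emph{not} the algebraic bookkeeping — which is essentially the same as in \rla{unsol} — but verifying rigorously that the non-linear simultaneous-linearization lemma from \cite{GS15} produces a \emph{unique} tangent-to-identity conjugator once one demands it land in $\cL C(\hat\tau_i)$, because uniqueness is what makes the $\rho$-equivariance statements (the choices $\Phi_2=\rho\Phi_1\rho$, and hence $\Upsilon_2=\rho\Upsilon_1\rho \Leftrightarrow \Upsilon\rho=\rho\Upsilon$) go through; this is where one must invoke the non-resonance of $\mu$ and the specific structure of $\hat\tau_i$ most carefully.
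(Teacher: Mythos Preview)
Your overall strategy matches the paper's, but there are two issues worth flagging.

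First, in (ii) you have a sign slip: from \re{tpup}, namely $\tilde\Phi_i=\Upsilon^{-1}\Phi_i\Upsilon_i$, one solves $\Upsilon_i=\Phi_i^{-1}\Upsilon\tilde\Phi_i$, not $\Phi_i^{-1}\Upsilon^{-1}\tilde\Phi_i$ as you wrote. With your definition the chain $\Upsilon_i^{-1}\hat\tau_{ij}\Upsilon_i=\tilde\Phi_i^{-1}\Upsilon\tau_{ij}\Upsilon^{-1}\tilde\Phi_i$ does not collapse to $\widehat{\tilde\tau}_{i\nu_i(j)}$, because the hypothesis is $\Upsilon^{-1}\tau_{ij}\Upsilon=\tilde\tau_{i\nu_i(j)}$, not $\Upsilon\tau_{ij}\Upsilon^{-1}=\tilde\tau_{i\nu_i(j)}$. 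With the correct sign the computation is exactly what the paper does.

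Second, your anticipated ``main obstacle'' in (iii) --- uniqueness of the tangent-to-identity conjugator in $\cL C(\hat\tau_i)$ --- is a red herring. The statement only asserts that one \emph{can choose} $\Phi_2=\rho\Phi_1\rho$, not that this is forced. The paper simply \emph{defines} $\Phi_2:=\rho\Phi_1\rho$ and verifies directly, using $\tau_{2j}=\rho\tau_{1j}\rho$ and $\hat\tau_{2j}=\rho\hat\tau_{1j}\rho$, that $\Phi_2^{-1}\tau_{2j}\Phi_2=\hat\tau_{2j}$; no uniqueness is invoked. Likewise for the converse in (iii): from $\tilde\Phi_i=\Upsilon^{-1}\Phi_i\Upsilon_i$ one has $\Upsilon=\Phi_1\Upsilon_1\tilde\Phi_1^{-1}$, and then $\rho\Upsilon\rho=\rho\Phi_1\Upsilon_1\tilde\Phi_1^{-1}\rho=\Phi_2\Upsilon_2\tilde\Phi_2^{-1}=\Upsilon$ follows by direct substitution once $\Upsilon_2=\rho\Upsilon_1\rho$, $\Phi_2=\rho\Phi_1\rho$, $\tilde\Phi_2=\rho\tilde\Phi_1\rho$ are assumed. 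So the non-resonance of $\mu$ plays no role in (iii) beyond what was already used to set up $\hat\tau_i$.

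For (i) the paper takes a slightly cleaner route than yours: rather than conjugating through $E_{\hat{\mathbf\Lambda}_i,\mathbf B_i}$, it linearizes both $\{\tau_{ij}\}$ and $\{\hat\tau_{ij}\}$ separately (via \cite[Lemma 2.4]{GS15}) to the common linear family $\{T_{ij}\}$ by tangent-to-identity maps $\psi_i,\hat\psi_i$, and sets $\Phi_i=\psi_i\hat\psi_i^{-1}$. Either route works.
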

\begin{proof}
(i)
Note that $\hat\tau_{ij}$ is conjugate to
$Z_j$ via the   map
$E_{\boldsymbol{\hat{\Lambda}_i}, \mathbf B_i}$. Fix $i$.  Each  $\hat\tau_{ij}$
is an involution
and its set of fixed-point is a hypersurface. Furthermore,  $\fix(\tau_{11}), \ldots, \fix(\tau_{1p})$
intersect
transversally at the origin.  By   \cite[Lemma 2.4]{GS15},  there exists a formal mapping $\psi_i$
 such that
$\psi_i^{-1}\tau_{ij}\psi_i=L\tau_{ij}$.  Now  $L\psi_i$ commutes with $L\tau_{ij}$,
Replacing $\psi_i$ by $\psi_i(L\psi_i)^{-1}$, we may assume that $\psi_i$ is tangent
to the identity. We also find a formal mapping $\hat\psi_i$, which is tangent to the identity,  such
that $\hat\psi^{-1}_i\hat\tau_{ij}\hat\psi_i=L\hat\tau_{ij}=L\tau_{ij}$. Then  
 $\Phi_{1}= \psi_i\hat\psi_i^{-1}$ fulfills the requirements.

(ii) Suppose that
\eq{tijp}\nonumber
\tau_{ij}=\Phi_{i}\hat \tau_{ij}\Phi_{i}^{-1},\quad
\tilde\tau_{ij}=\tilde\Phi_{i}\widehat {\tilde\tau}_{ij}\tilde\Phi_{i}^{-1}.
\eeq
Assume that there is a formal biholomorphic mapping $\Upsilon$ that transforms $\{\tau_{ij}\}$ into $\{\tau_{ij}\}$
for $i=1,2$. Then
\eq{UtUt}
\Upsilon ^{-1}\tau_{ij}\Upsilon =\tilde\tau_{i\nu_i(j)}, \quad j=1,\ldots, p,\  i=1,2.
\eeq
Here $\nu_i$ is a permutation of $\{1,\ldots, p\}$. Then
\eq{upsh}
\hat\tau_i\Upsilon= \Upsilon \hat\tau_i, \quad \hat\sigma\Upsilon =\Upsilon \hat\sigma.
\eeq
  Set
$\Upsilon_{i}\colonequals\Phi_{i}^{-1}\Upsilon  \tilde \Phi_{i}$. We obtain
\ga\label{el1}
\Upsilon_{i}^{-1}\hat\tau_{ij}\Upsilon_{i}=\widehat {\tilde\tau}_{i\nu_i(j)},\quad 1\leq j\leq p,\\
\label{pitp}
\tilde\Phi_{i}= \Upsilon ^{-1}\Phi_{i}\Upsilon_{i}, \quad i=1,2.
\end{gather}
Conversely, assume that \re{upsh}-\re{pitp}
 are valid. 
 Then \re{UtUt} holds as
\ga
\label{el2}
\nonumber
\Upsilon ^{-1}\tau_{ij}\Upsilon =\Upsilon ^{-1}\Phi_{i}
\hat\tau_{ij}\Phi_{i}^{-1}\Upsilon = \tilde\Phi_{i}\Upsilon_{i}^{-1}
\hat \tau_{ij} \Upsilon_{i}\tilde\Phi_{i}^{-1} =\tilde\tau_{\nu_i(j)}. 
\end{gather}


(iii) Assume that we have the reality assumption
 $\tau_{2j}=\rho\tau_{1j}\rho$ and
$\tilde\tau_{2j}=\rho\tilde\tau_{1j}\rho$. As before,
we take $\Phi_1$,  tangent to the identity, such that  $\tau_{1j}=\Phi_{1}\hat \tau_{1j}\Phi_{1}^{-1}$. Let $\Phi_{2}
=\rho\Phi_{1}\rho$. By $\hat \tau_{2j}=\rho \hat \tau_{1j}\rho$, we get $\tau_{2j}=\rho\tau_{1j}\rho=\Phi_{2}\hat \tau_{2j}\Phi_{2}^{-1}$
  for $\nu_2=\nu_1$.
Suppose that $\tilde\Phi_{i}$ associated with
 $\tilde \tau_{1j}$ and $\rho$ satisfy the analogous properties.
Suppose that $\Upsilon ^{-1}\tau_{ij}\Upsilon =\tilde \tau _{i\nu_i(j)}$ with $\nu_2=\nu_1$,
 and $\Upsilon \rho=\rho\Upsilon $. Letting
$\Upsilon_{1}= \Phi_{1}^{-1}\Upsilon \tilde\Phi_{1}$  we get
$\Upsilon_{2}=\rho\Upsilon_{1}\rho$. Conversely, if   $\Upsilon_{1}$ and $
\Upsilon_{2}$ satisfy 
 $\Upsilon_{2}=\rho\Upsilon_{1}\rho$, then
$$\rho\Upsilon \rho=\rho\Phi_{1}\Upsilon_{1}\tilde\Phi_{1}^{-1}\rho=
\Phi_{2}\Upsilon_{2}\tilde\Phi_{2}^{-1}=\Upsilon .$$
This shows that $\Upsilon $ satisfies the reality condition.
\end{proof}

  Now we assume that 
    $\hat F=\log\hat M$ is 
  tangent to the identity and is in the 
      normal form
   \re{hfj1}. Recall the latter means that
  the $j$th component of $\hat F-I$ is independent of the $j$th variable.
We assume that the linear part $T_{ij}$ of $\tau_{ij}$ are given by \re{ltij}, where the non-singular matrix $\mathbf B$ is arbitrary.
  As mentioned earlier in this section,  the group of formal biholomorphisms that preserve  the form of $\hat\sigma$ consists of only linear involutions
$R_\e$ defined by \re{upsi}.  This   restricts  the holomorphic equivalence classes of the quadratic parts of $M$.
By \rp{2tnorm}, such quadrics are classified
by a more restricted equivalence relation,   namely,
$(\mathbf {\tilde B_1},\mathbf {\tilde B}_2)\sim (\mathbf B_1,\mathbf B_2)$, if and only if
$$ 
\tilde {\mathbf B}_i= ( \diag \mathbf a)^{-1}\mathbf B_i\diag_{  \nu_i}\mathbf d, \quad i=1,2.
$$ 
 To deal with a general situation, let us assume for the moment that $\mathbf B_1,\mathbf B_2$ are arbitrary invertible matrices.

Using the normal form $\{\hat\tau_{1},\hat\tau_2\}$ and the matrices $\mathbf B_1,\mathbf B_2$, we first decompose
$\hat\tau_{i}=\hat\tau_{11}\cdots\hat\tau_{1p}$. By \rl{clas} (i), we then find $\Phi_i$   such
that
\eq{tauPh}
\nonumber
\tau_{ij}=\Phi_i\hat\tau_{ij} \Phi_i^{-1}, \quad 1\leq j\leq p.
\eeq
For each $i$,  $\Phi_i$  commutes with $\hat\tau_i$. It is within this family of  $\{\mathbf B_i, \Phi_i; i=1,2\}$  with $\Phi_i\in\cL C(\hat\tau_i)$
for $i=1,2$ that we will find a normal form for $\{\tau_{ij}\}$.    When restricted  to $\tau_{2j}=\rho\tau_{1j}\rho$,  the classification
 of   the real submanifolds is within the family of  $\{\tau_{1j}, \rho\}$ as described  in \rl{clas} (iii).

 From \rl{clas} (ii),    the equivalence relation on $\cL C(\hat\tau_i)$ is given by
\ga
\label{pitp+}
\nonumber
\tilde\Phi_i= \Upsilon^{-1} \Phi_i\Upsilon_i, \quad i=1,2.
\end{gather}
Here $\Upsilon_i$ and $\Upsilon$   satisfy
\eq{el1+}
\nonumber
\Upsilon_i^{-1}\hat\tau_{ij}\Upsilon_i=\hat \tau_{i\nu_i(j)},\quad 1\leq j\leq p;
\quad\Upsilon^{-1}\hat\tau_i\Upsilon=\hat\tau_i,\quad i=1,2.
\eeq

We now construct  a normal form for $\{\tau_{ij}\}$ within the above family.
Let us first use the centralizer of $\cL C^{\mathsf c}(Z_1,\ldots, Z_p)$, described in \rl{lehphi},
to define the
complement of the
centralizer of the family of non-linear commuting involutions
$\{\hat\tau_{11}, \ldots, \hat\tau_{1p}\}$.
Recall that
the mappings  $E_{\mathbf{\hat\Lambda}_i}$ and $(B_i)_*$ are defined by   \re{bise}.
According to \rl{lehphi}, we have the following.
\begin{lemma}\label{cnnl} Let $i=1$,  $2$.
Let $\{\hat\tau_{i1},\ldots, \hat\tau_{ip}\}$ be given by \rea{tPtPhat}.
Then \gan
{\cL C}(\hat\tau_{i1}, \ldots, \hat\tau_{ip})
=\left\{{ E}_{\hat{\mathbf{\Lambda}}_i, \mathbf  B_i}\circ \phi_0\circ { E}_{\hat{\mathbf{\Lambda}}_i, \mathbf  B_i}^{-1}\colon
\phi_0\in{\cL C}(Z_1, \ldots,  Z_p)\right\},\\
{\cL C}(\hat\tau_{i})
=\left\{{ E}_{\hat{\mathbf{\Lambda}}_i, \mathbf  B_i}\circ \phi_0\circ { E}_{\hat{\mathbf{\Lambda}}_i, \mathbf  B_i}^{-1}\colon
\phi_0\in{\cL C}(Z)\right\}.  
\end{gather*}
 Set
\gan
{\cL C}^{\mathsf c}(\hat\tau_{i1},\ldots, \hat\tau_{ip})
\colonequals\left\{{ E}_{\hat{\mathbf{\Lambda}}_i, \mathbf  B_i} \circ\phi_1 \circ{ E}_{\hat{\mathbf{\Lambda}}_i, \mathbf  B_i}^{-1}\colon
\phi_1\in{\cL C}^{\mathsf c}(Z_1,\ldots, Z_p)\right\}.\label{ccht}
\end{gather*}
Each formal biholomorphic mapping $\psi$ admits a unique decomposition $\psi_1\psi_0^{-1}$
with $$\psi_1\in{\cL C}^{\mathsf c}(\hat\tau_{i1}, \ldots, \hat\tau_{ip}),\quad \psi_0\in
{\cL C}(\hat\tau_{i1}, \ldots, \hat\tau_{ip}).$$ If $\hat\tau_{ij}$ and $\psi$
are convergent, then $\psi_0,\psi_1$ are convergent.
  Assume further that
  $\hat\tau_{2j}=\rho\hat\tau_{1j}\rho$ with $\rho$ being
given by \rea{eqrh}.    
Then define $\cL C^{\mathsf c}(\hat\tau_{21}, \ldots, \hat\tau_{2p})=\{\rho\phi_1\rho\colon\phi_1\in \cL C^{\mathsf c}(\hat\tau_{11}, \ldots, \hat\tau_{1p})\}$.
\end{lemma}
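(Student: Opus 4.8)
\textbf{Proof plan for \rl{cnnl}.}
The plan is to reduce everything to \rl{lehphi} via the conjugating map $E_{\hat{\mathbf\Lambda}_i,\mathbf B_i}=E_{\hat{\mathbf\Lambda}_i}\circ(B_i)_*$. First I would record the basic conjugacy identity: by \re{tPtPhat} we have $\hat\tau_{ij}=E_{\hat{\mathbf\Lambda}_i,\mathbf B_i}\circ Z_j\circ E_{\hat{\mathbf\Lambda}_i,\mathbf B_i}^{-1}$, hence for any formal biholomorphism $\psi$ the map $\psi$ commutes with every $\hat\tau_{ij}$ if and only if $\phi:=E_{\hat{\mathbf\Lambda}_i,\mathbf B_i}^{-1}\circ\psi\circ E_{\hat{\mathbf\Lambda}_i,\mathbf B_i}$ commutes with every $Z_j$. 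This gives the first displayed formula for $\cL C(\hat\tau_{i1},\dots,\hat\tau_{ip})$ directly from the description of $\cL C(Z_1,\dots,Z_p)$ in \rl{lehphi}. The same computation with $Z=Z_1\cdots Z_p$ in place of the family, together with $\hat\tau_i=E_{\hat{\mathbf\Lambda}_i,\mathbf B_i}\circ Z\circ E_{\hat{\mathbf\Lambda}_i,\mathbf B_i}^{-1}$ (which follows from \re{t1jdl}), yields the formula for $\cL C(\hat\tau_i)$. Note here that $E_{\hat{\mathbf\Lambda}_i,\mathbf B_i}$ is tangent to $E_{\mathbf\Lambda_i}\circ(B_i)_*$ at the origin and in particular is a genuine formal biholomorphism, so conjugation by it is a well-defined bijection on the group of formal biholomorphisms; this is the only point where one uses that $\mathbf B_i$ and $\mathbf\Lambda_i=\hat{\mathbf\Lambda}_i(0)$ are invertible.

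Next, for the decomposition statement: given a formal biholomorphism $\psi$ tangent to the identity, set $\phi:=E_{\hat{\mathbf\Lambda}_i,\mathbf B_i}^{-1}\circ\psi\circ E_{\hat{\mathbf\Lambda}_i,\mathbf B_i}$. This $\phi$ need not be tangent to the identity, but its linear part commutes with each $Z_j$ (since $L\psi=\I$ forces $L\phi$ to commute with the $Z_j$), so $L\phi\in\cL C(Z_1,\dots,Z_p)$ and $\phi(L\phi)^{-1}$ is tangent to the identity. Applying \rl{lehphi} to $\phi(L\phi)^{-1}$ gives a unique decomposition $\phi(L\phi)^{-1}=\phi_1\phi_0^{-1}$ with $\phi_1\in\cL C^{\mathsf c}(Z_1,\dots,Z_p)$, $\phi_0\in\cL C(Z_1,\dots,Z_p)$; hence $\phi=\phi_1(\phi_0 (L\phi))^{-1}$ with $\phi_0(L\phi)\in\cL C(Z_1,\dots,Z_p)$ (the centralizer is a group), and conjugating back by $E_{\hat{\mathbf\Lambda}_i,\mathbf B_i}$ produces $\psi=\psi_1\psi_0^{-1}$ with $\psi_1\in\cL C^{\mathsf c}(\hat\tau_{i1},\dots,\hat\tau_{ip})$ and $\psi_0\in\cL C(\hat\tau_{i1},\dots,\hat\tau_{ip})$. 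For uniqueness, if $\psi_1\psi_0^{-1}=\psi_1'\psi_0'^{-1}$ then $\psi_1'^{-1}\psi_1=\psi_0'^{-1}\psi_0$ lies in both sets; conjugating back to $Z$-coordinates and using the uniqueness in \rl{lehphi} (after again peeling off the linear part, which lies in $\cL C(Z_1,\dots,Z_p)$ and has trivial image in $\cL C^{\mathsf c}$) forces both sides to be the identity. Convergence of $\psi_0,\psi_1$ when $\hat\tau_{ij}$ and $\psi$ are convergent is immediate from the convergence clause of \rl{lehphi} together with the fact that $E_{\hat{\mathbf\Lambda}_i,\mathbf B_i}$ is convergent whenever the $\hat\tau_{ij}$ are (its entries are built from $\hat{\mathbf\Lambda}_i$ and the constant matrix $\mathbf B_i$).

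Finally, for the reality statement: when $\hat\tau_{2j}=\rho\hat\tau_{1j}\rho$ with $\rho$ as in \re{eqrh}, one checks that $\rho$ conjugates $Z_j$ to $Z_j$ and $E_{\hat{\mathbf\Lambda}_1,\mathbf B_1}$ to a map of the form $E_{\hat{\mathbf\Lambda}_2,\mathbf B_2}$ for the appropriate $\mathbf B_2$ (this is exactly the compatibility already used in \rp{2tnorm} and \rl{clas}(iii)), so $\rho\,\cL C^{\mathsf c}(\hat\tau_{11},\dots,\hat\tau_{1p})\,\rho=\cL C^{\mathsf c}(\hat\tau_{21},\dots,\hat\tau_{2p})$ as defined by \re{ccht}; hence the stated definition of $\cL C^{\mathsf c}(\hat\tau_{21},\dots,\hat\tau_{2p})$ is consistent with the intrinsic one. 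I do not expect a serious obstacle here; the only point requiring care — and thus the ``hard part'' — is the bookkeeping around linear parts: elements of $\cL C(\hat\tau_{i1},\dots,\hat\tau_{ip})$ are generally not tangent to the identity, so one must split off the linear part before invoking \rl{lehphi} and check that this splitting is compatible with the uniqueness assertion. Everything else is a routine transport of \rl{lehphi} through the fixed biholomorphism $E_{\hat{\mathbf\Lambda}_i,\mathbf B_i}$.
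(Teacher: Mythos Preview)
Your proposal is correct and follows exactly the approach the paper intends: the paper gives no explicit proof of \rl{cnnl}, simply prefacing it with ``According to \rl{lehphi}, we have the following,'' so the entire content is the transport of \rl{lehphi} through conjugation by the fixed biholomorphism $E_{\hat{\mathbf\Lambda}_i,\mathbf B_i}$, which is precisely what you do.

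One minor simplification: your ``hard part'' is not actually there. If $\psi$ is tangent to the identity, then so is $\phi=E_{\hat{\mathbf\Lambda}_i,\mathbf B_i}^{-1}\circ\psi\circ E_{\hat{\mathbf\Lambda}_i,\mathbf B_i}$, since the linear part of a conjugate is the conjugate of the linear parts and $(LE)^{-1}\cdot\I\cdot(LE)=\I$. Hence \rl{lehphi} applies to $\phi$ directly, with no need to peel off a linear part first. (If instead one reads the lemma as allowing general $\psi$, then for the decomposition $\psi=\psi_1\psi_0^{-1}$ with $\psi_1$ tangent to the identity to exist one needs $L\psi\in\cL C(\hat\tau_{i1},\dots,\hat\tau_{ip})$ anyway, and your linear-part splitting handles that case correctly.)
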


\begin{prop}\label{ideal6}
Let $\hat\tau_i,\hat\sigma$  be given by \rea{htaix}-\rea{hsixi} in which  $\log\hat M$ is in
 the 
  formal normal form \rea{hfj1}.
 Let $\{\hat\tau_{ij}\}$ be given by \rea{tPtPhat}.
 Suppose that
 \ga\label{tauPhp}
\tau_{ij}=\Phi_i\hat\tau_{ij} \Phi_i^{-1}, \quad\tilde \tau_{ij}=\tilde\Phi_i\hat\tau_{ij}\tilde \Phi_i^{-1}
 \quad 1\leq j\leq p,\\
 \Phi_i\in\cL C(\hat\tau_i),\quad  \tilde\Phi\in \cL C(\hat\tau_i), \quad
 \tilde\Phi_i'(0)=\Phi_i'(0)= \mathbf I, \quad i=1,2.
\end{gather}
Then   $\{\Upsilon^{-1}\tau_{ij}\Upsilon\}=\{\tilde\tau_{ij}\}$
for $i=1,2$ and for some invertible  $\Upsilon\in {\mathcal C}(\hat \tau_1,\hat\tau_2)$,
if and only if there exist formal biholomorphisms  $\Upsilon, \Upsilon_1^*,\Upsilon_2^*$ such that
\ga\label{quaeqp}
\Upsilon^{-1}\circ( B_i)_*\circ Z_j\circ
(B_i)_*^{-1}\circ   \Upsilon=( B_i)_*\circ Z_{\nu_i(j)}\circ
(B_i)_*^{-1},\\
 \label{tpup+-}
\tilde \Phi_i= \Upsilon^{-1}\Phi_i\Upsilon_i^*\Upsilon,  \quad \Upsilon_i^*\in\cL C(\hat\tau_{i1},\ldots,\hat\tau_{ip}), \quad i=1,2,
\\
\Upsilon\hat\sigma\Upsilon^{-1}=\hat\sigma,
\label{el1-+}
\end{gather}
where each $\nu_i$ is a permutation of $\{1, \ldots, p\}$.  Assume further that
 $\hat\tau_{2j}=\rho\hat\tau_{1j}\rho$ and $\Phi_2=\rho\Phi_1\rho$
and $\tilde\Phi_2=\rho\tilde\Phi_1\rho$.  We   can take
 $\Upsilon_2^*=\rho\Upsilon_1^*\rho$   and $\nu_2=\nu_1$, if additionally
 \eq{Uprh}\nonumber
 \Upsilon\rho=\rho\Upsilon.
 \eeq
 \end{prop}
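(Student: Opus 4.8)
Here is how I would attack the final Proposition (the one labelled \rp{ideal6}).

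The plan is to transcribe the scheme of \rl{clas} into the present normalized situation \re{htaix}--\re{hsixi}, \re{tPtPhat}, \re{tauPhp}, exploiting two structural facts. First, since $\log\hat M$ is tangent to the identity, \rt{ideal5} (ii) identifies $\mathcal C(\hat\tau_1,\hat\tau_2)$, and likewise $\mathcal C(\hat\sigma)$, with the finite group $\{R_\e\}$ of dilations \re{upsi}. Second, every such $R_\e$ commutes with the ``twisting'' maps $E_{\mathbf\Lambda_i}$ and $E_{\hat{\mathbf\Lambda}_i}$ of \re{tPtPhat}: with $E_{\mathbf\Lambda_i}$ because $\mathbf\Lambda_i$ is diagonal, and with $E_{\hat{\mathbf\Lambda}_i}$ because in addition $R_\e$ fixes each product $\xi_j\eta_j$. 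These two facts let me reduce the full non-linear conjugacy to the linear identity \re{quaeqp} together with a factor $\Upsilon_i^*$ that is tangent to the identity and fixes every $\hat\tau_{ij}$; the structure of such factors is supplied by \rl{cnnl}.

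For necessity I would take $\Upsilon\in\mathcal C(\hat\tau_1,\hat\tau_2)$ with $\Upsilon^{-1}\tau_{ij}\Upsilon=\tilde\tau_{i\nu_i(j)}$, so that $\Upsilon=R_\e$ by \rt{ideal5} (ii). Multiplying over $j$ and using $\Phi_i,\tilde\Phi_i\in\mathcal C(\hat\tau_i)$ gives $\tau_i=\Phi_i\hat\tau_i\Phi_i^{-1}=\hat\tau_i=\tilde\tau_i$, hence $\Upsilon$ commutes with $\hat\sigma=\hat\tau_1\hat\tau_2$, which is \re{el1-+}. Passing to linear parts in $\Upsilon^{-1}\tau_{ij}\Upsilon=\tilde\tau_{i\nu_i(j)}$, and using $L\tau_{ij}=L\tilde\tau_{ij}=T_{ij}=E_{\mathbf\Lambda_i}(B_i)_*Z_j(B_i)_*^{-1}E_{\mathbf\Lambda_i}^{-1}$ as in \re{ltij} together with $R_\e E_{\mathbf\Lambda_i}=E_{\mathbf\Lambda_i}R_\e$, one cancels $E_{\mathbf\Lambda_i}$ and obtains \re{quaeqp}; since also $R_\e E_{\hat{\mathbf\Lambda}_i}=E_{\hat{\mathbf\Lambda}_i}R_\e$, \re{quaeqp} upgrades to $\Upsilon^{-1}\hat\tau_{ij}\Upsilon=\hat\tau_{i\nu_i(j)}$. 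Then I would set $\Upsilon_i^*:=\Phi_i^{-1}\Upsilon\tilde\Phi_i\Upsilon^{-1}$, so that \re{tpup+-} holds tautologically; a one-line computation with $\tau_{ij}=\Phi_i\hat\tau_{ij}\Phi_i^{-1}$ and $\tilde\tau_{ij}=\tilde\Phi_i\hat\tau_{ij}\tilde\Phi_i^{-1}$ then gives $\Upsilon_i^*\hat\tau_{ij}(\Upsilon_i^*)^{-1}=\hat\tau_{ij}$ for every $j$, i.e. $\Upsilon_i^*\in\mathcal C(\hat\tau_{i1},\dots,\hat\tau_{ip})$.

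For sufficiency I would reverse this. By \re{el1-+} and \rt{ideal5} (ii), $\Upsilon=R_\e\in\mathcal C(\hat\tau_1,\hat\tau_2)$, hence $\Upsilon$ commutes with $E_{\hat{\mathbf\Lambda}_i}$, and \re{quaeqp} gives $\Upsilon^{-1}\hat\tau_{ij}\Upsilon=\hat\tau_{i\nu_i(j)}$, i.e. $\Upsilon\hat\tau_{ij}\Upsilon^{-1}=\hat\tau_{i\nu_i^{-1}(j)}$. Substituting $\tilde\Phi_i=\Upsilon^{-1}\Phi_i\Upsilon_i^*\Upsilon$ into $\tilde\tau_{ij}=\tilde\Phi_i\hat\tau_{ij}\tilde\Phi_i^{-1}$ and using that $\Upsilon_i^*\in\mathcal C(\hat\tau_{i1},\dots,\hat\tau_{ip})$ commutes with each $\hat\tau_{ik}$, the expression collapses to $\Upsilon^{-1}\Phi_i\hat\tau_{i\nu_i^{-1}(j)}\Phi_i^{-1}\Upsilon=\Upsilon^{-1}\tau_{i\nu_i^{-1}(j)}\Upsilon$, so $\{\Upsilon^{-1}\tau_{ij}\Upsilon\}=\{\tilde\tau_{ij}\}$ for $i=1,2$ with $\Upsilon\in\mathcal C(\hat\tau_1,\hat\tau_2)$. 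For the reality clause I would assume $\hat\tau_{2j}=\rho\hat\tau_{1j}\rho$, $\Phi_2=\rho\Phi_1\rho$, $\tilde\Phi_2=\rho\tilde\Phi_1\rho$ (so $\tau_{2j}=\rho\tau_{1j}\rho$ and $\tilde\tau_{2j}=\rho\tilde\tau_{1j}\rho$); if moreover $\Upsilon\rho=\rho\Upsilon$, equivalently $\e_{s+s_*}=\e_s$, then conjugating $\Upsilon^{-1}\tau_{1j}\Upsilon=\tilde\tau_{1\nu_1(j)}$ by $\rho$ forces $\nu_2=\nu_1$, and applying $\rho(\,\cdot\,)\rho$ to $\Upsilon_1^*=\Phi_1^{-1}\Upsilon\tilde\Phi_1\Upsilon^{-1}$ yields $\Upsilon_2^*=\rho\Upsilon_1^*\rho$---precisely the mechanism used in \rl{clas} (iii) and \rp{2tnorm}.

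I expect the only genuinely delicate step to be the passage from the linear equation \re{quaeqp} to the non-linear conjugacy $\Upsilon^{-1}\hat\tau_{ij}\Upsilon=\hat\tau_{i\nu_i(j)}$: this works exactly because $\Upsilon$ commutes with the twists $E_{\hat{\mathbf\Lambda}_i}$, which in turn is guaranteed only once the normal-form hypothesis on $\log\hat M$ (through \rt{ideal5} (ii)) together with \re{el1-+} have pinned $\Upsilon$ down to a dilation $R_\e$. Everything else---reduction via linear parts, extraction of $\Upsilon_i^*$, and the permutation/$\rho$-equivariance bookkeeping---is routine and runs parallel to the corresponding portions of \rl{clas}.
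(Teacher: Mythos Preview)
Your proposal is correct and follows essentially the same approach as the paper's proof. Both identify $\Upsilon$ as one of the $2^p$ dilations $R_\e$ via \rt{ideal5}, use the commutation of $R_\e$ with $E_{\hat{\mathbf\Lambda}_i}$ to pass between the linear identity \re{quaeqp} and the nonlinear conjugacy $\Upsilon^{-1}\hat\tau_{ij}\Upsilon=\hat\tau_{i\nu_i(j)}$, and then define $\Upsilon_i^*:=\Phi_i^{-1}\Upsilon\tilde\Phi_i\Upsilon^{-1}$ and check it lies in $\cL C(\hat\tau_{i1},\ldots,\hat\tau_{ip})$; the paper routes the necessity direction through \rl{clas} (introducing an intermediate $\Upsilon_i$ with $L\Upsilon_i=\Upsilon$) while you go directly, but the logic and computations coincide.
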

\begin{proof} Recall that
\eq{tijp+}
\nonumber
\tau_{ij}=\Phi_i\hat \tau_{ij}\Phi_i^{-1},\quad\Phi_i\in \cL C(\hat\tau_i); \qquad
\tilde\tau_{ij}=\tilde\Phi_i\widehat { \tau}_{ij}\tilde\Phi_i^{-1}, \quad\tilde\Phi_i\in \cL C(\hat\tau_i).
\eeq
Suppose that
\eq{UtUt+}
\Upsilon^{-1}\tau_{ij}\Upsilon=\tilde \tau_{i\nu_i(j)}, \quad j=1,\ldots, p,\  i=1,2.
\eeq
By \rl{clas}, there are invertible $\Upsilon_i$ such that
\ga\label{el1++}
\Upsilon_i^{-1}\hat\tau_{ij}\Upsilon_i=\widehat { \tau}_{i\nu_i(j)},\quad 1\leq j\leq p,\\
\label{tpup+}
\tilde\Phi_i=\Upsilon^{-1}\circ\Phi_i\circ\Upsilon_i,\quad i=1,2. 
\end{gather}
Let us simplify the equivalence relation.
  By \rt{ideal5}, $\cL C(\hat\tau_1,\hat\tau_2)$
consists of $2^p$ dilations $\Upsilon$ of the form
$(\xi,\eta)\to(a \xi,a\eta)$ with $a_j=\pm1$.
Since $\Phi_i,\tilde\Phi_i$ are tangent to the identity, then $D\Upsilon_i(0)$ is diagonal because
\eq{}
\nonumber
L\Upsilon_i=\Upsilon.
\eeq
   Clearly, $\Upsilon$ commutes with each  non-linear
transformation $  E_{\boldsymbol{\hat\Lambda_i}}$.    
Simplifying the linear parts of both sides of  \re{el1++}, we get
\eq{quaeq}
\Upsilon^{-1}\circ(( B_i)_*\circ Z_j\circ
(B_i)_*^{-1})\circ  \Upsilon=(   B_i)_*\circ Z_{\nu_i(j)}\circ
(  B_i)_*^{-1}.\eeq
 From the commutativity of $\Upsilon$ and $E_{\hat{\Lambda}_i}$ again and  the above identity, it follows that
\eq{LUht}
\Upsilon^{-1}\circ\hat\tau_{ij}\circ \Upsilon=\widehat{\tau}_{\nu_i(j)}, \quad j=1,\ldots, p,\  i=1,2.
\eeq
Using \re{tauPhp} and \re{LUht}, we can rewrite \re{UtUt+} as
\eq{}
\nonumber
\Upsilon^{-1}\Phi_i\hat\tau_{ij}\Phi_i^{-1}\Upsilon= \tilde\tau_{i\nu_i(j)}=\tilde\Phi_i\Upsilon^{-1} \hat{\tau}_{ij}\Upsilon\tilde\Phi_i^{-1}.
\eeq
It is equivalent to  $\Upsilon_i^*\hat\tau_{ij}=\hat\tau_{ij}\Upsilon_i^*$, where  we define
\eq{dfUp}\nonumber
\Upsilon_i^*:=\Phi_i^{-1}\Upsilon\tilde\Phi_i   \Upsilon^{-1}.
\eeq
 Therefore, by \re{tpup},  in
$\cL C(\hat\tau_i)$, $\tilde\Phi_i $ and $\Phi_i $ are equivalent, if and only if
\eq{tppi} \nonumber
\tilde \Phi_i= \Upsilon^{-1}\Phi_i\Upsilon_i^*\Upsilon,  \quad \Upsilon_i^*\in\cL C(\hat\tau_{i1},\ldots,\hat\tau_{ip}), \quad i=1,2.
\eeq
Conversely,  if $\Upsilon_i^*, \Upsilon$ satisfy \re{quaeqp}-\re{el1-+}, we take $\Upsilon_i=\Upsilon_i^*\Upsilon$ to get   \re{tpup+} by \re{tpup+-}. Note that
\re{el1-+} ensures that $\Upsilon$ commutes with $\hat\tau_i$ and $E_{\mathbf{\hat\Lambda_i}}$. Then \re{LUht}, or equivalently
\re{quaeq} (i.e. \re{quaeqp}) as $\Upsilon$ commutes with $E_{\mathbf{\hat\Lambda}_i}$,
gives us \re{el1++}.  By \rl{clas}, \re{el1++}-\re{tpup+} are equivalent to \re{UtUt+}. %
\end{proof}
%

\pr{caseI} Let $\{\tau_{ij}\}$, $\{\tilde \tau_{ij}\}$, $\Phi_i$, and $\tilde\Phi_i$ be as in \rpa{ideal6}.
Decompose $\Phi_i=\Phi_{i1}\circ\Phi_{i0}^{-1}$   with $\Phi_{i1}\in\cL C^{\mathsf c}(\hat\tau_{i1}, \ldots, \hat\tau_{1p})$
and $\Phi_{i0}\in\cL C (\hat\tau_{i1}, \ldots, \hat\tau_{1p})$, and decompose $\tilde\Phi_i$ analogously.
Then $\{\{\tau_{1j}\}, \{\tau_{2j}\}\}$ and $\{\{\tilde\tau_{1j}\}, \{\tilde\tau_{2j}\}\}$ are equivalent under a mapping
that is tangent to the identity  if and only if $\Phi_{i1}=\tilde\Phi_{i1}$ for $i=1,2$.  Assume further that
 $\tau_{2j}=\rho\tau_{1j}\rho$ and $\tilde\tau_{2j}=\rho\tilde\tau_{1j}\rho$.
Then two families
are equivalent under a mapping that is tangent to the identity and commutes with $\rho$ if and only if $\Phi_{11}=\tilde\Phi_{11}$.  
\end{prop}
\begin{proof} When restricting to changes of coordinates that are tangent to the identity, we have $\Upsilon=I$ in \re{UtUt+}.
Also \re{quaeqp} holds trivially as $\nu_i$ is the identity.
By the uniqueness
of the decomposition $\Phi_i=\Phi_{i1}\Phi_{i0}^{-1}$, \re{tpup+-} becomes $\Phi_{i1}=\tilde\Phi_{i1}$.
\end{proof}

We consider a general case   without restriction on coordinate changes.

\le{rbnu}Let $\boldsymbol{\Upsilon}=\diag(\mathbf a,\mathbf a)$ with $\mathbf a\in\{-1,1\}^p$. Let $\mathbf B$ be a nonsingular $p\times p$ matrix and let $\nu$ be a permutation of $\{1,\dots, p\}$. Then
\eq{Usbs}
\Upsilon^{-1}\circ   B_*\circ Z_j\circ
  B_*^{-1}\circ \Upsilon=    B_*\circ Z_{\nu(j)}\circ
  B_*^{-1}, \quad 1\leq j\leq p
  \eeq
if and only if
\eq{genB}
\mathbf B=(\diag\mathbf a)^{-1} \mathbf B(\diag_{\nu}\mathbf d).
\eeq
In particular, if   $\mathbf B$ is an upper or lower triangular matrix, then $\nu=I$ and $\mathbf d=\mathbf a$.
\ele
\begin{proof}Let $\tilde{\mathbf Z}_j=\diag(1,\dots, -1,\dots, 1)$ be the matrix where $-1$ at the $j$-th place.   Set $\mathbf C:=\mathbf B^{-1}\diag\mathbf a\, \mathbf B$ and   $\mathbf C=(c_{ij})$. In  $2\times 2$ block matrices, we see that \re{Usbs} is equivalent to $\mathbf C\tilde{\mathbf Z}_{\nu(j)}=\tilde{\mathbf Z}_{j}\mathbf C$, i.e.
$$
-c_{i\nu(j)}=c_{i\nu(j)}, \quad \quad i\neq j.
$$
Therefore, $\mathbf C=\diag_{\nu}\mathbf d$ with $d_j=c_{j\nu(j)}$, by \re{dfndiag}.  \end{proof}
 We will
assume that $M$ is a higher order perturbation of non-resonant product quadric. Let us recall $\hat \sigma$
be given by \re{hsixi} and define $\hat\tau_{ij}$ as follows:
\gan\label{7convnfsi}
\hat\sigma:\begin{cases}\xi'_j = \hat M_j(\xi\eta)\xi_j\\ \eta'_j=\hat M_j^{-1}(\xi\eta)\eta_j,\end{cases}\quad
\hat \tau_{ij}:\begin{cases}\xi'_j  =\hat \Lambda_{ij} (\xi\eta)\eta_j\\ \eta'_j = \hat\Lambda_{ij}^{-1}(\xi\eta)\xi_j\\ \xi'_k = \xi_k\\
 \eta'_k = \eta_k,\quad k\neq j\end{cases}
\end{gather*}
with $\hat\Lambda_{2j}=\hat\Lambda_{1j}^{-1}$ and $\hat M_j=\hat\Lambda_{1j}^2$.
Let $\hat\tau_i=\hat\tau_{i1}\cdots\hat\tau_{1p}$.  Recall that $E_{\boldsymbol{\hat\Lambda}_i}$ in \re{bise}.
\begin{prop}\label{ideal6+}
   Let $\{\tau_{11},\dots, \tau_{1p},\rho\}$ be the family of involutions with $\rho$ be given by \rea{57rhoz}. Suppose that the linear parts of $\tau_{1j}$ are given by \rea{ltij} and associated $\sigma$ is non-resonant, while the associated matrix $\mathbf B$ for $\{T_{1j}\}$ satisfies the non-degeneracy condition that \rea{genB} holds only for $\nu=I$.
Let $\hat\sigma$ be the formal normal form $\hat\sigma$ of the $\sigma$ associated to $M$ that is
given by  \rea{hsixi} in which  $\log\hat M$ is 
in the 
 formal normal form  \rea{hfj1}. Let $\hat\tau_{1j}$ be given by \rea{tPtPhat} and $\hat\tau_{2j}=\rho\hat\tau_{1j}\rho$.
  In suitable formal coordinates
  the   involutions $\tau_{ij}$ 
  have the form
  \eq{tPtP}
  \tau_{1j}=\Psi\hat\tau_{ij} \Psi^{-1},\quad \tau_{2j}=\rho \tau_{1j}\rho, \quad
  \quad \Psi\in\cL C(\hat\tau_1)\cap C^c(\hat\tau_{11}, \ldots, \hat\tau_{1p}), \quad \Psi'(0)=\mathbf I.
   \eeq
Moreover,  if $\tilde\tau_{11}, \dots, \tilde\tau_{1p}$ have the form \rea{tPtP} in which   $\Psi$
is replaced by $\tilde\Psi$. Then there exists a formal mapping $R$ commuting with $\rho$ and transforms
the family
 $\{\tilde\tau_{11},\dots,\tilde\tau_{1p}\}$ into
$\{ \tau_{11},\dots, \tau_{1p}\}$  if and only if  $R$ is an $R_\e$ defined by \rea{upsi}
and \eq{redequiv}
\tilde\Psi=R_\e^{-1}\Psi R_\e,  \quad R_\e\rho=\rho R_\e.
\eeq
In particular, $\{\tau_{11},\dots,\tau_{1p},\rho\}$ is formally equivalent to $\{\hat\tau_{11},\ldots,\hat\tau_{1p},\rho\}$
if and only if $\Psi$ in \rea{tPtP} is the identity map.
 \end{prop}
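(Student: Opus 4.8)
\textbf{Proof plan for Proposition~\ref{ideal6+}.} The strategy is to derive this statement as a specialization of \rp{ideal6} under the two standing hypotheses: (a) the non-degeneracy condition that \re{genB} forces $\nu=I$, and (b) that $M$ is a higher-order perturbation of a non-resonant product quadric, so that $\Psi$ may be taken in $\cL C(\hat\tau_1)\cap\cL C^{\mathsf c}(\hat\tau_{11},\ldots,\hat\tau_{1p})$. First I would establish the existence part, i.e. the representation \re{tPtP}. By \rl{clas}~(i) applied with $\rho$ (part (iii)), there is a $\Phi_1\in\cL C(\hat\tau_1)$, tangent to the identity, with $\tau_{1j}=\Phi_1\hat\tau_{1j}\Phi_1^{-1}$ and $\tau_{2j}=\rho\tau_{1j}\rho=\Phi_2\hat\tau_{2j}\Phi_2^{-1}$ for $\Phi_2=\rho\Phi_1\rho$. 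Then I invoke \rl{cnnl} to decompose $\Phi_1=\Phi_{11}\Phi_{10}^{-1}$ uniquely with $\Phi_{11}\in\cL C^{\mathsf c}(\hat\tau_{11},\ldots,\hat\tau_{1p})$ and $\Phi_{10}\in\cL C(\hat\tau_{11},\ldots,\hat\tau_{1p})$; since $\Phi_{10}$ already commutes with all $\hat\tau_{1j}$, replacing $\Phi_1$ by $\Phi_{11}=\Phi_1\Phi_{10}$ does not change the conjugates $\tau_{1j}$. This $\Psi:=\Phi_{11}$ is in $\cL C^{\mathsf c}(\hat\tau_{11},\ldots,\hat\tau_{1p})$; it remains to check $\Psi\in\cL C(\hat\tau_1)$, which holds because $\Phi_1\in\cL C(\hat\tau_1)$ and $\Phi_{10}\in\cL C(\hat\tau_{11},\ldots,\hat\tau_{1p})\subset\cL C(\hat\tau_1)$. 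Tangency to the identity is preserved since $\Phi_{10},\Phi_{11}$ are both tangent to the identity by the convergent/formal decomposition in \rl{cnnl} and \rl{clas}~(i). That gives \re{tPtP}; the reality $\tau_{2j}=\rho\tau_{1j}\rho$ is built in.

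Next I turn to the equivalence statement. Suppose $R$ is a formal biholomorphism commuting with $\rho$ that carries $\{\tilde\tau_{1j}\}$ to $\{\tau_{1j}\}$, hence (by $R\rho=\rho R$ and $\tilde\tau_{2j}=\rho\tilde\tau_{1j}\rho$) also $\{\tilde\tau_{2j}\}$ to $\{\tau_{2j}\}$. Apply \rp{ideal6} with $\Phi_i$ replaced by the present $\Psi$ (for $i=1$) and $\rho\Psi\rho$ (for $i=2$), and $\tilde\Phi_i$ similarly: there are $\Upsilon$, $\Upsilon_1^*,\Upsilon_2^*$ and permutations $\nu_1,\nu_2$ with $\nu_2=\nu_1$, satisfying \re{quaeqp}, \re{tpup+-}, \re{el1-+} and $\Upsilon\rho=\rho\Upsilon$, $\Upsilon_2^*=\rho\Upsilon_1^*\rho$. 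Here $\Upsilon=R$ after the normalizations; the key point is that \re{el1-+}, $\Upsilon\hat\sigma\Upsilon^{-1}=\hat\sigma$, combined with \rt{ideal5}~(ii) (the centralizer of $\{\hat\tau_1,\hat\tau_2\}$, equivalently of $\hat\sigma$ under the tangency-to-identity hypothesis on $\log\hat M$, consists of exactly the $2^p$ dilations $R_\e$ of \re{upsi}), forces $\Upsilon=R_\e$ for some $\e$ with $\e_j^2=1$. Now \re{quaeqp} reads $R_\e^{-1}(B_1)_* Z_j(B_1)_*^{-1}R_\e=(B_1)_*Z_{\nu_1(j)}(B_1)_*^{-1}$, and by \rl{rbnu} this is equivalent to $\mathbf B_1=(\diag\mathbf a)^{-1}\mathbf B_1(\diag_{\nu_1}\mathbf d)$ with $\mathbf a=\e$; the non-degeneracy hypothesis (a) then forces $\nu_1=I$ (and $\mathbf d=\mathbf a$), so $\nu_2=I$ as well. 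With $\nu_i=I$ the map $\Upsilon_i^*$ in \re{tpup+-} lies in $\cL C(\hat\tau_{i1},\ldots,\hat\tau_{ip})$; but $\Psi,\tilde\Psi$ lie in $\cL C^{\mathsf c}(\hat\tau_{11},\ldots,\hat\tau_{1p})$, and conjugation by $R_\e$ (which commutes with $E_{\hat{\boldsymbol\Lambda}_i}$ and with $Z$, hence normalizes both $\cL C^{\mathsf c}$ and $\cL C$ for $\{\hat\tau_{1j}\}$) preserves membership in $\cL C^{\mathsf c}(\hat\tau_{11},\ldots,\hat\tau_{1p})$. Thus \re{tpup+-}, $\tilde\Psi=R_\e^{-1}\Psi\,\Upsilon_1^*\,R_\e$, exhibits $\tilde\Psi$ and $R_\e^{-1}\Psi R_\e$ as two decompositions relative to the pair $(\cL C^{\mathsf c},\cL C)$ of \rl{cnnl}; by the uniqueness of that decomposition, $\Upsilon_1^*=I$ and $\tilde\Psi=R_\e^{-1}\Psi R_\e$, which is \re{redequiv}. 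The converse (that any $R_\e$ with $R_\e\rho=\rho R_\e$ and \re{redequiv} induces an equivalence) is a direct computation: $R_\e^{-1}\tau_{1j}R_\e=R_\e^{-1}\Psi\hat\tau_{1j}\Psi^{-1}R_\e=\tilde\Psi(R_\e^{-1}\hat\tau_{1j}R_\e)\tilde\Psi^{-1}=\tilde\Psi\hat\tau_{1j}\tilde\Psi^{-1}=\tilde\tau_{1j}$, using that $R_\e$ commutes with each $\hat\tau_{1j}$ (it commutes with $E_{\hat{\boldsymbol\Lambda}_1}$, with $(B_1)_*$ since $\mathbf B_1=(\diag\e)^{-1}\mathbf B_1\diag\e$ need not hold in general—so one instead notes $R_\e$ commutes with $Z_j$ and uses $\hat\tau_{1j}=E_{\hat{\boldsymbol\Lambda}_1,\mathbf B_1}Z_j E_{\hat{\boldsymbol\Lambda}_1,\mathbf B_1}^{-1}$ together with the already-derived fact $R_\e^{-1}\hat\tau_{1j}R_\e=\hat\tau_{1j}$, which is \re{LUht} with $\nu_1=I$). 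The final sentence, that $\{\tau_{11},\ldots,\tau_{1p},\rho\}\sim\{\hat\tau_{11},\ldots,\hat\tau_{1p},\rho\}$ iff $\Psi=I$, is the special case $\tilde\Psi=I$: if $\Psi=I$ this is immediate, and conversely if some $R_\e$ with \re{redequiv} and $\tilde\Psi=I$ exists then $I=R_\e^{-1}\Psi R_\e$, so $\Psi=I$.

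The step I expect to be the main obstacle is pinning down that $\Upsilon$ must be one of the finitely many dilations $R_\e$ and then extracting $\nu_i=I$ from the non-degeneracy hypothesis: this is where the hypotheses (a) and ``$\log\hat M$ tangent to the identity'' are genuinely used, via \rt{ideal5}~(ii) and \rl{rbnu}, and it is the only place where anything beyond bookkeeping with the centralizer decompositions of \rl{cnnl} and \rl{clas} happens. Once $\Upsilon=R_\e$ and $\nu_i=I$ are secured, the remaining identifications are forced by the uniqueness in \rl{cnnl}. One should also double-check the claim, used repeatedly, that conjugation by $R_\e$ preserves the subspaces $\cL C^{\mathsf c}(\hat\tau_{11},\ldots,\hat\tau_{1p})$ and $\cL C(\hat\tau_{11},\ldots,\hat\tau_{1p})$; this follows because $R_\e$ commutes with $E_{\hat{\boldsymbol\Lambda}_1}$ (it is linear and diagonal, and $E_{\hat{\boldsymbol\Lambda}_1}$ depends only on the product $\xi\eta$, which $R_\e$ fixes) and conjugation by $R_\e$ maps $\cL C^{\mathsf c}(Z_1,\ldots,Z_p)$ to itself since $R_\e$ commutes with each $Z_j$ and preserves the normalization conditions in \rl{lehphi} (they are conditions on vanishing of certain Taylor coefficients, invariant under the sign change $(\xi_j,\eta_j)\mapsto(\e_j\xi_j,\e_j\eta_j)$). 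Finally, throughout, the reality conditions are handled by the symmetry clauses of \rp{ideal6} and \rl{clas}~(iii), so that every map produced ($\Psi$, $R_\e$, $\Upsilon_i^*$) can be chosen $\rho$-equivariant, which is exactly what is recorded in \re{tPtP} and \re{redequiv}.
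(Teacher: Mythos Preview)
Your plan is essentially the paper's own proof: apply \rp{ideal6}, use \rt{ideal5}~(ii) to force $\Upsilon=R_\e$, use \rl{rbnu} plus the non-degeneracy hypothesis to force $\nu_i=I$, and then invoke the uniqueness in \rl{cnnl} to conclude $\tilde\Psi=R_\e^{-1}\Psi R_\e$. The existence argument you sketch is also what the paper has in mind (it is implicit there via \rl{clas} and \rl{cnnl}).

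There is one genuine imprecision. Your justification that conjugation by $R_\e$ preserves $\cL C^{\mathsf c}(\hat\tau_{11},\ldots,\hat\tau_{1p})$ appeals only to $R_\e$ commuting with $E_{\hat{\boldsymbol\Lambda}_1}$ and with each $Z_j$; but the definition of $\cL C^{\mathsf c}(\hat\tau_{1j})$ in \rl{cnnl} involves $E_{\hat{\boldsymbol\Lambda}_1,\mathbf B_1}=E_{\hat{\boldsymbol\Lambda}_1}(B_1)_*$, and $R_\e$ does \emph{not} commute with $(B_1)_*$ for general $\mathbf B_1$. Your parenthetical ``$\mathbf d=\mathbf a$'' is exactly what would make that commutation hold, but \rl{rbnu} only gives this when $\mathbf B$ is triangular, not under the non-degeneracy hypothesis alone. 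The paper closes this gap as follows: once $\nu=I$, \re{genB} still yields $(\diag\e)\mathbf B_1=\mathbf B_1(\diag\mathbf d)$ for \emph{some} $\mathbf d$, hence $R_\e\,(B_1)_*=(B_1)_*\,D$ with $D=\diag(\diag\e,\diag\mathbf d)$ \emph{diagonal}; therefore $R_\e^{-1}E_{\hat{\boldsymbol\Lambda}_1,\mathbf B_1}\phi_1 E_{\hat{\boldsymbol\Lambda}_1,\mathbf B_1}^{-1}R_\e=E_{\hat{\boldsymbol\Lambda}_1,\mathbf B_1}(D^{-1}\phi_1 D)E_{\hat{\boldsymbol\Lambda}_1,\mathbf B_1}^{-1}$, and one checks directly (as the paper does) that conjugation by a diagonal $D$ preserves both $\cL C(Z_1,\ldots,Z_p)$ and $\cL C^{\mathsf c}(Z_1,\ldots,Z_p)$. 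With that correction your argument goes through and coincides with the paper's.
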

\begin{proof} We apply \rp{ideal6}.
We need to refine the equivalence relation \re{quaeqp}-\re{el1-+}.  First we know that $\re{el1-+}$
means that $\Upsilon=R_\e$ and it commutes with $\rho$.   It remains to refine \re{tpup+-}.
 We have $\Phi_2=\rho\Phi_1\rho$.     By assumption, we know that $\nu_1$ in \re{quaeqp} must be the identity.
Then    $\Phi_1\in\cL C^{\mathsf c}(\hat\tau_{11}, \ldots, \hat\tau_{1p})$ implies that
    $\Upsilon^{-1}\Phi_1\Upsilon\in \cL C^{\mathsf c}(\hat\tau_{11}, \ldots, \hat\tau_{1p})$;  indeed by \re{genB} we have
  $$
 \Upsilon { E}_{\mathbf{\Lambda}_i}\circ({ B}_i)_*={ E}_{\mathbf{\Lambda}_i}\circ\Upsilon\circ ({ B}_i)_* ={ E}_{\mathbf{\Lambda}_i}\circ ({ B}_i)_*\circ D,\quad\tilde{\mathbf D}= \diag(\diag\mathbf a,\diag \mathbf d). $$
 Note that $\psi_0=(U,V)$ is in ${\cL C}_2(Z_1,\ldots, Z_p)$ if and only if
  $$
 U(\xi,\eta)= \tilde U(\xi,\eta_1^2,\dots, \eta_p^2), \quad V_j(\xi,\eta)=\eta_j\tilde V_j(\xi,\eta_1^2,\dots, \eta_p^2).
 $$
 Let $ \psi_1=(U,V)$ be in ${\cL C}^{\mathsf c}_2(Z_1,\ldots, Z_p)$, i.e.
 $$
 U(\xi,\eta)=\sum_i\eta_i\tilde U_i(\xi,\eta_1^2,\dots,\eta_i^2), \quad
 V_j(\xi,\eta)=V^*_j(\xi,\eta)+\eta_j\sum_i\eta_i\tilde V_i(\xi,\eta_1^2,\dots,\eta_i^2),
 $$
 where $V_j^*(\xi,\eta)$ is independent of $\eta_j$.   Since $\mathbf D$ is diagonal, then $D \psi_1D^{-1}$ is in ${\cL C}^{\mathsf c}_2(Z_1,\ldots, Z_p)$.    This shows that conjugation by  $\Upsilon$ preserves ${\cL C}^{\mathsf c}(\hat\tau_{11},\dots,\hat\tau_{1p})$.
 Also $\Upsilon$ commutes with each $\hat\tau_{1j}$.  Hence, it preserves ${\cL C}(\hat\tau_{11},\dots,\hat\tau_{1p})$. By the uniqueness of decomposition,  \re{tpup+-} becomes
\eq{}
\nonumber
\tilde\Phi_{11}=\Upsilon^{-1}\Phi_{11}\Upsilon, \quad \tilde\Phi_{10}^{-1}=\Upsilon^{-1} \Phi_{10}^{-1}\Upsilon^*_1 
\Upsilon.
\eeq
The second equation defines $\Upsilon_1^*$ that is in $\cL C(\hat\tau_{11}, \ldots, \hat\tau_{1p})$
as $\Upsilon, \Phi_{10}, \tilde\Phi_{10}$ are in the centralizer.
Rename $\Phi_{11},\tilde\Phi_{11}$ by $\Psi,\tilde\Psi$.
This shows that the equivalence relation is reduced to  \re{redequiv}.  \end{proof}

We now derive the following formal normal form.
\begin{thm}\label{mnorm} Let $M$ be a real analytic submanifold   that is a higher order perturbation of a non-resonant
product quadric.
Assume that the formal normal form $\hat\sigma$ of the $\sigma$ associated to $M$ is
given by  \rea{hsixi} in which  $\log\hat M$ is tangent to the identity and in the 
formal normal form  \rea{hfj1}. Let $ E_{\boldsymbol{\hat\Lambda_1}}$ be defined by \rea{t1jd+l}.
Then $M$
is formally equivalent to a formal submanifold in the $(z_1, \ldots, z_{2p})$-space defined by
\begin{equation}\label{tMzp}
\nonumber
 \tilde M\colon   z_{p+j}=(\la_j^{-1}U_j(\xi,\eta)-V_j(\xi,\eta))^2, \quad1\leq j\leq p, \end{equation}
where  $(U,V)=E_{\boldsymbol{\hat\Lambda_1}(0)} E_{\boldsymbol{\hat\Lambda}_1}^{-1}\Psi^{-1}$,
 $\Psi$ is tangent to the identity and 
  in $\cL C(\hat\tau_1)\cap\cL C^{\mathsf c}(\hat\tau_{11},\ldots, \hat\tau_{1p})$, defined in \rla{cnnl}, and
$\xi,\eta$ are solutions to
$$
z_j=U_j(\xi,\eta)+\la_jV_j(\xi,\eta), \quad \ov z_j=\ov{U_j\circ\rho(\xi,\eta)}+\ov\la_j\ov{V_j\circ\rho(\xi,\eta)},
\quad 1\leq j\leq p.$$
Furthermore,
the $\Psi$ is uniquely determined up to   conjugacy  $R_\e\Psi R_\e^{-1}$ by an
 involution  $R_\e\colon \xi_j\to\e_j\xi_j,
\eta_j\to\e_j\eta_j$ for $1\leq j\leq p$   that commutes with $\rho$, i.e. $\e_{s+s_*}=\e_s$.
The formal holomorphic automorphism group of $\hat M$ consists of
 involutions of the form
$$
L_\e\colon z_j\to\e_j z_j, \quad  z_{p+j}\to z_{p+j}, \quad 1\leq j\leq p
$$
with $\e$ satisfying  $R_\e\Psi=\Psi R_\e$   and
$\e_{s+s_*}=\e_s$.   If the $\sigma$   associated to  $M$ is holomorphically equivalent to  a Poincar\'e-Dulac normal
form, then  $\tilde M$ can be achieved by a holomorphic transformation too.
\end{thm}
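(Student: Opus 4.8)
\textbf{Proof plan for Theorem~\ref{mnorm}.}
The plan is to assemble the normal form by concatenating the three normalizations already established in the excerpt, following exactly the realization scheme of \rp{mmtp}. First I would invoke \rp{ideal0} and \rt{ideal5} to bring $\sigma=\tau_1\tau_2$ and the pair $\{\tau_1,\tau_2,\rho\}$ into the forms $\hat\sigma,\hat\tau_1,\hat\tau_2$ of \re{htaix}--\re{hsixi}, using the hypothesis that $\log\hat M$ is tangent to the identity; by \rt{ideal5}(iv) the intervening transformation commutes with $\rho$, so the reality condition $\tau_{2j}=\rho\tau_{1j}\rho$ is preserved throughout. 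Since $M$ is a higher order perturbation of a non-resonant product quadric, its quadratic part is a product quadric and hence (by \rl{sett} together with the equivalence relation \re{bsdb}, or \rp{2tnorm}) the matrix $\mathbf B$ associated to $\{T_{1j}\}$ can be taken to be the identity, which certainly satisfies the non-degeneracy condition that \re{genB} forces $\nu=I$. Thus \rp{ideal6+} applies and produces the representation \re{tPtP}: in suitable formal coordinates $\tau_{1j}=\Psi\hat\tau_{1j}\Psi^{-1}$, $\tau_{2j}=\rho\tau_{1j}\rho$, with $\Psi$ tangent to the identity and in $\cL C(\hat\tau_1)\cap\cL C^{\mathsf c}(\hat\tau_{11},\ldots,\hat\tau_{1p})$, and $\Psi$ is unique up to the conjugacy $R_\e\Psi R_\e^{-1}$ with $R_\e$ commuting with $\rho$.

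Next I would run the realization construction recalled after \rp{mmtp} with this data. The linear invariant functions of $\{\hat\tau_{1j}\}$ are generated by the entries of $Z^+$ and the squares of the entries of $Z^-$ as in \re{Z+xi} (with $\mathbf B=\mathbf I$), namely $Z_j^+=\xi_j+\la_j\eta_j$ and $Z_j^-=-\la_j^{-1}\xi_j+\eta_j$; conjugating these by $\Psi$ and writing $(U,V)=E_{\boldsymbol{\hat\Lambda_1}(0)}E_{\boldsymbol{\hat\Lambda}_1}^{-1}\Psi^{-1}$ one sees that $U_j+\la_jV_j$ are invariant under all $\tau_{1j}$ while $\la_j^{-1}U_j-V_j$ is the skew-invariant of $\tau_{1j}$ whose square is invariant. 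Setting $z_j=U_j(\xi,\eta)+\la_jV_j(\xi,\eta)$, $w_j=\ov{U_j\circ\rho}+\ov\la_j\ov{V_j\circ\rho}$ gives a formal biholomorphism $(\xi,\eta)\mapsto(z,w)$ conjugating $\rho$ to $\rho_0$ (the same computation as in the quadric case), and then $z_{p+j}=(\la_j^{-1}U_j-V_j)^2$, with $w_j$ replaced by $\ov z_j$, defines the formal submanifold $\tilde M$. By \rp{mmtp}(ii) this $\tilde M$ has $\{\tilde\tau_{11},\ldots,\tilde\tau_{1p},\rho_0\}$ formally equivalent to $\{\tau_{11},\ldots,\tau_{1p},\rho\}$, and by \rp{mmtp}(i) $\tilde M$ is formally equivalent to $M$.

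For uniqueness, I would combine \rp{mmtp}(i) with \rp{ideal6+}: two such $\tilde M$, $\tilde M'$ are formally equivalent iff their involution families are, iff (by \rp{ideal6+}) the corresponding $\Psi,\tilde\Psi$ satisfy $\tilde\Psi=R_\e^{-1}\Psi R_\e$ with $R_\e\rho=\rho R_\e$, i.e. $\e_{s+s_*}=\e_s$. The automorphism group of $\hat M$ is then read off from the $\Psi=\tilde\Psi$ case: the residual symmetries are exactly the $R_\e$ with $R_\e\Psi=\Psi R_\e$ commuting with $\rho$, and pushing $R_\e$ through the realization map $(\xi,\eta)\mapsto(z,w)$ turns it into $L_\e\colon z_j\to\e_j z_j$, $z_{p+j}\to z_{p+j}$ (the squares are fixed and $\la_j^{-1}U_j-V_j$ is odd in the $\e_j$), which fixes $\tilde M$. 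Finally, for convergence: if $\sigma$ is holomorphically equivalent to a Poincar\'e–Dulac normal form, then by \rt{ideal5}(ii) the $\Psi$ of \rp{ideal6+} — obtained by the decomposition of \rl{cnnl}, which preserves convergence — is convergent, hence $(U,V)$ and therefore $\tilde M$ are given by convergent series and the equivalence $M\to\tilde M$ is holomorphic by the convergent half of \rp{mmtp}.

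\textbf{Main obstacle.} The substantive point is bookkeeping rather than a new estimate: one must check that the conjugate-by-$\Psi$ of the formula \re{Z+xi} really does separate into the invariant combination $U_j+\la_jV_j$ and the skew-invariant $\la_j^{-1}U_j-V_j$ with $(U,V)=E_{\boldsymbol{\hat\Lambda_1}(0)}E_{\boldsymbol{\hat\Lambda}_1}^{-1}\Psi^{-1}$, and that the map $(\xi,\eta)\mapsto(z,w)$ so defined is a genuine (formal) biholomorphism whose effect on $\rho$ is $\rho_0$ — this is the place where the precise matrix identities from Section~\ref{secquad} must be transported verbatim to the nonlinear setting, keeping track of the fact that $\mathbf B=\mathbf I$ here. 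The only conceptual subtlety is making sure the non-degeneracy hypothesis of \rp{ideal6+} (that \re{genB} forces $\nu=I$) is automatic for a perturbation of a \emph{product} quadric; this follows because the product structure gives $\mathbf B=\mathbf I$, for which \rl{rbnu}'s last sentence yields $\nu=I$.
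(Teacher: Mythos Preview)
Your proposal is correct and follows essentially the same approach as the paper's proof: reduce to the normal form of \rp{ideal6+} (noting that the product-quadric hypothesis forces $\mathbf B=\mathbf I$, so \rl{rbnu} gives the non-degeneracy condition for free), then run the realization procedure of \rp{mmtp} using the invariant/skew-invariant functions $\xi_j+\la_j\eta_j$ and $(\la_j^{-1}\xi_j-\eta_j)^2$ pulled back along $\psi=(U,V)=E_{\boldsymbol{\hat\Lambda_1}(0)}E_{\boldsymbol{\hat\Lambda}_1}^{-1}\Psi^{-1}$, and read off uniqueness and the automorphism group from the $R_\e$-ambiguity of \rp{ideal6+}. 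The paper carries out exactly these steps, with the minor addition of invoking \cite[Lemma~2.5]{GS15} to identify $\{\phi\tau_{1j}\phi^{-1}\}$ with the deck transformations of the realized $\tilde M$; your convergence argument is slightly more explicit than the paper's, which leaves that claim implicit in the convergence clauses of the cited propositions.
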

\begin{proof}  We fist choose linear coordinates so that the linear parts of $\{\tau_{11},\dots, \tau_{1p},\rho\}$ are in the
normal form in
\rla{t1t2sigrho}.
We apply \rp{ideal6+} and assume that $\tau_{ij}$ are already in the normal form. The rest of proof is essentially in \rp{mmtp}
and we will be brief. Write $T_{1j}=E_{\boldsymbol{\hat\Lambda}_1(0)} \circ Z_j\circ
 E_{\boldsymbol{\hat\Lambda}_1(0)}^{-1}$. Let $\psi=(U,V)$ with $U,V$ being given in the theorem.  We obtain
 $$
\tau_{1j}=\Psi\hat\tau_{1j} \Psi^{-1}= \psi^{-1} T_{1j}\psi, \quad 1\leq j\leq p.
 $$
 Let $f_j=\xi_j+\la_j\eta_j$ and $h_j=(\la_j\xi_j-\eta_j)^2$.  The invariant functions of $\{T_{11}, \ldots, T_{1p}\}$ are generated by
 $f_1,\ldots, f_p,   h_1,\ldots, h_p$. This shows that the invariant functions of $\{\tau_{11}, \ldots, \tau_{1p}\}$ are generated by
 $f_1\circ\psi, \ldots, f_p\circ\psi, h_1\circ \psi, \ldots, h_p\circ\psi$.
 Set $g:=\ov{f\circ\psi\circ\rho}$.
 We can verify that $\phi=(f\circ\psi,g)$ 
  is biholomorphic. Now
 $
 \phi\rho\phi^{-1}=\rho_0.
 $
 Let $ M$ 
  be defined by
 $$
 z_{p+j}=E_j(z',\ov z'), \quad 1\leq j\leq p,
 $$
 where $ E_j= h_j\circ\phi^{-1}$.  Then  $E_j\circ\phi$ and $z_j\circ\phi=f_j$
 are invariant by $\{\tau_{1k}\}$. This shows that \{$\phi\tau_{ij}\phi^{-1}\}$ has the same
 invariant functions as deck transformations of $\pi_1$
 of the complexification $\cL M$ of $M$.   By Lemma 2.5 in \cite{GS15},
 $\{\phi\tau_{1j}\phi^{-1}\}$ agrees with the unique set of generators for
  the deck transformations of $\pi_1$.
 Then $ M$ is a realization of $\{\tau_{11}, \ldots, \tau_{1p},\rho\}$.

 Finally, we identity the formal automorphisms of $ M$, which fix the origin.  For such an automorphism $F$ on ${\cc}^n$, define $\tilde F(z',w')=(F(z', E(z',w')), \ov F(w',\ov E'(w',z'))$ on  $\cL M$. Then $\phi^{-1}\tilde F\phi$ preserves $\{\tau_{11},\dots, \tau_{1p},\rho\}$. By \rp{ideal6+}, $\phi^{-1}\tilde F\phi=R_\e$,  $R_\e\rho=\rho R_\e$, and
 $R_\e\Psi=\Psi R_\e$.  Given   \re{upsi}, we write $R_\e=(L_\e',L_\e')$.  In view of $(U,V)=E_{\boldsymbol{\hat\Lambda_1}(0)} E_{\boldsymbol{\hat\Lambda}_1}^{-1}\Psi^{-1}$, we obtain that $L'_\e U=UR_\e$ and $L'_\e V=VR_\e$.
 Since $z_j=U_j(\xi,\eta)+\la_jV_j(\xi,\eta)$ and $z_{p+j}=(\la_j^{-1} U_j(\xi,\eta)-V_j(\xi,\eta))^2$, then $z'\circ\tilde F=L_\e'z'$ and $z''\circ\tilde F=z''$ as functions in $(z',w')$. This shows that $z'\circ F=L_e'z'$ and $z''\circ F=z''$ as functions in $(z',z'')$. Therefore, $F=L_\e$.
  \end{proof}

\begin{rem} Let $b$ be on the unit circle with $0\leq\arg b<\pi$. Let
$$
\mathbf B=\begin{pmatrix}
1   & b\\
 \tilde b&1
\end{pmatrix}, \quad |\tilde b|\leq 1, \quad b\tilde b\neq1.
$$
One can check  that  \re{genB} admits  a solution $\nu\neq I$ if and only if $\tilde b=-\ov b$.   \end{rem}

\bibliographystyle{alpha}
\def\cprime{$'$}

\end{document}